\documentclass[11pt]{article}
\usepackage[top=0.9in, bottom=0.8in, left=1in, right=1in]{geometry}
\usepackage{amsmath,amssymb,amsthm,enumerate,bbm}
\usepackage{xcolor}
\usepackage[unicode,breaklinks=true,colorlinks=true]{hyperref}

\usepackage{theoremref}
\usepackage{mathrsfs}
\usepackage[]{authblk}
\usepackage{comment}
\usepackage{cancel}

\allowdisplaybreaks

\pagestyle{headings}

\usepackage{titletoc}
\titlecontents{section}[1.5em]{\vspace{-2pt}}%
    {\thecontentslabel\enspace\enspace}%numbered sections
    {}%numberless section
    {\titlerule*[0.5pc]{.}\contentspage}%

\titlecontents{subsection}[3em]{\vspace{-2.5pt}}%
    {\thecontentslabel\enspace\enspace}%numbered subsections
    {}%numberless subsection
    {\titlerule*[0.5pc]{.}\contentspage}%

\numberwithin{equation}{section}
\newtheorem{thm}{Theorem}[section]
\newtheorem{cor}[thm]{Corollary}
\newtheorem{lem}[thm]{Lemma}
\newtheorem{prop}[thm]{Proposition}

\theoremstyle{remark}
\newtheorem{remark}{Remark}[section]
\theoremstyle{definition}
\newtheorem{example}[thm]{Example}
\newcommand{\bke}[1]{\left ( #1 \right )}
\newcommand{\bkt}[1]{\left [ #1 \right ]}
\newcommand{\bket}[1]{\left \{ #1 \right \}}
\newcommand{\norm}[1]{\left \| #1 \right \|}

\newcommand{\R}{\mathbb{R}}
\newcommand{\N}{\mathbb{N}}
\renewcommand{\div}{\mathop{\rm div}\nolimits}
\newcommand{\curl} {\mathop{\rm curl}}
\newcommand{\pd}{\partial}
\newcommand\Ga{\Gamma}

\newcommand\La{\Lambda}

\newcommand{\si}{\sigma}
\newcommand\De{\Delta}
\newcommand\de{\delta}
\newcommand{\nb}{\nabla}
\newcommand{\lec}{{\ \lesssim \ }}
\newcommand{\gec}{{\ \gtrsim \ }}

\newcommand{\bka}[1]{{\langle #1 \rangle}}
\newcommand{\abs}[1]{\left | #1 \right |}

\newcommand\al{\alpha}
\newcommand\be{\beta}
\newcommand\ep{\epsilon}

\newcommand\e {\varepsilon}

\newcommand\la{\lambda}

\newcommand\Si{\Sigma}

\newcommand{\ZZ}{\mathbb{Z}}
\newcommand{\NN}{\mathbb{N}}

\newcommand{\LN}  {\mathrm{L\mkern-0.5mu N}}

\newcommand{\na}{\nabla}
\newcommand{\td}{\tilde}

\newcommand{\I}{\infty}

\newcommand{\EQ}[1]{\begin{equation} #1 \end{equation}}
\newcommand{\EQS}[1]{\begin{equation}\begin{split} #1 \end{split}\end{equation}}

\newcommand{\EQN}[1]{\begin{equation*}\begin{split} #1 \end{split}\end{equation*}}
\newcommand{\EN}[1]{\begin{enumerate} #1 \end{enumerate}}
\newcommand{\uloc}{{\mathrm{uloc}}}
\newcommand{\loc}{{\mathrm{loc}}}

\newcommand{\bP}{{\mathbf{P}}}
\newcommand{\tsum}{\textstyle \sum}
\newcommand{\one}{\mathbbm{1}}

\begin{document}
\title{The Green tensor of the nonstationary Stokes system in the half space}

\author[1]{\rm Kyungkeun Kang}
\author[2]{\rm Baishun Lai}
\author[3]{\rm Chen-Chih Lai \thanks{current affiliation: Department of Mathematics, Columbia University, New York, NY 10027, USA}} 
\author[4]{\rm Tai-Peng Tsai}
\affil[1]{\footnotesize Department of Mathematics, Yonsei University, Seoul 120-749, South Korea}
\affil[2]{\footnotesize LCSM (MOE) and School of Mathematics and Statistics, Hunan Normal University, Changsha 410081, Hunan, China}
\affil[3,4]{\footnotesize Department of Mathematics, University of British Columbia, Vancouver, BC V6T 1Z2, Canada}

\date{}
\maketitle

\vspace{-1.09cm}
\begin{abstract}
We prove the first ever pointwise estimates of the (unrestricted) Green tensor and the associated pressure tensor of the nonstationary Stokes system in the half-space, for every space dimension greater than one. The force field is not necessarily assumed to be solenoidal. The key is to find a suitable Green tensor formula which maximizes the tangential decay, showing in particular the integrability of Green tensor derivatives. With its pointwise estimates, we show the symmetry of the Green tensor, which in turn improves pointwise estimates. We also study how the solutions converge to the initial data, and the (infinitely many) restricted Green tensors acting on solenoidal vector fields. As applications, we give new proofs of existence of mild solutions of the Navier-Stokes equations in $L^q$, pointwise decay, and uniformly local $L^q$ spaces in the half-space.
\end{abstract}

\tableofcontents
%\renewcommand{\baselinestretch}{1.0}\normalsize

%\newpage
\section{Introduction}

This paper considers the Green tensor of the nonstationary Stokes system in the half space. A major goal is
to derive its pointwise estimates.
%
%We first fix the notation.
Denote $x=(x_1,\ldots,x_{n-1},x_n) = (x',x_n)$ and $x^*=(x',-x_n)$ for $x \in \R^n$, $n \ge 2$, and the half space
 $\R^n_+ = \{  (x',x_n)\in \R^n \ | \ x_n>0\}$ with boundary $\Si=\pd\R^n_+$.

\subsection{Background}
The nonstationary Stokes system in the half-space $\R_{+}^{n}$, $n\geq2$, reads
\begin{align}\label{E1.1}
\begin{split}
\left.
\begin{aligned}
u_{t}-\Delta u+\nabla \pi=f \\
 \div u=0
\end{aligned}\ \right\}\ \ \mbox{in}\ \ \R_{+}^{n}\times (0,\infty),
\end{split}
\end{align}
with initial and boundary conditions
\begin{align}\label{E1.2}
u(\cdot,0)=u_0; \qquad  %
u(x',0,t)=0\ \ \mbox{on}\ \ \Si\times (0,\infty).
\end{align}
Here  $u=(u_{1},\ldots,u_{n})$ is the velocity, $\pi$ is the pressure, and $f=(f_{1},\ldots,f_{n})$ is the  external force. They are defined for $(x,t)\in \R_{+}^{n}\times (0,\infty)$.
The \emph{Green tensor} $ G_{ij}(x,y,t)$
and its associated \emph{pressure tensor}  $g_j(x,y,t)$
are defined for $(x,y,t) \in\R^n _+ \times \R^n_+ \times \R$ and
 $1\le i,j\le n$ so that,
for suitable $f$ and $u_0$,
the solution of \eqref{E1.1} is given by
\begin{equation}
\label{E1.3}
u_i(x,t) = \sum_{j=1}^n\int_{\R^n_+}G_{ij}(x,y,t)u_{0,j}(y)\,dy+
\sum_{j=1}^n\int_0^t \int_{\R^n_+} G_{ij}(x,y,t-s) f_j(y,s)\,dy\,ds.
\end{equation}
Another way to write a solution of \eqref{E1.1} uses the \emph{Stokes semigroup} $e^{-t\mathbf{A}}$,
where $\mathbf{A}= - \bP \De$ is the \emph{Stokes operator}, and $\bP$ is the \emph{Helmholtz projection} (see Remark \ref{rem2.6})
\begin{equation}
\label{E1.3a}
u(t) = e^{-t\mathbf{A}}\bP u_0 + \int_0^t e^{-(t-s)\mathbf{A}}\bP f(s)\,ds.
\end{equation}
We may regard the Green tensor $G_{ij}$ as the kernel of $e^{-t\mathbf{A}}\bP$. In using \eqref{E1.3} and
\eqref{E1.3a}, we already exclude weird solutions of \eqref{E1.1} that are unbounded at spatial infinity, and can talk about ``the'' unique solution in suitable classes. For applications to Navier-Stokes equations,
\EQ{\label{NS} \tag{\textsc{NS}}
u_{t}-\Delta u+ \nabla \pi=-u \cdot \nb u ,\quad
 \div u=0, \quad \mbox{in}\ \ \R_{+}^{n}\times (0,\infty),
}
with zero boundary condition, a solution of \eqref{NS} is called a \emph{mild solution} if it satisfies \eqref{E1.3} or \eqref{E1.3a} with
$f = -u \cdot \nb u$ and suitable estimates.

The Stokes semigroup $e^{-t\mathbf{A}}$ and the Helmholtz projection $\bP$ are only defined in suitable functional spaces. When defined, the image of $\bP$ is solenoidal. A vector field $u=(u_1,\ldots,u_n)$ in $\R^n_+$ is called
 {\bf solenoidal} if
\begin{align}\label{solenoidal}
\div u=0,\ \ \ u_{n}|_{\Si}=0.
\end{align}
An equivalent condition for $u \in L^1_\loc(\overline{\R^n_+})$ is
\EQ{
\int_{\R^n_+} u \cdot \nb \phi \,dx=0, \quad \forall \phi \in C^\infty_c(\overline{\R^n_+}).
}
For applications to Navier-Stokes equations, although we may assume $u_0$ is solenoidal, we do not have $\div f=0$ for $f= -u\cdot \nb u$. Hence we cannot omit $\bP$ in the integral of \eqref{E1.3a}.

The initial condition $u(\cdot,0)=u_0$ in \eqref{E1.2} is understood by the weak limit
\EQ{\label{Gij-initial0}
\lim_{t\to 0_+} (u(t),w) = (u_0,w) , \quad \forall w \in C^\infty_{c,\si}(\R^n_+),
}
where
$C^\infty_{c,\si}(\R^n_+)=\{ w \in C^\infty_c(\R^n_+;\R^n): \div w=0\}.$
A strong limit is unavailable unless we further assume
$u_0$ is solenoidal, see Theorem \ref{Convergence-to-initial-data}.
This agrees with the expectation that
\[
\lim_{t\to 0_+} e^{-t\mathbf{A}}\bP u_0 = \bP u_0.
\]

There are many results for \eqref{E1.1} in Lebesgue and Sobolev spaces because the Stokes semigroup and the Helmholtz projection are bounded in $L^q(\R^n_+)$, $1<q<\infty$.
Solonnikov \cite{MR0171094} expressed the solution $u$ in terms of Oseen and Golovkin tensors (see \S\ref{sec2}) and proved estimates of $u_t, \nb^2 u, \nb p$ in $L^q$ %
in $\R^3_+\times \R_+$,
extending the 2D work by Golovkin \cite{MR0128219}. %
Ukai \cite{MR896770} derived
an explicit solution formula to \eqref{E1.1} when $f=0$ in $\R^n_+$, expressed in terms of Riesz operators and the solution operators for the heat and Laplace equations in $\R^n_+$. It is simpler and different from that of \cite{MR0171094} and gives estimates in $L^q$ spaces trivially.
Cannone-Planchon-Schonbek \cite{MR1759797} extended \cite{MR896770} for nonzero $f$ using pseudo-differential operators. Estimates in borderline $L^1$ and $L^\infty$ spaces are studied by Desch, Hieber, and Pr\"{u}ss \cite{DHP}.
Koch and Solonnikov \cite{KS-2002} derived gradient estimates of $u$ in $L^q_{x,t}$ for $q>1$ when $f$ is a divergence of some tensor field.
These results are applied to the study of \eqref{NS} in Lebesgue spaces.

The pointwise behavior of the solutions of \eqref{NS} is less studied, as the Helmholtz projection is not  bounded in $L^\infty$, and there have been no pointwise estimates for $G_{ij}$ except for two special cases to be explained below. To circumvent this difficulty, many researchers
expand explicitly
\[
e^{-t\mathbf{A}}\bP \pd_k (u_k u)
\]
to sums of estimable terms
for the study of \eqref{NS}. See also the literature review for mild solutions later, in particular \eqref{eq:1015}. The drawback of this approach is that it does not apply to general nonlinearities $f=f_0(u,\nb u)$.

The pointwise estimates for $G_{ij}$ and its derivatives will be useful in the following situations:
\EN{
\item It gives direct estimates of the Navier-Stokes nonlinearity without expanding its Helmholtz projection.
\item It works for general nonlinearities, for example, those considered in Koba \cite{MR3302113}, and those from the coupling of the fluid velocity with another physical quantity such as
\[
f_j  = \tsum_k  \pd_k (b_k b_j),\qquad g_j =  -  \tsum_k \pd_k  (\pd_k d \cdot \pd_j d),
\]
where $f$ is the coupling with the magnetic field $b: \R^3_+ \times (0,\I) \to \R^3 $ in the \emph{magnetohydrodynamic equations} in the half space $\R^3_+$ with boundary conditions $b _3=0$ and $(\nabla \times b) \times e_3=0$ (see \cite{MR2921213, MR2957530, MR3121723, MR3366805}), and $g$ is the coupling with the orientation field $d: \R^3_+ \times (0,\I) \to \mathbb{S}^2 $ in the \emph{nematic liquid crystal flows} with boundary conditions $\pd _3 d|_\Si=0$ and $\lim_{|x|\to \I} d=e_3$ (see \cite{MR3945617}).
\item It allows to estimate the contribution from a non-solenoidal initial data, e.g., $u_0 \in L^q$ and in particular when $q=1$, as done by Maremonti \cite{Maremonti-2011} for bounded domains.
\item Pointwise estimates are very useful for
the study of the local and asymptotic behavior of the solutions of \eqref{NS}, see e.g.~\cite{Korobkov-Tsai} and our companion papers \cite{KLLTbs,Greenapp}.
}
In contrast to the absence in the time-dependent case, pointwise estimates for \emph{stationary} Stokes system in the half-space have been known;
See \cite{KMT18} for the literature and the most recent refinement.

We now describe the  two special cases of known pointwise estimates for $G_{ij}$.
For the special case of solenoidal vector fields $f$ satisfying \eqref{solenoidal},
by using the Fourier transform in $x'$ and the Laplace transform in $t$ of the system \eqref{E1.1},
Solonnikov \cite[(3.12)]{MR0415097} derived an explicit formula of the \emph{restricted Green tensor} and their pointwise estimates  for $n=3$ (also see \cite{MR0460931, MR1992567} for $n \ge 2$; The same method is used in \cite{MMP2}). Specifically, he showed that for $u_0=0$,
and $f$ satisfying \eqref{solenoidal},
\EQS{
\label{E1.5}
u_i(x,t) &= \sum_{j=1}^n\int_0^t \int_{\R^n_+} \breve G_{ij}(x,y,t-s) f_j(y,s)dy\,ds,\\
\pi(x,t)&=\sum_{j=1}^n\int_0^t \int_{\R^n_+}\breve g_j(x,y,t-s)f_j(y,s)dy\,ds,
}
with
\EQS{\label{E1.6}
\breve G_{ij}(x,y,t)& = \de_{ij}\Ga(x-y,t) + G_{ij}^*(x,y,t),
\\
G_{ij}^*(x,y,t) &=  -\de_{ij}\Ga(x-y^*,t)
 \\
&\quad
- 4(1-\de_{jn})\frac {\pd}{\pd x_j}
 \int_{\Si \times [0,x_n]} \frac {\pd}{\pd x_i} E(x-z) \Ga(z-y^*,t)\,dz,
\\
\breve g_j(x,y,t)&=4(1-\delta_{jn})\pd_{x_j}\Big[\int_{\Si} E(x-\xi')\partial_n\Gamma(\xi'-y,t)d\xi'\\&
\quad+\int_{\Si}\Ga(x'-y'-\xi',y_{n},t)\pd_nE(\xi',x_n)\,d\xi'\Big],
}
where $y^*=(y',-y_n)$ for $y=(y',y_n)$, and $E(x) $ and $\Ga(x,t)$ are the fundamental solutions of the Laplace
and heat equations in $\R^n$, respectively.  (See \S\ref{sec2}. Our $E(x)$ differs from \cite{MR0415097} by a sign.)  Moreover, $G_{ij}^*$ and $\breve g_j$
satisfy the pointwise bound (\cite[(2.38), (2.32)]{MR1992567}) for $n \ge2$,
\EQS{
\label{Solonnikov.est}
|\pd_{x',y'}^l\pd_{x_n}^k \pd_{y_n}^q\pd_t^m G_{ij}^*(x,y,t)| &\lesssim \frac{e^{-\frac{cy_n^2}t}}{t^{m+\frac{q}2}(|x^*-y|^2+t)^{\frac{l+n}2}(x_n^2+t)^{\frac{k}2}};\\
|\pd_{x,y'}^l \pd_{y_n}^q\pd_t^m\breve g_j(x,y,t)| &\lesssim t^{-1-m-\frac{q}{2}}(|x-y^{*}|^{2}+t)^{-\frac{n-1+l}{2}}e^{-\frac{cy_n^2}{t}}.
}
His argument is also valid for $n=2$ since the fundamental solution $E$ in \eqref{E1.6} has a derivative, thus has the scaling property.

Another special case is the pointwise estimate of the Green tensor by Kang \cite{MR2097573},
 but only when the second variable $y$ is zero, or equivalently $y_n=0$,
\begin{equation}
\label{eq_est_Kang}
|\pd_x^l\pd_t^mG_{ij}(x,y',t)|\lesssim \frac1{t^{m+\frac{1+\alpha}2}(|x-y'|^2+t)^{\frac{l+n-2}2}x_n^{1-\alpha}},
\end{equation}
where $\alpha$ is any number with $0<\alpha<1$, and we identify $y'$ with $(y',0)$. Even for $y=0$, this estimate does not seem optimal because we anticipate the symmetry of the Green tensor (see \thref{prop1}).

\subsection{Results}

The following is our first and key
pointwise estimates of the (unrestricted) Green tensor and its derivatives. Even when restricted to $y=0$, it is better than \eqref{eq_est_Kang} by removing the singularity at $x_n=0$. It will be further improved in \thref{thm3} after we show symmetry.

\begin{prop}[First estimates]\thlabel{prop2}
Let $n\ge 2$, $x,y\in\R^n_+$, $t>0$, $i,j=1,\ldots,n$, and $l,k,q,m \in \N_0$. Let $G_{ij}$ be the Green tensor for the time-dependent Stokes system \eqref{E1.1} in the half-space $\R^n_+$, and $g_j$ be the associated pressure tensor. We have
\EQS{\label{Green_est}
|\pd_{x',y'}^l \pd_{x_n}^k\pd_{y_n}^q\pd_t^m & G_{ij}(x,y,t)| \lec
\frac1{\left(|x-y|^2+t\right)^{\frac{l+k+q+n}2+m}}
\\
&+\frac{\LN_{ijkq}^{mn}}{t^{m}(|x^*-y|^2+t)^{\frac{l+k-k_i+n}2 }(x_n^2+t)^{ \frac{k_i}2 }(y_n^2+t)^{\frac{q}2}},
}
where $k_i=(k-\de_{in})_+$,
\EQ{\label{LNijkqmn.def}
\LN_{ijkq}^{mn} := 1+\de_{n2}\mu_{ik}^m\bkt{\log(\nu_{ijkq}^m|x'-y'|+x_n+y_n+\sqrt{t}) - \log(\sqrt{t})},
}
with $\mu_{ik}^m= 1-(\de_{k0}+\de_{k1}\de_{in})\de_{m0}$, and $\nu_{ijkq}^m =  \de_{q0} \de_{jn} \de_{k(1+\de_{in})} \de_{m0}+\de_{m>0}$.
Also,
\EQ{\label{pressure_est}
|\pd_{x',y'}^l\pd_{x_n}^k\pd_{y_n}^q g_j(x,y,t)|
\lec  t^{-\frac12} \bkt{ \frac1{R^{l+q+n}} \bke{\frac1{ x_n^{k}}+ \de_{k0} \log{\frac R{x_n}} } +  \frac 1{R^{k+n-1}y_n^{l+q+1}}},
}
where $R=|x'-y'|+x_n+y_n+\sqrt{t}\sim|x-y^*|+\sqrt{t}$.%
\end{prop}

\emph{Comments on \thref{prop2}:}
\EN{
\item The numerator $\LN_{ijkq}^{mn}$ is a log correction for $n=2$,
and equals 1 if $n \ge 3$.
 %, and is nondecreasing in $x_n+y_n$.
The parameters $\mu_{ik}^m,\nu_{ijkq}^m \in\{0,1\}$. For simplicity we may take $\mu_{ik}^m=\nu_{ijkq}^m=1$ for most cases.

%\item The estimates in \thref{prop2} are used to prove the symmetry of the Green tensor, \thref{prop1}. After that we improve \eqref{Green_est} using the symmetry in \thref{thm3}.

%
%

\item As we will see in \thref{gj-decomp}, the pressure tensor $g$ contains a delta function supported at $t=0$. It is not in \eqref{pressure_est} where $t>0$.

\item The estimate \eqref{Green_est} of $\pd_t G_{ij}$ is not integrable for $0<t<1$. It can be improved using the Green tensor equation \eqref{Green-def-distribution} and estimates of $\De_x G_{ij}$ and $\nb_x g_j$.

}

\medskip

With the first estimates, we are able to prove the following theorems on restricted Green tensors, convergence to initial data, and symmetry of the Green tensor.
We say a tensor $\bar G_{ij}(x,y,t)$ is a \emph{restricted Green tensor} if for any solenoidal $u_0$, the vector field $u_i(x,t) =\sum_{j=1}^n \int_{\R^n_+} \bar G_{ij}(x,y,t)u_{0,j}(y)\,dy$ is a solution of the Stokes system \eqref{E1.1}-\eqref{E1.2}.

\begin{thm}[Restricted Green tensors]\label{th6.1}
Let $u_0 \in C^1_{c,\si}(\overline {\R^n_+})$, i.e., it is a vector field in $C^1_c(\overline {\R^n_+};\R^n)$ with $\div u_0=0$ and $u_{0,n}|_\Si=0$. Then
\[
\sum_{j=1}^n \int_{\R^n_+} G_{ij}(x,y,t)u_{0,j}(y)\,dy = \sum_{j=1}^n \int_{\R^n_+} \breve G_{ij}(x,y,t)u_{0,j}(y)\,dy
= \sum_{j=1}^n \int_{\R^n_+} \widehat G_{ij}(x,y,t)u_{0,j}(y)\,dy
\]
as continuous functions in $x\in \R^n_+$ and $t>0$,
where $ \breve G_{ij}(x,y,t)$ is the restricted Green tensor of Solonnikov given in \eqref{E1.6}, and
\EQS{ \label{0827a}
\widehat G_{ij}(x,y,t)& = \de_{ij}\bkt{ \Ga(x-y,t) - \Ga(x-y^*,t) } - 4 \de_{jn}C_i(x,y,t),
}
with $C_i(x,y,t)=\int_0^{x_n}\int_\Si\pd_n\Ga(x-y^*-z,t)\,\pd_iE(z)\,dz'\,dz_n$.
\end{thm}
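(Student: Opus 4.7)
The plan is to verify that each of the three integrals defines a classical solution of the homogeneous Stokes system \eqref{E1.1} (with $f=0$) having $u_0$ as its initial data, and then invoke uniqueness of such solutions. Denote the three vector fields by $u^{(1)}, u^{(2)}, u^{(3)}$ in the order they appear. For $u^{(1)}$ there is nothing to prove: it solves \eqref{E1.1} by the defining property \eqref{E1.3} of $G_{ij}$, and \thref{prop2} applied to the compactly supported $u_0 \in C^1_{c,\si}(\overline{\R^n_+})$ gives pointwise bounds on $u^{(1)}$ and its derivatives up to any fixed time. For $u^{(2)}$, built from Solonnikov's $\breve G_{ij}$ in \eqref{E1.6}, one adapts the construction of \cite{MR0415097, MR1992567}: its setup for solenoidal source terms $f$ extends to a solenoidal initial datum by a standard approximation, with bounds controlled by \eqref{Solonnikov.est}. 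The essential input is the solenoidal condition \eqref{solenoidal} on $u_0$, which is guaranteed by $u_0 \in C^1_{c,\si}$.

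The key step is $u^{(3)}$. Decompose
\[
u^{(3)}_i(x,t) = \int_{\R^n_+} [\Ga(x-y,t) - \Ga(x-y^*,t)]\, u_{0,i}(y)\, dy \; - \; 4 \int_{\R^n_+} C_i(x,y,t)\, u_{0,n}(y)\, dy \;=:\; A_i(x,t) - 4 B_i(x,t).
\]
Then $A_i$ is the Dirichlet heat semigroup applied to $u_{0,i}$, so $(\pd_t - \De)A_i = 0$, $A_i|_{\Si} = 0$, and $A_i(\cdot, 0) = u_{0,i}$ strongly. Since $C_i(x,y,t) = \int_0^{x_n}\!\cdots\, dz_n$ vanishes when $x_n=0$, we also have $B_i|_{\Si} = 0$; and $B_i(\cdot,0)=0$ because, as $t\to 0$, the kernel $\pd_n\Ga(x-y^*-z,t)$ concentrates at $z_n = x_n + y_n > x_n$, outside the $z_n$-integration interval. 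To obtain the momentum equation, apply $\pd_t - \De_x$ under the integral in $C_i$: using $\pd_t\Ga = \De\Ga$ and integrating by parts in $z$ across $\{z_n=0\}$, $\{z_n=x_n\}$, and in $z'$ over $\Si$, while invoking the fundamental solution property of $E$, one obtains $(\pd_t - \De_x) C_i(x,y,t) = \pd_{x_i}\Phi(x,y,t)$ for an explicit scalar $\Phi$ built from $E$ and $\Ga$. Thus $u^{(3)}$ satisfies \eqref{E1.1} with pressure $\pi^{(3)}(x,t) = -4\int \Phi(x,y,t)\,u_{0,n}(y)\, dy$. Finally $\div_x u^{(3)}$ reduces, after integration by parts in $y$ using $\div u_0 = 0$ and $u_{0,n}|_{\Si} = 0$, to a boundary identity of Laplace type that vanishes identically.

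Since $u_0 \in C^1_c(\overline{\R^n_+})$, each $u^{(k)}$ is a bounded classical solution in every $L^q(\R^n_+)$, and uniqueness of the Stokes system in this class (see e.g.\ \cite{MR896770, MR0171094}) forces $u^{(1)}=u^{(2)}=u^{(3)}$. The main obstacle is the direct verification for $u^{(3)}$: the correction $-4\de_{jn}C_i$ has a subtle structure, and establishing that it produces exactly the right pressure gradient while restoring the divergence-free condition lost by the Dirichlet heat extension of $u_{0,n}$ requires careful bookkeeping of derivatives of $E$ and $\Ga$ along with repeated use of $\div u_0 = 0$ and $u_{0,n}|_{\Si}=0$ at the right moments.
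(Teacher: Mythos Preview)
Your approach via uniqueness is genuinely different from the paper's. The paper does not verify that each expression solves Stokes and then appeal to uniqueness; instead it starts from the explicit formula for the unrestricted Green tensor (Lemma~\ref{Green-formula}),
\[
G_{ij} = \de_{ij}\bigl[\Ga(x-y,t)-\Ga(x-y^*,t)\bigr] + \bigl[\Ga_{ij}(x-y,t)-\ep_i\ep_j\Ga_{ij}(x-y^*,t)\bigr] - 4\widehat H_{ij},
\]
and manipulates $\int G_{ij}u_{0,j}\,dy$ directly. The middle bracket is $\pd_{y_i}\pd_{y_j}$ of a scalar and integrates to zero against any solenoidal $u_0$ with $u_{0,n}|_\Si=0$. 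For the $\widehat H_{ij}$ term the paper uses its representation through the integrals $D_{ijm}$ (Lemma~\ref{cancel_C_H}), integrates by parts in $y$, and invokes $\div u_0=0$ together with $u_{0,n}|_\Si=0$ to collapse everything to $-4\int C_i\,u_{0,n}\,dy$, which is the $\widehat G_{ij}$ form; one further integration by parts in $y_n$ gives the $\breve G_{ij}$ form. No PDE verification or uniqueness theorem is invoked, and both restricted tensors fall out of the same computation.

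Your outline has a genuine gap at its core: the verification that $u^{(3)}$ solves Stokes is the substance of the theorem for the new tensor $\widehat G_{ij}$, and you have only asserted it. Concretely, the Dirichlet heat part $A$ is \emph{not} divergence-free (one finds $\div_x A = -2\int\pd_{x_n}\Ga(x-y^*,t)\,u_{0,n}\,dy$ after integrating by parts), so closing $\div_x u^{(3)}=0$ requires the pointwise identity $\sum_i\pd_{x_i}C_i(x,y,t)=-\tfrac12\pd_{x_n}\Ga(x-y^*,t)$; and your claim that the boundary terms from $(\pd_t-\De_x)C_i$ assemble into $\pd_{x_i}\Phi$ with a single scalar $\Phi$ independent of $i$ is precisely the statement that needs proof. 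Both identities are true, but establishing them requires exactly the kind of careful manipulation of $E$, $\Ga$, and the $z_n$-strip integral that the paper carries out explicitly via $D_{ijm}$. The uniqueness framing therefore does not shorten the argument; it relocates the hard computation into ``one obtains'' and ``vanishes identically'' without performing it.
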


{\it Comments on Theorem \ref{th6.1}:}
\begin{enumerate}
\item The last term of $\breve G_{ij}$ in \eqref{E1.6} only acts on the \emph{tangential} components $u_{0,j}$, $j<n$. In contrast, the last term of $\widehat G_{ij}$ in \eqref{0827a}
only acts on the \emph{normal} component $u_{0,n}$. We do not know whether \eqref{0827a} has appeared in literature. We will use both $\breve G_{ij}$ and $\widehat G_{ij}$ in the proof of
Lemma \ref{th6.2}. $C_i$ will be defined in \eqref{eq_def_Ci} with estimates in Remark \ref{D_estimate-rmk}.

\item We can get infinitely many restricted Green tensors by adding to $\breve G_{ij}$ any tensor $T_{ij}$ that vanishes on all solenoidal vector fields $f=(f_j)$,  $\int_{\R^n_+} T_{ij}(x,y,t) f_j(y) dy = 0$, for example, a tensor of the form $T_{ij} = \pd_{y_j} T_i(x,y,t)$ with suitable regularity and decay.
We do not need $\sum_i\pd_{x_i} T_{ij}(x,y,t)=0$ nor $T_{ij}|_{x_n=0}=0$ since $\int_{\R^n_+} T_{ij}(x,y,t) f_j(y) dy = 0$.
In fact, if we denote
\[
C_{i}^\sharp(x,y,t) := \int_0^{x_n}\int_{\Si} \Ga(x-y^*-z,t) \pd_iE(z)\, dz'dz_n,
\]
then we have the (more symmetric) alternative forms:
\EQS{\label{0902a}
\breve G_{ij}(x,y,t)& = \de_{ij}\bkt{ \Ga(x-y,t) - \Ga(x-y^*,t) } + 4(1-\de_{jn}) \pd_{y_j} C_{i}^\sharp(x,y,t),
\\
\widehat G_{ij}(x,y,t)& = \de_{ij}\bkt{ \Ga(x-y,t) - \Ga(x-y^*,t) } - 4 \de_{jn} \pd_{y_j} C_{i}^\sharp(x,y,t)
\\
&= \breve G_{ij}(x,y,t) + \pd_{y_j} 4C_{i}^\sharp(x,y,t).
}

\item In contrast, the unrestricted Green tensor $G_{ij}$ is \emph{unique}: We require it to satisfy
the equation \eqref{Green-def-distribution}$_1$, the boundary condition \eqref{Green-def-distribution}$_2$, and the initial condition that the vector field $u_i(x,t)=\sum_{j=1}^n\int_{\R^n_+}G_{ij}(x,y,t)(u_0)_j(y)\,dy$ satisfies $\lim_{t\to 0_+}u(\cdot,t) = \bP u_0$ for any initial data $u_0$ not necessarily solenoidal.
Suppose $(\bar G_{ij}(x,y,t), \bar g_j(x,y,t))$ is another pair of unrestricted Green tensor and pressure tensor. For fixed $j$ and $y$, the difference $u_i(x,t) = (G_{ij}-\bar G_{ij})(x,y,t)$ and its companion pressure $p(x,t)=(g_j- \bar g_j)(x,y,t)$ satisfy the Stokes system \eqref{E1.1} with zero boundary and initial values. Under bounds such as
\[
|u(x,t)| \lec \frac 1{(y_n+|x|+\sqrt t)^n}, \quad |p(x,t)| \lec \frac 1{\sqrt t(y_n+ |x|+\sqrt t)^{n-1}}, \quad 
\]
suggested by Proposition \ref{prop2}, we can show $u=0$ by energy estimate: Testing \eqref{E1.1} by $u \phi_R$ for some cut-off function $\phi_R(x)=\Phi(x/R)$ and integrating over $t_0<t<t_1$, sending $R\to \infty$, and then sending $t_0 \to 0_+$.
(Also see \cite[Theorem 5]{MMP1}). Hence $G_{ij}=\bar G_{ij}$.

\item Theorem \ref{th6.1} is extended to $u_0\in L^p_\si$ in Remark \ref{rem9.2} for $1\le p\le \infty$. When $p=\infty$ we can only show the first equality, and we need $u_0$  in the $L^\infty$-closure of $C^1_c$.
\end{enumerate}

\begin{thm}[Convergence to initial data]  \label{Convergence-to-initial-data}
Let $u(x,t)=\sum_{j=1}^n \int_{\R^n_+} G_{ij}(x,y,t)u_{0,j}(y)\,dy$ for a vector field $u_0 $ in $\R^n_+$. Let ${\bf P}$ be the Helmholtz projection in $\R^n_+$ to be given in Remark \ref{rem2.6}.

\textup{(a)} If $u_0 \in C^1_c(\R^n_+)$,
 then $u(x,t)\to (\bP u_0)(x)$ for all $x\in \R^n_+$, and uniformly for all $x$ with $x_n\ge\de$ for any $\de>0$.

 \textup{(b)} If $u_0 \in L^q(\R^n_+)$, $1<q<\infty$,
 then $u(x,t)\to (\bP u_0)(x)$ in $L^q(\R^n_+)$.

  \textup{(c)} If $u_0 \in C^1_{c,\si}(\overline {\R^n_+})$, i.e., it is a vector field in $C^1_c(\overline {\R^n_+};\R^n)$ with $\div u_0=0$ and $u_{0,n}|_\Si=0$,
 then $u_0=\bP u_0$ and $u(x,t)\to u_0(x)$ in $L^q(\R^n_+)$  for $1<q\le \infty$.
\end{thm}

In Part (a), the support of $u_0$ is away from the boundary. In Part (c), the tangential part of $u_0$ may be nonzero on $\Si$, and $q=\infty$ is allowed.

\medskip

\begin{prop}[Symmetry of Green tensor]\thlabel{prop1}
Let $G_{ij}$ be the Green tensor for the Stokes system in the half-space $\R^n_+$, $n\ge2$. Then for $x,y\in\R^n_+$ and $t\neq 0$ we have
\EQ{\label{Green.symmetry0}
G_{ij}(x,y,t)=G_{ji}(y,x,t),\quad
\forall
x\not=y\in\R^n_+.
}
\end{prop}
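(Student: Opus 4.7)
The plan is a duality argument exploiting the formal self-adjointness of the Stokes semigroup on $L^2_\si(\R^n_+)$. Fix $T>0$ and arbitrary test fields $a,b \in C^\infty_c(\R^n_+;\R^n)$ (not assumed solenoidal). Set
\[
U(x,t) := \int_{\R^n_+} G_{ij}(x,y,t) a_j(y)\,dy = [e^{-t\mathbf{A}} \bP a]_i(x),
\]
and define $V$ analogously from $b$. By the pointwise bounds on $G_{ij}$ and on the associated pressure tensor $g_j$ derived earlier in the paper, both $U$ and $V$ are smooth, solenoidal, spatially decaying Stokes solutions on $\R^n_+ \times (0,T]$ that vanish on $\Si$, with associated pressures $\pi_U, \pi_V$.

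The heart of the proof is to show that the quantity $J(t) := (U(t), V(T-t))_{L^2(\R^n_+)}$ is constant on $[0,T]$. Differentiating in $t$ and substituting the Stokes equations for $\pd_t U$ and $\pd_s V$ yields
\[
J'(t) = (\De U, V)_{L^2} - (U, \De V)_{L^2} - (\nb \pi_U, V)_{L^2} + (U, \nb \pi_V)_{L^2}.
\]
After integration by parts the Laplacian terms cancel because $U, V$ vanish on $\Si$, and each pressure term vanishes separately via $\div U = 0 = \div V$ together with $U_n|_\Si = 0 = V_n|_\Si$. Using $U(0) = \bP a$, $V(0) = \bP b$ from the strong $L^2$-continuity of the semigroup (or Theorem~\ref{Convergence-to-initial-data}), and the solenoidality of $U(T), V(T)$ to drop the Helmholtz projection when pairing with $a$ or $b$, the identity $J(0) = J(T)$ reduces to $(a, V(T))_{L^2} = (U(T), b)_{L^2}$. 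Expanding both sides through the Green tensor and swapping the dummy indices $i \leftrightarrow j$ together with the variables $x \leftrightarrow y$ on one side produces
\[
\iint_{\R^n_+ \times \R^n_+} a_i(x)\,[G_{ij}(x,y,T) - G_{ji}(y,x,T)]\,b_j(y)\,dx\,dy = 0.
\]
Arbitrariness of $a, b$ forces the bracket to vanish as a distribution on the product domain, and the off-diagonal smoothness of $G_{ij}$ established earlier upgrades this to the pointwise identity \eqref{Green.symmetry0}; the case $t<0$ is trivial by causality.

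The main obstacle is the rigorous justification of the several integration-by-parts steps, which requires enough spatial decay of $U, V, \nb U, \nb V, \pi_U, \pi_V$ to eliminate the boundary-at-infinity contributions. This decay is supplied by the pointwise bounds on $G_{ij}$ and on $g_j$ derived earlier in the paper, together with the compact support of $a, b$; if needed, one may first cut $J(t)$ off at spatial radius $R$ and then let $R \to \infty$ using these bounds. The endpoint continuity $J(t) \to J(0), J(T)$ as $t \to 0^+, T^-$ follows from standard strong $L^2$-continuity of the Stokes semigroup on solenoidal data, making the whole argument essentially a packaging of the self-adjointness of $e^{-t\mathbf{A}}$ on $L^2_\si(\R^n_+)$.
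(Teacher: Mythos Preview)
Your approach is correct and genuinely different from the paper's. The paper works pointwise with the Green tensors themselves: it derives a representation of $G^x_{ji}(y,t)$ and $G^y_{ij}(x,t)$ as limits of surface integrals over $\partial B_\ep(y)$ and $\partial B_\ep(x)$ (Lemma~\ref{th7-1} and \eqref{Gx_weak_form}--\eqref{Gy_weak_form}), then applies a Green's identity on the excised domain $U_\ep^{L,\de}$ and sends $L\to\infty$, $\de\to0$, $\ep\to0$, tracking every boundary term individually. Your route instead packages the symmetry as the self-adjointness of $e^{-t\mathbf A}\bP$ on $L^2$, proved via the global pairing $J(t)=(U(t),V(T-t))$. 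Your argument is more conceptual and shorter; the paper's is more explicit about which estimate kills which term, and in particular makes transparent that the integrability of the Green tensor derivatives in the tangential direction (the point of \thref{prop2}) is what makes the far-field terms vanish.

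There is one omission worth flagging. You say the only obstacle is the boundary-at-infinity contribution, but the physical boundary $\Si$ is equally delicate: by \eqref{pressure_est} the pressure $\pi_U(x,t)=\int \widehat w_j(x,y,t)a_j(y)\,dy$ carries a $\log x_n$ singularity (and $\pd_{x_n}\pi_U$ an $x_n^{-1}$ singularity) as $x_n\to0$, so the step $(\nabla\pi_U,V)=0$ needs more than $V_n|_\Si=0$ --- it needs the linear vanishing $|V(x',x_n,\cdot)|\lesssim x_n\langle x'\rangle^{-n}$, which follows either from the $\pd_{x_n}$ bound in \thref{prop2} together with $G_{ij}|_{x_n=0}=0$, or from the base case of \thref{thm4} (whose proof does not use the symmetry). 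The paper confronts the identical issue when bounding the boundary integral $J_3$ at $z_n=1/L$ in Section~\ref{sec7} and resolves it the same way; see also Remark~\ref{rem7.1}. Once you add a cutoff near $\Si$ in addition to your radial cutoff and use this vanishing, your argument goes through.
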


For the stationary case, the symmetry is known by Odqvist \cite[p.358]{MR1545170} for $n=3$ and \cite[Lemma 2.1, (2.29)]{KMT18} for $n \ge 2$. We do not know \eqref{Green.symmetry0} for the nonstationary case in the literature.
We will prove \thref{prop1} in Section \ref{sec7}, after we have shown \thref{prop2}.
It gives an alternative proof of  the stationary case for $n\ge3$, see
Remark \ref{stationary-nonstationary}.

Although $G_{ij}$ is symmetric by \thref{prop1}, the restricted Green tensors in \eqref{E1.6} and \eqref{0827a}
are not. For example, if $i<n$ and $j=n$,
\[\breve G_{in}(x,y,t)=0,\ \ \ \breve G_{ni}(y,x,t)= -4
 \int_{\Si \times [0,y_n]} \pd_i\pd_n E(y-z) \Ga(z-x^*,t)\,dz,\]
\[\widehat G_{in}(x,y,t)=-4 C_i(x,y,t),\ \ \ \widehat G_{ni}(y,x,t)= 0.\]

\medskip

By the symmetry of the Green tensor in \thref{prop1}, the estimates in \thref{prop2} can be improved. Our main estimates are the following:

\begin{thm}[Main estimates]\thlabel{thm3}
Let $n\ge 2$, $x,y\in\R^n_+$, $t>0$, $i,j=1,\ldots,n$, and $l,k,q,m \in \N_0$.
We have
\EQS{\label{eq_Green_estimate}
|\pd_{x',y'}^l \pd_{x_n}^k \pd_{y_n}^q \pd_t^m G_{ij}(x,y,t)|\lec&\frac1{(|x-y|^2+t)^{\frac{l+k+q+n}2+m}}
\\&
+\frac{\LN_{ijkq}^{mn}+\LN_{jiqk}^{mn}}
{t^{m}(|x^*-y|^2+t)^{\frac{l+k-k_i+q-q_j+n}2}(x_n^2+t)^{\frac{k_i}2}(y_n^2+t)^{\frac{q_j}2} },
}
where $\LN_{ijkq}^{mn}$ is given in \eqref{LNijkqmn.def}, $k_i=(k-\de_{in})_+$, and $q_j=(q-\de_{jn})_+$.
\end{thm}

{\it Comments on \thref{thm3}:}

\EN{

\item
Assume $ l + k + q + n \ge 3$. For the cases when $k_i = q_j = 0$ and $m=0$,
the time integrals of the above estimates coincide with the well-known estimates of the stationary Green tensor given in \cite[IV.3.52]{GaldiBook2011}. We lose tangential spatial decay in other cases.

\item The estimates of the stationary Green tensor mentioned above have been improved by \cite{KMT18}. For example, when there is no normal derivative and $n+l\ge 3$, \cite[Theorems 2.4, 2.5]{KMT18} show %
\EQ{
|\pd_{x',y'}^l G_{ij}^{0}(x,y)| \lec \frac {x_n y_n^{1+\de_{jn}}}  {|x-y|^{n-2+l}\, |x-y^*|^{2+\de_{jn}}}.
}
(It can be improved using symmetry, but \cite{KMT18} does not have $i=j=n$ case.)
The tangential decay rate is better than the normal decay and the whole space case, probably because of the zero boundary condition. Thus \eqref{eq_Green_estimate} may have room for improvement. Compare \thref{thm4}.

}

\medskip

The following estimates quantify the boundary vanishing of the Green tensor and its derivatives at $x_n=0$ or $y_n=0$.
\begin{thm}[Boundary vanishing]\thlabel{thm4}
Let $n\ge 2$, $x,y\in\R^n_+$, $t>0$, $i,j=1,\ldots,n$, and $l,k,q,m \in \N_0$.
Let $0\le\alpha\le1$.
If $k=0$, we have
\EQS{\label{eq_thm3_al_yn}
\left|\pd_{x',y'}^l\pd_{y_n}^q\pd_t^mG_{ij}(x,y,t)\right|
&\lesssim\frac{x_n^\al}{(|x-y|^2+t)^{\frac{l+q+n}2+m}(|x-y^*|^2+t)^{\frac\al2}}
\\
&\quad +\frac{x_n^\al\,\LN}
{t^{m+\frac{\al}2}(|x-y^*|^2+t)^{\frac{l+q-q_j+n}2} (y_n^2+t)^{\frac{q_j}2}} ,
}
with $\LN= {\textstyle\sum_{k=0}^1}( \LN_{ijkq}^{mn} + \LN_{jiqk}^{mn})(x,y,t) $.
If $q=0$, we have
\EQS{\label{eq_thm3_al_xn}
\left|\pd_{x',y'}^l\pd_{x_n}^k\pd_t^mG_{ij}(x,y,t)\right|
&\lesssim\frac{y_n^\al}{(|x-y|^2+t)^{\frac{l+k+n}2+m}(|x-y^*|^2+t)^{\frac\al2}}
\\
&\quad +\frac{y_n^\al\,\LN}
{t^{m+\frac{\al}2}(|x-y^*|^2+t)^{\frac{l+k-k_i+n}2 } (x_n^2+t)^{\frac{k_i}2}} ,
}
with $\LN= {\textstyle\sum_{q=0}^1}( \LN_{ijkq}^{mn} + \LN_{jiqk}^{mn})(x,y,t)$.
\end{thm}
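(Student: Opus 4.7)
The plan is to gain the factor $x_n^\alpha$ (resp.\ $y_n^\alpha$) from the Dirichlet boundary condition $G_{ij}|_{x_n=0}=0$, combine this with \thref{thm3} by interpolation, and obtain \eqref{eq_thm3_al_xn} from \eqref{eq_thm3_al_yn} via the symmetry of \thref{prop1}.

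\textbf{Endpoint $\alpha=1$ of \eqref{eq_thm3_al_yn}.} Since \eqref{E1.2} gives $G_{ij}(x',0,y,t)=0$ and this vanishing is preserved under $\pd_{x',y'}^l\pd_{y_n}^q\pd_t^m$, the fundamental theorem of calculus yields
\[
\pd_{x',y'}^l\pd_{y_n}^q\pd_t^m G_{ij}(x,y,t)=\int_0^{x_n}\pd_s\pd_{x',y'}^l\pd_{y_n}^q\pd_t^m G_{ij}(x',s,y,t)\,ds.
\]
I would bound the integrand using \thref{thm3} with the normal-derivative count replaced by $k=1$; the bound is a sum of a free-space piece (depending on $s$ via $s-y_n$) and a reflected piece (depending on $s$ via $s+y_n$). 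Following the local-plus-reflected decomposition $G_{ij}=G_{ij}^{(L)}+G_{ij}^{(R)}$ implicit in the proof of \thref{thm3}---each piece vanishing at $x_n=0$ and satisfying the matching term of the \thref{thm3} estimate---one integrates each piece separately. A change of variables $u=s\mp y_n$ and case-by-case estimation based on the relative sizes of $x_n,y_n,\sqrt r,\sqrt t$ (with $r=|x'-y'|^2+t$)---in particular, splitting $[0,x_n]$ into $[x_n/2,x_n]$ (where $s+y_n\sim x_n+y_n$ so the target denominator appears directly) and $[0,x_n/2]$ (handled via alternative estimates)---produces $|\pd^{(\cdot)}G^{(L)}|\le x_n a_1$ and $|\pd^{(\cdot)}G^{(R)}|\le x_n b_1$, where $a_1=a_0/(|x-y^*|^2+t)^{1/2}$ and $b_1=b_0/((x_n+y_n)^2+t)^{1/2}$ are exactly the two terms in the $\alpha=1$ form of \eqref{eq_thm3_al_yn}. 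The shift $\si_{ij0q}\to\si_{ij1q}$ (at most $1$ larger) is absorbed by $r\ge t$.

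\textbf{Interpolation and the companion estimate.} For each piece we now have the $\alpha=0$ bound ($a_0$ or $b_0$ from \thref{thm3}) and the $\alpha=1$ bound ($x_n a_1$ or $x_n b_1$). Termwise geometric means give $|\pd^{(\cdot)}G^{(L)}|\le x_n^\alpha a_0^{1-\alpha}a_1^\alpha$ and $|\pd^{(\cdot)}G^{(R)}|\le x_n^\alpha b_0^{1-\alpha}b_1^\alpha$; summing and using $\LN^\alpha\le\LN$ (valid because $\LN\ge 1$) recovers \eqref{eq_thm3_al_yn}. For \eqref{eq_thm3_al_xn}, \thref{prop1} gives $G_{ij}(x,y,t)=G_{ji}(y,x,t)$, so
\[
\pd_{x',y'}^l\pd_{x_n}^k\pd_t^m G_{ij}(x,y,t)=\pd_{y',x'}^l\pd_{x_n}^k\pd_t^m G_{ji}(y,x,t).
\]
Applying the already-proven \eqref{eq_thm3_al_yn} to $G_{ji}$ with first argument $y$ and $\pd_{x_n}^k$ playing the role of $\pd_{y_n}^q$ (i.e., $q$ replaced by $k$) produces a $y_n^\alpha$ factor. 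The invariances $|x-y|=|y-x|$, $|x-y^*|=|y-x^*|$ (because $|x'-y'|$ and $x_n+y_n$ are swap-symmetric), $\si_{jiqk}=\si_{ijkq}$, and the $x\leftrightarrow y$ symmetry of $\LN_{ijkq}^{mn}$ together show the swapped bound matches \eqref{eq_thm3_al_xn}.

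\textbf{Main obstacle.} The technical heart is the integration for the reflected piece $|\pd^{(\cdot)}G^{(R)}|\le x_n b_1$. The integrand's factor $((s+y_n)^2+t)^{-(q+1-\si_{ij1q})/2}$ is monotone decreasing in $s$, so the naive bound $x_n\cdot\sup_{[0,x_n]}(\cdot)$ produces $(y_n^2+t)^{-(\ldots)}$ rather than the target $((x_n+y_n)^2+t)^{-(\ldots)}$. One must split the interval and use separate estimates in the regimes $y_n\gtrsim x_n$ (where $y_n\sim x_n+y_n$ makes direct evaluation work) and $y_n\ll x_n$ (handling $[x_n/2,x_n]$ directly and $[0,x_n/2]$ via alternative bounds), while simultaneously tracking the log factors $\LN_{ij1q}^{mn}$ and reconciling the $\si$ shift. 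A secondary subtlety is justifying that the decomposition $G_{ij}=G_{ij}^{(L)}+G_{ij}^{(R)}$ actually has both pieces vanishing at $x_n=0$ and admitting the corresponding $k=1$ thm3 bound on the derivative; this follows from the explicit structure of the Green tensor formula exploited in the proof of \thref{thm3}.
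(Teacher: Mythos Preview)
Your overall strategy---fundamental theorem of calculus from the boundary condition, \thref{thm3} with $k=1$, case analysis on the relative sizes of $x_n,y_n,|x'-y'|,\sqrt t$, passage to general $\alpha$, then symmetry for \eqref{eq_thm3_al_xn}---is exactly the paper's approach. Two points, however, deserve correction.

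\textbf{The decomposition $G_{ij}=G_{ij}^{(L)}+G_{ij}^{(R)}$ is unjustified and unnecessary.} You claim a splitting ``implicit in the proof of \thref{thm3}'' where each piece vanishes at $x_n=0$ and is bounded by the matching single term of the \thref{thm3} estimate. No such splitting appears in the proof of \thref{thm3}, which instead takes the minimum of two estimates for the full tensor. The natural structural decomposition from Lemma~\ref{Green-formula} does have pieces that vanish at $x_n=0$, but none of them is controlled by the first \thref{thm3} term alone (the ``local'' contributions $\Ga(x-y,t)$, $\Ga_{ij}(x-y,t)$ are always paired with reflected terms). The paper avoids this entirely: it bounds the full quantity by $\int_0^{x_n}|\pd_{z_n}(\cdots)|\,dz_n\le I_1+I_2$, where $I_1,I_2$ are the integrals of the two terms of the \thref{thm3} bound at $k=1$, and estimates $I_1$ and $I_2$ separately. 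In the ``large $x_n$'' regime (your Case~3), rather than splitting $[0,x_n]$, the paper simply reverts to the $k=0$ estimate \eqref{eq_Green_estimate} applied directly to the left-hand side, and uses $x_n\sim(|x-y^*|^2+t)^{1/2}$ to insert the factor $x_n/(|x-y^*|^2+t)^{1/2}\sim 1$ (and likewise for the second term with $(x_n+y_n)^2+t$).

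\textbf{The interpolation is just monotonicity.} Your termwise geometric mean is the reason you needed the decomposition in the first place. But it is not needed: since $x_n\le x_n+y_n\le(|x-y^*|^2+t)^{1/2}$ and $x_n\le((x_n+y_n)^2+t)^{1/2}$, each target term in \eqref{eq_thm3_al_yn} is of the form $(x_n/D)^\alpha\cdot(\text{$\alpha$-independent})$ with $x_n/D\le 1$, hence \emph{decreasing} in $\alpha$. Once the $\alpha=1$ inequality is established, every $\alpha\in[0,1]$ follows immediately; this is precisely the paper's one-line conclusion. With this observation, your decomposition and the geometric-mean step can be deleted without loss.
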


\subsection{Key ideas and the structure of the proof}

Let us explain the idea for our key result, \thref{prop2}: The major difficulty is to find a formula for the Green tensor in which each term has good estimates. Our first formula \eqref{eq_def_Gij} with the correction term $W_{ij}$ given by
\eqref{eq_Wij_formula0} is obtained from the definition using the Oseen and Golovkin tensors.
The second formula for $W_{ij}$ in \thref{lem_Wij_formula} is obtained using Poisson's formula for the heat equation to \emph{remove the time integration}. The idea of using Poisson's formula is already in the stationary case of \cite{MR548252,KMT18}. Our final formula for the Green tensor in Lemma \ref{Green-formula} is obtained by identifying the cancellation of terms in \thref{lem_Wij_formula}, maximizing the tangential decay.
We further transform the term $\widehat H_{ij}$ in Lemma \ref{Green-formula} in terms of $D_{ijm}$ in Lemma \ref{cancel_C_H}, which are integrals over $\Si \times [0,x_n]$. For $D_{ijm}$, we do space partition and integration by parts to estimate their tangential derivatives, and we explore their algebraic properties, e.g., computing their divergence, to \emph{move normal derivatives to tangential derivatives}. These enable us to
prove \thref{prop2}.

Maximizing the tangential decay is essential: As seen in \thref{prop2}, normal derivatives do not increase tangential decay, and maximal tangential decay allows us to prove the integrability in $y$ of all derivatives of the Green tensor  (uniformly in $x$). This is used in the proofs of \eqref{E10.6} of Lemma \ref{th9.1} and \eqref{S93eq2} of Lemma \ref{th9.6}, both relying on the function $H_1 \in L^1$ for $H_1$ defined in \eqref{1015b}, for the construction of mild solutions of Navier-Stokes equations.  The maximal tangential decay is also used to prove that the Green tensor itself is integrable in $y$, but with an $x_n$-dependent constant,
\EQ{ \int_{\R^n_+} |G_{ij}(x,y,t)|\,dy \lec  \ln (e+\frac {x_n}{\sqrt t}).
}
This is proved in \eqref{C1.def} of  Remark \ref{rem9.2} using \thref{thm4}, and used to prove
an extension of Theorem \ref{th6.1} to the $L^\infty$-setting, see Remark \ref{rem9.2}. In this sense, the Green tensor in the half space has a stronger decay than the whole space case. This phenomenon is well known in the stationary case.

Having the first estimates of both Green tensor and its associated pressure tensor in hand, we can investigate restricted Green tensors and initial values, and prove \thref{prop1} on the symmetry of the Green tensor. Our main estimate \thref{thm3} is proved using  \thref{prop2} and \thref{prop1}. We then prove the  boundary vanishing \thref{thm4} using the normal derivative estimates of \thref{thm3}.

\subsection{Applications}

As an application, we will construct mild solutions of the Navier-Stokes equations in the half space in various functional spaces.
We will provide other applications in forth coming papers \cite{KLLTbs} and \cite{Greenapp}.
Since it is only for illustration, we only consider local-in-time solutions with zero external force.
Fujita-Kato \cite{KF1962, FK1964} and Sobolevskii \cite{Sobolevskii1964} transformed \eqref{NS} into an abstract initial value problem using the Stokes semigroup
\EQ{\label{Duhamel}
u(t) = e^{-t\mathbf{A}} u_0 %
 -\int_0^t e^{-(t-s)\mathbf{A}}\bP\pd_k (u_ku)(s)\, ds,
}
whose solution $u(t)$ lies in some Banach spaces and is called a mild solution of \eqref{NS}.
In the whole space setting,
there is an extensive literature on the unique existence of mild solutions of \eqref{NS}. See e.g.~\cite{FJR, Kato1984, GM1985, Giga1986, Yamazaki2000, Miyakawa2000, KT2001, Brandolese2005, MaTe} for the most relevant to our study.

For mild solutions of \eqref{NS} in the half-space, the unique local and global existence in $L^q(\R^n_+)$ were established by Weissler \cite{Weissler1980} for $3\le n<q<\infty$, by Ukai \cite{MR896770} for $2\le n\le q<\infty$, and by Kozono \cite{Kozono1989} for $2\le n=q$. Canaone-Planchon-Schonbek \cite{MR1759797} established unique existence of solutions in $L^\infty L^3$ with initial data in the homogeneous Besov space $\dot B_{q,\infty}^{3/q-1}(\R^3_+)$. For mild solutions in weighted $L^q$ spaces, we refer the reader to \cite{KK2015, KK2016}.

For solutions with pointwise decay, Crispo-Maremonti \cite{CM2004} proved the local existence of solutions controlled by $(1+|x|)^{-\al}(1+t)^{-\be/2}$, $\al+\be=a\in(1/2,n)$ when $u_0\in L^\infty(\R^n_+, (1+|x|)^a dx)$ and $n\ge3$. If $a\in[1,n)$, they further showed the existence is global in time when $u_0$ is small enough in $L^\infty(\R^n_+, (1+|x|)^a dx)$. The constraints imposed in \cite{CM2004} on $a$ and $n$ are relaxed by Chang-Jin \cite{CJ-NA2017} to $a\in(0,n]$ and $n\ge2$. They proved the existence of mild solutions to \eqref{NS} having the same weighted decay estimate as the Stokes solutions if $a\in(0,n]$. Note that for the case $a=n$, the mild solution is local in time because the weighted estimate of solutions to the Stokes system has an additional log factor. They also obtained the weighted decay estimates for $n<a<n+1$ in \cite{CJ-JDE2017} with an additional condition that $R_j'u_0\in L^\infty(\R^n_+, (1+|x|)^a dx)$.  Regarding solutions whose initial data has no spatial decay, the local existence and uniqueness of strong mild solutions with initial data in $L^\infty$ were established by Bae-Jin \cite{BJ2012}, improving Solonnikov \cite{MR1981302} and
  Maremonti \cite{MR2760555}
for continuous initial data. Recently, Maekawa-Miura-Prange \cite{MMP1} studied the analyticity of Stokes semigroup in uniformly local $L^q$ space via the Stokes resolvent problem and constructed mild solutions in such spaces for $q\ge n$.

In the following, Theorems \ref{thm5}, \ref{thm6} and \ref{thm7} are already known, while Theorems \ref{thm1.8} is new. We will provide new proofs using the following solution formula of \eqref{NS} with the Green tensor
\EQS{
u_i(x,t) &= \sum_{j=1}^n \int_{\R^n_+} \breve G_{ij}(x,y,t) u_{0,j}(y)dy\\
&\quad+ \sum_{j,k=1}^n\int_0^t \int_{\R^n_+} \pd_{y_k} G_{ij}(x,y,t-s) (u_k u_j)(y,s)dy\,ds.
}
We use the restricted Green tensor $\breve G_{ij}$ for the first term and the (unrestricted) Green tensor $G_{ij}$ for the second term.
Note that the second term is written as
\EQS{\label{eq:1015}
-\sum_{j,k=1}^n\int_0^t \int_{\R^n_+} \breve G_{ij}(x,y,t-s) (\bP \pd_k u_ku )_j(y,s)dy\,ds
}
 and
explicitly computed in \cite{CM2004,BJ2012}, as the Green tensor $G_{ij}$ was unknown.

For $1\le q\le \infty$, let
\EQ{\label{Lpsi}
L^p_\si(\R^n_+) = \bket{ f \in L^p(\R^n_+;\R^n) :
\div f=0, \ f_n(x',0)=0}.
}

\begin{thm}\label{thm5}
Let $2\le n \le q\le \infty$ and $u_0 \in L^q_{\si}(\R^n_+)$. If $q=\infty$, we also assume $u_0$ in the $L^\infty$-closure of $C^1_{c,\si}(\overline{\R^n_+})$ and $n\ge3$. There are $T= T(n,q, u_0)>0$ and a unique mild solution $u(t)\in C([0,T]; L^q)$
of \eqref{NS} in the class
\[
\sup_{0<t<T} \bke{ \norm{u(t)}_{L^q} + t^{\frac n{2q}} \norm{u(t)}_{L^\infty}
+ t^{1/2}\norm{\nb u(t)}_{L^q}} \le C_* \norm{u_0}_{L^q}.
\]
We can take $T= T(n,q, \norm{u_0}_{L^q})$ if $n<q \le\infty$.
\end{thm}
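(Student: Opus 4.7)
I would run a Fujita–Kato Picard iteration on the Duhamel formula, with the Green tensor bounds from \thref{thm3}/\thref{thm4} as the only analytic input beyond elementary convolution inequalities. Define
\[
\Phi(u)(t) := e^{-t\mathbf A}u_0 + B(u,u)(t),\quad B(u,v)_i(x,t) := \sum_{j,k=1}^n\int_0^t\!\int_{\R^n_+}\pd_{y_k}G_{ij}(x,y,t-s)(u_kv_j)(y,s)\,dy\,ds,
\]
and seek a fixed point in the Banach space
\[
X_T := \bket{u\in C((0,T];L^q_\si): \|u\|_{X_T} := \sup_{0<t<T}\bke{\|u(t)\|_{L^q} + t^{\frac n{2q}}\|u(t)\|_{L^\infty} + t^{\frac12}\|\nb u(t)\|_{L^q}} < \infty}.
\]
Because $u_0$ is solenoidal, $\int\breve G_{ij}(\cdot,y,t)u_{0,j}(y)\,dy = (e^{-t\mathbf A}u_0)_i$ by Theorem~\ref{th6.1}. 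Integrating \eqref{eq_Green_estimate} and its one-derivative analogue in $y$---feasible precisely because \thref{thm3} maximizes tangential decay---yields the heat-semigroup-type estimates
\[
\|e^{-t\mathbf A}u_0\|_{L^r} \lec t^{-\frac n2(\frac1q-\frac1r)}\|u_0\|_{L^q},\qquad \|\nb e^{-t\mathbf A}u_0\|_{L^q} \lec t^{-\frac12}\|u_0\|_{L^q},\qquad q\le r\le\infty,
\]
so that $\|e^{-t\mathbf A}u_0\|_{X_T}\le C\|u_0\|_{L^q}$.

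\textbf{Bilinear estimates.} The pointwise bound for $\pd_y G$ is of heat-kernel-derivative type, so Young's and H\"older's inequalities give
\[
\|B(u,v)(t)\|_{L^q} \lec \int_0^t(t-s)^{-\frac12}\|u(s)\|_{L^q}\|v(s)\|_{L^\infty}\,ds,\quad \|B(u,v)(t)\|_{L^\infty} \lec \int_0^t(t-s)^{-\frac12-\frac n{2q}}\|u(s)\|_{L^q}\|v(s)\|_{L^\infty}\,ds.
\]
For $\nb B$ in $L^q$, I rewrite $B(u,v) = -\int_0^t e^{-(t-s)\mathbf A}\bP(u\cdot\nb v)\,ds$ (valid since $\div u=0$) and apply the Stokes gradient bound to the outer semigroup, obtaining $\|\nb B(u,v)(t)\|_{L^q} \lec \int_0^t(t-s)^{-1/2}\|u(s)\|_{L^\infty}\|\nb v(s)\|_{L^q}\,ds$. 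Substituting the $X_T$-bounds and evaluating the resulting beta-function integrals yields $\|B(u,v)\|_{X_T}\le CT^{\kappa}\|u\|_{X_T}\|v\|_{X_T}$ with $\kappa := \tfrac12-\tfrac n{2q}\ge 0$ whenever the integrals converge at $s=0$ (always so for $q>n$).

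\textbf{Fixed point and critical cases.} For $n<q<\infty$ we have $\kappa>0$; Banach's fixed point theorem on the ball of radius $C_*\|u_0\|_{L^q}$ in $X_T$ then yields a unique solution for $T$ so small that $CT^\kappa\|u_0\|_{L^q}\ll 1$, i.e.~$T=T(n,q,\|u_0\|_{L^q})$. For $q=\infty$, $\kappa=1/2>0$ so the same scheme applies, but the hypothesis $u_0\in\overline{C^1_{c,\si}}^{L^\infty}$ is needed for strong continuity at $t=0^+$ via Theorem~\ref{Convergence-to-initial-data}(c). At the critical exponent $q=n$, $\kappa=0$ and the $\nb B$ time integral becomes borderline; the standard remedy is to enlarge the iteration norm by a scaling-invariant quantity such as $\sup_{t<T}t^{1/2}\|u\|_{L^{2n}}$ and exploit the density of $C^1_{c,\si}(\overline{\R^n_+})$ in $L^n_\si$ to make $\sup_{t<T}t^{1/2}\|e^{-t\mathbf A}u_0\|_{L^{2n}}$ arbitrarily small (splitting $u_0$ into a smooth piece and a small-norm residue). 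This closes the iteration on a small ball, with $T=T(n,q,u_0)$. Strong continuity $u\in C([0,T];L^q)$ and $u(0)=u_0$ then follow from Theorem~\ref{Convergence-to-initial-data}(b,c).

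\textbf{Main obstacle.} The most delicate step is the $\nb B$-in-$L^q$ estimate: a naive pointwise bound on $\nb_x\nb_y G$ produces a non-integrable $(t-s)^{-1}$ kernel. This is resolved by the non-divergence rewriting above, shifting one derivative from the integrand onto the Stokes semigroup, so only the $t^{-1/2}$ gradient bound of Step 1 is used. A secondary challenge is the critical case $q=n$, where scaling forces $\kappa=0$; the density-of-test-functions device compensates for the absence of a positive time power, at the cost that $T$ depends on $u_0$ itself rather than only on $\|u_0\|_{L^q}$.
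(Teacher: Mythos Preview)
Your approach is essentially the paper's: the paper states that Theorem~\ref{thm5} follows from the linear estimates in Lemma~\ref{th9.1} by a standard fixed-point argument and omits the details, and you have correctly filled in the Fujita--Kato iteration together with the usual auxiliary-norm refinement at the critical exponent $q=n$.

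One point needs correcting. The endpoint bound $\|e^{-t\mathbf A}u_0\|_{L^q}\le C\|u_0\|_{L^q}$ does \emph{not} follow from the unrestricted estimate \eqref{eq_Green_estimate}: that bound gives only $|G_{ij}(x,y,t)|\lesssim(|x-y|+\sqrt t)^{-n}$, which is not in $L^1_y$ (indeed $\int|G_{ij}(x,y,t)|\,dy\sim\log(e+x_n/\sqrt t)$, see \eqref{C1.def} and Remark~\ref{rem9.2}), so neither Young nor Schur applies at $r=q$. The paper's Lemma~\ref{th9.1}(a) instead uses the restricted tensor $\breve G_{ij}$---which you have already introduced via Theorem~\ref{th6.1}---together with Solonnikov's estimate \eqref{Solonnikov.est} for $G^*_{ij}$, whose extra factor $e^{-cy_n^2/t}$ supplies the missing integrability. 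The derivative bounds (your $\nabla e^{-t\mathbf A}u_0$ and the bilinear terms) are fine via \eqref{E10.6}, since there $H_1\in L^1$ thanks to the additional $(x_n^2+1)^{-m/2}$ decay when $m=a+b\ge 1$. So the fix is simply to cite \eqref{Solonnikov.est} rather than \eqref{eq_Green_estimate} for the $r=q$ linear bound.
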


This is known in \cite{Weissler1980,MR896770,Kozono1989} for $2\le n\le q<\infty$, and in \cite{BJ2012} for $q=\infty$. (We do not have $n=2$ and $q=\infty$ case of \cite{BJ2012}).

For $a\ge 0$, denote
\EQ{
Y_a = \bigg\{ f \in L^\infty_\loc(\R^n_+) \, \bigg|\, \norm{f}_{Y_a} = \sup_{x \in \R^n_+} |f(x)|\bka{x}^a<\infty
\bigg\},
}
and
\begin{equation}
Z_a = \bigg\{ f \in L^\infty_\loc(\R^n_+) \, \bigg|\, \norm{f}_{Z_a} = \sup_{x \in \R^n_+} |f(x)|\bka{x_n}^a<\infty
\bigg\}.
\end{equation}

\begin{thm}\label{thm6}
Let $n\ge 2$ and $0< a \le n$. For any vector field $u_0 \in Y_a$ with $\div u_0=0$ and $u_{0,n}|_\Si=0$,
there is a strong mild solution $u\in L^\infty(0,T;Y_a)$ of \eqref{NS} for some time interval $(0,T)$.
Moreover, the mild solution is unique in the class $L^\infty(\R^n_+\times (0,T))$.
\end{thm}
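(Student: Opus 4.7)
I would prove Theorem \ref{thm6} by a standard Picard iteration applied to the integral formulation
\EQS{\label{plan-duhamel}
u_i(x,t) &= \sum_{j=1}^n \int_{\R^n_+} \breve G_{ij}(x,y,t)\, u_{0,j}(y)\,dy \\
&\quad + \sum_{j,k=1}^n \int_0^t\!\!\int_{\R^n_+} \pd_{y_k} G_{ij}(x,y,t-s)\,(u_k u_j)(y,s)\,dy\,ds =: S(t)u_0 + B(u,u)(x,t),
}
in the complete metric space $X_T := \{u \in L^\infty(0,T;Y_a) : \|u\|_{X_T} \le 2C_0 \|u_0\|_{Y_a}\}$ (for $0<a<n$), or in a slightly shrunken space incorporating a time-dependent logarithmic factor when $a=n$. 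The goal is to find $T>0$ such that the linear term satisfies $\|S(t)u_0\|_{Y_a} \le C_0 \|u_0\|_{Y_a}$, and the bilinear map satisfies $\|B(u,v)\|_{X_T} \le \eta(T) \|u\|_{X_T}\|v\|_{X_T}$ with $\eta(T)\to 0$ as $T\to 0$.

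\textbf{Linear estimate.} The first step is to show $|S(t)u_0(x)| \lesssim \|u_0\|_{Y_a}\bka{x}^{-a}$ uniformly in $t\in(0,T)$ (with an extra $\log$ factor when $a=n$). Using the decomposition $\breve G_{ij} = \de_{ij}\Ga(x-y,t) + G_{ij}^*(x,y,t)$ from \eqref{E1.6}, the heat-kernel piece is handled by standard convolution estimates, while $G_{ij}^*$ is controlled by Solonnikov's pointwise bound \eqref{Solonnikov.est}. Splitting the $y$-integration according to whether $|y| \lec |x|$ or $|y| \gec |x|$ and exploiting $|x-y^*|\ge |x-y|$ on $\R^n_+\times\R^n_+$, one obtains the desired $\bka{x}^{-a}$ decay; the critical case $a=n$ produces a $\log(2+|x|^2/t)$ defect, which is absorbed either by weakening the target norm or by restricting to short times.

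\textbf{Bilinear estimate.} The core of the argument is the pointwise bound
\EQS{\label{plan-bil}
|B(u,v)(x,t)| \lec \|u\|_{X_T}\|v\|_{X_T}\int_0^t\!\!\int_{\R^n_+} |\nabla_y G(x,y,t-s)|\,\bka{y}^{-2a}\,dy\,ds.
}
From Theorem \ref{thm3} with $l=0$, $q=1$, $k=0$, $m=0$, the worst term in $|\pd_{y_k} G_{ij}(x,y,\tau)|$ is bounded by $(|x-y^*|^2+\tau)^{-(n+1)/2}$ modulo the harmless $\LN$ factor. Inserting this into \eqref{plan-bil}, splitting space into $\{|y|\le|x|/2\}$, $\{|x|/2<|y|<2|x|\}$, $\{|y|\ge 2|x|\}$, and integrating in $\tau=t-s$ yields a bound of the form $T^{1/2}\|u\|_{X_T}\|v\|_{X_T}\bka{x}^{-a}$ for $0<a<n$ (with only an additional $\log$ factor for $a=n$, as in \cite{CJ-NA2017}). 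Summing the geometric series then produces a mild solution in $X_T$ by contraction.

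\textbf{Strong convergence to $u_0$ and uniqueness.} To conclude that the solution is \emph{strong mild}, i.e., that $u(t)\to u_0$ in a suitable sense as $t\to 0^+$, I would apply Theorem \ref{Convergence-to-initial-data}(a),(c) combined with the $Y_a$-bound on $B(u,u)$, which shows $B(u,u)(t)\to 0$ pointwise uniformly on compact subsets of $\R^n_+$. For uniqueness in $L^\infty(\R^n_+\times(0,T))$, I would take two bounded mild solutions $u,\tilde u$ with the same initial data, form the difference $w=u-\tilde u$, and use the pointwise bound on $\pd_{y_k} G_{ij}$ together with $\|u\|_{L^\infty}, \|\tilde u\|_{L^\infty}\le M$ to derive $\|w(t)\|_{L^\infty(\R^n_+)} \lec M\int_0^t (t-s)^{-1/2} \|w(s)\|_{L^\infty}\,ds$; a Gronwall-type argument then gives $w\equiv 0$ on a small interval, and a continuation argument extends uniqueness to $(0,T)$.

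\textbf{Main obstacle.} The delicate point is the endpoint case $a=n$, where a logarithmic loss appears in both the linear estimate (due to the $\LN^{mn}_{ijkq}$ factor when $n=2$ and from the borderline $y$-integration when $n\ge 3$) and potentially in the bilinear estimate. Controlling this log without destroying the fixed-point scheme forces $T$ to depend on $\|u_0\|_{Y_a}$ in a more subtle way (as already observed in \cite{CJ-NA2017}); the argument requires either working in a slightly larger weight class with a $\log$ factor or exploiting that the $\log$ only appears in a coefficient that can be absorbed by the smallness of $T^{1/2}$.
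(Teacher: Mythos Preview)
Your proposal is correct and follows essentially the same route as the paper: the paper reduces Theorem~\ref{thm6} to the linear and bilinear estimates of Lemma~\ref{1001a} (your \eqref{1001b}-type and \eqref{1001c}-type bounds) and then invokes the Picard iteration of \cite{CJ-NA2017}, exactly as you outline. One small refinement worth noting: for the bilinear estimate the paper uses \thref{prop2} rather than \thref{thm3}, because with $k=0$ and $q=1$ Proposition~\ref{prop2} has $\mu^m_{ik}=0$ and hence no $\LN$ factor, whereas \thref{thm3} can produce $\LN_{jiqk}^{mn}\neq 1$ when $n=2$ and $j<n$; this is cleaner than treating the log as ``harmless'' and absorbing it by an $\epsilon$-loss of decay. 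Your identification of the endpoint $a=n$ log defect matches the paper's discussion.
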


\begin{thm}\label{thm1.8}
Let $n\ge 2$ and $0< a \le 1$. For any vector field $u_0 \in Z_a$ with $\div u_0=0$ and $u_{0,n}|_\Si=0$,
there is a strong mild solution $u\in L^\infty(0,T;Z_a)$ of \eqref{NS} for some time interval $(0,T)$.
Moreover, the mild solution is unique in the class $L^\infty(\R^n_+\times (0,T))$.
\end{thm}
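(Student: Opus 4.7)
The plan is to run a Picard iteration in the Banach space
\[
X_T := \Big\{u \in L^\infty(0,T;Z_a) : \norm{u}_{X_T} := \sup_{0<t<T}\, \sup_{x\in\R^n_+}\bka{x_n}^a\, |u(x,t)| < \infty\Big\},
\]
rewriting \eqref{NS} as the integral equation $u = Lu_0 + B(u,u)$, where
\[
(Lu_0)_i(x,t) = \sum_j \int_{\R^n_+} \breve G_{ij}(x,y,t)\,u_{0,j}(y)\,dy,
\]
\[
B(u,v)_i(x,t) = \sum_{j,k}\int_0^t\int_{\R^n_+} \partial_{y_k} G_{ij}(x,y,t-s)\,(u_k v_j)(y,s)\,dy\,ds,
\]
the boundary contributions from integration by parts vanishing because $u_n|_\Si = 0$. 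I would then prove (i) $\norm{Lu_0}_{X_T} \lesssim \norm{u_0}_{Z_a}$ uniformly in $T\le 1$, and (ii) $\norm{B(u,v)}_{X_T} \lesssim T^{1/2}\norm{u}_{X_T}\norm{v}_{X_T}$; the Banach fixed-point theorem then produces a unique solution in a small closed ball of $X_T$, provided $T$ is chosen small enough relative to $\norm{u_0}_{Z_a}$.

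For the linear estimate (i) I would use Theorem \ref{th6.1} together with the explicit formula \eqref{E1.6} for $\breve G_{ij}$. The heat-kernel pieces $\de_{ij}[\Ga(x-y,t)-\Ga(x-y^*,t)]$ are controlled by the one-dimensional convolution inequality $\int_\R \Ga_1(x_n-y_n,t)\bka{y_n}^{-a}\,dy_n \lesssim \bka{x_n}^{-a}$ (valid for $0\le a \le 1$ and $t>0$), proved by the standard split $|y_n|\le |x_n|/2$ versus $|y_n|>|x_n|/2$ and using the Gaussian decay of $\Ga_1$ in the first region. The boundary correction term, involving $E(x-z)\Ga(z-y^*,t)$, is handled by Solonnikov's bound \eqref{Solonnikov.est} for $G_{ij}^*$ together with the same $y_n$-splitting; the reflected argument $|x-y^*|$ places $x_n+y_n$ in the denominator, which only improves matters because $x_n+y_n \ge x_n$.

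The main obstacle is the bilinear bound (ii). Applying \thref{thm3} with $(l,k,q,m)=(1,0,0,0)$ or $(0,0,1,0)$, the gradient $\partial_{y_k}G_{ij}(x,y,\tau)$ is bounded by $(|x-y|^2+\tau)^{-(n+1)/2}$ plus a reflected piece of the form $(|x-y^*|^2+\tau)^{-(n+\si')/2}((x_n+y_n)^2+\tau)^{-(1-\si')/2}$ with $\si'\in\{0,1\}$ (and an at-most logarithmic factor $\LN$ in dimension $n=2$). For the unrestricted piece, tangential integration $\int dy'$ yields $(|x_n-y_n|^2+\tau)^{-1}$, and then I claim
\[
\int_0^\infty \frac{\bka{y_n}^{-2a}}{(x_n-y_n)^2+\tau}\,dy_n \lesssim \frac{1}{\sqrt{\tau}\,\bka{x_n}^a}
\]
for $0 < a\le 1$, again by a $|y_n-x_n|\le x_n/2$ split (the two sub-cases $a\ge 1/2$ and $a<1/2$ being handled separately in the far region, using $\int_0^M\bka{y_n}^{-2a}dy_n\lesssim M^{(1-2a)_+}$). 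Integrating the resulting $\tau^{-1/2}$ over $\tau \in (0,t)$ gives the required $t^{1/2}\bka{x_n}^{-a}$. The reflected piece is handled analogously, exploiting $x_n+y_n \ge x_n$ for even stronger $x_n$ decay. The case $a<1/2$, where $\bka{y_n}^{-2a}$ is not integrable on $\R_+$, is precisely where the maximal tangential decay of $\partial_{y_k}G_{ij}$ emphasized in the comments on \thref{prop2} becomes essential; the dimension-two logarithm is absorbed into a negligible power of $\bka{x_n}$.

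Uniqueness in the larger class $L^\infty(\R^n_+\times(0,T))$ follows from the same machinery applied to unweighted bounded functions: given two solutions $u,v$ with the same initial datum, the uniform bound $\int_{\R^n_+}|\partial_y G(x,y,\tau)|\,dy \lesssim \tau^{-1/2}$ (a consequence of \thref{thm3} via the maximal tangential decay recorded in the paper) yields
\[
\norm{u-v}_{L^\infty(\R^n_+\times(0,T))} \lesssim T^{1/2}\bke{\norm{u}_\infty+\norm{v}_\infty}\norm{u-v}_{L^\infty(\R^n_+\times(0,T))},
\]
forcing $u=v$ for $T$ small and hence on the whole existence interval by a standard bootstrap.
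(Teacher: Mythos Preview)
Your approach is essentially the same as the paper's: the paper reduces Theorem~\ref{thm1.8} to two pointwise estimates in $Z_a$ (Lemma~\ref{lem9.3}), namely the linear bound \eqref{weight-xn-10} and the bilinear bound \eqref{weight-xn-20}, and then invokes the standard iteration argument of \cite{CJ-NA2017}. Your linear estimate via the heat kernel plus Solonnikov's bound \eqref{Solonnikov.est} matches the paper's proof of \eqref{weight-xn-10} almost line by line.

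The one noteworthy difference is in the bilinear estimate. You invoke \thref{thm3} for $\partial_{y_k}G_{ij}$, which in dimension $n=2$ carries the logarithmic factor $\LN$ (indeed $\mu_{j1}^0=1$ when $j<n$), and you then propose to absorb it into a small power. The paper instead uses \thref{prop2} directly: with $k=0$, $l+q=1$, $m=0$ one has $\mu_{ik}^m=0$, so the log never appears and the reflected piece is simply $(|x^*-y|^2+t)^{-n/2}(y_n^2+t)^{-1/2}$. This is the cleaner route and is flagged explicitly in the remark after Lemma~\ref{1001a}. Your absorption argument can be made to work, but the paper's choice avoids the issue entirely; otherwise the two proofs coincide.
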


Theorem \ref{thm6} corresponds to \cite[Theorem 1]{CJ-NA2017} and \cite[Theorem 2.1]{CM2004}.
Theorem \ref{thm1.8} is new. Its upper bound $a\le 1$ is less than that in Theorem \ref{thm6}.

\medskip

For $1\le q\le \infty$, denote
\EQN{
L^q_{\uloc}(\R^n_+) &= \bigg\{u \in L^q_{\loc}(\R^n_+)\ \big|\
\sup_{x \in \R^n_+} \norm{u}_{L^q(B_1(x) \cap \R^n_+ )} <\infty\bigg\},
\\
L^q_{\uloc,\si}(\R^n_+) &= \bket{ u \in L^q_{\uloc}(\R^n_+;\R^n) \ \big|\
\div u=0, \ u_{0,n}|_\Si=0}.
}

\begin{thm}\label{thm7}
Let $2\le n \le q\le \infty$ and $u_0 \in L^q_{\uloc,\si}(\R^n_+)$.

\textup{(a)} If $n<q\le\I$, and suppose $n\ge3$ if $q=\infty$, there are $T= T(n,q, \norm{u_0}_{L^q_\uloc})>0$ and a unique mild solution $u$ of \eqref{NS} with
\EQS{\label{eq1.25}
u(t)\in L^\infty(0,T; L^q_{\uloc,\si})\cap C((0,T); W^{1,q}_{\uloc,0}(\R^n_+)^n\cap \mathrm{BUC}_\si (\R^n_+)),
\\
\sup_{0<t<T} \bke{ \norm{u(t)}_{L^q_\uloc} + t^{\frac n{2q}} \norm{u(t)}_{L^\infty}
+ t^{1/2}\norm{\nb u(t)}_{L^q_\uloc}} \le C_* \norm{u_0}_{L^q_\uloc}.
}

\textup{(b)} If $n=q$, for any $0<T<\infty$, there are $\ep(T),C_*(T)>0$ such that if $\norm{u_0}_{L^n_\uloc} \le \ep(T)$,
then there is a unique mild solution $u(t)$ of \eqref{NS} in the class \eqref{eq1.25}.
\end{thm}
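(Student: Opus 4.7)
The plan is to solve \eqref{NS} by Banach fixed-point iteration in the space $X_T$ whose norm is the left-hand side of \eqref{eq1.25}, applied to the integral equation
\[
u_i(x,t) = U_i(x,t) + \mathcal{B}(u,u)_i(x,t),
\]
where $U_i(x,t) = \sum_j \int_{\R^n_+}\breve G_{ij}(x,y,t)\,u_{0,j}(y)\,dy$ is the Stokes flow with initial data $u_0$, and
\[
\mathcal{B}(u,v)_i(x,t) = \sum_{j,k}\int_0^t \int_{\R^n_+}\pd_{y_k} G_{ij}(x,y,t-s)\,u_k(y,s)v_j(y,s)\,dy\,ds
\]
is the bilinear term obtained by shifting $\pd_k$ off $u_ku_j$. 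The two decisive inputs will be Solonnikov's pointwise bounds \eqref{Solonnikov.est} for $\breve G_{ij}$ in the linear estimate, and the derivative bound \eqref{eq_Green_estimate} of \thref{thm3} in the bilinear estimate.

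First I would prove the linear bound $\|U\|_{X_T}\lec \|u_0\|_{L^q_\uloc}$. Covering $\R^n_+$ by unit cubes $\{Q_k\}$ and applying H\"older in $y$ on each cube yields
\[
|U(x,t)|\lec \sum_k \|\breve G(x,\cdot,t)\|_{L^{q'}(Q_k)}\,\|u_0\|_{L^q(Q_k)}\lec \|u_0\|_{L^q_\uloc}\sum_k \|\breve G(x,\cdot,t)\|_{L^{q'}(Q_k)},
\]
and the sum converges uniformly in $x$ because \eqref{Solonnikov.est} bounds the $L^{q'}$-norm on $Q_k$ by a quantity of order $((1+\mathrm{dist}(x,Q_k))^2+t)^{-n/(2q)}$, with exponential decay from the $y_n$-boundary terms. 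This produces the three bounds $\|U(t)\|_{L^q_\uloc}\lec \|u_0\|_{L^q_\uloc}$, $\|U(t)\|_{L^\infty}\lec t^{-n/(2q)}\|u_0\|_{L^q_\uloc}$, and $\|\nabla U(t)\|_{L^q_\uloc}\lec t^{-1/2}\|u_0\|_{L^q_\uloc}$. Smoothness of $\breve G_{ij}$ for $t>0$ and its vanishing on $\Si$ then give $U\in C((0,T);\mathrm{BUC}_\si\cap W^{1,q}_{\uloc,0})$.

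For the bilinear estimate $\|\mathcal{B}(u,v)\|_{X_T}\le C(T,q)\|u\|_{X_T}\|v\|_{X_T}$, the key is that \thref{thm3} with $(l,k,q,m)=(0,0,1,0)$ (together with the symmetric normal-derivative case) gives $\int_{\R^n_+}|\pd_y G_{ij}(x,y,\tau)|\,dy\lec \tau^{-1/2}$ uniformly in $x$ (up to a logarithmic factor in dimension two), thanks to the maximal tangential decay emphasized in the discussion preceding \thref{thm3}. Combined with the pointwise bound $\|uv(s)\|_{L^\infty}\le s^{-n/q}\|u\|_{X_T}\|v\|_{X_T}$ and the Beta-type identity $\int_0^t(t-s)^{-1/2}s^{-n/q}\,ds\lec t^{1/2-n/q}$, one obtains
\[
\sup_{0<t<T}\, t^{n/(2q)}\|\mathcal{B}(u,v)(t)\|_{L^\infty}\lec T^{(1-n/q)/2}\|u\|_{X_T}\|v\|_{X_T}.
\]
The $L^q_\uloc$ estimate follows from the same $L^\infty$ bound on $uv$ restricted to unit cells, while the gradient estimate is obtained by undoing the integration by parts and placing $\nabla_y$ on the product $u_kv_j$, so that the Beta integral $\int_0^t (t-s)^{-1/2}s^{-1/2-n/(2q)}\,ds\lec t^{-n/(2q)}$ converges (preserving the $t^{1/2}$ weight). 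The net factor is $C(T,q)=C\,T^{(1-n/q)/2}$ for $n<q\le\infty$, which tends to $0$ as $T\to 0$, whereas for $q=n$ the factor is only a constant (possibly with a $\log T$), forcing the smallness assumption.

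The main obstacle is justifying the uniform-in-$x$ integrability of $|\pd_y G_{ij}(x,\cdot,\tau)|$: the tangential decay rate $(|x-y^*|^2+\tau)^{-(n+1)/2}$ is only borderline integrable over $\R^{n-1}$ when coupled with the normal factor $((x_n+y_n)^2+\tau)^{-(1-\si)/2}$, so one must invoke the strongest tangential decay in \eqref{eq_Green_estimate} together with the symmetric enhancement $\LN_{ijkq}^{mn}+\LN_{jiqk}^{mn}$ to close the estimate. Once this is in hand, the standard contraction argument yields a unique fixed point in a small closed ball of $X_T$: in (a), $T=T(n,q,\|u_0\|_{L^q_\uloc})$ is chosen so that $4C(T,q)\|U\|_{X_T}<1$, which is possible since $n<q$; in (b), the scaling-critical case $n=q$ permits arbitrary $T>0$ at the cost of the smallness $\|u_0\|_{L^n_\uloc}\le\ep(T)$. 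Continuity $u\in C((0,T);\mathrm{BUC}_\si\cap W^{1,q}_{\uloc,0})$, the boundary behaviour, and uniqueness in the class \eqref{eq1.25} then follow from the same bilinear estimate applied to the difference of two solutions.
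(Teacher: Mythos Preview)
Your overall strategy --- fixed-point iteration in the space $X_T$ whose norm is the left side of \eqref{eq1.25}, with the linear term controlled via Solonnikov's bound \eqref{Solonnikov.est} on $\breve G_{ij}$ and the bilinear term via the derivative estimate \eqref{eq_Green_estimate} --- is exactly the paper's route. The paper does not spell out the iteration either: it packages the two inputs as Lemma~\ref{th9.6} (estimates \eqref{S93eq1} and \eqref{S93eq2}) and then defers to \cite[Prop.~7.1, 7.2]{MMP1} for the contraction argument.

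Two remarks on the comparison. First, the paper resolves your ``main obstacle'' --- uniform-in-$x$ integrability of $|\pd_y G_{ij}(x,\cdot,\tau)|$ --- more cleanly than your sketch suggests. From \eqref{eq_Green_estimate} together with $|x^*-y|\ge|x-y|$ and $x_n+y_n\ge|x_n-y_n|$ one obtains a pointwise majorant $|\pd_x^a\pd_y^bG_{ij}(x,y,t)|\le H_t(x-y)$ with a \emph{translation-invariant} kernel $H_t(z)=t^{-(n+m)/2}H_1(z/\sqrt t)$, $H_1\in L^1\cap L^\infty(\R^n)$; see \eqref{1015a}--\eqref{1015b}. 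The $L^p_\uloc\to L^q_\uloc$ bilinear bound then reduces to a convolution estimate handled by Maekawa--Terasawa's Young inequality \cite[Thm.~3.1]{MaTe}. Your cell-by-cell bookkeeping can be made to work, but this convolution structure is what makes the borderline integrability a non-issue.

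Second, there is a genuine gap in part~(b). At $q=n$ your Beta integrals \emph{diverge}, they do not merely pick up a logarithm: with $\|uv(s)\|_{L^\infty}\le s^{-n/q}=s^{-1}$ one has $\int_0^t(t-s)^{-1/2}s^{-1}\,ds=\infty$, and your gradient integral $\int_0^t(t-s)^{-1/2}s^{-1/2-n/(2q)}\,ds$ is the same divergent integral when $q=n$. Closing the iteration at the critical exponent requires extra structure --- this is precisely why the paper cites \cite[Prop.~7.2]{MMP1} separately for $q=n$. One standard fix is to split $[0,t]=[0,t/2]\cup[t/2,t]$ and use \eqref{S93eq2} with different source exponents on each piece (e.g.\ $p=n$ near $s=0$, so that $\|uv(s)\|_{L^n_\uloc}\le s^{-1/2}\|u\|_{X_T}\|v\|_{X_T}$, and $p=\infty$ near $s=t$), so that no single integral carries both endpoint singularities; the outcome is $t^{1/2}\|\mathcal B(t)\|_{L^\infty}\lec \|u\|_{X_T}\|v\|_{X_T}$ with no vanishing power of $T$, which is why smallness of $\|u_0\|_{L^n_\uloc}$ is needed. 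As written, your argument handles only part~(a).
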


This theorem is \cite[Propositions 7.1 and 7.2]{MMP1}. Continuity at time zero requires further restrictions on $u_0$.

\medskip

In addition to the existence of mild solutions in various spaces, pointwise estimates of the Green tensor is useful for the study of local and asymptotic behavior of solutions. In a forth coming paper \cite{KLLTbs}, we will use the Stokes flows of \cite{Kang2005} as the profile to construct
solutions of the Navier-Stokes equations in $\R^3_+ \times (0,2)$
with \emph{finite global energy} such that they are globally bounded with spatial decay but their normal derivatives are unbounded near the boundary due to H\"older continuous boundary fluxes which are not $C^1$ in time.
Note that  \cite[Theorem 2.4]{KLLTbs} should be revised according to Theorem \ref{thm3}. In its application in 
 \cite[(4.6)]{KLLTbs}, the first line should be revised, but the conclusion remains the same. We will collect other applications in \cite{Greenapp}.
\medskip

The rest of this paper is organized as follows.
In Sect.~2, we give a few preliminaries and recall the Oseen tensor and the Golovkin tensor.
In Sect.~3, we consider the Green tensor and its associated pressure tensor, and derive their first formulas.
In Sect.~4, we derive a second formula  for the Green tensor which has better estimates.
In Sect.~5, we give the first estimates in \thref{prop2} of the Green tensor and the pressure tensor.
In Sect.~6, we study the restricted Green tensors, and how the solutions converge to the initial values.
In Sect.~7, we prove the symmetry of the Green tensor in \thref{prop1}.
In Sect.~8, the ultimate estimate in \thref{thm3} is derived from \thref{prop2} using the symmetry of the Green tensor and the divergence-free condition. We also estimate their vanishing at the boundary, proving \thref{thm4}.
In Sect.~9, we prove the key estimates for the construction of mild solutions in various spaces for
Theorems \ref{thm5},  \ref{thm6}, \ref{thm1.8} and  \ref{thm7}.

\medskip

\emph{Notation.} We denote $\bka{\xi}=(|\xi|^2+2)^{1/2}$ for any $\xi \in \R^m$, $m \in \NN$.
We denote $f\lesssim g$ if there is a constant $C$ such that $|f|\leq Cg$.
\[
\begin{array}{ccc}
\text{Green tensor} & \cdots & G_{ij} ,\ g_j \\
\text{Oseen tensor} & \cdots & S_{ij},\ s_j  \\
\text{Golovkin tensor} & \cdots &  K_{ij},\ k_j  \\
\text{Fundamental solution of }-\Delta & \cdots & E \\
\text{Heat kernel} & \cdots & \Ga \\
\text{Poisson kernel for heat equation} & \cdots & P
\end{array}
\]

\section{Preliminaries, Oseen and Golovkin tensors}
\label{sec2}

In this section, we first recall a few definitions and estimates from  \cite{MR0171094}. We then give two integral estimates.
We next recall in \S\ref{S2.1} the Oseen tensor \cite{Oseen}, which is the fundamental solution of the nonstationary Stokes system  in $\R^n$. We finally recall in \S\ref{S2.2} the Golovkin tensor
\cite{MR0134083}, which is the Poisson kernel of the nonstationary Stokes system  in $\R^n_+$.

The heat kernel $\Ga$ and the fundamental solution $E$ of $-\De$ are given by
$$
\Gamma(x,t)=\left\{\begin{array}{ll}(4\pi t)^{-\frac{n}{2}}e^{\frac{-x^{2}}{4t}}&\ \text{ for }t>0,\\0&\ \text{ for }t\le0,\end{array}\right.\ \text{ and }\ E(x)=\left\{\begin{array}{ll}\frac1{n\,(n-2)|B_1|}\,\frac1{|x|^{n-2}}&\ \text{ for }n\ge3,\\-\frac1{2\pi}\,\log|x|&\ \text{ if }n=2.\end{array}\right.
$$

The Poisson kernel of $-\De$ in $\R^n_+$ is $P_0 (x)= -2\pd_n E(x)$. We will use %
 \cite[(2.32)]{KMT18} for $n \ge 2$,
\EQ{\label{KMT2.32}
\int_\Si E(\xi'-y)P_0(x-\xi')\,d\xi'=E(x-y^*), \quad P_0 (x)= -2\pd_n E(x).
}
It is because the integral is a harmonic function in $x$ that equals $E(x-y^*)$ when $x_n=0$, and was first used in
Maz$'$ja-Plamenevski{\u\i}-Stupjalis \cite [Appendix 1]{MR548252} to study the stationary Green tensor for $n=2,3$.

We will use the following functions defined in \cite[(60)-(61)]{MR0171094}:
\begin{equation}\label{eq_def_A}A(x,t)=\int_\Si\Ga(z',0,t)E(x-z')\,dz'=\int_\Si\Ga(x'-z',0,t)E(z',x_n)\,dz'\end{equation} and
\begin{equation}\label{eq_def_B}B(x,t)=\int_\Si\Ga(x-z',t)E(z',0)\,dz'=\int_\Si\Ga(z',x_n,t)E(x'-z',0)\,dz'.\end{equation}
They are defined only for $n=3$ in \cite{MR0171094} and differ from \eqref{eq_def_A}-\eqref{eq_def_B} by a factor of $4\pi$.
The estimates for $A, B$, and their derivatives are given in \cite[(62, 63)]{MR0171094} for $n=3$. For general case, we can use the same approach and derive the following estimates for $l+n\ge3$:
\begin{equation}\label{eq_estA}|\pd_x^l\pd_t^mA(x,t)|\lesssim\frac1{t^{m+\frac12}(x^2+t)^{\frac{l+n-2}2}}\end{equation}
 and
\begin{equation}\label{eq_estB}|\pd_{x'}^l\pd_{x_n}^k\pd_t^mB(x,t)|\lesssim\frac1{(x^2+t)^{\frac{l+n-2}2}(x_n^2+t)^{\frac{k+1}2+m}}.\end{equation}
In fact, the last line of \cite[page 39]{MR0171094} gives
\begin{equation}\label{eq_estB2}|\pd_{x'}^l\pd_{x_n}^k B(x,t)|\lesssim\frac1{(x^2+t)^{\frac{l+n-2}2}t^{\frac{k+1}2}}\,e^{-\frac{x_n^2}{10t}}.\end{equation}

\begin{remark}\label{remark_est_AB}
For $n=2$, the condition $l\ge1$ is needed as $A(x,t)$ and $B(x,t)$ grow logarithmically as $|x|\to \infty$.
In fact, one may prove for $n=2$
\[
|A(x,t)|+|B(x,t)|\lesssim\frac{1 + |\log(|x_2|+\sqrt{t})|+|\log(|x_1|+|x_2|+\sqrt{t})|}{\sqrt{t}}.
\]

\end{remark}

\subsection{Integral estimates}
We now give a few useful integral estimates.

\begin{lem}\thlabel{lem6-1}%
For positive $L,a,d$, and $k$ we have
\[\int_0^L\frac{r^{d-1}\,dr}{(r+a)^k}\lesssim\left\{\begin{array}{ll}L^d(a+L)^{-k}& \text{ if } k<d,\\[3pt] L^d(a+L)^{-d}(1+\log_+\frac{L}a) & \text{ if } k=d,\\[3pt] %
L^d(a+L)^{-d}a^{-(k-d)}
& \text{ if } k>d.\end{array}\right.\]
\end{lem}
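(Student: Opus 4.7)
The plan is a direct case analysis based on the elementary equivalence $r + a \sim \max(r,a)$ for $r, a > 0$, which allows $(r+a)^k$ to be replaced by $a^k$ on $[0,a]$ and by $r^k$ on $[a,L]$, up to multiplicative constants independent of $a, L, r$.

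First I would dispose of the easy regime $L \le a$, where $r + a \sim a$ throughout $[0,L]$. The integral reduces to $a^{-k}L^d/d$, and since $a+L \sim a$ here a quick check verifies agreement with each of the three claimed right-hand sides: the logarithmic factor in the middle case reduces to $1$ because $L/a \le 1$, and the extra factor $a^{-(k-d)}$ in the last case combines with $(a+L)^{-d} \sim a^{-d}$ to give $a^{-k}$.

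For $L > a$ I would split $\int_0^L = \int_0^a + \int_a^L$. The first piece contributes $\lesssim a^{d-k}$ by the argument above, while the second is comparable to $\int_a^L r^{d-1-k}\,dr$, which separates into three subcases according to the sign of $d-k$: when $k < d$ the outer part $\sim L^{d-k}$ dominates and matches $L^d(a+L)^{-k}$ since $a+L \sim L$; when $k = d$ it produces the logarithm $\log(L/a)$ to be added to the $O(1)$ contribution from $[0,a]$; when $k > d$ the outer part is $\lesssim a^{d-k}$ and the $[0,a]$-piece dominates, matching $L^d(a+L)^{-d} a^{-(k-d)}$. There is no serious obstacle; the lemma is essentially a bookkeeping exercise, but the unified statement will be convenient since the three regimes are invoked many times in the pointwise Green-tensor estimates that follow.
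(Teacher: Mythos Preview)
Your proposal is correct and essentially identical to the paper's own proof: both split into the case where $a$ is large relative to $L$ (the paper uses the threshold $a \ge L/2$, you use $L \le a$, which is immaterial) and otherwise decompose $\int_0^L = \int_0^a + \int_a^L$, replacing $(r+a)$ by $a$ on the first piece and by $r$ on the second, then sort out the three subcases $k \lessgtr d$. There is nothing to add.
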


\begin{proof}
Denote the integral by $I$. If $a\ge\frac{L}2$, then
\[I\lesssim a^{-k}\int_0^L r^{d-1}\,dr\sim L^da^{-k}.\]
If $a<\frac{L}2$, then
\EQN{
I=\int_0^a\frac{r^{d-1}\,dr}{(r+a)^k}+\int_a^L\frac{r^{d-1}\,dr}{(r+a)^k}\lesssim&~a^{d-k}+\int_a^Lr^{d-k-1}\,dr\\
\lesssim&~a^{d-k}+\left\{\begin{array}{ll}L^{d-k}& \text{ if } k<d,\\\log\frac{L}a& \text{ if } k=d,\\a^{d-k}& \text{ if } k>d.\end{array}\right.
}
For $k<d$,
\[I\lesssim\left\{\begin{array}{ll}L^da^{-k}& \text{ if } a\ge\frac{L}2\\[3pt] L^{d-k}& \text{ if } a<\frac{L}2\end{array}\right.\lesssim L^d\max(a,L)^{-k}\lesssim L^d(a+L)^{-k},\]
where we used the fact $2\max(a,L)\ge a+L$. Next, for $k=d$,
\[I\lesssim\left\{\begin{array}{ll}\left(\frac{L}a\right)^d& \text{ if } a\ge\frac{L}2\\[3pt] 1+\log\frac{L}a& \text{ if } a<\frac{L}2\end{array}\right.\lesssim \frac{L^d}{(a+L)^d}(1+ \log_+\frac La),
\]
because $a\ge\frac{L}2$ implies that $\frac{L}a\lesssim1$. Finally, for $k>d$ we get
\[I\lesssim\left\{\begin{array}{ll}L^da^{-k}& \text{ if } a\ge\frac{L}2\\[3pt] a^{d-k}& \text{ if } a<\frac{L}2\end{array}\right.\lesssim a^{-k}\min(a,L)^d  \sim \frac{ L^d}{(a+L)^{d}a^{k-d}}.
\qedhere \]
\end{proof}

\begin{lem}\thlabel{lemma2.2}
Let $a>0$, $b>0$, $k>0$, $m>0$ and $k+m>d$. Let $0\not=x\in \R^d$ and
\[I:=\int_{\R^d}\frac{dz}{(|z|+a)^k(|z-x|+b)^m}.\]
Then, with $R=\max\{|x|,\,a,\,b\}\sim|x|+a+b$,
\EQN{I\lesssim R^{d-k-m} + \de_{kd} R^{-m} \log \frac Ra
+ \de_{md} R^{-k} \log \frac Rb
+ \mathbbm 1_{k>d} R^{-m}a^{d-k}
+ \mathbbm 1_{m>d} R^{-k}b^{d-m}.
}
\end{lem}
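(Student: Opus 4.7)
The plan is to partition $\R^d$ into regions chosen so that on each region one of the two factors is comparable to a power of $R$, reducing the integral to a one-dimensional radial one to which \thref{lem6-1} applies. Since $R\sim|x|+a+b$, up to constants exactly one of three scenarios holds: $R\sim|x|$, $R\sim a$, or $R\sim b$. The last two are interchanged by the substitution $(z,a,k)\leftrightarrow(z-x,b,m)$, so I would only handle the first two.

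\textbf{Case $R\sim|x|$.} Split $\R^d$ into three pieces. On $D_1=\{|z|\le R/3\}$ the triangle inequality gives $|z-x|\ge 2|x|/3\sim R$, so $|z-x|+b\gec R$ and
\[\int_{D_1}\frac{dz}{(|z|+a)^k(|z-x|+b)^m}\lec R^{-m}\int_0^{R/3}\frac{r^{d-1}\,dr}{(r+a)^k}.\]
Invoking \thref{lem6-1} with $L=R/3$ produces exactly the terms $R^{d-k-m}$, $\de_{kd}R^{-m}\log(R/a)$, and $\mathbbm 1_{k>d}R^{-m}a^{d-k}$, according to the three regimes $k<d$, $k=d$, $k>d$. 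The region $D_2=\{|z-x|\le R/3\}$ is handled symmetrically and produces the $b$-counterparts. On $D_3=\R^d\setminus(D_1\cup D_2)$ both $|z|\ge R/3$ and $|z-x|\ge R/3$; further splitting into $D_3\cap\{|z|\le 2R\}$ and $D_3\cap\{|z|>2R\}$ gives $\lec R^{-k-m}\cdot R^d=R^{d-k-m}$ on the first (bounded integrand on a volume $\lec R^d$), and $\lec\int_{2R}^\infty r^{d-1-k-m}\,dr\lec R^{d-k-m}$ on the second (using $|z-x|\ge|z|/2$ and the hypothesis $k+m>d$).

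\textbf{Case $R\sim a$} (so $|x|,b\lec R\sim a$). Since $|z|+a\ge a\sim R$ everywhere, I split on $|z|\le 3R$ versus $|z|>3R$. On the outer piece $|z-x|\ge|z|-|x|\ge|z|/2$, so the integrand is $\lec|z|^{-k-m}$, producing $\lec R^{d-k-m}$. On $|z|\le 3R$, the factor $(|z|+a)^k\sim R^k$ comes out, and translating by $x$ gives
\[\int_{|z|\le 3R}\frac{dz}{(|z|+a)^k(|z-x|+b)^m}\lec R^{-k}\int_{|w|\le 4R}\frac{dw}{(|w|+b)^m}\lec R^{-k}\int_0^{4R}\frac{r^{d-1}\,dr}{(r+b)^m},\]
which by \thref{lem6-1} contributes $R^{d-k-m}$, $\de_{md}R^{-k}\log(R/b)$, and $\mathbbm 1_{m>d}R^{-k}b^{d-m}$. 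The case $R\sim b$ follows by the symmetry above, completing the proof.

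The main bookkeeping task is verifying that the log and power-correction terms are charged only to the proper factor in the proper case: for instance, $\de_{kd}R^{-m}\log(R/a)$ should arise only from near-origin integration where $|z|+a$ can be as small as $a$, and in Case $R\sim a$ no such singularity contributes since $|z|+a\gec R$ globally. Beyond this routine case analysis I do not anticipate a substantive obstacle.
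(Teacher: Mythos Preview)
Your proposal is correct and follows essentially the same approach as the paper: a case split on which of $|x|$, $a$, $b$ realizes $R$, a near/far decomposition around the two singularities, and a reduction to the radial estimate of \thref{lem6-1}. The only cosmetic difference is that the paper first peels off the far region $\{|z|>2R\}$ uniformly before branching on the three cases, whereas you handle the far region within each case; the constants ($R/3$ versus $R/2$, etc.) also differ harmlessly.
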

\begin{proof}
Decompose $I$ into
\[I=\left(\int_{|z|<2R}+\int_{|z|>2R}\right)\frac{dz}{(|z|+a)^k(|z-x|+b)^m}:=I_1+I_2.\]
For $I_2$ we have
\[I_2 \lec \int_{|z|>2R}\frac{dz}{|z|^k|z|^m}\sim R^{d-k-m}.\]
For $I_1$ we consider the three cases concerning $R$: $R=|x|$, $R=a$, and $R=b$.
\begin{itemize}
\item If $R=|x|$, we split $I_1$ into
\[I_1=\left(\int_{|z|<\frac{R}2}+\int_{|z-x|<\frac{R}2}+\int_{\substack{\frac{R}2<|z|<2R\\|z-x|>\frac{R}2}}\right)\frac{dz}{(|z|+a)^k(|z-x|+b)^m}=:I_{1,1}+I_{1,2}+I_{1,3}.\]
By \thref{lem6-1} we obtain
\EQN{
I_{1,1}\lesssim&\int_{|z|<\frac{R}2}\frac{dz}{(|z|+a)^kR^m}\\\sim&~R^{-m}\int_0^{\frac{R}2}\frac{r^{d-1}\,dr}{(r+a)^k}\lec\left\{\begin{array}{ll}R^{d-m}(a+R)^{-k}& \text{ if } k<d,\\R^{-m}\left(1+\log_+\frac{R}a\right)& \text{ if } k=d,\\R^{-m}a^{-k}\min(a,R)^d& \text{ if } k>d\end{array}\right.
}
since $|z-x|\ge|x|-|z|=R-|z|\ge\frac{R}2$. Also by \thref{lem6-1},
\EQN{
I_{1,2}\lec\int_{|z-x|<\frac{R}2}\frac{dz}{R^k(|z-x|+b)^m}\sim&~R^{-k}\int_0^{\frac{R}2}\frac{r^{d-1}\,dr}{(r+b)^m}\\
\lec&\left\{\begin{array}{ll}R^{d-k}(b+R)^{-m}& \text{ if } m<d,\\R^{-k}\left(1+\log_+\frac{R}b\right)& \text{ if } m=d,\\R^{-k}b^{-m}\min(b,R)^d& \text{ if }m>d\end{array}\right.
}
since $|z|+a\ge|x|-|z-x|=R-|z-x|>\frac{R}2$, and
\[I_{1,3}\lec\int_{\substack{\frac{R}2<|z|<2R\\|z-x|>\frac{R}2}}\frac{dz}{|z|^k|z-x|^m}\lec R^{-k-m}\int_{\frac{R}2}^{2R}r^{d-1}\,dr\sim R^{d-k-m}.\]
\item If $R=a>|x|$,
\EQN{
I_1\le\int_{|z|<2R}\frac{dz}{a^k(|z-x|+b)^m}\le a^{-k}\int_{|z-x|<3R}\frac{dz}{(|z-x|+b)^m}=&~R^{-k}\int_0^{3R}\frac{r^{d-1}\,dr}{(r+b)^m}\\
\sim&~I_{1,2}.
}
\item If $R=b>|x|$
\EQN{
I_1\le\int_{|z|<2R}\frac{dz}{(|z|+a)^kb^m}=R^{-m}\int_0^{2R}\frac{r^{d-1}\,dr}{(r+a)^k}\sim I_{1,1}.
}
\end{itemize}
Combining the above cases, the proof is complete.
\end{proof}

\subsection{Oseen tensor}\label{S2.1}
We first recall the Oseen tensor $S_{ij}(x,y,t) = S_{ij}(x-y,t)$, derived by Oseen in \cite{Oseen}. For the Stokes system in $\R^{n}$:
\begin{align}\label{NE1.4}
\begin{split}
\left.
\begin{aligned}
v_{t}-\Delta v+\nabla q=f\\
v(x,0)=0, \ \ \div v=0
\end{aligned}\ \right\}\ \ \mbox{in}\ \ \R^{n}\times (0,+\infty),
\end{split}
\end{align}
with $f(\cdot,t)=0$ for $t<0$,
the unknown $v$ and $q$ are given by (see e.g.~\cite{FJR} or \cite[(46)]{MR0171094}):
\begin{align*}
v_{i}(x,t)
=&~\sum_{j=1}^n\int_{0}^{t}\int_{\R^{n}}S_{ij}(x-y,t-s)f_{j}(y,s)dyds,
\end{align*}
and
\begin{align*}
q(x,t)=\sum_{j=1}^n\int_{-\infty}^{\infty}\int_{\R^{n}}s_j(x-y,t-s)f_{j}(y,s)dyds
=-\sum_{j=1}^n \int_{\R^{n}}\pd_j E(x-y)f_{j}(y,t)dy.
\end{align*}
Here $(S_{ij},s_j)$, the Oseen tensor, is the fundamental solution of the non-stationary Stokes system in $\R^{n}$, and
\EQS{\label{eq_def_Sij}
S_{ij}(x,t)&=\de_{ij}\Ga(x,t)+\Ga_{ij}(x,t),\\
\Ga_{ij}(x,t)&=\pd_i\pd_j\int_{\R^{n}}\Gamma(x-z,t)E(z)dz,
}
\EQ{\label{sj.def}
s_j(x,t)=-\pd_jE(x)\de(t).
}
In \cite[(41), (42), (44)]{MR0171094} it is shown that (for $n=3$, but the general case can be treated in the same way)
\begin{equation}\label{eq_estSij}
|\pd_x^l\pd_t^m\Ga(x,t)| + |\pd_x^l\pd_t^m\Ga_{ij}(x,t)| + \left|\pd_x^l\pd_t^mS_{ij}(x,t)\right|\lesssim\frac1{\left(x^2+t\right)^{\frac{l+n}2+m}}
\end{equation}
for $n\ge2$. It holds for $n=2$ since we can apply one derivative on $E$ to remove the $\log$.

\begin{remark}
\label{rem2.1} Formally taking the zero time limit of \eqref{eq_def_Sij}, we get
\EQ{\label{Oseen-initial1}
S_{ij}(x,0_+)=\de_{ij}\de(x)+\pd_i\pd_jE(x).
}
An exact meaning of \eqref{Oseen-initial1} is given by Lemma \ref{Oseen-initial}.
In other words, the zero time limit of the Oseen tensor is the kernel of the Helmholtz projection $\bP_{\R^n}$ in $\R^n$,
\EQ{
(\bP_{\R^n}  u)_i = u_i + \pd_i (-\De)^{-1} \nb \cdot u.
}
\end{remark}

\begin{lem}\label{Oseen-initial}
Fix $i,j\in \{1,\ldots,n\}$, $n \ge 2$.
Suppose $f\in C^1_c(\R^n)$. Let $v(x,t)=\int_{\R^n} S_{ij}(x-y,t)  f(y)\, dy$ and $v_0(x)= \de_{ij} f(x) +\pd_i \int_{\R^n} \pd_j E(x-y)    f(y) \,dy$. Then
\[
\lim_{t\to0_+} \sup_{x \in \R^n} \bka{x}^{n}\,|v(x,t)-v_0(x)|=0.
\]
\end{lem}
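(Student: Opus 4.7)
\bigskip
\noindent\textbf{Proof plan.}
Fix $R>0$ with $\supp f\subset B_R$ and decompose $v(x,t)-v_0(x)=A(x,t)+B(x,t)$, where
$A=\de_{ij}[(\Ga(\cdot,t)*f)(x)-f(x)]$ is the heat part and
$B=\int\Ga_{ij}(x-y,t)f(y)\,dy-\pd_i\int\pd_jE(x-y)f(y)\,dy$
is the non-local part coming from $\Ga_{ij}=\pd_i\pd_j(\Ga*E)$. For $A$: on $|x|\le 2R$ use $|f(y)-f(x)|\le\|\nb f\|_\infty|y-x|$ together with $\int\Ga(z,t)|z|\,dz\lec\sqrt t$ to get $|A|\lec\sqrt t$; on $|x|>2R$ we have $f(x)=0$ and $|x-y|\ge|x|/2$ for $y\in\supp f$, giving $|A|\lec t^{-n/2}e^{-c|x|^2/t}$. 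In both cases, multiplying by $\bka{x}^n=(|x|+2)^{n/2}$ and using $|x|^2/t\ge 4R^2/t\to\infty$ yields $\sup_x\bka{x}^n|A|\to 0$ uniformly as $t\to 0_+$.

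For $B$, the key tool is the pointwise identity, valid for $x\in\R^n\setminus\{0\}$ and all $n\ge 2$,
\[
(\Ga(\cdot,t)*E)(x)=E(x)-\int_0^t\Ga(x,s)\,ds,
\]
which follows by noting $\Phi:=\Ga*E$ satisfies $\pd_t\Phi=\De\Phi=\Ga*\De E=-\Ga$ with $\Phi(\cdot,0_+)=E$ pointwise on $\R^n\setminus\{0\}$ and uniqueness among heat extensions of polynomial growth. Differentiating pointwise on $x\ne 0$,
\[
\Ga_{ij}(x,t)-\pd_i\pd_jE(x)=-\int_0^t\pd_i\pd_j\Ga(x,s)\,ds.
\]
For $|x|>2R$, since $|x-y|\ge|x|/2>R$ on $\supp f$, this identity applies inside the $y$-integral; the Gaussian bound $|\pd_i\pd_j\Ga(z,s)|\lec s^{-n/2-1}e^{-c|z|^2/s}$ followed by the substitution $\tau=|z|^2/s$ in the time integral yields $\int_0^t|\pd_i\pd_j\Ga(z,s)|\,ds\lec|z|^{-2}t^{1-n/2}e^{-c|z|^2/(2t)}$. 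Using $|z|\ge|x|/2$ and $\bka{x}^n\sim|x|^{n/2}$ for large $|x|$, we obtain $\bka{x}^n|B|\lec t^{-n/4}e^{-cR^2/t}\to 0$ uniformly on $|x|>2R$.

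On the bounded region $|x|\le 2R$, Fubini rewrites $\int\Ga_{ij}(x-y,t)f(y)\,dy=(\Ga(\cdot,t)*g)(x)$ with $g:=\pd_i\pd_j(E*f)=\pd_i(\pd_jE*f)$; since $f\in C^1_c$, Schauder theory gives $g\in C^{0,\al}_b(\R^n)$, so $B=\Ga*g-g$ converges to zero uniformly on compacts by the standard heat semigroup approximation, and $\bka{x}^n\le C_R$ on this region finishes the bound. The main technical obstacle is the rigorous justification of the two convolution identities when $n=2$, where $E(x)=-(2\pi)^{-1}\log|x|$ grows logarithmically at infinity; this is handled by verifying that both sides solve the same parabolic equation off the origin and have matching polynomial growth, then appealing to uniqueness of the heat extension.
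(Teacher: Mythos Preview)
Your proof is correct and follows a genuinely different strategy from the paper. For the non-local part, the paper moves one derivative onto $f$ and writes
\[
v_1(x,t)=\int_{\R^n}\Big[(\Gamma(\cdot,t)*\pd_i f)(w)-\pd_i f(w)\Big]\,\pd_j E(x-w)\,dw,
\]
then applies the scalar heat-convergence bound $|(\Gamma*a)(w)-a(w)|\le o(1)\,(|w|+R)^{-\alpha}$ with $a=\pd_i f\in C^0_c$; a subsequent near/far split in $w$ with one further integration by parts upgrades the spatial decay. This is entirely elementary and treats all $n\ge2$ uniformly. Your route instead exploits the closed-form identity $\Gamma_{ij}-\pd_i\pd_j E=-\int_0^t\pd_i\pd_j\Gamma(\cdot,s)\,ds$ in the far field, which yields Gaussian decay directly and makes that estimate much sharper than the paper's; in the near field you recast $B$ as $\Gamma(\cdot,t)*g-g$ with $g=\pd_i\pd_j(E*f)$ and invoke Schauder to place $g$ in $C^{0,\alpha}_b$. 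Each approach has its merits: yours is more structural and gives a cleaner far-field bound, but it imports two outside ingredients (the formula for $\Gamma*E$, whose justification for $n=2$ you rightly flag, and Schauder regularity for the near field), whereas the paper's argument remains at the level of basic convolution estimates throughout and never needs to distinguish $n=2$.
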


Some regularity of $f$ is needed to ensure $L^\infty$ convergence because $v_0$ may not be continuous if we only assume $f\in C^0_c$. By Lemma \ref{Oseen-initial} and approximation,
the convergence $v(\cdot,t)\to v_0$ is also valid in $L^q(\R^n)$, $1< q<\infty$, for $f \in L^q(\R^n)$.

\begin{proof}
 We first consider $u(x,t)=\int_{\R^n} \Ga(x-y,t)a(y)\,dy$ for a bounded and uniformly continuous function $a$. Let $M=\sup |a|$. For any $\e>0$, by uniform continuity, there is $r>0$ such that $|a(x)-a(y)|\le \e$ if $|x-y|\le r$. Using $\int_{\R^n} \Ga(x-y,t) \,dy=1$,
\EQN{
\abs{u(x,t)-a(x)} &= \abs{\bke{\int _{B_r(x)} + \int_{B_r^c(x)}} \Ga(x-y,t)[a(y) - a(x)]\,dy}
\\& \le \int _{B_r(x)}\Ga(x-y,t) \e\,dy + \int_{B_r^c(x)} \Ga(x-y,t) 2 M\,dy
\\& \le \e + CM \int_{|z|>r} t^{-n/2} e^{-z^2/4t}\,dz
 \le \e + CM e^{-r^2/8t}.
}
This shows $\norm{u(x,t)-a(x)}_{L^\infty(\R^n)} \to 0$ as $t\to 0_+$. Suppose furthermore $a\in C^0_c(\R^n)$, $a(y)=0$ if $|y|>R\ge1$. Then for $|x|>2R$,
\EQN{
\abs{u(x,t)-a(x)} &=\abs{u(x,t)}\le \int _{B_R}  \Ga(x-y,t)M\,dy
\\& \le  CM e^{-|x|^2/32t} \int_{\R^n} t^{-n/2} e^{-|x-y|^2/8t}\,dy = CM e^{-|x|^2/32t}
. %
}
We conclude for any $\al\ge 0$
\EQ{\label{0813a}
\abs{u(x,t)-a(x)}  \le \frac{o(1)}{(|x|+R)^\al }, \quad \forall x \in \R^n,
}
where $o(1)\to0$ as $t\to 0_+$, uniformly in $x$. (Estimate \eqref{0813a} is valid for $n\ge1$.)

Recall the definition \eqref{eq_def_Sij} of $S_{ij} = \de_{ij}\Ga + \Ga_{ij}$.
For $f\in C^1_c(\R^n)$, by \eqref{0813a} with $a=f$,
\[
v(x,t)-v_0(x)  = \frac{o(1)}{(|x|+R)^n}  +  v_1(x,t),
\]
where
\EQN{
v_1(x,t) &= \int_{\R^n}\int_{\R^n}  \Ga (x-y-z,t)\pd_j E(z)dz \,\pd_i f(y) \,dy - \int_{\R^n} \pd_j E(x-w)\pd_i  f(w) \,dw
\\ &= \int _{\R^n} \bke{\int_{\R^n} \Ga (w-y,t)\pd_i f(y) \,dy - \pd_i  f(w)} \pd_j E(x-w) \,dw  .
}
For the second equality we used $z=x-w$ and Fubini theorem.
By \eqref{0813a} again with $a=\pd_i f$,
\[
|v_1(x,t) | \le  \int_{\R^n} \frac{o(1)}{(|w|+R)^{n+1} } |x-w|^{1-n}\,dw  \le \frac{o(1)}{R(|x|+R)^{n-1} }.
\]
We have used \thref{lemma2.2} for the second inequality.

We now improve its decay in $|x|$ and assume $|x|>R+1$. Decompose $\R^n = U+V$ where $U=\{w: |w-x|<|x|/2\}$ and $V=U^c$. Integrating by parts in $w_i$ in $V$, we get
\EQN{
v_1(x,t) &=
 \int _{U} \bke{\int_{\R^n} \Ga (w-y,t)\pd_i f(y) \,dy - \pd_i  f(w)} \pd_j E(x-w) \,dw   \\& \quad+
 \int _{V} \bke{\int_{\R^n} \Ga (w-y,t) f(y) \,dy -   f(w)} \pd_i \pd_j E(x-w) \,dw
 \\
 &\quad+ \int_{\pd V}  \bke{\int \Ga (w-y,t) f(y) \,dy -   f(w)} \pd_j E(x-w) \,n_i \,dS_w = I_1+I_2+I_3.
}

By \eqref{0813a} with $a=\pd_i f$,
\[
|I_1 | \le  \int_{U} \frac{o(1)}{(|x|+R)^{n+2} } |x-w|^{1-n}\,dw \le \frac{o(1)}{(|x|+R)^{n+1} }.
\]
By \eqref{0813a} with $a=f$,
\[
|I_2 | \le  \int_{V} \frac{o(1)}{(|w|+R)^{n+1} } |x|^{-n}\,dw \le \frac{o(1)}{R(|x|+R)^{n} },
\]
\[
|I_3 | \le  \int_{\pd V} \frac{o(1)}{(|x|+R)^{n+1} } |x|^{1-n}\,dS_w  \le \frac{o(1)}{(|x|+R)^{n+1} }.
\]
The main term is $I_2$.
This shows the lemma.
\end{proof}

\subsection{Golovkin tensor}\label{S2.2}
The Golovkin tensor $K_{ij}(x,t): \R_{+}^{n}\times \R\to \R$  is the Poisson kernel of the nonstationary Stokes system  in $\R^n_+$, first constructed by Golovkin
\cite{MR0134083} for $\R^3_+$. Consider the boundary value problem of the Stokes system in the half-space:
\begin{equation}\label{0903a}
\begin{split}
\left.
\begin{aligned}
\hat{v}_{t}-\Delta \hat{v}+\nabla p=0\\
\div \hat{v}=0
\end{aligned}\ \right\}\ \ \mbox{in}\ \ \R_{+}^{n}\times (0,\infty),
\\[1mm]
\hat{v}(x',0,t)=\phi(x',t), \ \ \mbox{on}\ \ \Si\times (0,\infty).
\end{split}
\end{equation}
We extend $\phi(x',t)=0$ for $t<0$. By Solonnikov \cite[(82)]{MR0171094}, the Golovkin tensor $K_{ij}(x,t)$ and its associated pressure tensor $k_j$ are explicitly given by
\EQS{\label{eq_def_Kij}
K_{ij}(x,t)&=-2\,\de_{ij}\,\pd_n\Ga(x,t)-4\,\pd_j\int_0^{x_n}\int_\Si\pd_n\Ga(z,t)\,\pd_iE(x-z)\,dz'\,dz_n\\
&\quad -2\,\de_{nj}\pd_iE(x)\de(t),}
\EQ{\label{eq_def_kj}
k_j(x,t)=2\,\pd_j\pd_nE(x)\de(t)+2\,\de_{nj}E(x)\de'(t)+\frac2t\,\pd_jA(x,t),
}
where $A(x,t)$ is defined in \eqref{eq_def_A}.
A solution $(\hat{v},p)$ of \eqref{0903a} is represented by (\cite[(84)]{MR0171094}):
\EQ{\label{Golovkin}
\hat{v}_i(x,t)=\sum_{j=1}^n\int_{-\infty}^{\infty}\int_\Si K_{ij}(x-\xi',t-s)\phi_j(\xi',s)\,d\xi'\,ds
}
and
\EQS{\label{Golovkin_p}
p(x,t)=&~2\sum_{i=1}^n\pd_i\pd_n\int_{\Si}E(x-\xi')\phi_i(\xi',t)\,d\xi'
+2\int_{\Si}E(x-\xi')\pd_t\phi_n(\xi',t)\,d\xi'
\\
&+ \sum_{j=1}^n \pd_{i}  \int_{-\infty}^{\infty}\int_\Si \frac2{t-s} A(x-\xi',t-s) [\phi_i(\xi',s) - \phi_i(\xi',t)]\,d\xi'\,ds.
}
Note that $ \phi_i(\xi',t)$ is subtracted from the last integral to make it integrable.  Alternatively, using $(\pd_t-\De_{x'})A=(-1/(2t))A$ (since
$(\pd_t-\De_{x'}) \Ga(x',0,t) = (\pd_t-\De_{x'})(4\pi t)^{-1/2} \Ga_{\R^{n-1}}(x',t) = -(2t)^{-1} \Ga(x',0,t) $),
$p(x,t)$ can also be expressed as \cite[(85)]{MR0171094}
\EQS{\label{eq84_Solo1968}
p(x,t)=&~2\sum_{i=1}^n\pd_i\pd_n\int_{\Si}E(x-\xi')\phi_i(\xi',t)\,d\xi'
+2\int_{\Si}E(x-\xi')\pd_t\phi_n(\xi',t)\,d\xi'\\
&-4\sum_{i=1}^n(\pd_t-\De_{x'})\int_{-\infty}^\infty\int_{\Si}\pd_iA(x-\xi',t-\tau)\phi_i(\xi',\tau)\,d\xi'd\tau.
}
The last term of \eqref{eq_def_kj} is not integrable (hence not a distribution) and has to be understood in the sense of \eqref{Golovkin_p} or \eqref{eq84_Solo1968}.
By \cite{MR0171094} (for $n=3$, but the general case can be treated in the same manner), for $n \ge 2$, the Golovkin tensor satisfies,
for $i,j=1,\ldots,n$ and $t>0$,
\begin{equation}\label{eq_estKij}
\left|\pd_{x'}^l\pd_{x_n}^k\pd_t^mK_{ij}(x,t)\right|\lesssim\frac1{t^{m+\frac12}\left(x^2+t\right)^{\frac{l+n-\si}2}(x_n^2+t)^{\frac{k+\si}2}}, \quad \si=\de_{i<n} \de_{jn}.
\end{equation}
Here $\si=1$ if $i<n=j$ and $\si=0$ otherwise.
Specifically,
the case $j<n$ is \cite[(73)]{MR0171094}, the case $j=n$ uses $j<n$ case, the formulas for $K_{in}$ on \cite[page 47]{MR0171094},
 and \cite[(69)]{MR0171094}.
%Since
%\[
%K_{ij}(x,t) = -2\de_{ij}\pd_n\Ga(x,t) - 4\pd_{x_j}C_i(x,0,t)
%\]
%for $t>0$, it follows from \eqref{eq_estSij} and \eqref{C-estimate} that for $i,j=1,\ldots,n$ and $t>0$,
%\begin{equation}\label{eq_estKij}
%\left|\pd_{x'}^l\pd_{x_n}^k\pd_t^mK_{ij}(x,t)\right|\lesssim\frac1{t^{m+\frac12}\left(x^2+t\right)^{\frac{l+n-1}2}(x_n^2+t)^{\frac{k+1}2}}.
%\end{equation}

%

\begin{remark}
(i)
In the proof of \cite[(73)]{MR0171094}, in the equation after \cite[(72)]{MR0171094},  there is at least one $x'$-derivative acting on $B$ (defined in \eqref{eq_def_B}) even if $l=0$. The same is true for formulas  for $K_{in}$ on \cite[page 47]{MR0171094}.
Hence we have estimate \eqref{eq_estKij} for all $n \ge 2$ and do not have a log factor for $n=2$. Compare \eqref{eq_estB} and Remark \ref{remark_est_AB}.

\smallskip

(ii) Solonnikov  \cite[pp.46-48]{MR0171094} decomposes $\hat v = w + w'$ where
\EQN{
w_i (x,t)&= \sum_{j<n} \iint K_{ij}(x-\xi',t-s) \phi_j(\xi',s)d\xi'ds, \\
w_i' (x,t)&=  \iint K_{in}(x-\xi',t-s) \phi_n(\xi',s)d\xi'ds,
}
and shows that $w_i(x',0,t)=(1-\de_{in})\phi_i(x',t)$ and
$w_i'(x',0,t)=\de_{in}\phi_i(x',t)$.

\smallskip

(iii) The limit of $\hat v(\cdot,t)$ as $t \to 0_+$ depends on the $\lim_{t \to 0_+} \phi(\cdot,t)$. It is in general nonzero unless $\phi(\cdot,t)=0$ for $0<t<\de$. See the following example.
\end{remark}

\begin{example}
Let $\rho(\xi',t)$ be any continuous function defined on $\Si \times \R$ with suitable decay. Let
\[
u(x,t) = \nb_x h(x,t),\quad
h(x,t) = \int_\Si -2E(x-\xi') \rho(\xi',t)d\xi'.
\]
Let $\hat v(x,t)$ be defined by \eqref{Golovkin} with $\phi(x',t) = u(x',0,t)$. We claim that
$\hat v(x,t)=u(x,t)$.
Note that $h$ is harmonic in $x$ and $u_n|_\Si = \rho$ as $-2\pd_n E(x)$ is the Poisson kernel of $-\De$ in $\R^n_+$.
Since $\div u=0$ and $\curl u=0$, by Stein \cite{Stein70} Theorem III.3 on page 65,  we have
\[
u_n|_\Si = \rho, \quad u_i |_{\Si}  = R_i' \rho \quad (i<n),
\]
where $R'_j$ is the $j$-th Riesz transform on $\R^{n-1}$, $\widehat{R'_j f}(\xi') = \frac{i \xi_j}{|\xi'|} \hat f(\xi')$.
By \eqref{eq_def_Kij} and \eqref{Golovkin},
\EQN{
\hat v_i(x,t)=&-2\int_{-\infty}^\infty\int_\Si\pd_n\Ga(x-\xi',t-s)\phi_i(\xi',s)\,d\xi'ds\\
&-4\sum_{j=1}^{n-1}\int_{-\infty}^\infty \int_\Si\pd_{x_j}\bke{ \int_0^{x_n}\int_\Si\pd_n\Ga(z,t-s)\,\pd_iE(x-\xi'-z)\,dz'\,dz_n } \phi_j(\xi',s)\,d\xi'ds\\
&-4\int_{-\infty}^\infty \int_\Si\pd_{x_n}\bke{ \int_0^{x_n}\int_\Si\pd_n\Ga(z,t-s)\,\pd_iE(x-\xi'-z)\,dz'\,dz_n } \phi_n(\xi',s)\,d\xi'ds\\
&-2\int_\Si\pd_iE(x-\xi')\phi_n(\xi',t)\,d\xi' = : I_1+I_2+I_3+I_4.
}

As $\phi_n= \rho$, $I_4=u_i(x,t)$ by definition. If $i<n$, since $\phi_j  = R_j' \rho$, we can switch derivatives
\EQN{
I_2 &= -4\sum_{j=1}^{n-1}\int_{-\infty}^\infty \int_\Si\pd_{x_j}\bke{ \int_0^{x_n}\int_\Si\pd_n\Ga(z,t-s)\,\pd_jE(x-\xi'-z)\,dz'\,dz_n } \phi_i(\xi',s)\,d\xi'ds\\
&=4\int_{-\infty}^\infty \int_\Si\pd_{x_n}\bke{ \int_0^{x_n}\int_\Si\pd_n\Ga(z,t-s)\,\pd_nE(x-\xi'-z)\,dz'\,dz_n } \phi_i(\xi',s)\,d\xi'ds \\
&\quad + 2\int_{-\infty}^\infty \int_\Si\pd_{n}\Ga(x-\xi',t-s) \phi_i(\xi',s)\,d\xi'ds = I_{2a}+I_{2b}.
}
The second equality is \cite[(68)]{MR0171094}.
Note that $I_{2b}$ cancels $I_1$, and $I_{2a}+I_3=0$ because
\[
-2\int_\Si \pd_iE(x-\xi'-z)\phi_n(\xi',s)\,d\xi' =  u_i(x-z,s)
=-2 \int_\Si \pd_nE(x-\xi'-z)\phi_i(\xi',s)\,d\xi'.
\]
The first equality is by definition of $u_i$. The second is because $-2\pd_n E$ is the Poisson kernel.
Thus $\hat v_i(x,t)=u_i(x,t)$ for $i<n$. As they are harmonic conjugates of $\hat v_n$ and $u_n$, and
$\hat v_n$ and $u_n$ have the same boundary value $\rho$,
we also have $\hat v_n(x,t)=u_n(x,t)$.
\hfill $\square$
\end{example}

\section{First formula for the Green tensor}\label{S2.3}

In this section, we derive a formula of the Green tensor $G_{ij}$ of the non-stationary Stokes system in the half-space. We decompose $G_{ij}=\tilde{G}_{ij}+W_{ij}$ with explicit $\tilde G_{ij}$ given by \eqref{tdG.def}, and derive a formula for the remainder term $W_{ij}$. %

For the nonstationary Stokes system in the half-space $\R^n_+$, $n\ge2$, the Green tensor $G_{ij}(x,y,t)$ and its associated pressure tensor $g_j(x,y,t)$, for each fixed $j=1,\ldots,n$ and $y\in\R^n_+$, satisfy
\EQS{\label{Green-def-distribution}
\pd_t G_{ij}-\De_xG_{ij}+\pd_{x_i}g_j=\de_{ij}\de_y(x)\de(t),\quad \sum_{i=1}^n\pd_{x_i}G_{ij}=0,\quad \text{ for }x\in\R^n_+\text{ and } t\in\R,
}
\EQN{
G_{ij}(x,y,t)|_{x_n=0}=0.
}
Recall the defining property that
solution $(u,\pi)$ of \eqref{E1.1}-\eqref{E1.2} with zero boundary condition is given by \eqref{E1.3}
and
\begin{equation}\label{eq2.12}
\pi(x,t) = \int_{\R^n_+}g(x,y,t)\cdot u_0(y)\,dy+\int_{-\infty}^\infty \int_{\R^n_+} g(x,y,t-s)\cdot f(y,s)\,dy\,ds.
\end{equation}
The time interval in \eqref{eq2.12} is the entire $\R$ as we will see in \thref{gj-decomp} that $g$ contains a delta function in time, cf.~\eqref{sj.def}. In contrast, $G_{ij}$ is a function and we can define $G_{ij}(x,y,t)=0$ for $t \le 0$ in view of \eqref{E1.3}. Note that $G_{ij}(x,y,0_+)\not=0$, see \thref{th:Gij-initial}.

\medskip

We now proceed to find a formula for $G_{ij}$.
Let $u,\pi$ solve \eqref{E1.1}-\eqref{E1.2} with zero external force $f=0$, and non-zero initial data $u(x,0)=u_0(x)$, in the sense of \eqref{Gij-initial0}. Then
\EQ{\label{ui.def}
u_i(x,t)=\sum_{j=1}^n\int_{\R^n_+}G_{ij}(x,y,t)(u_0)_j(y)\,dy,
}
and $\pi$ is given by \eqref{eq2.12} with $f=0$.
Let ${\bf E}u_0$ be an extension of $u_0$ to $\R^n$ by
\EQ{\label{div0-extension}
{\bf E}u_0(x',x_n)=(-u_0',u_0^n)(x',-x_n)\ \text{ for }x_n<0.
}
Then $\div{\bf E}u_0(x',x_n)=-\div u_0(x',-x_n)$ for $x_n<0$.

\medskip

\begin{remark}
If $\div u_0=0$ and $u_0^n(x',0)=0$, then $\div {\bf E}u_0=0$ in $\mathcal{D}'(\R^n)$.
\end{remark}

\medskip

Let $\tilde{u}$ be the solution to the homogeneous Stokes system in $\R^n$ with initial data ${\bf E}u_0$. Then
\EQN{
\tilde{u}_i(x,t)=&\sum_{j=1}^n\int_{\R^n}S_{ij}(x-y,t)({\bf E}u_0)_j(y)\,dy\\
=&\sum_{j=1}^n\int_{\R^n_+}(S_{ij}(x-y,t)-\ep_jS_{ij}(x-y^*))(u_0)_j(y)\,dy\\
=&\sum_{j=1}^n\int_{\R^n_+}\tilde{G}_{ij}(x,y,t)(u_0)_j(y)\,dy,
}
where
\EQS{\label{tdG.def}
\tilde{G}_{ij}(x,y,t)=S_{ij}(x-y,t)-\epsilon_jS_{ij}(x-y^*,t),\quad \epsilon_j=1-2\de_{nj}.
}
Note that the factor $\epsilon_j$ is absent in the second term of Solonnikov's restricted Green tensor \eqref{E1.6}. Eqn.~\eqref{tdG.def} is closer to \cite[(2.22)]{KMT18}.

\begin{lem}\thlabel{thSijodd}
We have
\EQS{\label{Sijstar}
S_{ij}(x^*,t)=\epsilon_i\epsilon_jS_{ij}(x,t),
}
\EQS{\label{eq_tildeGij}
\tilde{G}_{ij}(x,y,t)\big|_{x_n=0}=
2\,\de_{in} \,S_{nj}(x'-y,t).
}
\end{lem}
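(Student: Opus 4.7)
The strategy is to unpack the definition $S_{ij}=\delta_{ij}\Gamma+\Gamma_{ij}$ from \eqref{eq_def_Sij} and track how each piece behaves under the reflection $x\mapsto x^*$. Since $\Gamma(\cdot,t)$ and $E$ are both radial functions, the convolution-type expression
\[
F(x,t):=\int_{\R^n}\Gamma(x-z,t)E(z)\,dz
\]
is even in $x_n$. Indeed, changing variables $z\mapsto z^*$ gives $F(x^*,t)=\int\Gamma((x-z)^*,t)E(z^*)\,dz=F(x,t)$ because $\Gamma$ and $E$ are invariant under $z\mapsto z^*$. Obviously $\Gamma(x^*,t)=\Gamma(x,t)$ as well, by radiality in the spatial variable.

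From $F(x^*,t)=F(x,t)$ the chain rule yields $(\partial_i F)(x^*,t)=\epsilon_i(\partial_i F)(x,t)$ (no sum) and therefore
\[
\Gamma_{ij}(x^*,t)=(\partial_i\partial_j F)(x^*,t)=\epsilon_i\epsilon_j\,\Gamma_{ij}(x,t).
\]
For the diagonal contribution $\delta_{ij}\Gamma$ we either have $i\neq j$ (so it vanishes) or $i=j$ (so $\epsilon_i\epsilon_j=\epsilon_i^2=1$), making the identity $\delta_{ij}\Gamma(x^*,t)=\epsilon_i\epsilon_j\delta_{ij}\Gamma(x,t)$ automatic. Adding the two contributions gives \eqref{Sijstar}.

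For \eqref{eq_tildeGij}, the key observation is that when $x_n=0$ one has $x-y^{*}=(x-y)^{*}$, since the $n$-th components of both sides equal $y_n$ and the tangential components agree. Applying \eqref{Sijstar} to the argument $x-y$ then gives $S_{ij}(x-y^{*},t)=\epsilon_i\epsilon_j S_{ij}(x-y,t)$ on $\{x_n=0\}$, so
\[
\tilde G_{ij}(x,y,t)\big|_{x_n=0}=S_{ij}(x-y,t)-\epsilon_j\cdot\epsilon_i\epsilon_j\,S_{ij}(x-y,t)=(1-\epsilon_i)\,S_{ij}(x-y,t).
\]
Finally $1-\epsilon_i=2\delta_{in}$, and when $i=n$ the factor forces $S_{ij}=S_{nj}$; identifying $x'$ with $(x',0)$ so that $x-y=x'-y$ on the boundary, we recover $\tilde G_{ij}(x,y,t)|_{x_n=0}=2\delta_{in}S_{nj}(x'-y,t)$.

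The only subtle point is the bookkeeping with $\epsilon_i,\epsilon_j$ and the identification $(x-y)^{*}=x-y^{*}$ on the boundary; once these are in place the proof is essentially a computation. No approximation or regularity argument is needed since $S_{ij}(\cdot,t)$ is smooth for $t>0$.
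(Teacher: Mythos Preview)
Your proof is correct and follows essentially the same approach as the paper's. Both arguments rest on the parity of $S_{ij}(\cdot,t)$ in the $x_n$-variable (which you derive explicitly via the radiality of the convolution $F=\Gamma(\cdot,t)*E$, while the paper simply asserts the parity), and both use the identification $x-y^*=(x-y)^*$ on $\{x_n=0\}$ together with $\epsilon_j^2=1$ to reduce $\tilde G_{ij}$ to $(1-\epsilon_i)S_{ij}=2\delta_{in}S_{nj}$ on the boundary.
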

\begin{proof}
 If $i=j$, then $S_{ii}(x,t)$ is even in all $x_k$. If $i\neq j$, then $S_{ij}(x,t)$ is odd in $x_i$ and $x_j$, but even in $x_k$ if $k\neq i,j$. In particular, with $x_k=x_n$, we get \eqref{Sijstar} for all $i,j=1,\ldots,n$. By \eqref{Sijstar},
\[\left.\tilde{G}_{ij}(x,y,t)\right|_{x_n=0}=S_{ij}(x'-y,t)-\epsilon_jS_{ij}(x'-y^*,t)= S_{ij}(x'-y,t)-\epsilon_iS_{ij}(x'-y,t)\]
which gives \eqref{eq_tildeGij}.
\end{proof}

\medskip

Let $\hat{u}=u-\tilde{u}|_{\R^n_+}$. Then $\hat{u}$ solves the boundary value problem \eqref{0903a} with boundary data $\hat{u}|_{x_n=0}=-\tilde{u}(x,t)|_{x_n=0}$. 
%It should have zero initial data at least when $u_0 = \bP u_0$.
By the Golovkin formula \eqref{Golovkin},
\EQN{
\hat{u}_i(x,t)=&~\sum_{k=1}^n{\int_{-\infty}^{\infty}}\int_{\Si}K_{ik}(x-\xi',t-s)(-\tilde{u}_k(\xi',0,s))\,d\xi' ds\\
=&~\sum_{k=1}^n{\int_{-\infty}^{\infty}}\int_{\Si}K_{ik}(x-\xi',t-s)\left(-\sum_{j=1}^n\int_{\R^n_+}\tilde{G}_{kj}(\xi',y,s)(u_0)_j(y)\,dy\right)\,d\xi' ds\\
=&~\sum_{j=1}^n\int_{\R^n_+}\left(-\sum_{k=1}^n{\int_{-\infty}^{\infty}}\int_{\Si}K_{ik}(x-\xi',t-s)\tilde{G}_{kj}(\xi',y,s)\,d\xi'ds\right)(u_0)_j(y)dy\\
=&~\sum_{j=1}^n\int_{\R^n_+}W_{ij}(x,y,t)(u_0)_j(y)dy,
}
where
\EQS{\label{def_Wij}
W_{ij}(x,y,t)&=-\sum_{k=1}^n{\int_{-\infty}^{\infty}}\int_{\Si}K_{ik}(x-\xi',t-s)\tilde{G}_{kj}(\xi',y,s)\,d\xi'ds.
}

By \thref{thSijodd}, we have the following first formula of $W_{ij}$.
\begin{lem}[The first formula of $W_{ij}$]
For $x,y\in\R^n_+$, $t>0$, and $i,j=1,\ldots,n$
\begin{equation}
\label{eq_Wij_formula0}
W_{ij}(x,y,t)=-2{\int_{-\infty}^{\infty}}\int_\Si K_{in}(x-\xi',t-s)S_{nj}(\xi'-y,s)\,d\xi'ds.\end{equation}
\end{lem}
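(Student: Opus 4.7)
The plan is to derive \eqref{eq_Wij_formula0} by directly substituting the boundary-trace identity \eqref{eq_tildeGij} from \thref{thSijodd} into the definition \eqref{def_Wij} of $W_{ij}$. Since $K_{ik}(x-\xi', t-s)$ is evaluated with $\xi' \in \Si$ (i.e., $\xi_n' = 0$), the Golovkin kernel is only ever paired with the boundary values of $\tilde G_{kj}$, which is exactly where \thref{thSijodd} applies.

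Concretely, I would start from
\[
W_{ij}(x,y,t)=-\sum_{k=1}^n\int_{-\infty}^{\infty}\int_{\Si}K_{ik}(x-\xi',t-s)\,\tilde{G}_{kj}(\xi',y,s)\,d\xi'ds,
\]
noting that the inner argument of $\tilde G_{kj}$ has its last spatial coordinate equal to zero (as $\xi' \in \Si$). Applying \eqref{eq_tildeGij} with the index $i$ there replaced by $k$ gives
\[
\tilde{G}_{kj}(\xi',y,s)\big|_{\xi_n=0}=2\,\de_{kn}\,S_{nj}(\xi'-y,s).
\]
Substituting this identity, the sum over $k$ collapses to the single term $k=n$, and a factor of $2$ is produced, yielding
\[
W_{ij}(x,y,t)=-2\int_{-\infty}^{\infty}\int_\Si K_{in}(x-\xi',t-s)\,S_{nj}(\xi'-y,s)\,d\xi'ds,
\]
which is \eqref{eq_Wij_formula0}.

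There is essentially no obstacle here: the content of the lemma is a bookkeeping consequence of the parity property \eqref{Sijstar} of the Oseen tensor (already packaged into \eqref{eq_tildeGij}), combined with the fact that the Golovkin formula only sees boundary values of the extended Cauchy solution $\tilde u$. One should nevertheless verify that the $d\xi'\,ds$ integral is absolutely convergent (or at least well-defined in the distributional sense used in \S\ref{S2.2}), since $K_{in}$ and the last term of \eqref{eq_def_Kij} contain a $\de(t)$ factor when $n=j$; but this is handled by the same convention under which \eqref{def_Wij} was written, so no additional argument is required for the statement itself.
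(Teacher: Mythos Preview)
Your argument is correct and matches the paper's approach exactly: the paper simply states that the formula follows from \thref{thSijodd}, and you have spelled out precisely that substitution of \eqref{eq_tildeGij} into \eqref{def_Wij}.
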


\begin{remark}\label{rem2.4}
Because of $\de(t)$ in the last term of the formula \eqref{eq_def_Kij} of $K_{ij}$, when we substitute \eqref{eq_def_Kij} into the right side of \eqref{eq_Wij_formula0}, one of the resulting integrals is spatial only.
\end{remark}

\medskip

As $u=\tilde{u}|_{\R^n_+}+\hat{u}$, the Green tensor $G_{ij}$ has the decomposition
\EQS{
\label{eq_def_Gij}
G_{ij}(x,y,t)&=\tilde{G}_{ij}(x,y,t)+W_{ij}(x,y,t)\\
&=S_{ij}(x-y,t)-\epsilon_jS_{ij}(x-y^*,t) + W_{ij}(x,y,t).
}
All of them are zero for $t \le 0$.

With the first formula of $W_{ij}$, we have the scaling property of the Green tensor.
\begin{cor}
For $n\ge2$ the Green tensor $G_{ij}$ obeys the following scaling property
\[G_{ij}(x,y,t)=\la^nG_{ij}(\la x,\la y, \la^2t).\]
\end{cor}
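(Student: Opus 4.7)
The plan is to verify the scaling separately for each piece of the decomposition
\[
G_{ij}(x,y,t)=\tilde G_{ij}(x,y,t)+W_{ij}(x,y,t)
\]
from \eqref{eq_def_Gij}, using the integral representation \eqref{eq_Wij_formula0} of $W_{ij}$. Throughout, $\la>0$.

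First I would establish the scaling of the basic building blocks. The heat kernel satisfies $\Gamma(\la x,\la^2 t)=\la^{-n}\Gamma(x,t)$ directly from its formula, and the delta distribution gives $\de(\la^2 t)=\la^{-2}\de(t)$. For $E$, when $n\ge3$ we have $E(\la z)=\la^{2-n}E(z)$; for $n=2$, $E(\la z)=E(z)-(2\pi)^{-1}\log\la$, so every first-order derivative of $E$ is homogeneous of degree $1-n$ in both dimensions. From \eqref{eq_def_Sij}, this yields $S_{ij}(\la x,\la^2 t)=\la^{-n}S_{ij}(x,t)$: the $\de_{ij}\Gamma$ part is immediate, while for the nonlocal part $\Gamma_{ij}=\pd_i\pd_j\Phi$ with $\Phi(x,t)=\int\Gamma(x-z,t)E(z)\,dz$, a change of variables $z=\la w$ gives $\Phi(\la x,\la^2 t)=\la^{2-n}\Phi(x,t)$ in $n\ge3$ (resp.\ $\Phi(x,t)$ minus a constant in $t$ for $n=2$, which disappears under $\pd_i\pd_j$), and the two extra derivatives contribute a factor $\la^{-2}$. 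Consequently,
\[
\tilde G_{ij}(\la x,\la y,\la^2 t)=S_{ij}(\la(x-y),\la^2 t)-\ep_j S_{ij}(\la(x-y^*),\la^2 t)=\la^{-n}\tilde G_{ij}(x,y,t).
\]

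Next I would show $K_{in}(\la x,\la^2 t)=\la^{-(n+1)}K_{in}(x,t)$ using \eqref{eq_def_Kij}. The first term $-2\de_{in}\pd_n\Ga$ scales as $\la^{-(n+1)}$ directly. The third term $-2\de_{n^2}\pd_iE(x)\de(t)$ contributes $\la^{1-n}\cdot\la^{-2}=\la^{-(n+1)}$. For the middle term, substituting $z=\la w$ in
\[
\pd_j\!\int_0^{\la x_n}\!\!\int_\Si \pd_n\Gamma(z,\la^2 t)\,\pd_iE(\la x-z)\,dz'dz_n
\]
produces $\la^{n-1}\cdot\la$ from the surface and normal measures, $\la^{-(n+1)}$ from $\pd_n\Gamma$, $\la^{1-n}$ from $\pd_i E$, and an additional $\la^{-1}$ from the outer $\pd_{\la x_j}$, summing to $\la^{-(n+1)}$.

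Finally I would plug these into \eqref{eq_Wij_formula0}. Substituting $\xi'=\la\eta'$ and $s=\la^2\tau$ in
\[
W_{ij}(\la x,\la y,\la^2 t)=-2\!\int_{-\infty}^{\infty}\!\!\int_\Si K_{in}(\la x-\xi',\la^2 t-s)\,S_{nj}(\xi'-\la y,s)\,d\xi'\,ds
\]
gives Jacobian $\la^{n-1}\cdot\la^{2}=\la^{n+1}$, while the integrand scales by $\la^{-(n+1)}\cdot\la^{-n}$. The net factor is $\la^{-n}$, so $W_{ij}(\la x,\la y,\la^2 t)=\la^{-n}W_{ij}(x,y,t)$. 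Combining with the scaling of $\tilde G_{ij}$ yields $G_{ij}(\la x,\la y,\la^2 t)=\la^{-n}G_{ij}(x,y,t)$, which is the claim.

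The only point requiring care is the $n=2$ case, where $E$ itself is not homogeneous; this is the mild obstacle but it dissolves once one observes that only derivatives of $E$ (and not $E$ itself) appear in the final formulas for $\Ga_{ij}$ and $K_{in}$, so the logarithmic constant is annihilated.
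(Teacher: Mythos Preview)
Your proof is correct and follows exactly the approach of the paper: use the decomposition \eqref{eq_def_Gij}, the first formula \eqref{eq_Wij_formula0} for $W_{ij}$, and the scaling properties of $\Gamma$, $\de$, $S_{ij}$, and $K_{ij}$. The paper's own proof is just a one-line reference to these ingredients, whereas you have spelled out the homogeneity bookkeeping explicitly (including the harmless $n=2$ caveat that only $\nabla E$ appears, never $E$ itself); aside from a small typo ($\de_{n^2}$ should be $\de_{nn}$, i.e.\ the coefficient is simply $-2$ since $j=n$), there is nothing to add.
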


\begin{proof}
Note that $\Ga(\la x,\la^2 t)=\la^{-n}\Ga(x,t)$ and $\de(\la^2 t)=\la^{-2}\de(t)$. It follows directly from \eqref{eq_def_Gij}, \eqref{eq_Wij_formula0} and the scaling properties of $K_{ij}$ and $S_{ij}$.
\end{proof}

\begin{remark}
In Lemma 2.1 of the stationary case of \cite{KMT18}, the condition $n\ge3$ is needed for showing the scaling property of $G_{ij}(x,y)$ because the 2D fundamental solution $E$ does not have the scaling property. However, in the nonstationary case we do not have this issue. So the scaling property of the nonstationary Green tensor holds for all dimension $n\ge2$.
\end{remark}

\medskip

Before we consider the zero time limit of $G_{ij}$, we consider the Helmholtz projection.

\begin{remark}[Helmholtz projection in $\R^n_+$]\label{rem2.6}
For a vector field $u$ in $\R^n_+$, its Helmholtz projection $\bP u$ is given by
\EQ{%
(\bP u)_i = u_i - \pd_i p,
}
where $p$ satisfies $-\De p =- \div u$, and $\pd_n p=u_n$ on $x_n=0$. Using the
Green function of the Laplace equation with Neumann boundary condition, $N(x,y)=E(x-y)+E(x-y^*)$, we have
\begin{equation}
\label{Helmholtz-p0}
p(x) = -\int_{\R^n_+} N(x,y) \div u(y)\,dy - \int_\Si u_n (y)N(x,y)\,dS_y.
\end{equation}
Note the unit outer normal $\nu=-e_n$ and $\frac{\pd p}{\pd \nu}  = - \pd_n p=-u_n$.
The second term is absent in \cite[Appendix]{MR605431}, \cite[(III.1.18)]{GaldiBook2011}, and \cite[Lemma A.3]{MMP2} because they are concerned with $L^q$ bounds of $\bP \tilde u$ with $\tilde u\in L^q$, for which \eqref{Helmholtz-p0} is undefined, and they approximate $\tilde u$ in $L^q$ by $u\in C^\infty_c(\R^n_+)$,  for which the second term in \eqref{Helmholtz-p0} is zero. For our purpose, we want pointwise bounds and hence we need to keep the boundary term.
Integrating by parts, %
\EQ{\label{Helmholtz-p1}
p(x)= \int_{\R^n_+} \pd_{y_j} N(x,y) u_j(y)\,dy .
}
The boundary terms on $\Si$ cancel. Using the definition of $N(x,y)$,
\EQ{
\label{Fj.def}
\pd_{y_j} N(x,y) = - F_j^y(x), \quad   F_j^y(x):= \pd_j E(x-y)+\ep_j \pd_j  E(x-y^*) .
}
Thus
\EQ{\label{Helmholtz.formula}
(\bP u)_i (x)= u_i(x) + \pd_i  \int_{\R^n_+} F_j^y(x) u_j(y)\,dy .
}
\end{remark}

We now consider the zero time limit of $G_{ij}$.

\begin{lem}\thlabel{th:Gij-initial}
(a) For $x,y\in\R^n_+$,
we have
\begin{equation}\label{Gij.t=0}
G_{ij}(x,y,0_+)=\de_{ij}\de(x-y) + \pd_{x_i}F^y_j(x),
\end{equation}
where $F^y_j(x)$ is defined in \eqref{Fj.def},
in the sense that, for any $i,j\in \{1,\ldots,n\}$ and $f\in C^1_c(\R^n_+)$, we have
\begin{equation}\label{Gij.t=0-wk}
\lim _{t \to 0_+} \bke{ \int_{\R^n_+}  G_{ij}(x,y,t)  f(y) dy- \de_{ij} f(x) - \pd_{x_i} \int_{\R^n_+}  F^y_j(x)  f(y) \,dy}=0,
\end{equation}
for all $x \in \R^n_+$, and uniformly for $x_{n}\ge\delta$, for any $\de>0$.

(b) Let $u_0 \in C^1_c(\R^n_+;\R^n)$ be a vector field in $\R^n_+$ and let $u(x,t)$ be given by \eqref{ui.def}.
 Then $u(x,t)\to (\bP u_0)(x)$ for all $x\in \R^n_+$, and uniformly for all $x$ with $x_n\ge\de$ for any $\de>0$.
\end{lem}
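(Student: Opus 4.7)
The plan is to compute the zero-time limit of $\int G_{ij}(x,y,t)f(y)\,dy$ piece by piece using the decomposition $G_{ij}=\tilde G_{ij}+W_{ij}$ from \eqref{eq_def_Gij}, and then to deduce (b) from (a) by summation.

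For the $\tilde G_{ij}$ contribution, I would extend $f\in C^1_c(\R^n_+)$ by zero to a function in $C^1_c(\R^n)$. Lemma \ref{Oseen-initial} applied to the first summand $S_{ij}(x-y,t)$ gives the limit $\de_{ij}f(x)+\pd_{x_i}\int_{\R^n_+}\pd_jE(x-y)f(y)\,dy$, locally uniformly in $x$ with $x_n\geq\de$. On the reflected summand $-\ep_jS_{ij}(x-y^*,t)$, the substitution $z=y^*$ turns it into an $\R^n$-integral of $S_{ij}(x-z,t)$ against a function supported in $\R^n_-$; since $x\in\R^n_+$ the Dirac contribution in the limit vanishes and only the Riesz-type piece $-\ep_j\pd_{x_i}\int_{\R^n_+}\pd_jE(x-y^*)f(y)\,dy$ remains.

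For $W_{ij}$, I would plug the explicit formula \eqref{eq_def_Kij} into \eqref{eq_Wij_formula0}, splitting $W_{ij}$ into three parts. The two heat-type parts give integrals over $s\in(0,t)$ of integrands whose $\xi'$-integrals are bounded uniformly in $s\in(0,t)$ by combining \eqref{eq_estSij} with \thref{lemma2.2} (using $x_n\geq\de$ and $\dist(\supp f,\Si)>0$), so they contribute $O(t)\to0$ uniformly. The distributional part $-2\pd_iE(\cdot)\de(t-s)$ yields
\[
W^{(3)}_{ij}(x,y,t)=4\int_\Si\pd_iE(x-\xi')S_{nj}(\xi'-y,t)\,d\xi'.
\]
Since $(\xi'-y)_n=-y_n$ stays negative and bounded away from zero, $S_{nj}(\xi'-y,t)\to\pd_n\pd_jE(\xi'-y)$ pointwise and dominated convergence (again via \thref{lemma2.2}) yields
\[
W^{(3)}_{ij}(x,y,0_+)=4\int_\Si\pd_iE(x-\xi')\pd_n\pd_jE(\xi'-y)\,d\xi'.
\]
To evaluate this I would split cases: for $j<n$ rewrite $\pd_n\pd_jE(\xi'-y)=\pd_{\xi_j}\pd_nE(\xi'-y)$ and integrate by parts tangentially; for $j=n$ use harmonicity $\pd_n^2E=-\sum_{k<n}\pd_k^2E$ away from the origin and integrate by parts twice tangentially. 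In both cases the identity \eqref{KMT2.32}, which implies $\int_\Si E(x-\xi')\pd_nE(\xi'-y)\,d\xi'=\tfrac12E(x-y^*)$, reduces the expression to $2\ep_j\pd_{x_i}\pd_jE(x-y^*)$. Adding the $\tilde G_{ij}$ and $W_{ij}$ limits gives
\[
\de_{ij}f(x)+\pd_{x_i}\int_{\R^n_+}\bigl[\pd_jE(x-y)+\ep_j\pd_jE(x-y^*)\bigr]f(y)\,dy=\de_{ij}f(x)+\pd_{x_i}\int_{\R^n_+}F^y_j(x)f(y)\,dy,
\]
which is exactly \eqref{Gij.t=0-wk}, with local uniformity in $\{x_n\geq\de\}$ preserved throughout.

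Part (b) follows immediately by applying (a) componentwise with $f=u_{0,j}$ and summing over $j$: the $\de_{ij}$ part gives $u_{0,i}(x)$, and the remainder $\pd_{x_i}\int_{\R^n_+}\sum_jF^y_j(x)u_{0,j}(y)\,dy$ combined with $u_{0,i}(x)$ is precisely $(\bP u_0)_i(x)$ by \eqref{Helmholtz.formula}. I expect the main technical obstacle to be the identification of $\lim_{t\to0_+}W^{(3)}_{ij}$: the case split $j<n$ versus $j=n$ and the nontrivial use of \eqref{KMT2.32} (differentiated in $x_i$ or $x_n$) to reduce the resulting surface integral to the Hessian of $E(x-y^*)$, all while verifying the dominated-convergence hypotheses at positive distance from $\Si$ to secure the locally uniform convergence in $x_n\geq\de$.
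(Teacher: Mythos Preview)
Your approach is essentially the paper's: decompose $G_{ij}=\tilde G_{ij}+W_{ij}$, handle $\tilde G_{ij}$ via Lemma~\ref{Oseen-initial}, split $W_{ij}$ according to the three terms of \eqref{eq_def_Kij}, show the time-integrated pieces vanish, and identify the limit of the $\delta(t)$ piece via \eqref{KMT2.32}. Two points deserve correction.

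First, the claim that the two heat-type pieces contribute $O(t)$ because their $\xi'$-integrals are ``bounded uniformly in $s\in(0,t)$'' is not right as stated: the bound \eqref{eq_estKij} for $\tilde K_{in}(x-\xi',t-s)$ carries a factor $(t-s)^{-1/2}$ which survives the $\xi'$-integration. The paper instead bounds these pieces together as $W_{ij,1}$ and integrates the $(t-s)^{-1/2}$ singularity directly, obtaining
\[
|W_{ij,1}(x,y,t)|\lesssim \log\Bigl(1+\tfrac{\sqrt t}{x_n}\Bigr)\cdot C(x,y),
\]
which vanishes uniformly for $x_n\ge\delta$. The conclusion is the same; only the rate is different.

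Second, no case split $j<n$ versus $j=n$ is needed for the limit of $W^{(3)}_{ij}$. Since $\pd_iE(x-\xi')\pd_n\pd_jE(\xi'-y)=-\pd_{x_i}\pd_{y_j}\bigl[E(x-\xi')\pd_nE(\xi'-y)\bigr]$ for \emph{all} $i,j$, the paper pulls both derivatives outside, applies \eqref{KMT2.32} once, and obtains $2\ep_j\pd_i\pd_jE(x-y^*)$ in one stroke. The paper also justifies convergence more carefully than dominated convergence: it integrates by parts in $y_n$ to write the target as $\int\pd_jE(\xi'-y)\pd_nf(y)\,dy$, applies Lemma~\ref{Oseen-initial} to control $\int_{\R^n}S_{nj}(\xi'-y,t)f(y)\,dy-\int_{\R^n}\pd_jE(\xi'-y)\pd_nf(y)\,dy=o(1)\langle\xi'\rangle^{-n}$, and only then integrates against $\pd_iE(x-\xi')$ over $\Si$ to secure the uniform convergence in $x$.
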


Note that $\pd_{x_i}F^y_j(x)$ is a distribution since it may produce delta function at $y$. This lemma shows that the zero time limit of the Green tensor is exactly the Helmholtz projection in $\R^n_+$, given in \eqref{Helmholtz.formula}.
We will show uniform convergence in Lemma \ref{th6.2} where we assume $\bP u_0\in C^1_c(\overline{\R^n_+}) $, allowing nonzero tangential components of $u_0|_\Si$, and show $L^q$ convergence in Lemma \ref{th:Gij-initial-Lq} where we assume $ u_0\in L^q(\R^n_+) $ but do not assume $u_0=\bP u_0$.

\begin{proof} (a) We may extend $f$ to $\R^n$ by setting $f(y)=0$ for $y_n \le 0$.
Recall that
$$
G_{ij}(x,y,t)=\tilde{G}(x,y,t)+W_{ij}(x,y,t), \ \ \mbox{with}\ \ \tilde{G}(x,y,t)=S_{ij}(x-y,t)-\epsilon_jS_{ij}(x-y^*,t).
$$
By \eqref{Sijstar} and Lemma \ref{Oseen-initial},
\begin{equation}\label{0817a}
\lim _{t \to 0_+} \int_{\R^n_+} \tilde G_{ij}(x,y,t)  f(y) dy=\de_{ij} f(x)+\pd_i\int_{\R^n_+}\pd_j\big[E(x-y)-\epsilon_{j}E(x-y^{*})\big]f(y)\,dy,
\end{equation}
uniformly in $x\in \R^n_+$. Now we consider the contribution from $W_{ij}(x,y,t)$. By \eqref{eq_Wij_formula0} and \eqref{eq_def_Kij},
\[
W_{ij}(x,y,t)=-2\int_{-\infty}^\infty \int_{\Si} K_{in}(x-\xi',t-s)S_{nj}(\xi'-y,s)\,d\xi'ds = W_{ij,1}(x,y,t) + W_{ij,2}(x,y,t),
\]
where
\EQN{
W_{ij,1}(x,y,t)&=-2\int_{-\infty}^\infty \int_{\Si} \tilde K_{in}(x-\xi',t-s)S_{nj}(\xi'-y,s)\,d\xi'ds ,
\\
W_{ij,2}(x,y,t)&=4 \int_{\Si}\pd_iE(x-\xi')S_{nj}(\xi'-y,t)\,d\xi',
}
and $\tilde K_{ij}$ is the sum of the first two terms in the definition \eqref{eq_def_Kij} of $K_{ij}$. By \eqref{eq_estKij}, \eqref{eq_estSij}, change of variable $s=u^2$ and Lemma \ref{lemma2.2},
%\EQN{
%|W_{ij,1}|
%&\lesssim~\int_0^t\int_{\Si}\frac1{\sqrt{s}(|x-\xi'|+\sqrt{s})^n}\,\frac1{(|\xi'-y|+\sqrt{t-s})^n}\,d\xi'ds\\
%&\le~\int_0^t\int_{\Si}\frac1{\sqrt{s}|x-\xi'|^n}\,\frac1{|\xi'-y|^n}\,d\xi'ds
%=2\sqrt{t}\int_{\Si}\frac1{|x-\xi'|^{n}}\,\frac1{|\xi'-y|^n}\,d\xi'\\&
%\leq \sqrt{t}|x-y^*|^{-n}\bke{x_n^{-1}+y_n^{-1}}.
%}
\EQN{
|W_{ij,1}|
&\lesssim\int_0^t\int_{\Si}\frac1{\sqrt{s}(|x-\xi'|+\sqrt{s})^{n-1} (x_n+\sqrt{s})}\,\frac1{(|\xi'-y|+\sqrt{t-s})^n}\,d\xi'ds\\
&\le\int_0^t\int_{\Si}\frac1{\sqrt{s} (x_n+\sqrt{s}) |x-\xi'|^{n-1}}\,\frac1{|\xi'-y|^n}\,d\xi'ds
=2 \log\bke{1+\frac{\sqrt{t}}{x_n}} \int_{\Si}\frac1{|x-\xi'|^{n-1}}\,\frac1{|\xi'-y|^n}\,d\xi'\\
&\lec \log\bke{1+\frac{\sqrt{t}}{x_n}} \bket{|x-y^*|^{-n} + |x-y^*|^{-n} \log \frac{|x-y^*|}{x_n} + |x-y^*|^{-(n-1)}y_n^{-1}}.
%&\lec \log\bke{1+\frac{\sqrt{t}}{x_n}} |x-y^*|^{-(n-1)}\bke{x_n^{-1}+y_n^{-1}}.
}
From this, one has
$$
\lim _{t \to 0_+}\int_{\R^n_+}  W_{ij,1}(x,y,t)  f(y) dy=0
$$
for all $x \in \R^n_+$, and uniformly for $x_{n}\ge \delta>0$.
On the other hand, by Remark \ref{rem2.1}, $W_{ij,2}(x,y,t)$ for $x_n,y_n>0$ as $t\to 0_+$ formally tends to
\EQS{\label{0722b}
4\int_{\Si}\pd_iE(x-\xi')\pd_n \pd_j E(\xi'-y)\,d\xi'&=
-4\pd_{x_i}\pd_{y_j}\int_{\Si}E(x-\xi')\pd_nE(\xi'-y)\,d\xi'\\&=
-2\pd_{x_i}\pd_{y_j}\int_{\Si}E(\xi'-x)P_0(y-\xi')\,d\xi'\\&=
-2\frac{\pd}{\pd x_i}\frac{\pd}{\pd y_j}  E(x-y^*)=
2\ep_j\pd_i\pd_j E(x-y^*),
}
where $P_0=-2\pd_nE$ and we've used \eqref{KMT2.32} for the third equality. It is in the sense of functions since its singularity is at $y=x^* \not \in \R^n_+$. Thus
\EQN{
&\int_{\R^n_+}  W_{ij,2}(x,y,t)  f(y) dy -
\int_{\R_+^{n}}2\ep_j\pd_i\pd_j E(x-y^{*})
f(y)\,dy
\\ & =\int_{\R^n}\!  4 \int_{\Si} \pd_iE(x-\xi')S_{nj}(\xi'-y,t) f(y) dy
-\int_{\R^n}\! 4 \int_{\Si}\pd_iE(x-\xi') \pd_j E(\xi'-y) d\xi' \pd_n  f(y) dy
\\ &= 4\int_{\Si}  \bket{ \int_{\R^n} S_{nj}(\xi'-y,t)  f(y) dy -   \int_{\R^n} \pd_j E(\xi'-y) \pd_n f(y) dy}  \pd_iE(x-\xi')d\xi'.
}
For the first equality we used \eqref{0722b} and integrated by parts in $y_n$ in the second integral using $f\in C^1_c(\R^n_+) $.
For the second equality we used the Fubini theorem.
By Lemma \ref{Oseen-initial} and \thref{lemma2.2}, the above is bounded by
\[
\lec \int_{\Si} \frac {o(1)} {\bka{\xi'}^{n}}\frac 1{|x-\xi'|^{n-1}}  d\xi' \lec \frac{o(1)}{\bka{x}^{n-1}}.
\]
The combination of the above and \eqref{0817a} give Part (a).

Part (b) is a consequence of Part (a) and Remark \ref{rem2.6}.
\end{proof}

\medskip

Finally we derive a formula for the pressure tensor $g_j$, to be used to estimate $g_j$ in \S\ref{S5}, and show symmetry of $G_{ij}$ in \S\ref{sec7}.

\begin{prop}[The pressure tensor $g_j$]\thlabel{gj-decomp}
For $x,y\in\R^n_+$, $t\in \R$, and $j=1,\ldots,n$ we have
\EQS{\label{eq_formula_gj}
g_j(x,y,t) =  \widehat w_j(x,y,t) - F^y_j(x) \de(t),
}
where $\widehat w_j(x,y,t) $ is a function with $\widehat w_j(x,y,t) =0$ for $t \le 0$ and, for $t>0$,
\EQS{\label{w-formula}
\widehat w_j(x,y,t)
=& - \sum_{i<n} 8\int_0^t\int_{\Si}\pd_i \pd_nA(\xi',x_n,\tau)\pd_nS_{ij}(x'-y'-\xi',-y_n,t-\tau)\,d\xi'd\tau\\
&+ \sum_{i<n} 4\int_{\Si}\pd_i E(x-\xi') \pd_nS_{ij}(\xi'-y,t)\,d\xi'\\
&+ 8\int_{\Si} \pd_nA(\xi',x_n,t) \pd_n\pd_jE(x'-y'-\xi',-y_n)\, d\xi'.
}
\end{prop}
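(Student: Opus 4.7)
The plan is to exploit the splitting $\pi=\tilde\pi+\hat\pi$ inherited from $u=\tilde u|_{\R^n_+}+\hat u$ of \eqref{eq_def_Gij}. The whole-space Oseen pressure $\tilde\pi$ driven by ${\bf E}u_0$ is delta-in-time: since $s_j(x,t)=-\pd_jE(x)\de(t)$ and the reflection \eqref{div0-extension} give
\[
\tilde\pi(x,t)=-\sum_{j}\int_{\R^n_+}\bigl[\pd_jE(x-y)+\epsilon_j\pd_jE(x-y^*)\bigr](u_0)_j(y)\,dy\,\de(t)=-\int_{\R^n_+}F^y_j(x)(u_0)_j(y)\,dy\,\de(t),
\]
$\tilde\pi$ accounts for the $-F^y_j(x)\de(t)$ summand of $g_j$. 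For $\hat\pi$ I feed $\phi=-\tilde u|_\Si$ into the \emph{regularized} Solonnikov formula \eqref{eq84_Solo1968}. By \eqref{eq_tildeGij}, $\phi_k\equiv 0$ for $k<n$ and $\phi_n(\xi',s)=-2\sum_j\int_{\R^n_+}S_{nj}(\xi'-y,s)(u_0)_j(y)\,dy$, so only the $i=n$ contributions to \eqref{eq84_Solo1968} survive; factoring out $(u_0)_j$ exhibits the pressure tensor contribution from $\hat\pi$ as $T_1+T_2+T_3$ with
\[
T_1+T_2=-4\pd_n^2\int_\Si E(x-\xi')S_{nj}(\xi'-y,t)\,d\xi'-4\int_\Si E(x-\xi')\pd_tS_{nj}(\xi'-y,t)\,d\xi',
\]
\[
T_3=8(\pd_t-\De_{x'})\int_0^t\int_\Si\pd_nA(x-\xi',t-\tau)S_{nj}(\xi'-y,\tau)\,d\xi'\,d\tau.
\]

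To identify $T_1+T_2$ with the middle term of \eqref{w-formula}, I use three facts at the evaluation point $z=\xi'-y\ne 0$: the homogeneous Oseen equation $\pd_tS_{nj}=\De S_{nj}=\De'_\xi S_{nj}+\pd_{y_n}^2S_{nj}$, the harmonicity identity $\De'_\xi E(x-\xi')=-\pd_n^2E(x-\xi')$, and the divergence-free relation $\pd_nS_{nj}=-\sum_{k<n}\pd_kS_{kj}$. Integrating $\De'_\xi$ by parts onto $E$ and cancelling the $\pd_n^2\int E\,S_{nj}$ contributions collapses $T_1+T_2$ to $-4\int_\Si E\,\pd_{y_n}^2S_{nj}\,d\xi'$. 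The chain-rule signs on $z=\xi'-y$ ($\pd_{y_n}z_n=-1$, $\pd_{\xi_k}z_k=+1$ for $k<n$) convert the divergence-free relation into $\pd_{y_n}^2S_{nj}(\xi'-y,t)=\sum_{k<n}\pd_{\xi_k}\bigl[\pd_nS_{kj}(\xi'-y,t)\bigr]$; a final integration by parts in $\xi_k$ (tangential boundary at infinity vanishing by Gaussian decay of $S_{kj}$) produces $4\sum_{k<n}\int_\Si\pd_kE(x-\xi')\pd_nS_{kj}(\xi'-y,t)\,d\xi'$, the middle term of \eqref{w-formula}.

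For $T_3$, the substitutions $\eta'=x'-\xi'$ and $\tau\mapsto t-\tau$ place the integrand in the form $\pd_nA(\eta',x_n,\tau)S_{nj}(x'-\eta'-y',-y_n,t-\tau)$; crucially $\pd_nA(\eta',x_n,\tau)$ carries no $x'$-dependence, so $\De_{x'}$ acts only on $S_{nj}$. A Leibniz differentiation in $t$ splits the outer $\pd_t$ into a $\tau=t$ boundary contribution in which $S_{nj}$ is evaluated at time $0_+$, and an interior integral. By Lemma \ref{Oseen-initial} and $-y_n\ne 0$ (so the $\de$-component vanishes), the boundary term equals $8\int_\Si\pd_nA(\eta',x_n,t)\pd_n\pd_jE(x'-\eta'-y',-y_n)\,d\eta'$, the third term of \eqref{w-formula}. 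In the interior, $\pd_tS_{nj}=\De_{x'}S_{nj}+\pd_{y_n}^2S_{nj}$ cancels against the $\De_{x'}$ piece, leaving $8\int_0^t\!\int_\Si\pd_nA\cdot\pd_{y_n}^2S_{nj}\,d\eta'\,d\tau$; reusing the chain-rule identity $\pd_{y_n}^2S_{nj}=\sum_{k<n}\pd_{\eta_k}\pd_nS_{kj}$ and integrating by parts in $\eta_k$ produces the first term of \eqref{w-formula}. Adding the $\tilde\pi$ and $\hat\pi$ contributions then gives $g_j=\widehat w_j-F^y_j(x)\de(t)$ as claimed.

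The delicate step is the Leibniz boundary at $\tau=t$: one must pass $t-\tau\to 0_+$ under the $\eta'$-integral, which relies on the pointwise limit $S_{nj}(\cdot,s)\to\pd_n\pd_jE$ from Lemma \ref{Oseen-initial} together with the decay of $\pd_nA$ in $\eta'$ from \eqref{eq_estA} to secure absolute convergence of the resulting surface integral. Routing the computation through the regularized formula \eqref{eq84_Solo1968} rather than the distributional kernel \eqref{eq_def_kj} is what sidesteps the non-integrable $1/(t-\tau)$ singularity of the latter.
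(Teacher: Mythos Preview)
Your overall strategy---split $g_j=\tilde g_j+w_j$ and feed $\phi_n=-2S_{nj}(\xi'-y,\cdot)$ into \eqref{eq84_Solo1968}---matches the paper, and your manipulations producing the three function terms of $\widehat w_j$ for $t>0$ are essentially correct. The gap is in the $\de(t)$ bookkeeping, where two errors happen to cancel.

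First, your claim that the Oseen side already gives $-F_j^y(x)\de(t)$ is wrong: the reflection \eqref{div0-extension} yields $\tilde g_j(x,y,t)=s_j(x-y,t)-\epsilon_j s_j(x-y^*,t)=-\bigl[\pd_jE(x-y)-\epsilon_j\pd_jE(x-y^*)\bigr]\de(t)$, with a \emph{minus} on the reflected term (exactly as in \eqref{tdG.def} for $\tilde G_{ij}$), so $\tilde g_j\neq -F_j^y\de(t)$; the discrepancy is $-2\epsilon_j\pd_jE(x-y^*)\de(t)$. Second, your $T_2$ step uses $\pd_tS_{nj}(\xi'-y,t)=\De S_{nj}(\xi'-y,t)$, which is only the classical identity for $t>0$. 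As a distribution on the whole time line the relation is $(\pd_t-\De)S_{nj}+\pd_n s_j=\de_{nj}\de\de$, and since $s_j=-\pd_jE\,\de(t)$ the term $-\pd_n s_j$ contributes $-4\de(t)\int_\Si E(x-\xi')\pd_n\pd_jE(\xi'-y)\,d\xi'=-2\epsilon_j\pd_jE(x-y^*)\de(t)$ (via \eqref{KMT2.32}) to $w_j$. Equivalently, $\phi_n$ jumps at $t=0$ from $0$ to $-2\pd_n\pd_jE(\xi'-y)$ (by \eqref{Oseen-initial1}), so $\pd_t\phi_n$ in \eqref{eq84_Solo1968} carries this $\de(t)$. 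This is precisely the paper's $\overline w_j\de(t)$, and only \emph{after} adding it to $\tilde g_j$ does one obtain $-F_j^y\de(t)$ (see \eqref{3.29}). Your proof reaches the right formula only because these two omissions are equal and opposite; as written the argument does not correctly identify where the singular part $-F_j^y\de(t)$ comes from.
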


\begin{proof}
For fixed $j$, the Green tensor  $({G}_{ij}, g_j)$ satisfies \eqref{Green-def-distribution} in $\R^n_+$.
Let
\EQS{
\tilde{g}_j(x,y,t) &=s_j(x-y,t)-\epsilon_j s_j(x-y^*,t)\\
&= - \bkt{  \pd_j E(x-y)-\ep_j \pd_j  E(x-y^*)}\delta(t).
}
The pair $(\tilde{G}_{ij},\tilde g_j)$ satisfies in $\R^n$
\EQS{\label{0903-0}
(\pd_t-\De_x)\tilde{G}_{ij}(x,y,t)+\pd_{x_i}\tilde g_j(x,y,t)&=\de_{ij}\de_y(x)\de(t)  - \ep_j \de_{ij} \de_{y^*}(x) \de(t),
\\
\textstyle \sum_{i=1}^n \pd_{x_i}\tilde G_{ij} &=0.
}
Thus the difference
$(W_{ij}, w_j)= ({G}_{ij}, g_j) - (\tilde{G}_{ij},\tilde g_j)$ solves
 in $\R^n_+$
\EQ{\label{0904a}
\left\{\begin{array}{l}
(\pd_t-\De_x)W_{ij}(x,y,t)+\pd_{x_i}w_j(x,y,t)=0,\quad \sum_{i=1}^n \pd_{x_i}W_{ij}=0,
\\[2mm]
W_{ij}(x,y,t)|_{x_n=0}=-2\,\de_{in}S_{nj}(x'-y,t).
\end{array}\right.
}
By \eqref{eq84_Solo1968}, we have
\EQS{\label{0904e}
w_j(x,y,t)
&= -4\int_{\Si}\pd_n^2E(x-\xi')S_{nj}(\xi'-y,t)\,d\xi' -4\int_{\Si}E(x-\xi')\pd_tS_{nj}(\xi'-y,t)\,d\xi'\\
&\quad +8(\pd_t-\De_{x'})\int_{-\infty}^\infty\int_{\Si}\pd_nA(x-\xi',t-\tau)S_{nj}(\xi'-y,\tau)\,d\xi'd\tau\\
&= I_1+I_2+I_3.
}
Using $(\pd_t -\De) S_{ij} + \pd_i s_j = \de_{ij}\de(x) \de(t)$, we have
\EQS{\label{0905a}
I_2 =& -4\int_{\Si}E(x-\xi')[\De S_{nj}(\xi'-y,t) - \pd_n s_j(\xi'-y,t)]\, d\xi'\\
=& -4\int_{\Si} \De_{x'} E(x-\xi')S_{nj}(\xi'-y,t)\, d\xi' -4\int_{\Si} E(x-\xi') \pd_n^2 S_{nj}(\xi'-y,t)\, d\xi'\\
&~ -4 \de(t) \int_{\Si} E(x-\xi') \pd_n\pd_j E(\xi'-y)\, d\xi'
}
The first term of $I_2$ in \eqref{0905a} cancels $I_1$ since $\De E(x-\xi')=0$, and the last term of \eqref{0905a} is
$\overline w_j(x,y) \de(t)$
with
\EQN{
\overline w_j(x,y) %
&= \pd_{y_j} 4\int_{\Si}E(x-\xi')\pd_n  E(\xi'-y)\,d\xi'
 = \pd_{y_j} 2 \int_{\Si}E(\xi'-x)P_0(y-\xi')\,d\xi' \\
 &= 2 \pd_{y_j} E(x-y^*) = -2 \ep_j \pd_j E(x-y^*)
}
using \eqref{KMT2.32}. Note that
\EQ{\label{3.29}
\tilde g_j(x,y,t) + \overline w_j(x,y) \de(t)  = - F_j^y(x) \de(t).
}

Using $(\pd_t -\De) S_{ij} + \pd_i s_j = \de_{ij}\de(x) \de(t)$ again, we have
\EQN{
I_3 =&~ 8(\pd_t-\De_{x'})\int_{-\infty}^\infty\int_{\Si}\pd_nA(\xi',x_n,\tau)S_{nj}(x'-y'-\xi',-y_n,t-\tau)\,d\xi'd\tau\\
=&~ 8\int_{-\infty}^\infty\int_{\Si}\pd_nA(\xi',x_n,\tau)\bkt{\pd_n^2 S_{nj} - \pd_n s_j}(x'-y'-\xi',-y_n,t-\tau)\, d\xi'd\tau\\
=&~ 8\int_{-\infty}^\infty\int_{\Si}\pd_nA(\xi',x_n,\tau)\pd_n^2 S_{nj}(x'-y'-\xi',-y_n,t-\tau)\,d\xi'd\tau \\
& \qquad + 8\int_{\Si} \pd_nA(\xi',x_n,t) \pd_n\pd_jE(x'-y'-\xi',-y_n)\, d\xi'.
}

Denote
$\widehat w_j(x,y,t) = w_j(x,y,t) - \overline w_j(x,y) \de(t) $.
We conclude
\EQS{\label{0715a}
&\widehat w_j(x,y,t)
= 8\int_{-\infty}^\infty\int_{\Si}\pd_nA(\xi',x_n,\tau)\pd_n^2 S_{nj}(x'-y'-\xi',-y_n,t-\tau)\,d\xi'd\tau
\\
& -4\int_{\Si} E(x-\xi') \pd_n^2 S_{nj}(\xi'-y,t) + 8\int_{\Si} \pd_nA(\xi',x_n,t) \pd_n\pd_jE(x'-y'-\xi',-y_n)\, d\xi' .
}
Using $\pd_n^2 S_{nj} = -\sum_{i<n} \pd_i \pd_n S_{ij}$ and integrating by parts in $\xi_i$ the first two terms,
we get \eqref{w-formula} for $\widehat w_j(x,y,t)$.
Integration by parts is justified since the singularities of the integrands are outside of $\Si$, and the integrands have sufficient decay as $|\xi'| \to \infty$ by \eqref{eq_estSij} and \eqref{eq_estA}
even for $n=2$.
This and \eqref {3.29} prove the proposition.
\end{proof}

\begin{remark}\label{rem3.5}
(i) Eq.~\eqref{w-formula} is better than \eqref{0715a} because its estimate %
allows more decay in $|x-y^*|+\sqrt t$, i.e., in tangential direction. However, it has a boundary singularity at $x_n=0$; see Remark \ref{rem7.1}.

(ii)
With \thref{gj-decomp},
the pressure formula \eqref{eq2.12} in the case $u_0=0$ becomes
\EQS{\label{0906a}
\pi(x,t) &= %
\int_{-\infty}^\infty \int_{\R^n_+} g(x,y,t-s)\cdot f(y,s)\,dy\,ds
\\
&= %
\int_{0}^t \int_{\R^n_+} \widehat w(x,y,t-s)\cdot f(y,s)\,dy\,ds
- \int_{\R^n_+}F_j^y(x) \cdot f_j(y,t)\,dy.
}
The last term comes from the Helmholtz projection of $f$ at time $t$ (see \eqref{Helmholtz-p1}-\eqref{Fj.def}), and corresponds to  the pressure formula above \eqref{eq_def_Sij} in the whole space case.  The first term of \eqref{0906a} shows that $\pi(\cdot,t)$ also depends on the value of $f$ at times $s<t$. There is no such term in the whole space case. This history-dependence property of the pressure in the half space case is well known, see e.g.~\cite{MR896770}.
\end{remark}

\begin{remark}[Kernel of Green tensor]\label{Gij.kernel}
Consider
\[
\mathbf G = \bket{u= \nb h\in C^0( \R^n_+ ; \R^n), \, \lim_{|x|\to \infty}h(x)=0}.
\]
If $u_0 \in \mathbf G$, then $u(x,t)$ given by \eqref{ui.def} is identically zero, using integration by parts in \eqref{ui.def}. The whole thing vanishes because $\sum_j \pd_{y_j}G_{ij}=0$ and $G_{in}|_{y_n=0}=0$.
Thus $\mathbf G$ is contained in the kernel of the Green tensor.
In fact, it is also inside the kernel of the Helmholtz projection in $L^q(\R^n_+)$, $1<q<\infty$, if we impose suitable spatial decay on functions in $\mathbf G$.
\end{remark}

\begin{remark}[Relation between stationary and nonstationary Green tensors]\label{stationary-nonstationary}
Denote the Green tensor of the stationary Stokes system
in the half space as $G_{ij}^0(x,y)$. For $n \ge 3$ we can show
\EQ{
\int_\R G_{ij}(x,y,t)\,dt  = G_{ij}^0(x,y).
}
The integral does not converge for $n=2$.
The idea is to decompose $G_{ij}(x,y,t)=\tilde G_{ij}(x,y,t)+W_{ij}(x,y,t)$ and
show their time integrations converge to corresponding terms in \cite[(2.25)]{KMT18}.
This relation gives an alternative proof of symmetry $G_{ij}^0(x,y)=G_{ji}^0(y,x)$ for $n\ge3$ using \thref{prop1}.
\end{remark}

\section{Revised formula for the Green tensor}
\label{sec3}
In this section we derive a second formula for the remainder term  $W_{ij}$ which is suitable for pointwise estimate. We also use it to get a new formula for the Green tensor in Lemma \ref{Green-formula}.

We first recall the Poisson kernel $P(x,\xi',t)$ for $\pd_t-\De$ in the half-space $\R^n_+$
for $x\in\R^n_+$ and $\xi'\in \Si$,
\begin{equation}\label{eq_P=-2dnG}P(x,\xi',t)=-2\,\pd_n\Ga(x-\xi',t).\end{equation}
The following lemma is based on Poisson's formula, and can be used to \emph{remove the time integration} in the first formula \eqref{eq_Wij_formula0}. It is the time-dependent version of \eqref{KMT2.32}.

\begin{lem}\thlabel{Poisson_integral}
Let $n \ge 2$. For $x\in\mathbb{R}^n_+$, $y\in\R^n$ and $t>0$,
\EQ{\label{eq3.2}
\int_0^t\int_\Si\Gamma(\xi'-y,s)P(x,\xi',t-s)\,d\xi'\,ds=\Gamma(x-y^\sharp,t),\quad y^\sharp=(y',-|y_n|).}
Note that $y^\sharp=y^*$ if $\R^n_+$, and $y^\sharp=y$ if $y\in\R^n_-$.
\end{lem}
\begin{proof} First we consider $y\in\R^n_+$.
Since $u(x,t)=\Gamma(x-y^*,t)$ satisfies \[\left\{\begin{array}{ll}(\pd_t-\Delta)u(x,t)=0&\  \text{ for }(x,t)\in\mathbb{R}^n_+\times(0,\infty),\\[1mm]
u(x',t)=\Gamma(x'-y^*,t)=\Gamma(x'-y,t) &\  \text{ for }(x',t)\in\pd\mathbb{R}^n_+\times(0,\infty),\\[1mm]
u(x,0)=\Gamma(x-y^*,0)=\delta(x-y^*)=0&\  \text{ for }x\in\mathbb{R}^n_+,\end{array}\right.\] by Poisson's formula for $\pd_t-\Delta$ in $\mathbb{R}^n_+$, we have
\[
\int_0^t\int_\Si\Gamma(\xi'-y,s)P(x,\xi',t-s)\,d\xi'\,ds=\Gamma(x-y^*,t) .
\]
For $y\in\R^n_-$, $y^*\in\R^n_+$. Since $\Ga(\xi'-y,s)=\Ga(\xi'-y^*,s)$,
\EQN{
\int_0^t\int_\Si\Gamma(\xi'-y,s)P(x,\xi',t-s)\,d\xi'\,ds&=\int_0^t\int_\Si\Gamma(\xi'-y^*,s)P(x,\xi',t-s)\,d\xi'\,ds\\
&=\Gamma(x-y^{**},t)=\Gamma(x-y,t).
}
The combination of the two cases $y\in\R^n_+$ and $y\in\R^n_-$ gives \eqref{eq3.2}.
\end{proof}

\medskip

With \thref{Poisson_integral} in hand, we are able to derive the second formula for $W_{ij}$.
\begin{lem}[The second formula for $W_{ij}$]\thlabel{lem_Wij_formula} For $x,y\in\R^n_+$ and $i,j=1,\ldots,n$,
\begin{equation}\label{eq_Wij_formula}
\begin{aligned}W_{ij}(x,y,t)&=-2\de_{in}\de_{nj}\Ga(x-y^*,t)+2\de_{in}\ep_j\Ga_{nj}(x-y^*,t)-4\de_{nj}C_i(x,y,t)-4H_{ij}(x,y,t) + V_{ij}(x,y,t),\end{aligned}\end{equation}
where
\begin{equation}\label{eq_def_Ci}
C_i(x,y,t)=\int_0^{x_n}\int_\Si\pd_n\Ga(x-y^*-z,t)\,\pd_iE(z)\,dz'\,dz_n,\end{equation}
%and 
\begin{equation}\label{eq_def_Hij}H_{ij}(x,y,t)=
-\int_{\mathbb{R}^n}\pd_{y_j}C_i(x,y+w,t)\pd_nE(w)dw,
\end{equation}
and
\EQ{\label{eq-Vij-def}
V_{ij}(x,y,t) = -2\de_{in} \La_j(x,y,t) -4\int_0^{x_n} \int_{\Si} \pd_{x_n}\La_j(x-z,y,t) \pd_iE(z)\, dz'dz_n.
}
Here
\EQ{\label{eq-La-def}
\La_j(x,y,t) = \pd_{y_n}\pd_{y_j} \int_{w_n<-y_n}G^{ht}(x,y+w,t) E(w)\, dw,
}
where $G^{ht}(x,y,t) = \Ga(x-y,t) - \Ga(x-y^*,t)$ is the Green function of heat equation in $\R^n_+\times(0,\infty)$.
Note that $C_i(x,y,t)$ is defined in $\R^n_+ \times \R^n \times (0,\infty)$, and $y_n $ is allowed to be negative.
\end{lem}

\medskip

\begin{remark}\thlabel{rem_3.1}
We can show that $C_i$, $H_{ij}$, and $V_{ij}$ are well defined using Lemma \ref{lemma2.2}. The $x'$- and $y'$-derivatives are interchangeable for $C_i$, $H_{ij}$, and $V_{ij}$: $\pd_{x'}^lC_i(x,y,t)=(-1)^l\pd_{y'}^lC_i(x,y,t)$ and   similarly for $H_{ij}$, and $V_{ij}$.

\end{remark}

\medskip

\begin{remark}
The formula \eqref{eq_Wij_formula} is better than \eqref{eq_Wij_formula0} because the definitions of the terms on the right side do not involve integration in time.
If an integration in time was involved,
there might be singularities at $s=0,t$ when we use the estimates of $K_{ij}$ and $S_{ij}$ in \eqref{eq_estKij} and \eqref{eq_estSij}, respectively. Their estimates would be worse and contain, for example, singularities in $x_n$ for $x_n$ small. The quantity $C_i(x,t)$ studied by Solonnikov \cite[(66)]{MR0171094} corresponds to our $C_i(x,0,t)$ with $y=0$ and he did not study full $C_i(x,y,t)$ with $y\not=0$ nor $H_{ij}(x,y,t)$.
\end{remark}

\medskip

\begin{remark}
The formula \eqref{eq_Wij_formula} corresponds to that of the stationary case in \cite[(2.36)]{KMT18}:
\[W_{ij}(x,y)=-\left(\de_{in}-x_n\pd_{x_i}\right)\left(\de_{nj}-y_n\pd_{y_j}\right)E(x-y^*).\]
\end{remark}

\medskip

\begin{proof}[Proof of \thref{lem_Wij_formula}]
 To obtain \eqref{eq_Wij_formula}, we use the formulae \eqref{eq_def_Sij} and \eqref{eq_def_Kij} and split the integral of \eqref{eq_Wij_formula0} into six parts as \[ \int_{-\infty}^{\infty}\int_\Si K_{in}(x-\xi',t-s)S_{nj}(\xi'-y,s)\,d\xi'\,ds=I_1+I_2+I_3+I_4+I_5+I_6,\]
where
\begin{align*}
I_1&=-2\,\de_{in}\de_{nj} \int_{-\infty}^{\infty}\int_\Si \pd_n\Ga(x-\xi',t-s)\Ga(\xi'-y,s)\,d\xi'ds,\\
I_2&=-4\,\de_{nj} \int_{-\infty}^{\infty}\int_\Si\pd_{x_n}\left[\int_0^{x_n}\!\int_\Si\pd_n\Ga(z,t-s)\,\pd_iE(x-\xi'-z)\,dz'\,dz_n\right]\Ga(\xi'-y,s)\,d\xi'ds,\\
I_3&=-2\,\de_{nj} \int_{-\infty}^{\infty}\int_\Si \pd_iE(x-\xi')\de(t-s)\Ga(\xi'-y,s)\,d\xi'ds,\\
I_4&=-2\,\de_{in} \int_{-\infty}^{\infty}\int_\Si \pd_n\Ga(x-\xi',t-s)\int_{\R^n}\pd_n\pd_j\Ga(\xi'-y-w,s)E(w)\,dwd\xi'ds,\\
I_5&=-4 \int_{-\infty}^{\infty}\int_\Si\pd_{x_n}\left[\int_0^{x_n}\int_\Si\pd_n\Ga(z,t-s)\,\pd_iE(x-\xi'-z)\,dz'\,dz_n\right] \\
&\hspace{70mm}\cdot\left[\int_{\R^n}\pd_n\pd_j\Ga(\xi'-y-w,s)E(w)\,dw\right]d\xi'ds,\\
I_6&=-2 \int_{-\infty}^{\infty}\int_\Si\pd_iE(x-\xi')\de(t-s)\int_{\R^n}\pd_n\pd_j\Ga(\xi'-y-w,s)E(w)\,dw\,d\xi'ds.
\end{align*}

We use \thref{Poisson_integral} to compute $I_1,I_2,I_4,I_5$. Indeed, we have
\[\begin{aligned}I_1=&-2\de_{in}\de_{nj}\int_0^t\int_\Si \pd_n\Ga(x-\xi',t-s)\Ga(\xi'-y,s)\,d\xi'ds\\=&~\de_{in}\de_{nj}\int_0^t\int_\Si P(x,\xi',t-s)\Ga(\xi'-y,s)\,d\xi'ds\\=&~\de_{in}\de_{nj}\Ga(x-y^\sharp,t) = \de_{in}\de_{nj} \Ga(x-y^*,t),\end{aligned}\]
where we used \eqref{eq_P=-2dnG}, \thref{Poisson_integral} and $y\in\R^n_+$. And, by changing the variables and Fubini's theorem, we have
\[\begin{aligned}%
I_2=&-4\,\de_{nj}\int_0^t\int_\Si\pd_{x_n}\left[\int_0^{x_n}\int_\Si\pd_n\Ga(x-\xi'-z,t-s)\,\pd_iE(z)\,dz'\,dz_n\right]\Ga(\xi'-y,s)\,d\xi'ds\\
=&-4\,\de_{nj}\pd_{x_n}\left[\int_0^{x_n}\int_\Si\int_0^t\left(\int_\Si\pd_n\Ga(x-\xi'-z,t-s)\Ga(\xi'-y,s)\,d\xi'ds\right)\pd_iE(z)\,dz'dz_n\right].\end{aligned}\]
With the aid of \eqref{eq_P=-2dnG} and \thref{Poisson_integral}, we actually get
\[\begin{aligned}
I_2=&~2\,\de_{nj}\pd_{x_n}\left[\int_0^{x_n}\int_\Si\left(\int_0^t\int_\Si P(x-z,\xi',t-s)\Ga(\xi'-y,s)\,d\xi'ds\right)\pd_iE(z)\,dz'dz_n\right]\\
=&~2\,\de_{nj}\pd_{x_n}\left[\int_0^{x_n}\int_\Si\Ga(x-z-y^\sharp,t)\,\pd_iE(z)\,dz'dz_n\right]\\
=&~ 2\,\de_{nj}\pd_{x_n}\left[\int_0^{x_n}\int_\Si\Ga(x-z-y^*,t)\,\pd_iE(z)\,dz'dz_n\right]\quad \text{(since $y\in\R^n_+$)} \\
=&~2\,\de_{nj}\int_\Si\Ga(x'-z'-y^*,t)\,\pd_iE(z',x_n)\,dz'
+2\,\de_{nj} C_i(x,y,t),
\end{aligned}\]
where $C_i(x,y,t)$ is as defined in \eqref{eq_def_Ci}.

Moreover, rearranging the integrals and derivatives and using \eqref{eq_P=-2dnG}, we obtain
\[\begin{aligned}I_4=&-2\,\de_{in}\int_0^t\int_\Si \pd_n\Ga(x-\xi',t-s)\int_{\R^n}\pd_n\pd_j\Ga(\xi'-y-w,s)E(w)\,dwd\xi'ds\\
=&-2\,\de_{in}\pd_{y_n}\pd_{y_j}\left[\int_0^t\int_\Si \pd_n\Ga(x-\xi',t-s)\int_{\R^n}\Ga(\xi'-y-w,s)E(w)\,dwd\xi'ds\right]\\
=&~\de_{in}\pd_{y_n}\pd_{y_j}\left[\int_{\R^n}\left(\int_0^t\int_\Si P(x,\xi',t-s)\Ga(\xi'-(y+w),s)d\xi'\,ds\right)E(w)\,dw\right].\end{aligned}\]
Hence, applying Fubini's theorem and \thref{Poisson_integral}, we have
\[\begin{aligned}I_4=&~\de_{in}\pd_{y_n}\pd_{y_j}\left[\int_{\R^n}\Ga(x-(y+w)^\sharp,t)E(w)\,dw\right]\\
=&~\de_{in}\pd_{y_n}\pd_{y_j}\left[\int_{w_n>-y_n}\Ga(x-(y+w)^*,t)E(w)\,dw + \int_{w_n<-y_n}\Ga(x-y-w,t)E(w)\,dw\right]\\
=&~\de_{in}\pd_{y_n}\pd_{y_j}\left[\int_{\R^n}\Ga(x-(y+w)^*,t)E(w)\,dw\right.\\
&\qquad\qquad\qquad\qquad\qquad\qquad\qquad\quad \left.+ \int_{w_n<-y_n}(\Ga(x-y-w,t) - \Ga(x-(y+w)^*,t)) E(w)\,dw\right]\\
=&~-\de_{in}\ep_j \Ga_{nj}(x-y^*,t) + \de_{in}\pd_{y_n}\pd_{y_j} \int_{w_n<-y_n} G^{ht}(x,y+w,t) E(w)\,dw\\
=&~-\de_{in}\ep_j \Ga_{nj}(x-y^*,t) + \de_{in} \La_j(x,y,t),\end{aligned}\]
where $G^{ht}(x,y,t)=\Ga(x-y,t) - \Ga(x-y^*,t)$ is the Green function of heat equation in $\R^n_+\times(0,\infty)$ and $\La_j(x,y,t)$ is as defined in \eqref{eq-La-def}.

In addition, by changing the variables, Fubini's theorem and \eqref{eq_P=-2dnG}, we get
\[\begin{aligned}I_5
=&-4\int_0^t\int_\Si\pd_{x_n}\left[\int_0^{x_n}\int_\Si\pd_n\Ga(x-\xi'-z,t-s)\,\pd_iE(z)\,dz'dz_n\right]\\
&~~~~~~~~~~~~~~~~~~~~~~~~~~~~~~~~~~~~~~~~~~~~~~~~~~~~~~~~~\cdot\left[\int_{\R^n}\pd_n\pd_j\Ga(\xi'-y-w,s)E(w)\,dw\right]d\xi'ds\\
=&~2\,\pd_{x_n}\left[\int_0^{x_n}\int_\Si\int_{\R^n}\pd_{y_n}\pd_{y_j}\left(\int_0^t\int_\Si P(x-z,\xi',t-s)\Ga(\xi'-(y+w),s)\,d\xi'ds\right)\right.\\&~~~~~~~~~~~~~~~~~~~~~~~~~~~~~~~~~~~~~~~~~~~~~~~~~~~~~~~~~~~~~~~~~~~~~~~~~~~~~~~~\cdot\pd_iE(z)E(w)\,dwdz'dz_n\Big].\end{aligned}\]
Thus, \thref{Poisson_integral} implies
\EQN{ %
I_5=&~2\,\pd_{x_n}\left[\int_0^{x_n}\int_\Si\int_{\R^n}\pd_{y_n}\pd_{y_j}\Ga((x-z)-(y+w)^\sharp,t)\,\pd_iE(z)E(w)\,dwdz'dz_n\right]\\
=&~ 2 \int_\Si\int_{\R^n}\pd_{y_n}\pd_{y_j}\Ga(x'-z'-(y+w)^\sharp,t)\,\pd_iE(z',x_n)E(w)\,dwdz'\\
& \quad +  2\,\int_0^{x_n}\int_\Si \pd_{x_n} \left(\int_{\R^n}\pd_{y_n}\pd_{y_j}\Ga((x-z)-(y-w)^\sharp,t)E(w)\,dw\right)\pd_iE(z)\,dz'dz_n\\
=&~ 2\int_\Si\Ga_{nj}(x'-z'-y,t)\,\pd_iE(z',x_n)\,dz' + 2H_{ij}^\sharp(x,y,t),
} %
where we've used $\Ga(x'-z'-(y+w)^\sharp,t)=\Ga((x'-z'-y)-w,t)$, the functions $\Ga_{ij}$ is defined in \eqref{eq_def_Sij}, and $H_{ij}^\sharp$ is expanded as
\EQN{
H_{ij}^\sharp(x,y,t) &= \int_0^{x_n}\int_{\Si} \pd_{x_n} \pd_{y_n} \pd_{y_j} \left( \int_{w_n>-y_n} \Ga((x-z)-(y+w)^*,t) E(w)\, dw\right.\\
&\qquad\qquad\qquad\qquad\qquad\left. +  \int_{w_n<-y_n} \Ga((x-z)-y-w,t) E(w)\, dw \right) \pd_iE(z)\, dz'dz_n\\
&= \int_0^{x_n}\int_{\Si} \pd_{x_n} \pd_{y_n} \pd_{y_j} \left( \int_{\R^n} \Ga((x^*-z^*-y)-w,t) E(w)\, dw\right.\\
&\qquad\qquad\qquad\qquad\qquad\left. + \int_{w_n<-y_n} G^{ht}(x-z,y+w,t)E(w)\, dw\right) \pd_iE(z)\, dz'dz_n\\
&= H_{ij}(x,y,t) + \int_0^{x_n} \int_{\Si} \pd_{x_n}\La_j(x-z,y,t) \pd_iE(z)\, dz'dz_n,
}
where $H_{ij}$ is defined in \eqref{eq_def_Hij}.

For $I_3$ and $I_6$, a direct computation gives
\[\begin{aligned}I_3
=&-2\,\de_{nj}\int_\Si \pd_iE(x-\xi')\Ga(\xi'-y,t)\,d\xi'\\
=&-2\,\de_{nj}\int_\Si\Ga(x'-z'-y^*,t)\,\pd_iE(z',x_n)\,dz',\end{aligned}\]
and
\[\begin{aligned}I_6
=&-2\int_\Si\pd_iE(x-\xi')\int_{\R^n}\pd_n\pd_j\Ga(\xi'-y-w,t)E(w)\,dwd\xi'\\
=&-2\int_\Si\pd_iE(x-\xi')\Ga_{nj}(\xi'-y,t)\,d\xi'\\=&-2\int_\Si\Ga_{nj}(x'-z'-y,t)\,\pd_iE(z',x_n)\,dz'.
\end{aligned}\]
Combining the above computations of $I_1,\cdots,I_6$, and noting that $I_3$ cancels the first term of $I_2$ while $I_6$ cancels the first term of $I_5$, we get
\EQN{
\sum_{k=1}^6 I_k
=\de_{in}\de_{nj}\Ga(x-y^*,t)+2\,\de_{nj}C_i(x,y,t)-\de_{in}\ep_j\Ga_{nj}(x-y^*,t)  + \de_{in} \La_j(x,y,t) + 2H_{ij}^\sharp(x,y,t).
}
This completes the proof.
\end{proof}

We now  explore a cancellation between $C_i$ and $H_{ij}$ in \eqref{eq_Wij_formula}, and define
\EQS{\label{Hhat_def}
\widehat  H_{ij}(x,y,t)  = H_{ij}(x,y,t) + \de_{nj} C_i (x,y,t).
}
Then \eqref{eq_Wij_formula} becomes
\EQS{\label{new_W}
W_{ij}(x,y,t) = -2\de_{in} \de_{nj} \Ga(x-y^*,t) + 2 \de_{in} \ep_j \Ga_{nj}(x-y^*,t) - 4\widehat H_{ij}(x,y,t) + V_{ij}(x,y,t).
}
This formula will provide better estimates than summing estimates of individual terms in \eqref{eq_Wij_formula}. See Remark \ref{D_estimate-rmk} after \thref{D_estimate}.

We conclude a second formula for the Green tensor.
\begin{lem}\label{Green-formula} The Green tensor satisfies
\EQS{
G_{ij}(x,y,t) &= \de_{ij} \bkt{ \Ga(x-y,t) - \Ga(x-y^*,t) } + \bkt{ \Ga_{ij}(x-y,t) - \ep_i \ep_j \Ga_{ij}(x-y^*,t) }\\
&\quad - 4\widehat H_{ij}(x,y,t) + V_{ij}(x,y,t).
}
\end{lem}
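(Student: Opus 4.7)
The plan is to simply combine the second formula \eqref{new_W} for $W_{ij}$ with the decomposition \eqref{eq_def_Gij} and the definition \eqref{eq_def_Sij} of the Oseen tensor, then check that the mirror-reflected terms collapse to the claimed coefficients. No new analysis is required; the content is purely algebraic book-keeping based on the identity $\ep_j = 1-2\de_{nj}$.

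First, I would substitute $S_{ij}(x,t) = \de_{ij}\Ga(x,t) + \Ga_{ij}(x,t)$ into \eqref{tdG.def} to write
\[
\tilde G_{ij}(x,y,t) = \de_{ij}\bkt{\Ga(x-y,t)-\ep_j\Ga(x-y^*,t)} + \Ga_{ij}(x-y,t) - \ep_j\Ga_{ij}(x-y^*,t).
\]
Then, using \eqref{new_W},
\EQS{
G_{ij}(x,y,t) &= \tilde G_{ij}(x,y,t) + W_{ij}(x,y,t)\\
&= \de_{ij}\Ga(x-y,t) + \Ga_{ij}(x-y,t) - 4\widehat H_{ij}(x,y,t) + R_{ij}(x,y,t),
}
where the ``reflected'' remainder collects the three terms supported at $x-y^*$:
\[
R_{ij}(x,y,t) = -\de_{ij}\ep_j\Ga(x-y^*,t) - 2\de_{in}\de_{nj}\Ga(x-y^*,t) - \ep_j\Ga_{ij}(x-y^*,t) + 2\de_{in}\ep_j\Ga_{nj}(x-y^*,t).
\]

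Next, I would show that $R_{ij}(x,y,t) = -\de_{ij}\Ga(x-y^*,t) - \ep_i\ep_j\Ga_{ij}(x-y^*,t)$, which finishes the proof. For the scalar part, using $\ep_j-1=-2\de_{nj}$ and $\de_{ij}\de_{nj}=\de_{in}\de_{nj}$,
\[
-\de_{ij}\ep_j - 2\de_{in}\de_{nj} = -\de_{ij} - \de_{ij}(\ep_j-1) - 2\de_{in}\de_{nj} = -\de_{ij} + 2\de_{ij}\de_{nj} - 2\de_{in}\de_{nj} = -\de_{ij},
\]
so the scalar contribution is $-\de_{ij}\Ga(x-y^*,t)$. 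For the tensorial part, the identity $\de_{in}\Ga_{ij} = \de_{in}\Ga_{nj}$ (both sides force $i=n$) gives
\[
-\ep_j\Ga_{ij} + 2\de_{in}\ep_j\Ga_{nj} = -\ep_j\Ga_{ij} + 2\de_{in}\ep_j\Ga_{ij} = -\ep_j(1-2\de_{in})\Ga_{ij} = -\ep_i\ep_j\Ga_{ij},
\]
evaluated at $x-y^*$. Assembling these two cancellations produces the claimed formula.

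There is no real obstacle; the only thing to be careful about is the bookkeeping in the case $i=j=n$, where $\ep_j=-1$ flips the sign of $\Ga(x-y^*,t)$ in $\tilde G_{ij}$ and the extra $-2\de_{in}\de_{nj}\Ga(x-y^*,t)$ coming from $W_{ij}$ is exactly what is needed to recover the desired $-\de_{ij}\Ga(x-y^*,t)$. The tensorial identity $\de_{in}\Ga_{ij}=\de_{in}\Ga_{nj}$ plays the analogous role for the $\Ga_{ij}$-type terms.
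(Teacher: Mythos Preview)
Your proof is correct and follows exactly the same approach as the paper, which simply cites \eqref{eq_def_Gij}, \eqref{eq_def_Sij}, and \eqref{new_W} and leaves the algebra to the reader. You have merely spelled out the cancellations via $\ep_j=1-2\de_{nj}$ that the paper omits.
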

\begin{proof}
Recall \eqref{eq_def_Gij} that $G_{ij}(x,y,t)=S_{ij}(x-y,t)-\ep_jS_{ij}(x-y^*,t)+ W_{ij}(x,y,t)$. By \eqref{eq_def_Sij} and \eqref{new_W}, we get the lemma.
\end{proof}

\section{First estimates of the Green tensor}\label{S5}
In this section, we first estimate $\widehat H_{ij}$, 
then estimate $V_{ij}$, and finally
prove the Green tensor estimates in \thref{prop2}.

\subsection{Estimates of $\widehat H_{ij}$}

\begin{lem}\thlabel{cancel_C_H}
For $i,j=1,\ldots,n$, we have
\EQS{\label{Hhat_D}
\left \{
\begin{split}
\widehat H_{ij}(x,y,t) &= - D_{ijn}(x,y,t) \quad\text{ if } j < n,   \quad\\
\widehat  H_{in}(x,y,t) &= \textstyle \sum_{\be<n} D_{i\be\be}(x,y,t)  \quad\text{ if }j = n,
\end{split}
\right .
}
where for $m = 1,\ldots, n$,
\EQ{\label{D_def}
D_{i\be m}(x,y,t) = \int_0^{x_n}\int_{\Si}\pd_\be\Ga_{m n}(x^*-y-z^*,t)\,\pd_iE(z)\,dz'dz_n.
}

\end{lem}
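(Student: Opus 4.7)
The plan is to write $H_{ij}$ directly as a $D_{inj}$-type integral by (i) passing the $y_j$-derivative through the reflection, (ii) using a Poisson-type integration by parts to collapse the $\R^n$-integral against $\pd_n E$ into $\pd_n \Ga_{jn}$, and (iii) applying parity to flip the argument from $x - y^* - z$ to $x^* - y - z^*$. The extra term $C_i$ that enters $\widehat H_{ij}$ when $j = n$ is then absorbed by the identity $\sum_\beta D_{i\beta\beta} = C_i$, which ultimately encodes $-\De E = \de$.

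Writing $\ep_j = 1 - 2\de_{nj}$, I first note that $(y+z)^* = y^* + z^*$, and since $v$ enters $C_i(x,v,t)$ only through $v^*$,
\[
\pd_{y_j} C_i(x, y+z, t) = -\ep_j \int_0^{x_n}\!\int_\Si \pd_j \pd_n \Ga(x - y^* - z^* - w, t)\, \pd_i E(w)\,dw' dw_n.
\]
Substituting into \eqref{eq_def_Hij} and changing variables $\tilde z = z^*$ (so that $\pd_n E(z)\,dz = -\pd_n E(\tilde z)\,d\tilde z$ by the oddness of $\pd_n E$ in $x_n$) yields
\[
H_{ij}(x,y,t) = -\ep_j \!\int_{\R^n}\!\int_0^{x_n}\!\!\int_\Si \pd_j \pd_n \Ga(x - y^* - \tilde z - w, t)\, \pd_i E(w)\, \pd_n E(\tilde z)\,dw'dw_n\,d\tilde z.
\]
Integrating by parts in $\tilde z_n$ transfers the normal derivative from $E$ to $\Ga$, and the convolution structure gives $\int_{\R^n} \pd_j \pd_n^2 \Ga(u - \tilde z, t)\, E(\tilde z)\,d\tilde z = \pd_n \Ga_{jn}(u, t)$. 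Applied with $u = x - y^* - w$, this reduces $H_{ij}$ to a slab integral over $\Si \times [0, x_n]$ of $\pd_n \Ga_{jn}(x - y^* - w, t)\,\pd_i E(w)$.

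Since $\int \Ga(u - z, t) E(z)\,dz$ is even in $u_n$ (both $\Ga$ and $E$ are), a short parity check gives $(\pd_n \Ga_{jn})(u^*, t) = \ep_j (\pd_n \Ga_{jn})(u, t)$. Using $(x^* - y - w^*)^* = x - y^* - w$ and applying this identity, the two factors of $\ep_j$ cancel and we obtain $H_{ij}(x,y,t) = -D_{inj}(x,y,t)$. By symmetry of mixed partials, $\pd_n \Ga_{jn} = \pd_j \Ga_{nn}$, so $D_{inj} = D_{ijn}$; this proves the $j < n$ case of \eqref{Hhat_D} since $\widehat H_{ij} = H_{ij}$ there.

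For $j = n$ it remains to show $C_i = \sum_{\beta=1}^n D_{i\beta\beta}$, after which $\widehat H_{in} = H_{in} + C_i = -D_{inn} + \sum_\beta D_{i\beta\beta} = \sum_{\beta<n} D_{i\beta\beta}$. Using the same reflection (with $\pd_n \Ga$ odd in $u_n$) one rewrites $C_i(x,y,t) = -\int_0^{x_n}\!\int_\Si \pd_n \Ga(x^* - y - z^*, t)\,\pd_i E(z)\,dz'dz_n$, while
\[
\sum_{\beta=1}^n \pd_\beta \Ga_{\beta n}(u, t) = \pd_n \De_u \!\int_{\R^n}\! \Ga(u - z, t) E(z)\,dz = -\pd_n \Ga(u, t),
\]
using $-\De E = \de$ via two integrations by parts. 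Matching integrands closes the identity. The main technical obstacle is keeping the many sign flips from the reflections $y \mapsto y^*$, $z \mapsto z^*$ and the derivative parities straight; introducing $\ep_j$ and tracking the parity of $\Ga_{jn}$ and $\pd_n \Ga_{jn}$ in $u_n$ at each step is what keeps the bookkeeping manageable.
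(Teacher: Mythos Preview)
Your proof is correct and follows essentially the same route as the paper. For $j<n$ both arguments are identical in spirit: integrate by parts in the normal variable to collapse the $\R^n$-convolution against $E$ into $\Ga_{nn}$, using parity to pass from $x-y^*-w$ to $x^*-y-w^*$. For $j=n$ there is a minor organizational difference: the paper applies $-\De E=\de$ directly to the factor $\pd_nE(w)$ in the definition of $H_{in}$ to obtain $H_{in}=-C_i+\sum_{\beta<n}D_{i\beta\beta}$ in one step, whereas you first use the uniform identity $H_{in}=-D_{inn}$ and then invoke $\sum_{\beta=1}^n D_{i\beta\beta}=C_i$ (an identity the paper records separately in the proof of Theorem~\ref{th6.1}). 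Both routes encode the same use of $-\De E=\de$; yours has the small advantage of treating all $j$ uniformly in the first step.
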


\begin{proof}
By definition,
\EQN{
H_{ij}(x,y,t)&=-\int_{\R^n}\pd_{y_j}C_i(x,y+w,t)\pd_nE(w)\,dw\\
&=-\int_{\R^n}\pd_{y_j}\bke{\int_0^{x_n}\!\int_{\Si}\pd_n\Ga(x-(y+w)^*-z,t)\pd_iE(z)\,dz'dz_n}\pd_nE(w)\,dw.
}
Integrating by parts in $w_n$ and applying Fubini's theorem give, for $j=1,\ldots,n$,
\EQN{
H_{ij}(x,y,t)
=&\int_0^{x_n}\int_{\Si}\pd_{y_j}\int_{\R^n}\pd_n^2\Ga((x^*-y-z^*)-w,t)E(w)\,dw\,\pd_iE(z)\,dz'dz_n\\
=&-\int_0^{x_n}\int_{\Si}\pd_j\Ga_{nn}(x^*-y-z^*,t)\pd_iE(z)\,dz'dz_n=-D_{ijn}(x,y,t).
}
This proves \eqref{Hhat_D} when $j<n$.
For $j=n$, we use the fact that $-\De E=\de$ to obtain
\EQN{
H_{in}(x,y,t)=&-\int_{\mathbb{R}^n}\pd_{y_n}C_i(x,y+w,t)\pd_nE(w)dw\\
=& - C_i(x,y,t)+\sum_{\beta=1}^{n-1}\int_{\mathbb{R}^n}\pd_{y_\beta}C_i(x,y+w,t)\pd_\beta E(w)\,dw.
}
Using the same argument above, we get
\EQN{
H_{in}(x,y,t)= - C_i(x,y,t)+\sum_{\beta=1}^{n-1}D_{i\be\be}(x,y,t).
}
This proves \eqref{Hhat_D} when $j=n$.
\end{proof}

The following lemma enables us to change $x_n$-derivatives to $x'$-derivatives.
\begin{lem}
Let $i, j , m=1,\ldots,n$. For $i<n$,
\EQ{\label{eq_pdxnDi}
\pd_{x_n}D_{ijm}(x,y,t)=\pd_{x_i}D_{njm}(x,y,t) + \int_{\R^n}\pd_i\pd_j\pd_mB(x^*-y-w,t)\pd_nE(w)\,dw,
}
and for $i=n$,
\EQ{\label{eq_pdxnDn}
\pd_{x_n}D_{njm}(x,y,t)=
-\sum_{\be=1}^{n-1}\pd_{x_\be}D_{\be jm}(x,y,t)-\frac12\,\pd_n\Ga_{jm}(x^*-y,t).
}
\end{lem}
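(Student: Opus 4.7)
Both identities follow from the same recipe: differentiate by Leibniz in $x_n$ (which produces a boundary contribution at $z_n=x_n$ plus an interior contribution with a sign flip because $(x^*-y-z^*)_n=-x_n-y_n+z_n$), rewrite the interior integrand as a $z$-divergence via an elementary product rule, and apply Gauss on $\Om:=\R^{n-1}\times(0,x_n)$, carefully tracking any $\de$-contribution from $\De E=-\de$.

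For \eqref{eq_pdxnDi}, the interior integrand of $\pd_{x_n}D_{ijm}-\pd_{x_i}D_{njm}$ is $-\pd_n\pd_j\Ga_{mn}\pd_iE-\pd_i\pd_j\Ga_{mn}\pd_nE$, and this equals $-\pd_{z_n}[\pd_j\Ga_{mn}\pd_iE]+\pd_{z_i}[\pd_j\Ga_{mn}\pd_nE]$, as follows from the chain rule $\pd_{z_n}\to+\pd_n$ and $\pd_{z_i}\to-\pd_i$ (for $i<n$) applied to $\Ga_{mn}(x^*-y-z^*)$. Integrating the right-hand side over $\Om$: the $\pd_{z_n}$-divergence gives boundary contributions at $z_n=x_n$ (which cancels the Leibniz boundary term of $\pd_{x_n}D_{ijm}$) and at $z_n=0$, while the $\pd_{z_i}$-divergence integrates to zero by tangential decay. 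What survives is
\[
J:=\int_\Si\pd_j\Ga_{mn}(x'-y'-z',-x_n-y_n,t)\pd_iE(z',0)\,dz'.
\]
To match $J$ with the target integral I would perform one more IBP in $z_i$ (legal since $i<n$) to move $\pd_i$ onto $\Ga_{mn}$, unfold $\Ga_{mn}=\pd_m\pd_n(\Ga\ast_{\R^n}E)$ and the definition $B(\xi,t)=\int_\Si\Ga(\xi-\eta',t)E(\eta',0)\,d\eta'$, then use Fubini together with a single IBP in $w_n$ on the target side to see that both expressions reduce to the same double integral with two $E$-weights.

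For \eqref{eq_pdxnDn} I would first reformulate the claim as $\sum_{\al=1}^n\pd_{x_\al}D_{\al jm}=-\tfrac12\pd_n\Ga_{jm}(x^*-y,t)$, noting $\pd_j\Ga_{mn}=\pd_n\Ga_{jm}$ since partials of $\Ga\ast E$ commute. Summing the Leibniz expressions, the interior integrand collapses (again via the reflection signs) to $-\nabla_zE(z)\cdot\nabla_z[\pd_j\Ga_{mn}(x^*-y-z^*)]$. The divergence identity $\nabla_zE\cdot\nabla_zf=\nabla_z\cdot(f\nabla_zE)+f\,\de$ (using $\De E=-\de$) converts this into $-\nabla_z\cdot(f\nabla_zE)-f\de$ with $f(z):=\pd_j\Ga_{mn}(x^*-y-z^*)$. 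Gauss on $\Om$ yields a $z_n=x_n$ face term that cancels the Leibniz boundary term $B_1'$, and a $z_n=0$ face term that vanishes because $\pd_nE(z',0)=0$ pointwise for $z'\neq0$. The $\de$ sits at the corner $z=0\in\pd\Om$ and contributes $-\tfrac12 f(0)=-\tfrac12\pd_j\Ga_{mn}(x^*-y,t)$, yielding the stated identity.

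The main technical point is justifying the ``half-delta'' from $\De E=-\de$ at the corner $z=0$ in \eqref{eq_pdxnDn}. I would exhaust $\Om$ by $\Om_\e:=\Om\setminus\overline{B_\e(0)}$, apply Gauss on $\Om_\e$ where $\De E\equiv0$ classically, and send $\e\to0$: the extra inner-boundary integral over $\pd B_\e(0)\cap\{z_n>0\}$ is exactly one half of what a full sphere would contribute and, using $\int_{\pd B_\e}\pd_\nu E\,dS=-1$, produces $\tfrac12 f(0)$ in the limit. Beyond this, all tangential integrations by parts (at $|z_\be|\to\infty$) and Fubini swaps in the $w$-integrals are routine given the Gaussian decay of $\Ga$ together with the $|z|^{-(n-1)}$ decay of $\nb E$.
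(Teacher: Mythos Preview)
Your approach is correct, and for \eqref{eq_pdxnDn} it is genuinely different from the paper's.

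For \eqref{eq_pdxnDi} the paper takes a slightly cleaner route that avoids a technical issue in yours. It first changes variables so that the $x_n$-dependence sits in the limits of integration,
\[
D_{ijm}=\pd_{x_i}\int_{-x_n-y_n}^{-y_n}\int_\Si\pd_j\Ga_{mn}(z,t)\,E(x-y^*-z^*)\,dz'\,dz_n,
\]
and then applies Leibniz: the interior term is immediately $\pd_{x_i}D_{njm}$, and the single boundary term is $\pd_{x_i}\int_\Si\pd_j\Ga_{mn}(z',-x_n-y_n,t)\,E(x'-y'-z',0)\,dz'$, which after Fubini and the definition of $B$ becomes the stated $w$-integral. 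Your route reaches the same point, but your intermediate boundary term $J=\int_\Si\pd_j\Ga_{mn}(\cdots)\,\pd_iE(z',0)\,dz'$ is a singular integral ($\pd_iE(z',0)\sim z_i|z'|^{-n}$ is not locally integrable on $\Si$), so the Gauss step on $\Om$ also needs the $\e$-excision. The inner half-sphere contributions from the $\pd_{z_n}$- and $\pd_{z_i}$-divergences do cancel (since $z_n\pd_iE=z_i\pd_nE$), but you should say so explicitly; the extra IBP in $z_i$ you mention then recovers exactly the paper's well-defined boundary term.

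For \eqref{eq_pdxnDn} the paper does \emph{not} use your half-delta argument. Instead it first integrates $D_{njm}$ by parts in $z_n$ (using $\pd_j\Ga_{mn}=\pd_n\Ga_{jm}$), producing boundary terms expressed through the auxiliary harmonic function $A(x,t)=\int_\Si\Ga(z',0,t)E(x-z')\,dz'$ together with the Poisson-kernel identity $\pd_nA(x',0_+,t)=-\tfrac12\Ga(x',0,t)$. After differentiating in $x_n$, two $A$-terms cancel by $\De_xA=0$ for $x_n>0$, leaving $-\tfrac12\pd_n\Ga_{jm}(x^*-y,t)$ and $-\sum_{\be<n}\pd_{x_\be}D_{\be jm}$. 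Your route---writing the interior as $-\nabla_zE\cdot\nabla_z f$, excising $B_\e(0)\cap\Om$, and reading off the factor $\tfrac12$ from the half-sphere flux---is more direct and bypasses $A$ entirely; the paper's route trades the corner-delta argument for explicit harmonicity of $A$.
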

\begin{proof}
After changing variables, $D_{ijm}$ becomes
\[D_{ijm}(x,y,t)=\int_{-x_n-y_n}^{-y_n}\int_\Si\pd_j\Ga_{mn}(z,t)\, \pd_iE(x-y^*-z^*)\,dz'dz_n.\]
For $i<n$ we have
\[D_{ijm}(x,y,t)=\pd_{x_i}\int_{-x_n-y_n}^{-y_n}\int_\Si\pd_j\Ga_{mn}(z,t)\, E(x-y^*-z^*)\,dz'dz_n.\]
Hence
\[\begin{aligned}\pd_{x_n}D_{ijm}(x,y,t)=&~\pd_{x_i}\int_\Si\pd_j\Ga_{mn}(z',-x_n-y_n,t) E(x'-y'-z',0)\,dz'\\
&+\pd_{x_i}\int_{-x_n-y_n}^{-y_n}\int_\Si\pd_j\Ga_{mn}(z,t) \pd_nE(x-y^*-z^*)\,dz'dz_n\\
=&~I + \pd_{x_i}D_{njm}(x,y,t),
\end{aligned}
\]
where
\[
I=\pd_{x_i}\int_\Si\bke{\int_{\R^n}\pd_j\pd_m\Ga(z'-w',-x_n-y_n-w_n,t)\pd_nE(w)\,dw} E(x'-y'-z',0)\,dz'.
\]
After changing variables $\xi'=x'-y'-z'$ and applying Fubini theorem,
\EQN{
I &=\int_{\R^n} \bke{ \pd_{x_i}\int_\Si \pd_j\pd_m\Ga(x'-y'-\xi'-w',-x_n-y_n-w_n,t) E(\xi',0)\,d\xi'} \pd_nE(w)\,dw\\
&=\int_{\R^n}\pd_{x_i}\pd_{y_j}\pd_{y_m}B(x^*-y-w,t)\pd_nE(w)\,dw\\
&=\int_{\R^n}\pd_i\pd_j\pd_mB(x^*-y-w,t)\pd_nE(w)\,dw
}
using $i<n$ again.
This proves \eqref{eq_pdxnDi}.

For \eqref{eq_pdxnDn}, we first move normal derivatives in the definition \eqref{D_def} of $D_{njm}$ to tangential derivatives. Observe that, using $\pd_j \Ga_{mn} = \pd_n \Ga_{jm}$,
\[\begin{aligned}
D_{njm}(x,y,t)
=&~\lim_{\varepsilon\to0_+}\left[\int_{\varepsilon}^{x_n}\int_\Si\pd_{z_n}\Ga_{jm}(x^*-y-z^*,t)\pd_nE(z)\,dz'dz_n\right]\\
=&~\lim_{\varepsilon\to0_+}
\left[\int_\Si\Ga_{jm}(x'-y'-z',-y_n,t)\pd_nE(z',x_n)\,dz'\right.\\
&~~~~~~~~-\int_\Si\Ga_{jm}(x'-y'-z',-x_n-y_n+\varepsilon,t)\pd_nE(z',\varepsilon)\,dz'\\
&~~~~~~~~\left.-\int_{\varepsilon}^{x_n}\int_\Si\Ga_{jm}(x^*-y-z^*,t)\pd_n^2E(z)\,dz'dz_n\right],
\end{aligned}\]
by integration by parts in the $z_n$-variable. Using the fact that $-\De E=\de$, we obtain
\[\begin{aligned}
D_{njm}(x,y,t)=&~\pd_{y_j}\pd_{y_m}\int_{\R^n}e^{\frac{-(y_n+w_n)^2}{4t}}\,\pd_nA(x'-y'-w',x_n,t) E(w)\,dw\\
&-\pd_{y_j}\pd_{y_m}\int_{\R^n}e^{-\frac{(x_n+y_n+w_n)^2}{4t}}\,\pd_nA(x'-y'-w',0_+,t) E(w)\,dw
+J,
\end{aligned}\]
where
\EQN{
J&=\sum_{\beta=1}^{n-1}\lim_{\varepsilon\to0_+}\int_{\varepsilon}^{x_n}\int_\Si\Ga_{mj}(x^*-y-z^*,t)\pd_\beta^2E(z)\,dz'dz_n
\\
&=  \sum_{\beta=1}^{n-1}\int_0^{x_n}\int_\Si\pd_\be\Ga_{mj}(x^*-y-z^*,t)\pd_\be E(z)\,dz'dz_n,
}
by integration by parts in the $z'$-variable.
Note that
\[
\pd_n A(x',0_+,t) = \lim_{\e \to 0_+} \int_\Si \Ga(x'-z',0,t)\pd_n E(z',\e)dz' = - \frac 12\, \Ga(x',0,t)
\]
since $-2\pd_nE(x)$ is the Poisson kernel for the Laplace equation in $\R^n_+$.
Using $e^{-\frac{(x_n+y_n+w_n)^2}{4t}}\Ga(x'-y'-w',0,t)=\Ga(x^*-y-w,t)$, we get
\begin{align}
D_{njm}(x,y,t)=&~\pd_{y_j}\pd_{y_m}\int_{\R^n}e^{\frac{-(y_n+w_n)^2}{4t}}\,\pd_nA(x'-y'-w',x_n,t) E(w)\,dw+\frac12\,\Ga_{mj}(x^*-y,t) \nonumber \\
&+\sum_{\beta=1}^{n-1}\int_0^{x_n}\int_\Si\pd_\be\Ga_{mj}(x^*-y-z^*,t)\pd_\be E(z)\,dz'dz_n.
\end{align}
In this form we have moved normal derivatives in the definition \eqref{D_def} of $D_{njm}$ to tangential derivatives.
Consequently,
\[\begin{aligned}
\pd_{x_n}&D_{njm}(x,y,t)\\
=&~\pd_{y_j}\pd_{y_m}\int_{\R^n}e^{\frac{-(y_n+w_n)^2}{4t}}\,\pd_n^2A(x'-y'-w',x_n,t) E(w)\,dw - \frac12\,\pd_n\Ga_{mj}(x^*-y,t) \\
&+\sum_{\beta=1}^{n-1}\int_\Si\pd_\be\Ga_{mj}(x'-y'-z',-y_n,t)\pd_\be E(z',x_n)\,dz'\\
&-\sum_{\beta=1}^{n-1}\int_0^{x_n}\int_\Si\pd_n\pd_\be\Ga_{mj}(x^*-y-z^*,t)\pd_\be E(z)\,dz'dz_n\\
=&~\pd_{y_j}\pd_{y_m}\int_{\R^n}e^{\frac{-(y_n+w_n)^2}{4t}}\,\pd_n^2A(x'-y'-w',x_n,t) E(w)\,dw - \frac12\,\pd_n\Ga_{mj}(x^*-y,t)\\
&+\sum_{\beta=1}^{n-1}\pd_{y_j}\pd_{y_m}\int_{\R^n}e^{-\frac{(y_n+w_n)^2}{4t}}\,\pd_\be^2A(x'-y'-w',x_n,t) E(w)\,dw
-\sum_{\beta=1}^{n-1}\pd_{x_\be} D_{\be jm}(x,y,t).
\end{aligned}\]
The first term cancels the third term since  $\De_x A(x,t)=0$ for $x_n>0$.
This proves \eqref{eq_pdxnDn}.
\end{proof}

\begin{remark} \label{rk-pdxnDn}
Note that Lemma \ref{cancel_C_H} and \eqref{eq_pdxnDn} imply
\[\textstyle \sum_{i=1}^n \pd_{x_i}\widehat H_{ij}(x,y,t) = \frac12\, \ep_j\pd_n\Ga_{nj}(x-y^*,t) - \frac12\,\de_{nj} \pd_n\Ga(x-y^*,t),\]
which is equivalent to $\sum_{i=1}^n \pd_{x_i}G_{ij}(x,y,t)=0$ using  Lemma \ref{Green-formula}. Since we will use \eqref{eq_pdxnDn} to prove \eqref{Green_est}, the property $\sum_{i=1}^n \pd_{x_i}G_{ij}(x,y,t)=0$ cannot be used to improve \eqref{Green_est}. However, we will use it to prove
\eqref{eq_Green_estimate}.
\end{remark}

The following lemma will be used in the $x_n$-derivative estimate of \thref{D_estimate}.

\begin{lem}\thlabel{lem_Bint_est}
For $B(x,t)$ defined by \eqref{eq_def_B}, for $l,k \in \NN_0$,
\EQS{\label{Bint_est}
\abs{\int_{\R^n}\pd_{x'}^{l+1}\pd_{x_n}^{k}B(x-w,1)\pd_n E(w)\,dw}
\lec \frac {1+\de_{n2} \log \bka{\de_{k0}|x'|+ |x_n|}}{\bka{x}^{l+n-1} \bka{x_n}^k}.
}
Note in \eqref{Bint_est} $\de_{k0}|x'|=0$ for $k>0$.
\end{lem}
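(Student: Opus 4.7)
The plan is to derive a closed-form representation for the integral and then apply the estimates already established for $B$ and its derivatives. Writing $B(x-w,1) = \int_\Si \Ga(x-w-(z',0),1)\,E(z',0)\,dz'$ and applying Fubini turns the target integral into
\[
I(x) := \int_{\R^n} B(x-w,1)\,\pd_n E(w)\,dw = \int_\Si E(z',0)\,\pd_n u(x-(z',0),1)\,dz',
\]
where $u(y,t):=(E*\Ga(\cdot,t))(y)$ is the Newtonian potential of the heat kernel, satisfying $-\Delta u = \Ga(\cdot,t)$. For $n\ge 3$ the semigroup identity $u(y,1)=\int_1^\infty \Ga(y,s)\,ds$ gives $\pd_n u(y,1) = -\tfrac{y_n}{2}\int_1^\infty \Ga(y,s)/s\,ds$; substituting back and recognising $\int_\Si E(z',0)\Ga(\cdot-(z',0),s)\,dz' = B(\cdot,s)$ yields the key identity
\[
I(x) \;=\; -\frac{x_n}{2}\int_1^\infty \frac{B(x,s)}{s}\,ds, \qquad n\ge 3.
\]
For $n=2$ the semigroup integral diverges, so one uses instead the explicit formula $\pd_n u(y,1) = -\tfrac{y_n(1-e^{-|y|^2/4})}{2\pi|y|^2}$, obtained by the same computation with a subtracted normalisation; the extra ``$1$'' in $1-e^{-|y|^2/4}$ is what generates the logarithmic term in the final bound.

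Next I apply $\pd_{x'}^{l+1}\pd_{x_n}^k$ to the key identity. The Leibniz rule for the product $x_n\cdot\int_1^\infty B/s\,ds$ produces
\[
x_n\int_1^\infty s^{-1}\,\pd_{x'}^{l+1}\pd_{x_n}^k B(x,s)\,ds \;+\; k\int_1^\infty s^{-1}\,\pd_{x'}^{l+1}\pd_{x_n}^{k-1}B(x,s)\,ds
\]
(the second term present only when $k\ge 1$). To bound each integrand I use the \emph{exponential} estimate \eqref{eq_estB2} in place of \eqref{eq_estB}: the factor $e^{-x_n^2/(10s)}$ is crucial, as it effectively restricts the $s$-integration to $s\gtrsim x_n^2$ and cancels what would otherwise be a logarithmic divergence from $\int s^{-1}\,ds$ at small $s$. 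After this restriction a routine case split of the $s$-integral into $1\le s\le |x|^2+x_n^2$ and $s>|x|^2+x_n^2$, with $(|x|^2+s)^{-(l+n-1)/2}$ and $(x_n^2+s)^{-(k+1)/2}$ estimated in each region by elementary computation, delivers $|x_n|T_k + k T_{k-1}\lec \bka{x}^{-(l+n-1)}\bka{x_n}^{-k}$, which is exactly the claimed bound in dimensions $n\ge 3$.

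For $n=2$ the same differentiation is carried out on the explicit formula for $\pd_n u(y,1)$. Because $l+1\ge 1$, at least one tangential derivative falls on the kernel, killing the leading $\log|y|$ behaviour of $u$ and producing polynomial decay; what survives after differentiation is a kernel that, when tested against $E(z',0)=-\tfrac{1}{2\pi}\log|z'|$, yields the factor $\log\bka{\de_{k0}|x'|+|x_n|}$ that appears in the statement. The base polynomial decay $\bka{x}^{-(l+1)}\bka{x_n}^{-k}$ is obtained from the same splitting as in the higher-dimensional case.

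The main obstacle is the two-dimensional case: the failure of the semigroup representation forces the use of the renormalised formula, and one must be careful to track which $\log|z'|$-integrations survive after tangential derivatives are taken. In particular, the placement of $\de_{k0}$ in the final log factor reflects that a normal derivative ($k\ge 1$) kills the $\log |x'|$ contribution from the tangential integration, leaving only a $\log|x_n|$ contribution, whereas no normal derivative ($k=0$) allows the full $\log\bka{|x'|+|x_n|}=\log\bka{x}$ to appear.
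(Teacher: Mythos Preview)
Your approach is genuinely different from the paper's and, for $n\ge 3$, it is both correct and more streamlined. The paper proceeds in two stages: first it treats $k=0$ by a direct four-region decomposition of the convolution $\int \pd_{w'}^{l+1}B(w,1)\,\pd E(x-w)\,dw$ (regions $|w|\lessgtr |x|$), using \eqref{eq_estB2} on each piece; then for $k\ge 1$ it runs an induction, using $-\Delta E=\delta$ to rewrite
\[
\int \pd_{x'}^{l+1}\pd_{x_n}^{k}B(x-w,1)\,\pd_n E(w)\,dw
=\pd_{x'}^{l+1}\pd_{x_n}^{k-1}B(x,1)-\sum_{\beta<n}\int \pd_{x'}^{l+1}\pd_{x_n}^{k-1}\pd_\beta B(x-w,1)\,\pd_\beta E(w)\,dw,
\]
and reduces to the $k=0$ base case. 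Your semigroup identity bypasses both the region splitting and the induction: one scalar $s$-integral with the exponential factor from \eqref{eq_estB2} does all the work. That is a real simplification, and the computation you sketch for the two Leibniz pieces $|x_n|\int s^{-1}\pd^{l+1}\pd_n^k B\,ds$ and $k\int s^{-1}\pd^{l+1}\pd_n^{k-1}B\,ds$ does give the stated bound for $n\ge 3$.

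The gap is in $n=2$. You abandon the key identity there because $u(y,1)=\int_1^\infty\Gamma(y,s)\,ds$ diverges, and instead propose to work from the explicit formula for $\pd_n u$ and ``test against $E(z',0)=-\tfrac1{2\pi}\log|z'|$''. As written this is only a plan: you do not carry out the integral against $\log|z'|$, do not show how the differentiation interacts with the $(1-e^{-|y|^2/4})$ factor, and do not explain concretely why $k\ge 1$ kills the $\log|x'|$ contribution while $k=0$ keeps it. These steps are where all the $n=2$ content lives, so the two-dimensional case is not actually proved.

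In fact you have thrown away your own best tool. Although $u=\int_1^\infty\Gamma\,ds$ diverges in 2D, the identity you need is for $\pd_n u$, and
\[
\pd_n u(y,1)=\int_1^\infty \pd_n\Gamma(y,s)\,ds=-\frac{y_n}{2}\int_1^\infty\frac{\Gamma(y,s)}{s}\,ds
\]
converges for every $n\ge 2$ (your own explicit 2D formula $-\tfrac{y_n(1-e^{-|y|^2/4})}{2\pi|y|^2}$ is exactly this integral). Hence the representation $I(x)=-\tfrac{x_n}{2}\int_1^\infty B(x,s)/s\,ds$ holds, at the differentiated level, uniformly in $n\ge 2$; the hypothesis $l+1\ge 1$ is exactly what makes \eqref{eq_estB2} available in 2D. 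Running your $n\ge 3$ estimate verbatim then closes the 2D case as well --- indeed without any logarithmic loss, which is stronger than the stated bound. So the fix is not to devise a separate 2D argument, but to observe that no separate argument is needed.
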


Recall that $\pd_{x'}^{l}\pd_{x_n}^{k}B$ satisfies \eqref{eq_estB}-\eqref{eq_estB2} if $l+n\ge 3$, which is invalid if $l=0$ and $n=2$.
\begin{proof}
We will prove by induction in $k$. First consider $k=0$ and full $\pd E$ instead of just $\pd_n E$. Change variables and denote
$
J=\int_{\R^n}\pd_{w'}^{l+1} B(w,1)\pd E(x-w)\,dw$.
By \eqref{eq_estB},
\[
|J|
\lec
\int_{\R^n}
\frac{dw}{\bka{w}^{l+n-1}\bka{w_n} |x-w|^{n-1}},
\]
which is bounded for all $x$. We now assume $|x|>10$ to show its decay.
Decompose $\R^n$ to 4 regions: $\textup{I}=\{w:|w'|>2|x|\}$,
$\textup{II}=\{w:|w'|<2|x| ,\, |w_n|>|x|/2\}$,
$\textup{III}=\{w:|x|/2<|w'|<2|x|,\,  |w_n|<|x|/2\}$,
 and $\textup{IV}=\{w:|w'|<|x|/2,\,  |w_n|<|x|/2\}$. Decompose
\[
J= \bke{\int_{\textup{I}} + \int_{\textup{II}} +  \int_{\textup{III}} + \int_{\textup{IV}} }
(\pd_{w'}^{l+1}B)(w,1)\pd E(x-w) \,dw = J_1 + J_2+J_3+J_4.
\]

Using \eqref{eq_estB2},
\[
|J_1| \lec \int_{\textup{I}}\frac{e^{-w_n^2/10}} {|w|^{l+n-1}\, |x-w|^{n-1}}\,dw
\lec  \int_{|w'|>2|x|}\frac{e^{-w_n^2/10}} {|w'|^{l+n-1}\, |w'|^{n-1}}\,dw = \frac C {|x|^{l+n-1}}.
\]

Also by \eqref{eq_estB2}, and with $z'=x'-w'$,
\EQN{
|J_2| &\lec \int_{\textup{II}}\frac{e^{-w_n^2/10}} {|x|^{l+n-1}\, |x-w|^{n-1}}\,dw
\lec \frac 1{|x|^{l+n-1}} \int _{|w_n|\ge |x|/2} \int _{|z'|<3|x|} \frac{e^{-w_n^2/10}} {(|x_n-w_n|+|z'|)^{n-1}}\,dz'dw_n
\\
&=  \frac 1{|x|^{l+n-1}} \int _{|w_n|\ge |x|/2} \int _0^{3|x|} \frac{r^{n-2}} {(|x_n-w_n|+r)^{n-1}}\,dr\, e^{-w_n^2/10}\,dw_n.
}
By \thref{lem6-1}, the inner integral is bounded by $1+\log_+\frac{3|x|}{|x_n-w_n|}$.
\[
|J_2| \lec  \frac 1{|x|^{l+n-1}} \int _{|w_n|\ge |x|/2} \bke{1+\log_+\frac{3|x|}{|x_n-w_n|}} \, e^{-w_n^2/10}\,dw_n
\lec \frac 1 {|x|^{l+n-1}}.
\]

For $J_3$, if we have
$\pd_nE(x-w)\sim \frac{x_n-w_n}{|x-w|^n}$ in the integrand,
using \eqref{eq_estB2} and \thref{lem6-1},
\EQN{
|J_{3}|\lesssim&~\int_{\textup{III} }\frac{e^{-w_n^2/10}}{|x|^{l+n-1}}\frac{|x_n-w_n|}{(|x'-w'|+|x_n-w_n|)^n}\,dw\\
\lesssim&~\frac1{|x|^{l+n-1}}\int_{\R}|x_n-w_n|e^{-w_n^2/10}\int_0^{3|x|}\frac{r^{n-2}}{(|x_n-w_n|+r)^n}\,drdw_n\\
\lesssim&~\frac1{|x|^{l+n-1}}\int_{\R}|x_n-w_n|e^{-w_n^2/10}\,\frac{\min(|x_n-w_n|,3|x|)^{n-1}}{|x_n-w_n|^n}\,dw_n
\lesssim\frac1{|x|^{l+n-1}}.
}
If we have $\pd_\be E(x-w)$ with $\be<n$ in $J_3$, and if $n\ge 3$, we integrate $J_3$ by parts in $w_\be$,
\EQN{
J_{3}&=\int_{\textup{III}}\pd_{w'}^{l+2}B(w,1)E(x-w)\,dw + \int_\Ga
  \pd_{w'}^{l+1}B(w,1)E(x-w)\,dS_w,
}
where $\Ga=\bket{(w',w_n)\mid {|w'|=|x|/2 \text{ or } |w'|=2|x|,\, |w_n|<|x|/2}}$ is the lateral boundary of III.
Now using \eqref{eq_estB2} and that
$|x-w|> c|x|$ on $\Ga$,
\EQN{
|J_3| &\le
\int_{\textup{III}}\frac{e^{-w_n^2/10}}{|x|^{l+n}}\frac1{|x-w|^{n-2}}\,dw
+ \int_{\Ga}\frac{e^{-w_n^2/10}}{|x|^{l+n-1}}\frac1{|x-w|^{n-2}}dS_w
\\
&\lec \int_{|w_n|<|x|/2}  \frac{e^{-w_n^2/10}}{|x|^{l+n}}  \bke{\int_{|z'|<3|x|} \frac {dz'}{|z'|^{n-2}}} dw_n
+ \int_{\Ga}\frac{e^{-w_n^2/10}}{|x|^{l+n-1}}\frac1{|x|^{n-2}}\,dS_w
\lesssim\frac1{|x|^{l+n-1}}.
}
If $\be<n=2$, integration by parts does not help. Direct estimating using \thref{lem6-1}
gives
\EQN{
|J_{3}|
& \lec \frac{1}{|x|^{l+n-1}}\int_{|w_n|<|x|/2} e^{-w_n^2/10} \int_0^{3|x|}\frac{1}{(|x_n-w_n|+r)}\,drdw_n
\\
&\lec \frac{1}{|x|^{l+n-1}}\int_{|w_n|<|x|/2} e^{-w_n^2/10} \bke{1+ \log \frac{3|x|}{|x_n-w_n|}} \,dw_n.
}

If $|x_n| \ge \frac 34 |x|$ so that $|x_n-w_n|\ge \frac 14|x|$, the integral is of order one. If $|x_n|<\frac 34|x|$ so that $|x'| \ge c |x|$,
the integral is bounded by $\log \bka{x'}$.
Thus
\[
|J_3| \lec  \frac{1}{|x|^{l+n-1}} \bke{1 + \de_{n2} \log \bka{x'}}.
\]

Finally we consider $J_4$ in region $\textup{IV}$. Denote $\Ga=\{(w',w_n): |w'|=|x|/2 \ge |w_n|\}$ the lateral boundary of $\textup{IV}$.
Integrating by parts repeatedly,
\[
J_4= \int_{\textup{IV}}B(w,1)\pd_{w'}^{l+1}\pd E(x-w)\,dw + \sum_{p=0}^{l} \int_{\Ga}\pd_{w'}^{l-p}B(w,1)\,\pd_{w'}^p\pd E(x-w)\cdot\chi_p(w)dS_w
\]
where $\chi_p$ are uniformly bounded functions on $\Ga$ depending on multi-index $p$. By
 \eqref{eq_estB2}, that $|x-w|> c|x|$ on IV and $\Ga$, and $|w|> c|x|$ on $\Ga$, and \thref{lem6-1},
\EQN{
|J_4| &\le
\int_{\textup{IV}}\frac{e^{-w_n^2/10}}{\bka{w}^{n-2}}\frac1{|x|^{l+n}}\,dw
+ \sum_{p=0}^l \int_{\Ga}\frac{e^{-w_n^2/10}}{|x|^{l-p+n-2}}\frac1{|x|^{p+n-1}}dS_w
\\
&\lec \int_{|w_n|<|x|/2}  \frac{e^{-w_n^2/10}}{|x|^{l+n}}  \bke{\int_{|z'|<3|x|} \frac {dz'}{|z'|^{n-2}}} dw_n
+ \int_{\Ga}\frac{e^{-w_n^2/10}}{|x|^{l+2n-3}}\,dS_w
\lesssim\frac1{|x|^{l+n-1}}.
}
If $n=2$, we do one less step in integration by parts,
\[
J_4= \int_{\textup{IV}}\pd_{w'}B(w,1)\pd_{w'}^{l}\pd E(x-w)\,dw + \sum_{p=0}^{l-1} \int_{\Ga}\pd_{w'}^{l-p}B(w,1)\,\pd_{w'}^p\pd E(x-w)\cdot\chi_p(w)dS_w
\]
Thus for $n=2$, by
 \eqref{eq_estB2} and \thref{lem6-1},
\EQN{
|J_4| &\le
\int_{\textup{IV}}\frac{e^{-w_n^2/10}}{|w|}\frac1{|x|^{l+n-1}}\,dw
+ \sum_{p=0}^{l-1} \int_{\Ga}\frac{e^{-w_n^2/10}}{|x|^{l-p+n-2}}\frac1{|x|^{p+n-1}}dS_w
\\
&\lec \int_{|w_n|<|x|/2}  \frac{e^{-w_n^2/10}}{|x|^{l+n-1}}  \bke{\int_0^{|x|/2} \frac {dr}{|w_n|+r} } dw_n
+ \frac1{|x|^{l+n-1}}
\\
&\lec  \frac1{|x|^{l+n-1}}  \bke{1+ \int_{|w_n|<|x|/2} e^{-w_n^2/10}  \bke{ 1+ \log \frac{|x|}{|w_n|} } dw_n}
\lesssim\frac{ \log \bka{x}}{|x|^{l+n-1}}.
}
Unlike  $\log \bka{x'}$ for $J_3$, we need $ \log \bka{x}$ for $J_4$.

Summing the estimates, we conclude for $k=0$, for all $x\in \R^n$ and $n \ge 2$,
\begin{equation}\label{eq_est_intB}
\begin{aligned}\abs{\int_{\R^n}\pd_{x'}^{l+1}B(x-w,1)\pd E(w)\,dw} \lesssim\frac {1 + \de_{n2} \log \bka{x}}{\bka{x}^{l+n-1}}.
\end{aligned}\end{equation}

Suppose now $k \ge 1$ and \eqref{Bint_est} has been proved for all $k' \le k-1$.
Thanks to $-\De E=\de$, we can reduce the order of the $x_n$-derivative in the integral as
\EQN{
J&=\int_{\R^n}(\pd_{x'}^{l+1} \pd_{x_n}^{k}B)(x-w,1)\pd_nE(w)\,dw\\
& =(\pd_{x'}^{l+1} \pd_{x_n}^{k-1}B)(x,1)-\sum_{\beta_1=1}^{n-1}\int_{\R^n}(\pd_{x'}^{l+1} \pd_{x_n}^{k-1}\pd_{w_{\beta_1}}B)(x-w,1)\pd_{\beta_1}E(w)\,dw.
}
If $k=1$, \eqref{Bint_est} follows from \eqref{eq_estB} and \eqref{eq_est_intB},
\EQN{
|J|&\lec |\pd_{x'}^{l+1} B(x,1)| + \frac{1+ \de_{n2} \log \bka{x}}{\bka{x}^{l+n}}
\\
&\lec  \frac{e^{-x_n^2/10}}{\bka{x}^{l+n-1}}+\frac{1+ \de_{n2} \log (|x|+e)}{(|x|+e)^{l+n}}
\lec \frac{1+ \de_{n2} \log (|x_n|+e)}{\bka{x}^{l+n-1}\,(|x_n|+e)}.
}
In the last inequality we have used that for $m \ge 1$
\EQS{\label{logt.t-decay}
f(t) =t^{-m} \log t \quad\text{ is decreasing in }t>e.
}

If $k\ge 2$, by
integrating by parts, the second term becomes
\[\begin{aligned}\int_{\R^n}(&\pd_{x'}^{l+1} \pd_{x_n}^{k-1}\pd_{w_{\beta_1}}B)(x-w,1)\pd_{\beta_1}E(w)\,dw
=\int_{\R^n}(\pd_{x'}^{l+1} \pd_{x_n}^{k-2}\pd_{w_{\beta_1}}^2B)(x-w,1)\pd_nE(w)\,dw.\end{aligned}\]
By \eqref{Bint_est} for $k'=k-2$, and \eqref{logt.t-decay} with $m=2$,
\EQN{
|J|&\lec |\pd_{x'}^{l+1} \pd_{x_n}^{k-1} B(x,1)| +  \frac{1+ \de_{n2} \log \bka{x}}{\bka{x}^{l+n+1}\bka{x_n}^{k-2}}
\\
&\lec  \frac{e^{-x_n^2/10}}{\bka{x}^{l+n-1}}  +  \frac{1+ \de_{n2} \log (|x|+e)}{\bka{x}^{l+n+1}(|x_n|+e)^{k-2}}
\lec \frac{1+ \de_{n2} \log \bka{x_n}}{\bka{x}^{l+n-1}\bka{x_n}^k}.\qedhere
}
\end{proof}

\begin{lem}\thlabel{lem_Bint_est<n}
For $B(x,t)$ defined by \eqref{eq_def_B}, for $l,k \in \NN_0$, for $\be<n$,
\EQS{\label{Bint_est<n}
\abs{\int_{\R^n}\pd_{x'}^{l+1}\pd_{x_n}^{k}B(x-w,1)\pd_\be E(w)\,dw}
\lec \frac {1+\de_{n2} \log \bka{\de_{k\le 1}|x'|+|x_n|}}{\bka{x}^{l+n-\de_{k0}} \bka{x_n}^{(k-1)_+}}.
}
Note in \eqref{Bint_est<n} $\de_{k\le 1}|x'|=0$ for $k>1$.
\end{lem}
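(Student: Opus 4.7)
The plan is to deduce \eqref{Bint_est<n} from \thref{lem_Bint_est} by two integrations by parts, which convert the tangential factor $\pd_\be E$ (with $\be<n$) back into the normal factor $\pd_n E$ at the cost of one extra tangential derivative on $B$ and one fewer normal derivative on $B$. The base case $k=0$ is essentially free: the proof of \thref{lem_Bint_est} establishes \eqref{eq_est_intB} for the \emph{full} gradient $\pd E$, not only for $\pd_n E$. Specialized to $\pd_\be$ with $\be<n$, it reads
\[
\abs{\int_{\R^n}\pd_{x'}^{l+1}B(x-w,1)\,\pd_\be E(w)\,dw}\lec\frac{1+\de_{n2}\log\bka{x}}{\bka{x}^{l+n-1}},
\]
which coincides with \eqref{Bint_est<n} at $k=0$, since then $(k-1)_+=0$, $\de_{k0}=1$, and $\de_{k\le 1}|x'|+|x_n|\sim|x|$.

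For $k\ge 1$, denote the left-hand side of \eqref{Bint_est<n} by $J$. Writing $\pd_\be E=\pd_{w_\be}E$ and using $\pd_{w_\be}[B(x-w,\cdot)]=-\pd_{x_\be}B(x-w,\cdot)$, an integration by parts in $w_\be$ yields
\[
J=\int_{\R^n}E(w)\,(\pd_{x_\be}\pd_{x'}^{l+1}\pd_{x_n}^k B)(x-w,1)\,dw=\int_{\R^n}E(w)\,(\pd_{x'}^{l+2}\pd_{x_n}^k B)(x-w,1)\,dw,
\]
since $\be<n$ allows me to lump $\pd_{x_\be}\pd_{x'}^{l+1}$ into $\pd_{x'}^{l+2}$. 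As $k\ge 1$, I then peel off one normal derivative via $(\pd_{x_n}F)(x-w,\cdot)=-\pd_{w_n}F(x-w,\cdot)$ and integrate by parts once in $w_n$ to obtain
\[
J=\int_{\R^n}\pd_n E(w)\,(\pd_{x'}^{l+2}\pd_{x_n}^{k-1}B)(x-w,1)\,dw.
\]
This is exactly the quantity estimated by \thref{lem_Bint_est} with $(l,k)$ replaced by $(l+1,k-1)$, so \eqref{Bint_est} yields
\[
|J|\lec\frac{1+\de_{n2}\log\bka{\de_{k-1,0}|x'|+|x_n|}}{\bka{x}^{l+n}\,\bka{x_n}^{k-1}},
\]
which matches the claim \eqref{Bint_est<n} for $k\ge 1$: in that range $\de_{k-1,0}=\de_{k\le 1}$, $(k-1)_+=k-1$, and $\bka{x}^{l+n}=\bka{x}^{l+n-\de_{k0}}$.

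The step expected to be slightly delicate is the first integration by parts, since $\pd_\be E\sim|w|^{1-n}$ is singular at the origin. I will justify it by the standard truncation on $\{|w|>\e\}$: the surface contribution from $|w|=\e$ is controlled by $|E(\e)|\,\e^{n-1}\lec\e$ (respectively $\e|\log\e|$ for $n=2$) and vanishes as $\e\to 0^+$. Decay at spatial infinity is supplied by the Gaussian factor $e^{-(x_n-w_n)^2/10}$ in \eqref{eq_estB2}, which also closes the $w_n$-integration by parts (harmless at the origin as $\pd_n E$ is locally integrable for $n\ge 2$). No further obstacle is anticipated beyond these routine justifications.
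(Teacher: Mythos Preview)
Your proposal is correct and follows essentially the same approach as the paper: the case $k=0$ is read off from \eqref{eq_est_intB} (which was proved for the full gradient $\pd E$), and for $k\ge1$ two integrations by parts convert $\pd_\be E$ into $\pd_n E$ while trading one normal derivative on $B$ for one tangential derivative, after which \thref{lem_Bint_est} with $(l,k)\mapsto(l+1,k-1)$ concludes. The paper presents the two integrations by parts as a single line and omits the $\varepsilon$-truncation justification you supply, but the argument is the same.
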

\begin{proof}
The case $k=0$ is proved in the proof for \thref{lem_Bint_est}. When $k \ge 1$, we integrate by parts
\EQN{
J=\int_{\R^n}\pd_{x'}^{l+1}\pd_{x_n}^{k}B(x-w,1)\pd_\be E(w)\,dw
= \int_{\R^n}\pd_{x'}^{l+1}\pd_{x_n}^{k-1}\pd_\be B(x-w,1)\pd_n E(w)\,dw.
}
By \thref{lem_Bint_est},
\[
|J| \lec \frac {1+\de_{n2} \log \bka{\de_{k\le 1}|x'|+|x_n|}}{\bka{x}^{l+n} \bka{x_n}^{k-1}}.\qedhere
\]
\end{proof}

The following is our estimates of derivatives of $D_{ijm}$.

\begin{prop}\thlabel{D_estimate}
For $x,y\in\mathbb{R}^n_+$, $l,k,q \in \NN_0$, $i,m=1,\ldots,n$, and $j<n$, we have
\EQ{\label{D_est}
|\pd_{x',y'}^l\pd_{x_n}^k\pd_{y_n}^qD_{ijm}(x,y,1)|\lec
\frac{ 1+ \mu\de_{n2}  \log \bka{\nu|x'-y'|+x_n+y_n}}
{\bka{x-y^*}^{l+k+n-\si} \bka{x_n+y_n}^\si \bka{y_n}^q},
}
where $\si = (k+ \de_{m n}- \de_{in} -1)_+$, $\mu=1-\de_{k0}-\de_{k1}\de_{in}$, and $\nu =  \de_{q0} \de_{m<n} \de_{k(1+\de_{in})}$.
\end{prop}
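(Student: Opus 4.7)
The plan is to induct on the number $k$ of normal derivatives $\pd_{x_n}$, using the algebraic identities \eqref{eq_pdxnDi} and \eqref{eq_pdxnDn} to trade each $\pd_{x_n}$ for either a tangential derivative inside $D$ or an explicit convolution $\int\pd^{\alpha}B(x^*-y-w,1)\pd_n E(w)\,dw$, plus at worst one boundary piece $\pd_n\Ga_{jm}(x^*-y,1)$. The first is handled by the induction, the second and third are controlled respectively by Lemmas \thref{lem_Bint_est}/\thref{lem_Bint_est<n} and by the Oseen estimate \eqref{eq_estSij}. Throughout $t=1$, and the tangential derivatives $\pd_{x',y'}^l$ and the $y_n$ derivatives commute with the identities and pass under the integral sign defining $D_{ijm}$ in \eqref{D_def}.

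\textbf{Base case $k=0$.} Here $\si=\mu=\nu=0$ and the target collapses to $\bka{x-y^*}^{-(l+n)}\bka{y_n}^{-q}$. Since in \eqref{D_def} the integrand depends on $x',y'$ only through $x'-y'$ inside $\Ga_{mn}$, and on $y_n$ only through $\Ga_{mn}$, the derivatives $\pd_{x',y'}^l\pd_{y_n}^q$ all land on $\pd_j\Ga_{mn}$, producing the pointwise bound $\bka{x^*-y-z^*}^{-(n+1+l+q)}$ by \eqref{eq_estSij}. Combining this with $|\pd_i E(z)|\lec|z|^{-(n-1)}$, splitting the $z$-integral at $|z|\sim|x-y^*|$, and invoking \thref{lemma2.2} yields the $\bka{x-y^*}^{-(l+n)}$ factor; the $\bka{y_n}^{-q}$ factor follows from the observation that on $\Si\times[0,x_n]$ the $n$-th component of $x^*-y-z^*$ is $-x_n-y_n+z_n\in[-x_n-y_n,-y_n]$, whose absolute value is at least $y_n$, so $q$ of the $n+1+l+q$ powers of $\bka{x^*-y-z^*}$ can be exchanged for $\bka{y_n}$ before the $z$-integration.

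\textbf{Inductive step.} For $k\ge 1$, I commute $\pd_{x_n}$ to the leftmost position and apply the appropriate identity. If $i<n$, \eqref{eq_pdxnDi} splits the expression into $\pd_{x',y'}^{l+1}\pd_{x_n}^{k-1}\pd_{y_n}^{q}D_{njm}$, to which the inductive hypothesis on $D_{njm}$ with tangential order $l+1$ and normal order $k-1$ applies (with a direct check that the jump in $\si$ is exactly $\de_{mn}-\de_{in}$ on the right), plus a residual $B$-convolution which, after commuting the remaining $\pd_{x',y'}^l\pd_{x_n}^{k-1}\pd_{y_n}^q$ inside, falls under \thref{lem_Bint_est} (if $m=n$) or \thref{lem_Bint_est<n} (if $m<n$) with $x$ there replaced by $x^*-y$; the $\de_{n2}$ logarithm produced is precisely $\mu\de_{n2}\log\bka{\nu|x'-y'|+x_n+y_n}$. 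If $i=n$, \eqref{eq_pdxnDn} first reduces the expression to a sum of $\pd_{x',y'}^{l+1}\pd_{x_n}^{k-1}\pd_{y_n}^q D_{\be jm}$ with $\be<n$ (handled by the $i<n$ case) plus an elementary stationary remainder $\pd_{x',y'}^l\pd_{x_n}^{k-1}\pd_{y_n}^q\pd_n\Ga_{jm}(x^*-y,1)$, bounded by \eqref{eq_estSij} as $\bka{x-y^*}^{-(n+l+k+q)}$, which dominates the target.

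\textbf{Main obstacle.} The principal challenge is not the analytic estimates themselves, which reduce to standard splittings and Lemmas \thref{lemma2.2}, \thref{lem_Bint_est}, \thref{lem_Bint_est<n}, but the combinatorial bookkeeping: verifying that $\si=(k+\de_{mn}-\de_{in}-1)_+$ and the indicator expressions $\mu=1-\de_{k0}-\de_{k1}\de_{in}$ and $\nu=\de_{q0}\de_{m<n}\de_{k(1+\de_{in})}$ arise precisely from the recursion. In particular, the absence of any $B$-integral contribution at $k=1$, $i=n$ (because \eqref{eq_pdxnDn} is invoked in place of \eqref{eq_pdxnDi}) explains why $\mu=0$ there, and the configurations with $\nu=1$ are exactly those in which \thref{lem_Bint_est<n} is applied for the first time with its own internal $k=0$, yielding the stronger logarithm $\log\bka{|x'|+x_n}$ in place of $\log\bka{x_n}$. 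Carefully matching each $(i,j,m,k,q)$-case to the correct branch of this recursion, and confirming that the exponent jumps compound into the single closed formula for $\si$, is the bulk of the work.
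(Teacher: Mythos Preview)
Your inductive step is essentially the paper's: use \eqref{eq_pdxnDi} for $i<n$ and \eqref{eq_pdxnDn} for $i=n$, with the $B$-convolution bounded by \thref{lem_Bint_est}. (One correction: the integral in \eqref{eq_pdxnDi} always carries $\pd_nE(w)$, so only \thref{lem_Bint_est} is relevant; the $m=n$ versus $m<n$ distinction enters through the count of tangential versus normal derivatives on $B$, not through a choice between \thref{lem_Bint_est} and \thref{lem_Bint_est<n}.)

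The genuine gap is in your base case $k=0$. Bounding the integrand pointwise by $\bka{x^*-y-z^*}^{-(n+1+l+q)}|z|^{-(n-1)}$ and splitting at $|z|\sim R:=|x-y^*|$ does \emph{not} give $R^{-(l+n)}$. The problem is the region where $z$ is near $x-y^*$, equivalently where $w:=x-y^*-z$ is small: there the $\Ga_{mn}$ factor is $O(1)$, the $E$ factor is $\approx R^{-(n-1)}$, and integrating over $|w|\lec 1$ contributes $\approx R^{-(n-1)}$, short of the target by a factor $R^{l+1}$. This region does meet the slab $\Si\times[0,x_n]$ whenever $y_n$ is small (the distance from $x-y^*$ to the slab is exactly $y_n$), so neither the slab restriction nor a direct application of \thref{lemma2.2} removes the defect---the latter produces precisely this bad $R^{-(n-1)}$ term via its $\one_{m>d}R^{-k}b^{d-m}$ contribution.

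The paper's remedy is to change variables to $w=x-y^*-z$ (so the domain becomes $\Pi=\{y_n\le w_n\le x_n+y_n\}$) and, in the region $\Pi_1=\{|w|<\tfrac34R\}$, integrate by parts repeatedly in $w'$ to transfer all $l+1$ tangential derivatives (including $\pd_j$, crucially using $j<n$) from $\Ga_{mn}(w,1)$ onto $\pd_iE(x-y^*-w)$. Since $|x-y^*-w|\sim R$ on $\Pi_1$, each transferred derivative gains a factor $R^{-1}$; the resulting bulk term, the boundary terms on $\{|w|=\tfrac34R\}$, and the direct estimate on $\Pi_2=\{|w|>\tfrac34R\}$ then all yield the correct $R^{-(l+n)}\bka{y_n}^{-q}$. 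Without this integration-by-parts step the base case fails, and the induction cannot start.
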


\begin{remark}\label{D_estimate-rmk}
By a similar proof (see \cite[Appendix B]{KLLTbs}), we can show
\EQ{\label{C-estimate}
|\pd_{x',y'}^l\pd_{x_n}^k\pd_{y_n}^q  C_i(x,y,1)|\lesssim\frac{e^{-\frac1{30}{ y_n^2}}} {\bka{x-y^*}^{l+n-1}\bka{x_n+y_n}^k\bka{y_n}^{q+1}} ,
}
whose decay in $x'$ is not as good as \eqref{D_est} since $\pd_n \Ga$ in the definition of $C_i$ has an additional $\pd_n$ derivative  than $\pd_j \Ga_{mn}$  in the definition of $D_{ijm}$.
This is why formula \eqref{new_W} for $W_{ij}$ is preferred than \eqref{eq_Wij_formula}. It is worth to note that the main term of $G^*_{ij}$ in \eqref{E1.6} is closely related to $\pd_{y_j}C_i$ (compare \eqref{0902a}). Henceforth, their estimates \eqref{Solonnikov.est} and \eqref{C-estimate} are similar.
\end{remark}

\begin{proof}
{\bf $\bullet\,\boldsymbol{\pd_{x',y'}, \pd_{y_n}}$-estimate:}
Recall the definition \eqref{D_def} of $D_{ijm}$.
Changing the variables $w=x-y^*-z$ after taking derivatives, and using $j<n$,
\[
\pd_{x',y'}^l\pd_{y_n}^q  D_{ijm} (x,y,1) = \int _\Pi\pd_{w'}^{l+1}\pd_n^{q}\Ga_{mn}(w,1)\,\pd_iE(x-y^*-w)\,dw
\]
up to a sign,
where $\Pi=\{w\in\R^n:y_n\le w_n\le x_n+y_n\}$. It is bounded for finite $|x-y^*|$, and to prove the estimate, we may assume $R=|x-y^*|>100$. Decompose $\Pi=\Pi_1+\Pi_2$ where
\[\Pi_1=\Pi\cap\left\{|w|<\tfrac34R\right\},\ \ \
\Pi_2=\Pi\cap\left\{|w|>\tfrac34R\right\}.
\]
Integrating by parts in $\Pi_1$ with respect to $w'$ iteratively, it equals
\EQN{
=&\int_{\Pi_1}\bke{\pd_{w_n}^{q}\Ga_{mn}(w,1)}\,\pd_{w'}^{l+1}\pd_iE(x-y^*-w)\,dw'dw_n\\
&+\sum_{p=0}^{l}\int_{\Pi\cap\{|w|=\frac34R\}}\bke{\pd_{w'}^{l-p}\pd_{w_n}^{q}\Ga_{mn}(w,1)}\,\pd_{w'}^p\pd_iE\cdot \chi_p(x-y^*-w)\,dS_w\\
&+\int_{\Pi_2}\bke{\pd_{w'}^{l+1}\pd_{w_n}^q\Ga_{mn}(w,1)}\,\pd_iE(x-y^*-w)\,dw=I_1+I_2+I_3,
}
where $\chi_p$ are bounded functions on the boundary.
Estimate \eqref{eq_estSij} and \thref{lem6-1} imply
\EQN{
|I_1|\lesssim&~\int_{y_n}^{x_n+y_n}\int_{\R^{n-1}}\frac1{(|w'|+w_n+1)^{q+n}R^{l+n}}\,dw'dw_n \\
\lesssim&~\frac1{R^{l+n}}\int_{y_n}^{x_n+y_n}\frac1{(w_n+1)^{q+1}}\,dw_n
\lesssim\frac{x_n}{R^{l+n}(y_n+1)^{q}(x_n+y_n+1)}.
}
For $I_2$, estimate \eqref{eq_estSij} gives%
\EQN{
|I_2|\lesssim&~\sum_{p=0}^{l}\int_{|w|=\frac34R}\frac1{\bka{w}^{l+q-p+n}}\,\frac1{|x-y^*-w|^{n+p-1}}\,dS_w \\
\lesssim&~\sum_{p=0}^{l}\frac1{R^{l+q-p+n}R^{n+p-1}}\,R^{n-1}
\sim\frac1{R^{l+q+n}}
}
Using the estimate \eqref{eq_estSij} and \thref{lemma2.2}, %
\EQN{
|I_3|\lesssim&~\int_{\Pi_2}\frac1{\bka{w}^{l+q+n+1}}\,\frac1{|x-y^*-w|^{n-1}}\,dw \\
\lesssim&~\frac1{R^{l+q+n+1/2}}\int_{y_n}^{x_n+y_n}\int_{\R^{n-1}}\frac1{(|w'|+w_n+1)^{1/2}(|x'-y'-w'|+(x_n+y_n-w_n))^{n-1}}\,dw'dw_n\\
\lesssim&~\frac1{R^{l+q+n+1/2}}\int_{y_n}^{x_n+y_n}\bke{R^{-1/2}+R^{-1/2}\log\frac{R}{(x_n+y_n-w_n)}}dw_n\\
\sim&~\frac{x_n}{R^{l+q+n+1}}\bke{1+\log\frac{R}{x_n}}\lec  \frac1{R^{l+q+n}} ,
}
noting $|x'-y'|+w_n+1+x_n+y_n-w_n\sim R$. Therefore, we conclude that for $i,m=1,\ldots,n$ and $j<n$,%
\EQS{\label{eq_estx'y'D}
|\pd_{x',y'}^l\pd_{y_n}^qD_{ijm}(x,y,1)|\lesssim&\frac1{\bka{x-y^*}^{l+n}\bka{y_n}^{q}} .
}
\smallskip

\noindent{\bf $\bullet\,\boldsymbol{\pd_{x_n}}$-estimate:}
Note $j<n$ always. Also note that $j$ and $m$ in $D_{ijm}$ are not changed in \eqref{eq_pdxnDi} and \eqref{eq_pdxnDn}. For $k \ge 1$ and $i<n$, by \eqref{eq_pdxnDi} and \thref{lem_Bint_est},
\EQ{\label{0811a}
\pd_{x',y'}^l \pd_{x_n}^k\pd_{y_n}^q D_{ijm}(x,y,1)\lec \pd_{x',y'}^{l+1} \pd_{x_n}^{k-1}\pd_{y_n}^q D_{njm}(x,y,1) + \frac {\LN'}{ \bka{x-y^*}^{l+n+1-\de_{m n}} \bka{x_n+y_n}^{k+q-1+\de_{m n}}},
}
where %
\[
\LN' =  1+ \de_{n2} \log \bka{\nu|x'-y'|+x_n+y_n},\quad \nu=\de_{0(k+q-1+\de_{m n})} = \de_{k1} \de_{q0} \de_{m<n}.
\]

For $k \ge 1$ and $i=n$, by \eqref{eq_pdxnDn} and \eqref{eq_estSij},
\EQ{\label{0811b}
\pd_{x',y'}^l \pd_{x_n}^k\pd_{y_n}^q D_{njm}(x,y,1)\lec \pd_{x',y'}^{l+1} \pd_{x_n}^{k-1}\pd_{y_n}^q D_{\be jm}(x,y,1) + \frac {1}{ \bka{x-y^*}^{l+n+k+q}},
}
where $\be<n$.The proof of \eqref{D_est} is then completed by induction in $k$ using \eqref{0811a}, \eqref{0811b} and the base case \eqref{eq_estx'y'D}.%
\end{proof}

\medskip

\begin{prop}\thlabel{Hhat_estimate}
For $x,y\in\mathbb{R}^n_+$, $t>0$, $l,k,q,m \in \NN_0$, $i,j=1,\ldots,n$, we have
\EQS{\label{Hhat_est}
|\pd_{x',y'}^l\pd_{x_n}^k\pd_{y_n}^q\pd_t^m \widehat H_{ij}(x,y,t)|\lec
\frac{1+\mu\,\de_{n2}\bkt{\log(\nu|x'-y'|+x_n+y_n+\sqrt{t})-\log(\sqrt{t})}}{t^{m}(|x^*-y|^2+t)^{\frac{l+k+n-\si}2}((x_n+y_n)^2+t)^{\frac{\si}2}(y_n^2+t)^{\frac{q}2}},
}
where $\si = (k- \de_{in}-\de_{jn} )_+$, $\mu=1-(\de_{k0}+\de_{k1}\de_{in})\de_{m0}$, and $\nu =  \de_{q0} \de_{jn} \de_{k(1+\de_{in})} \de_{m0}+\de_{m>0}$.
\end{prop}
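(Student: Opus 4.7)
The plan is to reduce $\widehat H_{ij}$ to the $D_{ijm}$ bounds of \thref{D_estimate} via \thref{cancel_C_H}, use parabolic scaling to pass from $t=1$ to general $t>0$, and handle time derivatives using the heat equation for $\Gamma_{mn}$. By \thref{cancel_C_H}, $\widehat H_{ij}=-D_{ijn}$ when $j<n$ and $\widehat H_{in}=\sum_{\beta<n}D_{i\beta\beta}$ when $j=n$. A direct check shows that the exponent $\sigma=(k-\de_{in}-\de_{jn})_+$ in \eqref{Hhat_est} matches $\sigma_D=(k+\de_{mn}-\de_{in}-1)_+$ from \eqref{D_est} for each relevant choice of the inner index $m$ (namely $m=n$ when $j<n$, and $m=\beta<n$ when $j=n$), and the parameters $\mu,\nu$ in the two propositions coincide in the $m=0$ case.

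Next, I would use parabolic scaling. From $\Ga(\la x,\la^2 t)=\la^{-n}\Ga(x,t)$ and $\pd E(\la x)=\la^{1-n}\pd E(x)$ (the $\log$ in $E$ for $n=2$ is killed by the derivative), one checks $\widehat H_{ij}(\la x,\la y,\la^2 t)=\la^{-n}\widehat H_{ij}(x,y,t)$, so
\EQN{
\pd_{x',y'}^l\pd_{x_n}^k\pd_{y_n}^q\pd_t^m\widehat H_{ij}(x,y,t)=t^{-\frac{n+l+k+q}{2}-m}\pd_{x',y'}^l\pd_{x_n}^k\pd_{y_n}^q\pd_t^m\widehat H_{ij}\bke{\tfrac{x}{\sqrt t},\tfrac{y}{\sqrt t},1}.
}
This reduces matters to the bound at $t=1$; the elementary equivalences $t^{a/2}\bka{\xi/\sqrt t}^{-a}\sim(|\xi|^2+t)^{-a/2}$ and $\log\bka{A/\sqrt t}\sim\log(A+\sqrt t)-\log\sqrt t$ convert the $t=1$ bound into \eqref{Hhat_est}, with the power-of-$t$ bookkeeping reading $-(n+l+k+q)/2-m+(l+k+n-\sigma)/2+\sigma/2+q/2=-m$.

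For $m=0$ the $t=1$ bound is immediate from \thref{D_estimate}. For $m\ge 1$, I would use the heat equation $\pd_t\Ga_{mn}=\De\Ga_{mn}$ together with $\De_x[\Ga_{mn}(x^*-y-z^*,t)]=(\De\Ga_{mn})(x^*-y-z^*,t)$ (the reflection $x\mapsto x^*$ preserves $\De$) to replace $\pd_t^m$ by the action of $\De^m$ on $\Ga_{mn}$ inside the integral defining $D_{ijm}$. The method of proof of \thref{D_estimate} then applies with $\Ga_{mn}$ replaced by $\De^m\Ga_{mn}$: by \eqref{eq_estSij} this substitution only improves spatial decay by a factor $\bka{x-y^*}^{-2m}$, so the bound trivially holds for the $\bka{x-y^*}$ and $\bka{x_n+y_n}$ exponents. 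The iterations of \eqref{eq_pdxnDi}--\eqref{eq_pdxnDn} needed for the normal-derivative estimate always introduce at least one $B$-integral correction carrying a log factor by \thref{lem_Bint_est}, forcing $\mu=1$; the special structure allowing $\nu=0$ is likewise destroyed, giving $\nu=1$ as in the statement.

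The main technical obstacle is the $m\ge 1$ bookkeeping: verifying that $\sigma$, $\mu$, and $\nu$ track correctly through the heat-equation reduction and the normal-to-tangential conversions \eqref{eq_pdxnDi}--\eqref{eq_pdxnDn}, and confirming that the log factors produced by the correction $B$-integrals are absorbed by the stated bound rather than blowing it up, particularly in dimension $n=2$ where the $B$-bounds of \thref{lem_Bint_est}--\thref{lem_Bint_est<n} themselves carry logarithmic factors.
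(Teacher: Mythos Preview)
Your approach is correct and close in spirit to the paper's. The reduction of $\widehat H_{ij}$ to $D_{ijm}$ via \thref{cancel_C_H}, the matching of $\sigma$, $\mu$, $\nu$ with those of \thref{D_estimate}, and the use of parabolic scaling are exactly what the paper does for the $m=0$ case. Where you diverge is in the handling of time derivatives: you propose converting $\pd_t$ to $\Delta$ via the heat equation and then re-running the proof of \thref{D_estimate} with $\Gamma_{mn}$ replaced by $\Delta^m\Gamma_{mn}$. The paper instead differentiates the scaling identity $\widehat H_{ij}(x,y,t)=t^{-n/2}\widehat H_{ij}(x/\sqrt t,y/\sqrt t,1)$ directly in $t$, which yields the Euler-type operator $\big(1+\sum_p\frac{x_p}{\sqrt t}\pd_{X_p}+\frac{y_p}{\sqrt t}\pd_{Y_p}\big)^m$ acting on the $t=1$ spatial estimate. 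Since the weight $x_p/\sqrt t$ exactly compensates the extra decay from $\pd_{X_p}$, this operator manifestly preserves the bound, so only the $m=0$ estimate from \thref{D_estimate} is ever needed; the paper then simply takes $\mu=\nu=1$ for $m>0$ as an overcount.

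Your heat-equation route also works, and can in fact be streamlined: since the integration domain in $D_{ijm}$ is $y$-independent, one has $\pd_t D_{ijm}=\Delta_y D_{ijm}$, so there is no need to redo the proof of \thref{D_estimate} --- just apply it directly with $l$ or $q$ raised by $2$ per time derivative and note that the resulting extra $\bka{x-y^*}^{-2}$ or $\bka{y_n}^{-2}$ at $t=1$ is bounded by $1$. Your explanation that the iterations \eqref{eq_pdxnDi}--\eqref{eq_pdxnDn} force $\mu=1$ is not quite right: those identities handle $\pd_{x_n}$, while $\Delta_y$ only raises $l$ and $q$, so they are not additionally invoked. The relaxation to $\mu=\nu=1$ for $m>0$ is not forced by either method; it is a harmless upper bound that both approaches accommodate.
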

\begin{proof}
From \eqref{Hhat_D} and \eqref{D_est},
\EQS{\label{Hhat_est1}
|\pd_{x',y'}^l\pd_{x_n}^k\pd_{y_n}^q \widehat H_{ij}(x,y,1)|\lec
\frac{1+\mu\,\de_{n2}\log\bka{\nu|x'-y'|+x_n+y_n}}{\bka{x^*-y}^{l+k+n-\si}\bka{x_n+y_n}^{\si}\bka{y_n}^{q}},
}
with corresponding $\si$, $\mu$ and $\nu$.
Note that $\widehat H_{ij}$ satisfies the scaling property
\begin{equation}\label{Hhat_invariant}
\widehat H_{ij}(x,y,t)=\frac1{t^{\frac{n}2}}\,\widehat H_{ij}\left(\frac{x}{\sqrt{t}},\frac{y}{\sqrt{t}},1\right).
\end{equation}
Therefore, \eqref{Hhat_est} can be obtained by differentiating \eqref{Hhat_invariant} in $t$ and using \eqref{Hhat_est1}.
Indeed,
\begin{multline*}
\pd_{x',y'}^l \pd_{x_n}^k\pd_{y_n}^q\pd_t^m \widehat H_{ij}(x,y,t)
=\bke{\frac{\pd}{\pd t}}^m \bke{t^{-\frac {l+k+q+n}2}
\pd_{X',Y'}^l \pd_{X_n}^k\pd_{Y_n}^q \widehat H_{ij}
\bke{ \frac {x}{\sqrt t} , \frac {y}{\sqrt t} ,1}}
\\
\qquad \sim t^{-\frac {l+k+q+n}2-m} \bke{ 1+ \textstyle \sum _{p=1}^n\frac {x_p}{\sqrt t}  \pd_{X_p} + \frac {y_p}{\sqrt t} \pd_{Y_p} }^m
 \pd_{X',Y'}^l \pd_{X_n}^k\pd_{Y_n}^q \widehat H_{ij}
 \bke{ \frac {x}{\sqrt t} , \frac {y}{\sqrt t} ,1}.
\end{multline*}
Here we use $\frac{\pd}{\pd t}$ to indicate a total derivative, and $\pd_{X_p}$ for a partial derivative in that position, e.g., $\frac{\pd}{\pd x} (f(ax,by)) = a \pd_{X} f(ax,by) $.
Note that $\frac {x_p}{\sqrt t}  \pd_{X_p} $ and $\frac {y_p}{\sqrt t} \pd_{Y_p}$ do not change the decay estimate no matter $p<n$ or $p=n$, except that we take $\mu=\nu=1$ when $m>0$ for simplicity.
This completes the proof of \thref{Hhat_estimate}.
\end{proof}

\subsection{Estimates of $V_{ij}$}

\begin{lem}\label{lem57}
Let $V_{ij}(x,y,t)$ be defined by \eqref{eq-Vij-def}, $x,y\in\mathbb{R}^n_+$, $t>0$.
For $i<n$,
\EQ{\label{eq4-13}
V_{ij}(x,y,t)
= 2\ep_j\int_0^{x_n}
\int_{\R^n_+} \pd_{x_n} G^{ht}((x_n-z_n)e_n, w,t)\,\pd_j\pd_i E(w+x'-y^* +z_ne_n )\, dw\, dz_n.
}
For $i=n$,
\EQ{\label{eq4-14}
V_{nj}(x,y,t)
= -2\ep_j\sum_{\be<n} \int_0^{x_n}
\int_{\R^n_+}  G^{ht}((x_n-z_n)e_n, w,t)\,\pd_j\pd_\be^2 E(w+x'-y^* +z_ne_n )\, dw\, dz_n.
}
\end{lem}

\begin{proof}
First of all, by changing variables $\td w = (y+w)^*$ in definition \eqref{eq-La-def},
\EQS{\label{eq43-1}
\La_j(x,y,t) &=\pd_{y_n}\pd_{y_j} \int_{\R^n_+} G^{ht}(x,\td w^*,t) E(\td w^*-y)\, d\td w\\
&=-\pd_{y_n}\pd_{y_j} \int_{\R^n_+} G^{ht}(x,\td w,t) E(\td w-y^*)\, d\td w\\
&=-\pd_{y_j} \int_{\R^n_+} G^{ht}(x,\td w,t) \pd_{n}E(\td w-y^*)\, d\td w.
%\\&= \ep_j \int_{\R^n_+} G^{ht}(x,\td w,t) \pd_n\pd_j E(\td w-y^*)\, d\td w.
}
Decompose $V_{ij}(x,y,t) = V_{ij,1}(x,y,t) + V_{ij,2}(x,y,t)$, where $V_{ij,1}(x,y,t) = -2\de_{in} \La_j(x,y,t)$
and 
\EQ{\label{eq-Vij2-def}
V_{ij,2}(x,y,t) = -4\int_0^{x_n} \int_{\Si} \pd_{x_n}\La_j(x-z,y,t) \pd_iE(z)\, dz'dz_n.
}

If $i<n$, integrating by parts,
\EQN{
V_{ij,2}(x,y,t) &= 4\int_0^{x_n}\!\!\! \int_\Si \pd_{z_i} \pd_{x_n}\La_j(x-z,y,t) E(z)\, dz'dz_n\\
&= -4 \pd_{x_i}\int_0^{x_n}\!\!\! \int_\Si \pd_{x_n}\La_j(x-z,y,t) E(z)\, dz'dz_n.
}
From the third line of \eqref{eq43-1},
changing variable $w=\td w-x'$ and using $G^{ht}(x,w+p',t)=G^{ht}(x-p',w,t)$ for any $p'\in \Si$,
\EQ{\label{eq43-2}
\La_j(x,y,t) =-\pd_{y_j} \int_{\R^n_+} G^{ht}(x_ne_n, w,t) \pd_{n}E(w+x'-y^*)\, dw.
}
Using \eqref{eq43-2},
\EQN{
V_{ij,2}&(x,y,t)\\
&= 4\pd_{x_i}\int_0^{x_n}\!\!\! \int_\Si \pd_{x_n}\bke{ \pd_{y_j}
\int_{\R^n_+} G^{ht}((x_n-z_n)e_n, w,t) \pd_{n}E(w+x'-z'-y^*)\, dw}
E(z)\, dz'dz_n
\\
&= 2\pd_{x_i}\int_0^{x_n}  \pd_{x_n}\pd_{y_j}
\int_{\R^n_+} G^{ht}((x_n-z_n)e_n, w,t)\bke{2\int_\Si  \pd_{n}E(w+x'-z'-y^*)
E(z)\, dz'}\, dw\, dz_n.
}
Using the stationary Poisson formula \eqref{KMT2.32}, $w+x'-z'-y^*\in \R^n_+$ and $E(z) = E(z' - (z_ne_n))$,
\EQS{\label{eq4-6}
V_{ij,2}(x,y,t)
&=- 2\pd_{x_i}\int_0^{x_n}  \pd_{x_n}\pd_{y_j}
\int_{\R^n_+} G^{ht}((x_n-z_n)e_n, w,t)\,E(w+x'-y^* +z_ne_n )\, dw\, dz_n
\\
&=- 2\int_0^{x_n}  \pd_{y_j}
\int_{\R^n_+} \pd_{x_n} G^{ht}((x_n-z_n)e_n, w,t)\,\pd_i E(w+x'-y^* +z_ne_n )\, dw\, dz_n.
}
Since $V_{ij,1}=0$ when $i<n$, we get \eqref{eq4-13}.

If $i=n$,
\[
V_{ij,2}(x,y,t) = -4\int_0^{x_n}\!\!\! \int_\Si \pd_{x_n}\La_j(x-z,y,t) \pd_nE(z)\, dz'dz_n.
\]
From the second line of \eqref{eq43-1},
changing variable $w=\td w-x'$ and using $G^{ht}(x,w+p',t)=G^{ht}(x-p',w,t)$ for any $p'\in \Si$,
\EQS{\label{eq43-4}
\La_j(x,y,t) &=-\pd_{y_n}\pd_{y_j} \int_{\R^n_+} G^{ht}(x,\td w,t) E(\td w-y^*)\, d\td w
\\
&=-\pd_{y_n}\pd_{y_j} \int_{\R^n_+} G^{ht}(x_ne_n, w,t) E(w+x'-y^*)\, dw.
}
Using \eqref{eq43-4},
\EQN{
V_{ij,2}&(x,y,t)\\
&= 4\int_0^{x_n}\!\!\! \int_\Si \pd_{x_n}\bke{ \pd_{y_n} \pd_{y_j}
\int_{\R^n_+} G^{ht}((x_n-z_n)e_n, w,t) E(w+x'-z'-y^*)\, dw}\pd_n
E(z)\, dz'dz_n
\\
&= 2\int_0^{x_n}  \pd_{x_n}\pd_{y_n} \pd_{y_j}
\int_{\R^n_+} G^{ht}((x_n-z_n)e_n, w,t)\bke{2\int_\Si  E(w+x'-z'-y^*)\pd_{n}
E(z)\, dz'}\, dw\, dz_n.
}
By \eqref{KMT2.32}, one has
\EQN{
2\int_\Si  E(w+x'-z'-y^*)\pd_{n}
E(z)\, dz'
&=2\int_\Si  E(z'-(w+x'-y^*))\pd_{n}
E(z_ne_n-z')\, dz' \\ &= -E(w+x'-y^*+z_ne_n)
}
and
\EQS{\label{eq4.25}
V_{ij,2}(x,y,t)
&=-2\int_0^{x_n}  \pd_{x_n}\pd_{y_n} \pd_{y_j}
\int_{\R^n_+} G^{ht}((x_n-z_n)e_n, w,t)E(w+x'-y^*+z_ne_n)\, dw\, dz_n
\\
&=-2\int_0^{x_n}  \pd_{y_j}
\int_{\R^n_+} \pd_{x_n}G^{ht}((x_n-z_n)e_n, w,t)\pd_{n} E(w+x'-y^*+z_ne_n)\, dw\, dz_n.
}

Therefore, for all $1\le i ,j\le n$, including $i=n$ or $j=n$, we have \eqref{eq4-6}.
Integrating \eqref{eq4.25} by parts in $z_n$,
\EQN{
&V_{ij,2}(x,y,t)
%= 2\ep_j\int_0^{x_n}
%\int_{\R^n_+} \pd_{x_n} G^{ht}((x_n-z_n)e_n, w,t)\,\pd_j\pd_n E(w+x'-y^* +z_ne_n )\, dw\, dz_n\\
= - 2\ep_j\int_0^{x_n}
\int_{\R^n_+} \pd_{z_n} G^{ht}((x_n-z_n)e_n, w,t)\,\pd_j\pd_n E(w+x'-y^* +z_ne_n )\, dw\, dz_n\\
&= - 2\ep_j\int_{\R^n_+} G^{ht}(0, w,t)\,\pd_j\pd_n E(w+x-y^*)\, dw
 + 2\ep_j\int_{\R^n_+} G^{ht}(x_ne_n, w,t)\,\pd_j\pd_n E(w+x'-y^*)\, dw\\
&\quad + 2\ep_j\int_0^{x_n}
\int_{\R^n_+}  G^{ht}((x_n-z_n)e_n, w,t)\,\pd_j\pd_n^2 E(w+x'-y^* +z_ne_n )\, dw\, dz_n\\
&= 0 - V_{ij,1}(x,y,t) - 2\ep_j\int_0^{x_n}
\int_{\R^n_+}  G^{ht}((x_n-z_n)e_n, w,t)\,\sum_{\be <n}\pd_j\pd_\be^2 E(w+x'-y^* +z_ne_n )\, dw\, dz_n.
}
Then \eqref{eq4-14} follows from the above equation, completing the proof of the lemma.
\end{proof}

\begin{prop}\thlabel{V_estimate}
For $x,y\in\mathbb{R}^n_+$, $t>0$, $l,k,q,m \in \NN_0$, $i,j=1,\ldots,n$, we have
\EQ{\label{eq-Error-est}
| \pd_{x',y'}^l \pd_{x_n}^k \pd_{y_n}^q \pd_t^m V_{ij}(x,y,t) |
\lec \frac1{t^m (|x-y^*|^2 + t)^{\frac{l+k-k_i+q+n}2} (x_n^2 + t)^{\frac{k_i}2}}, \quad k_i=(k-\de_{in})_+.
}
\end{prop}

\begin{proof} We first consider $t=1$. 

\medskip

\noindent{\bf $\bullet$\, All spatial derivatives are bounded}

By Lemma \ref{lem57}, all spatial derivatives of $V_{ij}$ at $t=1$ are equal to sums of integrals of the form
\[
\int_0^{x_n}\int_{\R^n_+} \pd_{x_n}^k G^{ht}((x_n-z_n)e_n, w,1)\,\pd_{y_n}^q\pd_{x',y'}^l E(w+x'-y^* +z_ne_n )\, dw\, dz_n
\]
and
\[
\int_{\R^n_+} \pd_{x_n}^k G^{ht}(0, w,1)\,\pd_{x_n,y_n}^q\pd_{x',y'}^l E(w+x-y^* )\, dw.
\]
We first use $\pd_n^2 E=-\sum_{\be<n}\pd_\be^2 E$ to reduce $q$ to $q=0,1$. We then integrate by parts in tangential variables to move $\pd_{x',y'}^l$ in front of $G^{ht}$. When $|w+x'-y^* +z_ne_n|>1$,  
$\pd_{y_n}^q E(w+x'-y^* +z_ne_n )$ is bounded and the heat kernel derivative is integrable. 
When $|w+x'-y^* +z_ne_n|<1$, the heat kernel derivative is bounded and $\pd_{y_n}^q E(w+x'-y^* +z_ne_n )$ is integrable. 
Hence all such integrals are bounded. These bounds are uniform in $x,y\in \R^n_+$ with $x_n < C$ for fixed $C>0$. 
\medskip

\noindent{\bf $\bullet\,\boldsymbol{\pd_{x',y'}, \pd_{y_n}}$-estimate:}
We first estimate $V_{ij}(x,y,1)$.

For $i=n$, changing variable in \eqref{eq4-14}, it follows that
\EQ{
V_{nj}(x,y,1)
= -2\sum_{\be<n} \int_0^{x_n}
\int_{w_n<x_n-z_n}  G^{ht}((x_n-z_n)e_n, (x_n-z_n)e_n-w,1)\,\pd_j\pd_\be^2 E(w-x+y^*  )\, dw\, dz_n.
}
We split the set $A:=\bket{w\in \R^n: w_n<x_n-z_n}$ into two disjoint sets denoted by
\[
A_L=\bket{w: |w-x+y^*|>\frac{|x-y^*|}{2}}\cap A, \quad
A_S=\bket{w: |w-x+y^*|\le \frac{|x-y^*|}{2}}\cap A.
\]
For the region on $A_L$, it is direct that
\EQS{\label{eq4-15}
&\abs{\int_0^{x_n}
\int_{A_L}  G^{ht}((x_n-z_n)e_n, (x_n-z_n)e_n-w,1)\,\pd_j\pd_\be^2 E(w-x+y^*  )\, dw\, dz_n}\\
&\qquad\le \frac{c}{|x-y^*|^{n+1}}\int_0^{x_n}
\int_{A_L}  \abs{G^{ht}((x_n-z_n)e_n, (x_n-z_n)e_n-w,1)}\, dw\, dz_n\\
&\qquad\lec \frac1{|x-y^*|^{n+1}}\int_0^{x_n} dz_n \lec \frac1{|x-y^*|^{n+1}}\, Cx_n \lec |x-y^*|^{-n}.
}

On the other hand, on $A_S$, noting that $|w|>\frac{|x-y^*|}{2}$, 
that $x_n-z_n>w_n$ implies $2(x_n-z_n)-w_n>|w_n|$ and $|2(x_n-z_n)-w|\ge |w|$,
%$\Ga(2(x_n-z_n)-w,1)\le\Ga(w,1)$, 
and using integration by parts, the integral over $A_S$ is bounded by
\EQN{
&\abs{\int_0^{x_n}
\int_{A_S} \pd_{w_\be}^2G^{ht}((x_n-z_n)e_n, (x_n-z_n)e_n-w,1)\, \pd_{j} E(w-x+y^*  )\, dw\, dz_n} \\
&+ \abs{\sum_{m=0}^1 \int_0^{x_n} \int_{|w-x+y^*|=\frac{|x-y^*|}2}  \pd^mG^{ht}((x_n-z_n)e_n, (x_n-z_n)e_n-w,1)\, \pd^{1-m}\pd_{j} E(w-x+y^*  ) \,dS_wdz_n}\\
&\qquad\le ce^{-c|x-y^*|^2} ( |x-y^*|^{-1}+1) x_n \lec |x-y^*|^{-n}.
}

For $i<n$, noting that $G^{ht}((x_n-z_n)e_n, w,1)=0$ if $z_n =x_n$, it follows via integration by parts in \eqref{eq4-13} that
\EQS{\label{eq4-13-byparts}
V_{ij}(x,y,1)&=2\ep_j\int_{\R^n_+}  G^{ht}(x_ne_n, w,1)\,\pd_j\pd_i E(w+x'-y^* )\, dw\\
&\quad +2\ep_j\int_0^{x_n}
\int_{\R^n_+}  G^{ht}((x_n-z_n)e_n, w,1)\,\pd_n\pd_j\pd_i E(w+x'-y^* +z_ne_n )\, dw\, dz_n.
}
For the second integral, we use $\pd^2_{n}\pd_i E=-\sum_{\be=1}^{n-1}\pd^2_{\be}\pd_i E$ when $j=n$
%If $j=n$, then we use $\pd^2_{n}\pd_i E=-\sum_{\be=1}^{n-1}\pd^2_{\be}\pd_i E$ 
to reduce the order of normal derivative on $E$ at most 1.
%the second integral. 
 Then the second integral can be treated in exactly the same way as the case $i=n$, for both $j<n$ and $j=n$. 
 
For the first integral, as before, by change of variables, we rewrite
\EQN{
\int_{\R^n_+}& G^{ht}(x_ne_n, w,1)\,\pd_{j}\pd_i E(w+x'-y^* )\, dw\\
&=\int_{w_n<x_n}  G^{ht}(x_ne_n, x_ne_n -w,1)\,\pd_{j}\pd_i E(w-x+y^*  )\, dw\\
&=\int_{A_L} \cdots\, dw+\int_{A_S} \cdots\, dw.
}
Here we split the integral into two regions $A_L$ and $A_S$ with replacement of $A:=\bket{w\in \R^n: w_n<x_n}$.
The first term is rather direct that
\[
\abs{\int_{A_L}  G^{ht}(x_ne_n, x_ne_n -w,1)\,\pd_{j}\pd_i E(w-x+y^*  )\, dw}\le \frac{c}{|x-y^*|^{n}}.
\]
For the second term, since $i<n$, by integration by parts, we have
\EQN{
\abs{\int_{A_S} \cdots\, dw}&=\abs{\int_{A_S} \pd_{w_i} G^{ht}(x_ne_n, x_ne_n -w,1)\,\pd_{j} E(w-x+y^*  )\, dw}
\\
&\le  ce^{-c|x-y^*|^2} |x-y^*| \le {|x-y^*|^{-n}}.
}

Hence, for $i,j=1,\ldots,n$, since $V_{ij}$ is bounded for $|x-y^*| \le 1$, we have that
\EQS{
|V_{ij}(x,y,1)|\lesssim\frac1{\bka{x-y^*}^n} .
}

Any higher tangential derivative can be treated similarly as above. Furthermore, any order of normal derivative in $y_n$ works out as well, with the aid of $\Delta E=0$.
Therefore, we conclude that for $i,j=1,\ldots,n$,
\EQS{\label{eq-Vij-tangential}
|\pd_{x',y'}^l\pd_{y_n}^qV_{ij}(x,y,1)|\lesssim\frac1{\bka{x-y^*}^{l+q+n}} .
}

\medskip
\noindent{\bf $\bullet\,\boldsymbol{\pd_{x_n}}$-estimate:}

For $i=n$, using $G^{ht}((x_n-z_n)e_n, w,1)=0$ if $z_n =x_n$ in \eqref{eq4-14}, we deduce by integration by parts in $w_n$ and induction in $k$
that
\begin{align}
\nonumber
\pd_{x_n}^k V_{nj}(x,y,1) &=  \sum_{\be<n} \sum_{m=0}^{k-1} c\int_{\R^n_+} \pd_{x_n}^m G^{ht}(x_ne_n,w,t)\, \pd_n^{k-1-m}\pd_j\pd_\be^2 E(w+x'-y^*)\, dw\\
&\quad +c \sum_{\be<n} \int_0^{x_n} \int_{\R^n_+} G^{ht}((x_n-z_n)e_n, w,t)\, \pd_n^k\pd_j\pd_\be^2E(w+x'-y^* +z_ne_n )\, dw\, dz_n \nonumber\\
&=: I_1 + I_2.\label{eq-Vnj-xn-derivative}
\end{align}
Using estimates similar to \eqref{eq-Vij-tangential} via the spatial decomposition $A_S\cup A_L$, one has $|I_2| \lec |x-y^*|^{-n-k}$.
For $I_1$, recall $G^{ht}(x_ne_n,w,t) = \Ga(x_ne_n-w,t) - \Ga(x_ne_n-w^*,t)$. 
The contribution from $\Ga(x_ne_n-w^*,t)$ to $I_2$ is bounded by the contribution from $\Ga(x_ne_n-w,t)$ since $w_n\ge0$.
It suffices to estimate
\EQN{
\sum_{\be<n} \int_{\R^n_+} &\pd_n^m \Ga(x_ne_n-w,t)\, \pd_n^{k-1-m}\pd_j\pd_\be^2 E(w+x'-y^*)\, dw\\
&= - \int_{\R^n_+} \pd_n^m \Ga(x_ne_n-w,t)\, \pd_n^{k+1-m}\pd_j E(w+x'-y^*)\, dw.
}
Moving the normal derivatives form $\Ga$ to $E$ via integration by parts, it becomes
\EQN{
%\int_{\R^n_+}& \pd_n^m \Ga(x_ne_n-w,t)\, \pd_n^{k+1-m}\pd_j E(w+x'-y^*)\, dw\\
&=  \sum_{p=0}^{m-1} c\int_\Si \pd_n^p\Ga(x_ne_n-w',t)\, \pd_n^{k-p}\pd_j E(w'+x'-y^*)\, dw'\\
&\quad+c \int_{\R^n_+} \Ga(x_ne_n-w,t) \pd_n^{k+1} \pd_j E(w+x'-y^*)\, dw,
}
where the sum is bounded by $e^{-\frac{x_n^2}8} |x-y^*|^{-n-k+m}$ and the second term is bounded by $|x-y^*|^{-n-k}$ using estimates similar to $I_2$.
Thus,
\EQ{\label{eq-Vnj-xn-decay}
| \pd_{x_n}^k V_{nj}(x,y,1) | 
\lec e^{-\frac{x_n^2}8} \sum_{m=0}^{k-1} |x-y^*|^{-n-k+m} + |x-y^*|^{-n-k}
\lec |x-y^*|^{-n-1} x_n^{1-k}.
}

For $i<n$, we use \eqref{eq4-13-byparts} to derive a formula similar to \eqref{eq-Vnj-xn-derivative}, replacing
$\sum_{\be<n}\pd_\be^2$ by $\pd_n \pd_i$, and $\sum_{m=0}^{k-1}$ by $\sum_{m=0}^{k}$.
Thus, $x_n$-derivatives of $V_{ij}$ satisfies the decay estimate
\EQ{%\label{eq-Vnj-xn-decay}
| \pd_{x_n}^k V_{nj}(x,y,1) | 
\lec e^{-\frac{x_n^2}8} \sum_{m=0}^{k} |x-y^*|^{-n-k+m} + |x-y^*|^{-n-k}
\lec |x-y^*|^{-n} x_n^{-k}.
}

Therefore, by the same argument of the first part of the proof that $V_{ij}$ is bounded for $x_n\le1$, we obtain for $i,j=1,\ldots,n$ that
\EQS{%\label{eq-Vij-normal}
| \pd_{x_n}^k V_{ij}(x,y,1) | 
\lec \frac1{\bka{x-y^*}^{n+k-k_i} \bka{x_n}^{k_i}}, \quad k_i=(k-\de_{in})_+.
}
Moreover,
tangential and $y_n$ derivatives will only hit the second factor $E$ in the integrand, and the same estimate leads to
\EQS{\label{eq-Vij-normal}
| \pd_{x',y'}^l \pd_{x_n}^k \pd_{y_n}^q   V_{ij}(x,y,1) | 
\lec \frac1{\bka{x-y^*}^{n+l +q+ k-k_i} \bka{x_n}^{k_i}}. %, \quad k_i=(k-\de_{in})_+.
}

Finally, \thref{V_estimate} follows from \eqref{eq-Vij-tangential}, \eqref{eq-Vij-normal}, and the scaling property
\[
V_{ij}(x,y,t)=\frac1{t^{\frac{n}2}}\,V_{ij}\left(\frac{x}{\sqrt{t}},\frac{y}{\sqrt{t}},1\right).\qedhere
\]
\end{proof}

\subsection{Proof of \thref{prop2}}

We now prove \thref{prop2}.

\begin{proof}[Proof of \thref{prop2}]
We first estimate the Green tensor $G_{ij}$, which satisfies the formula in Lemma \ref{Green-formula}.
By \eqref{eq_estSij} and \thref{Hhat_estimate}, the estimates of $G_{ij}$ is
bounded by the sum of $\left(|x-y|^2+t\right)^{-\frac{l+k+q+n}2-m}$,
those in \thref{Hhat_estimate} for $\widehat H_{ij}$
and those in \thref{V_estimate} for $V_{ij}$.
This shows \eqref{Green_est}.

We now estimate the pressure tensor $g_j$.
Recall the decomposition formula \eqref{eq_formula_gj} that $g_j = -F_j^y(x)\de(t) +\widehat w_j$ in
\thref{gj-decomp}. For $t>0$, it suffices to estimate
\EQN{
\pd_{x',y'}^l\pd_{x_n}^k\pd_{y_n}^q \widehat w_j(x,y,t)
\sim&- \sum_{i<n} 8\int_0^t\int_{\Si}\pd_i \pd_n^{k+1}A(\xi',x_n,\tau)\pd_{x'}^l\pd_n^{q+1}S_{ij}(x'-y'-\xi',-y_n,t-\tau)\,d\xi'd\tau\\
&+ \sum_{i<n} 4\int_{\Si}\pd_{x'}^l\pd_{x_n}^k\pd_i E(x-\xi') \pd_n^{q+1}S_{ij}(\xi'-y,t)\,d\xi' \\
&+ 8\int_{\Si} \pd_n^{k+1}A(\xi',x_n,t) \pd_{x'}^l \pd_n^{q+1}\pd_jE(x'-y'-\xi',-y_n)\, d\xi'
=:\textup{I}+\textup{II}+\textup{III}.
}

We first estimate $\textup{I}$. Using \eqref{eq_estA} and \eqref{eq_estSij}, we get
\EQN{
\textup{I}\lesssim&\int_0^t\int_{\Si}\frac1{\tau^{\frac12}(|\xi'|+x_n+\sqrt{\tau})^{k+n}}\,\frac1{(|\xi'-(x'-y')|+y_n+\sqrt{t-\tau})^{l+q+n+1}}\,d\xi'd\tau\\
=&~\bke{ \int_0^{t/2}\! \int_{\Si} +  \int_{t/2}^t \int_{\Si} } \bket{\cdots}\,d\xi'd\tau
=:~\textup{I}_1+\textup{I}_2.
}
We have
\[
|\textup{I}_1|\lesssim\int_{\Si}
\bke{\int_0^{t/2}\frac1{\tau^{\frac12}(|\xi'|+x_n+\sqrt{\tau})^{k+n}}\,d\tau }
\frac1{(|\xi'-(x'-y')|+y_n+\sqrt{t})^{l+q+n+1}}\,d\xi'.
\]
Let
\[
R=|x-y^*|+\sqrt t.
\]
By \thref{lem6-1} ($k>d$ case),
\EQN{
|\textup{I}_1| &\lec
\int_{\Si}\frac{\sqrt t}{(|\xi'|+x_n)^{k+n-1}(|\xi'|+x_n+\sqrt t)}\,\frac1{(|\xi'-(x'-y')|+y_n+\sqrt{t})^{l+q+n+1}}\,d\xi'
\\
&%
\lec
\int_{\Si}\frac{1}{(|\xi'|+x_n)^{k+n-1}}\,\frac1{(|\xi'-(x'-y')|+y_n+\sqrt{t})^{l+q+n+1}}\,d\xi'.
}
By \thref{lemma2.2},%
\[
|\textup{I}_1| \lec R^{-l-q-k-n-1} + \de_{k0} R^{-l-q-n-1} \log\bke{\frac R{x_n}} + \mathbbm 1_{k>0} R^{-l-q-n-1} x_n^{-k} + R^{-k-n+1} (y_n + \sqrt{t})^{-l-q-2}.
\]

For $\textup{I}_2$ and all $n\ge2$, by \thref{lem6-1},%
\EQN{
|\textup{I}_2|\lesssim&\int_{\Si}\frac1{t^{\frac12}(|\xi'|+x_n+\sqrt{t})^{k+n}}
\bke{\int_{\frac{t}2}^t\frac1{(|\xi'-(x'-y')|+y_n+\sqrt{t-\tau})^{l+q+n+1}}\,d\tau } d\xi'\\
\lesssim&\int_{\Si}\frac1{t^{\frac12}(|\xi'|+x_n+\sqrt{t})^{k+n}}\,\frac t {(|\xi'-(x'-y')|+y_n)^{l+q+n-1}(|\xi'-(x'-y')|^2+y_n^2+ t)   }\,d\xi' \\
\lesssim&%
\int_{\Si}\frac1{(|\xi'|+x_n+\sqrt{t})^{k+n}}\,\frac 1 {(|\xi'-(x'-y')|+y_n)^{l+q+n}   }\,d\xi' .
}
By \thref{lemma2.2},%
\[
|\textup{I}_2| \lec R^{-l-q-k-n-1} + R^{-l-q-n} (x_n + \sqrt{t})^{-k-1} + R^{-k-n} y_n^{-l-q-1}.
\]

Now, we estimate $\textup{II}$. Using the definition of $E$, \eqref{eq_estSij} and \thref{lemma2.2}, after integrating by parts, we get
\EQN{
|\textup{II}|\lesssim&\int_{\Si}\frac1{(|\xi'|+x_n)^{k+n-1}}\,\frac1{(|\xi'-(x'-y')|+y_n+\sqrt{t})^{l+q+n+1}}\,d\xi',
}
which is similar to $I_1$.
Hence
\EQN{
|\textup{I}+\textup{II}| &\lec \de_{k0} R^{-l-q-n-1} \log\bke{\frac R{x_n}} + \mathbbm 1_{k>0} R^{-l-q-n-1} x_n^{-k}+ R^{-l-q-n} (x_n + \sqrt{t})^{-k-1}  \\
&\quad+ R^{-k-n+1} (y_n + \sqrt{t})^{-l-q-2} + R^{-k-n} y_n^{-l-q-1}.
}
Using \eqref{eq_estA} and the definition of $E$, we have
\EQN{
|\textup{III}|\lesssim&\int_{\Si}\frac1{t^{\frac12}(|\xi'|+x_n+\sqrt{t})^{k+n-1}}\,\frac1{(|\xi'-(x'-y')|+y_n)^{l+q+n}}\,d\xi'.
}
By \thref{lemma2.2},
\[
|\textup{III}| \lec t^{-\frac12}\bke{%
\de_{k0} R^{-l-q-n}\log\frac{R}{x_n + \sqrt{t}} + \mathbbm{1}_{k>0} R^{-l-q-n}(x_n+\sqrt t)^{-k} + R^{-k-n+1} y_n^{-l-q-1}}.
\]

We conclude
\[
|\pd_{x',y'}^l\pd_{x_n}^k\pd_{y_n}^q\widehat w_j(x,y,t)|
\lec  t^{-\frac12} \bke{\de_{k0} \frac1{R^{l+q+n}} \log{\frac R{x_n}}+  \frac1{R^{l+q+n} x_n^{k}}
+ \frac 1{R^{k+n-1}y_n^{l+q+1}}}.
\]
This proves estimate \eqref{pressure_est} and completes the proof of \thref{prop2}.
\end{proof}

\begin{remark}
The pressure tensor estimate \eqref{pressure_est} is sufficient for our proof of \thref{prop1}, and can be improved by several ways: One can get alternative estimates by integrating $\xi'$ by parts in all three terms I, II and III to move decay exponents from $y_n$ to $x_n$. Furthermore, we can rewrite the last term III using integration by parts and $\De E = 0$ as
\[
\textup{III} = %
\left \{
\begin{split}
 8\int_{\Si} \pd_j \pd_nA(\xi',x_n,t) \pd_n E(x'-y'-\xi',-y_n)\, d\xi'\qquad \text{if } j<n,
 \\
\textstyle  \sum_{i<n} 8\int_{\Si} \pd_i \pd_nA(\xi',x_n,t) \pd_i E(x'-y'-\xi',-y_n)\, d\xi'\qquad \text{if } j=n.
\end{split}
\right.
\]
\end{remark}

\section{Restricted Green tensors and convergence to initial data}

In this section we first study the restricted Green tensors acting on solenoidal vector fields, showing Theorem \ref{th6.1}.
In addition to the restricted Green tensor $\breve G_{ij}$ of Solonnikov given in \eqref{E1.6}, we also identify another restricted Green tensor $\widehat G_{ij}$ in \eqref{0827a}.
 We then use them to show the convergence to initial data in pointwise and $L^q$ sense for solenoidal and general $u_0$ in Lemma \ref{th6.2} and Lemma \ref{th:Gij-initial-Lq}, respectively. These show Theorem \ref{Convergence-to-initial-data}.

\begin{proof}[Proof of Theorem \ref{th6.1}]
Suppose $\div u_0 = 0$ and $u_{0,n}|_{\Si} = 0$.
Let
\EQS{\label{breve-u-def}
u_i^L(x,t) &= \sum_{j=1}^n \int_{\R^n_+} G_{ij}(x,y,t) u_{0,j}(y)\, dy,\\
\breve u_i^L(x,t) = \sum_{j=1}^n \int_{\R^n_+} \breve G_{ij}(x,y,t) & u_{0,j}(y)\, dy,\quad
 \widehat u_i^L(x,t) =
\sum_{j=1}^n \int_{\R^n_+} \widehat G_{ij}(x,y,t)u_{0,j}(y)\,dy.
}
By Lemma \ref{Green-formula},
\EQS{\label{lem7-1-pf1}
u_i^L (x,t)  = &~
 \int_{\R^n_+} ( \Ga(x-y,t) - \Ga(x-y^*,t)) (u_0)_i(y) \, dy \\
& + \sum_{j=1}^n \int_{\R^n_+} \bke{ \Ga_{ij}(x-y,t) - \ep_i \ep_j \Ga_{ij}(x-y^*,t) } (u_0)_j(y) \, dy\\
& -4 \sum_{j=1}^n \int_{\R^n_+} \widehat H_{ij}(x,y,t) (u_0)_j(y) \, dy 
+ \sum_{j=1}^n \int_{\R^n_+} V_{ij}(x,y,t) (u_0)_j(y) \, dy\\
 =&: I_1+I_2+I_3 + I_4.
}

Note that $I_1$ corresponds to the tensor $\de_{ij}\bkt{ \Ga(x-y,t) - \Ga(x-y^*,t) }$ in both \eqref{E1.6} and \eqref{0827a}.
We claim $I_2+I_4=0$. Indeed, 
since $\Ga_{ij}(x-y,t) - \ep_i\ep_j\Ga_{ij}(x-y^*,t) + V_{ij}(x,y,t) = \pd_{y_j} T_i(x,y,t)$ with
\EQN{
T_i(x,y,t) &= \int_{\R^n}\pd_{y_i}\bkt{ \Ga(x-y-w,t)-\Ga(x-y^*-w,t)} E(w)\,dw\\
&\quad+ \left[ -2\de_{in} \pd_{y_n} \int_{w_n<-y_n} G^{ht}(x,y+w,t) E(w)\, dw \right.\\
&\qquad\qquad \left.-4 \int_0^{x_n} \int_{\Si} \pd_{x_n} \bke{\pd_{y_n} \int_{w_n<-y_n} G^{ht}(x-z,y+w,t) E(w)\, dw } dz'dz_n \right],
}
by \eqref{eq_def_Sij}, \eqref{eq-Vij-def} and \eqref{eq-La-def},
\[
I_2 + I_4 
= \sum_{j=1}^n \int_{\R^n_+} \pd_{y_j} T_i(x,y,t)  (u_0)_j(y) \, dy
=- \sum_{j=1}^n \int_{\R^n_+} T_i(x,y,t) \pd_{y_j} (u_0)_j(y) \, dy=0.
\]

For $I_3$, by separating the sum over $j<n$ and $j=n$, and using \thref{cancel_C_H},
\[
I_3 =-4 \sum_{j<n} \int_{\R^n_+}  (-D_{ijn})(x,y,t) (u_0)_j(y) \, dy - 4 \int_{\R^n_+} \sum_{\be<n} D_{i\be\be}(x,y,t) (u_0)_n(y) \, dy.
\]
Note that
\begin{align*}
&\sum_{j<n} \int_{\R^n_+} (-D_{ijn})(x,y,t) (u_0)_j(y) \, dy\\
&=\sum_{j<n} \int_{\R^n_+} \pd_{y_j}\bke{ \int_0^{x_n}\int_{\Si} \Ga_{nn}(x^*-y-z^*,t)\pd_iE(z)\,dz'dz_n} (u_0)_j(y) \, dy
\\
&=-\sum_{j<n} \int_{\R^n_+} \bke{ \int_0^{x_n}\int_{\Si} \Ga_{nn}(x^*-y-z^*,t)\pd_iE(z)\,dz'dz_n} \pd_{y_j} (u_0)_j(y) \, dy
\\
&= \int_{\R^n_+} \bke{ \int_0^{x_n}\int_{\Si} \Ga_{nn}(x^*-y-z^*,t)\pd_iE(z)\,dz'dz_n}\pd_{y_n} (u_0)_n(y) \, dy
\\
&= \int_{\R^n_+}\int_0^{x_n}\int_{\Si} \pd_n\Ga_{nn}(x^*-y-z^*,t)\pd_iE(z)\,dz'dz_n\, (u_0)_n(y) \, dy\\
&= \int_{\R^n_+} D_{inn}(x,y,t) (u_0)_n(y) \, dy.
\end{align*}
Hence
\[
I_3 =  - 4 \int_{\R^n_+}  \sum_{\be=1}^n D_{i\be\be}(x,y,t)\,(u_0)_n(y) \, dy.
\]

Since
\EQN{
\sum_{\be=1}^n D_{i\be\be}(x,y,t) =&~ \int_0^{x_n} \int_{\Si} \sum_{\be=1}^n \pd_{\be} \Ga_{\be n}(x^*-y-z^*,t)\pd_iE(z)\,dz'dz_n  \\
=&~ \int_0^{x_n} \int_{\Si} \sum_{\be=1}^n \pd_{y_\be}^2 \int_{\R^n} \pd_n\Ga(x^*-y-z^*-w,t)E(w) \, dw\,\pd_iE(z)\,dz'dz_n\\
=&~ -\int_0^{x_n} \int_{\Si} \pd_n \Ga(x^*-y-z^*,t) \pd_iE(z)\,dz'dz_n
=+ C_i(x,y,t),
}
where we used $-\De E=\de$, \eqref{lem7-1-pf1} becomes
\EQS{\label{lem7-1-pf3}
&\sum_{j=1}^n \int_{\R^n_+} G_{ij}(x,y,t) (u_0)_j(y) \, dy\\
& = \int_{\R^n_+} ( \Ga(x-y,t) - \Ga(x-y^*,t)) (u_0)_i(y) \, dy
 - 4 \int_{\R^n_+} C_i(x,y,t)\, (u_0)_n(y)\, dy.
}
This gives \eqref{0827a}.
On the other hand,
\EQN{
 I_3 &= 4\int_{\R^n_+} \int_0^{x_n} \int_{\Si} \pd_n \Ga(x^*-y-z^*,t) \pd_iE(z)\,dz'dz_n\, (u_0)_n(y)\, dy\\
&= 4\int_{\R^n_+} \int_0^{x_n} \int_{\Si} \Ga(x^*-y-z^*,t) \pd_iE(z)\,dz'dz_n\, \pd_n(u_0)_n(y)\, dy\\
&= -4\sum_{\be<n} \int_{\R^n_+} \int_0^{x_n} \int_{\Si} \Ga(x^*-y-z^*,t) \pd_iE(z)\,dz'dz_n\, \pd_\be (u_0)_\be(y)\, dy\\
&= -4\sum_{\be<n} \int_{\R^n_+} J_{i\be}(x,y,t) \cdot  (u_0)_\be(y)\, dy,
}
where for $\be<n$
\EQN{
J_{i\be} &= \pd_{x_\be}\int_0^{x_n} \int_{\Si}  \Ga(x^*-y-z^*,t) \pd_iE(z)\,dz'dz_n
\\
&=\pd_{x_\be} \int_0^{x_n} \int_{\Si}  \Ga(z-y^*,t) \pd_iE(x-z)\,dz'dz_n.
}
we conclude that
\EQS{\label{0825a}
&\sum_{j=1}^n \int_{\R^n_+} G_{ij}(x,y,t) (u_0)_j(y) \, dy \\
&= \int_{\R^n_+} ( \Ga(x-y,t) - \Ga(x-y^*,t)) (u_0)_i(y) \, dy
 - 4 \sum_{\be<n} \int_{\R^n_+} J_{i\be}(x,y,t) \cdot  (u_0)_\be(y)\, dy,
}
which gives \eqref{E1.6}. This completes the proof of Theorem \ref{th6.1}.
\end{proof}

\begin{remark}\label{presure-equav}
Similar to Theorem \ref{th6.1}, we have \emph{restricted pressure tensors}.
Let $f \in C^1_c(\overline {\R^n_+}\times\R;\R^n)$ be a vector field in $\R^n_+\times \R$ and $f = \bP f$, i.e., $\div f=0$ and $f_{n}|_\Si=0$. Then
\EQN{%
  \sum_{j=1}^n \int_{-\infty}^\infty \int_{\R^n_+} g_j(x,y,t-s) f_j(y,s)\,dy\,ds
  &= \sum_{j=1}^n \int_{-\infty}^t \int_{\R^n_+} \breve g_j(x,y,t-s) f_j(y,s)\,dy\,ds\\
  &= \sum_{j=1}^n \int_{-\infty}^t \int_{\R^n_+} \widehat g_j(x,y,t-s) f_j(y,s)\,dy\,ds,
  }
  where
\[
\breve g_j(x,y,t) = (\delta_{jn}-1)\pd_{y_j} Q(x,y,t)
 ,\quad
\widehat g_j(x,y,t)= \delta_{jn}\pd_{y_j} Q(x,y,t) ,
\]
and
\[
Q(x,y,t) = 4 \int_{\Si} \bigg[ E(x-\xi')\partial_n\Gamma(\xi'-y,t) + \Ga(x'-y'-\xi',y_{n},t)\pd_nE(\xi',x_n)\bigg] \,d\xi'.
\]
An equivalent formula of $\breve g_j$ appeared in Solonnikov \cite[(2.4)]{MR1992567}, but no $\widehat g_j$.%
Both $\breve g_j$ and $\widehat g_j$ are functions and do not contain delta function in time. Note that $\widehat g_j(x,y,t)= \breve g_j(x,y,t) -
\pd_{y_j} Q(x,y,t)$.
We can get infinitely many restricted pressure tensors by adding to them any gradient field $\pd_{y_j} P(x,y,t)$.
\hfill \qed
\end{remark}

\begin{lem}\label{th6.2}
Let $u_0 \in C^1_c(\overline {\R^n_+};\R^n)$ be a vector field in $\R^n_+$ and $u_0 = \bP u_0$, i.e., $\div u_0=0$ and $u_{0,n}|_\Si=0$. Then for all $i=1,\ldots,n$, and $1< q\le \infty$,
\EQS{\label{lem-7-2_2}
\lim_{t\to 0_+}\norm{u_{0,i}(x) - \tsum_{j=1}^n \int_{\R^n_+} G_{ij}(x,y,t)u_{0,j}(y)\,dy}_{L^q_x(\R^n_+)}=0.
}
\end{lem}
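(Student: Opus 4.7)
The plan is to exploit Theorem~\ref{th6.1} to rewrite the integral in terms of simpler kernels, and then analyze each piece separately. Since $u_0$ is solenoidal with $u_{0,n}|_\Si = 0$, \thref{th6.1} gives the restricted Green tensor representation
\[
v_i(x,t) := \sum_{j=1}^n\int_{\R^n_+} G_{ij}(x,y,t) u_{0,j}(y)\,dy = T_1^i(x,t) + T_2^i(x,t),
\]
where
\[
T_1^i(x,t) = \int_{\R^n_+}\big[\Ga(x-y,t) - \Ga(x-y^*,t)\big]\, u_{0,i}(y)\,dy,\qquad T_2^i(x,t) = -4\int_{\R^n_+} C_i(x,y,t)\, u_{0,n}(y)\,dy.
\]
This reduces matters to a Dirichlet heat semigroup on $\R^n_+$ applied to $u_{0,i}$, plus a correction that involves only $u_{0,n}$ (which vanishes on $\Si$).

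For $T_1^i$ I would use odd reflection: letting $\widetilde u_{0,i}$ be the odd extension of $u_{0,i}$ across $\Si$, one has $T_1^i(x,t) = (\Ga_t * \widetilde u_{0,i})(x)$ for $x \in \R^n_+$. When $i = n$ the identity $u_{0,n}|_\Si = 0$ makes $\widetilde u_{0,n} \in C^0_c(\R^n)$, so $T_1^n(\cdot,t) \to u_{0,n}$ uniformly on $\R^n$. When $i < n$, $\widetilde u_{0,i}$ is bounded with compact support (and may be discontinuous across $\Si$), and standard approximation-to-the-identity for the heat kernel gives $T_1^i(\cdot,t) \to u_{0,i}$ in $L^q(\R^n_+)$ for every $1 < q < \infty$.

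For the correction $T_2^i$ I would use the primitive $C_i = \pd_{y_n} C_i^\sharp$, where
\[
C_i^\sharp(x,y,t) := \int_0^{x_n}\!\int_\Si \Ga(x-y^*-z,t)\,\pd_i E(z)\,dz'\,dz_n,
\]
and integrate by parts in $y_n$; the boundary contribution at $y_n = 0$ vanishes because $u_{0,n}|_\Si = 0$, yielding
\[
T_2^i(x,t) = 4\int_{\R^n_+} C_i^\sharp(x,y,t)\, \pd_{y_n} u_{0,n}(y)\,dy.
\]
Pointwise bounds on $C_i^\sharp$ analogous to \eqref{C-estimate} for $C_i$, together with the compact support of $\pd_{y_n} u_{0,n}$, should then show that $\|T_2^i(\cdot,t)\|_{L^q(\R^n_+)} \to 0$ as $t \to 0_+$.

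The main obstacle is the $q = \infty$ case when $i < n$ and $u_{0,i}|_\Si \not\equiv 0$: then $\widetilde u_{0,i}$ has a jump across $\Si$, so $T_1^i$ alone cannot converge uniformly to $u_{0,i}$. On the other hand $v_i|_{x_n = 0} = 0$ by the Green-tensor boundary condition, so the error is concentrated in a $\sqrt t$-thick boundary layer near $\Si$, and uniform convergence must come from a precise cancellation between the boundary-layer profile of $T_1^i$ and the correction $T_2^i$. Tracking this cancellation pointwise near $\Si$, rather than only in integral norm, is the key technical step.
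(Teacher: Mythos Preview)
Your decomposition via $\widehat G_{ij}$ matches what the paper does for large $q$, and your integration by parts through $C_i^\sharp$ is essentially equivalent to the paper's direct use of $|u_{0,n}(y)|\le Cy_n$: both lead to the same majorant $|T_2^i(x,t)|\lesssim\int_K e^{-cy_n^2/t}(|x-y|+\sqrt t)^{-(n-1)}\,dy$. The trouble is that this bound has only $|x|^{-(n-1)}$ spatial decay, so it is not in $L^q(\R^n_+)$ for $q\le n/(n-1)$, and the Young-inequality argument delivers $\|T_2^i\|_{L^q}\to 0$ only for $q>n/(n-1)$. The paper covers the range $1<q<n/(n-1)$ by switching to the \emph{other} restricted tensor $\breve G_{ij}$ from Theorem~\ref{th6.1}: its remainder $G^*_{ij}$ in \eqref{E1.6} obeys the Solonnikov bound \eqref{Solonnikov.est} with the sharper decay $(|x-y^*|+\sqrt t)^{-n}$, and a Minkowski-in-$y$ estimate then handles small $q$ (the borderline $q=n/(n-1)$ by H\"older). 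Your proposal never invokes $\breve G_{ij}$, so $1<q\le n/(n-1)$ is a genuine gap.

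On the $q=\infty$ endpoint your instinct that a boundary layer obstructs uniform convergence is correct, but the cancellation you hope for cannot occur. Since $G_{ij}|_{x_n=0}=0$ and $u_0$ is bounded with compact support, dominated convergence gives $v_i(x,t)\to 0$ as $x_n\to 0^+$ for each fixed $t>0$; hence $\|v_i(\cdot,t)-u_{0,i}\|_{L^\infty(\R^n_+)}\ge\sup_{x'}|u_{0,i}(x',0)|$ for every $t>0$, and no rearrangement of $T_1^i+T_2^i$ can yield uniform convergence when the tangential trace is nonzero. The paper's own proof asserts that each heat piece of $T_1^i$ converges in $L^\infty$, which fails for exactly the boundary-layer reason you spotted. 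The argument is sound for $1<q<\infty$ (and for $q=\infty$ if $u_0|_\Si=0$), but the $q=\infty$ case with nonvanishing tangential trace appears to be an error in the lemma as stated.
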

Note that the exponent $q$ in \eqref{lem-7-2_2} includes $\I$ but not $1$.

\begin{proof}
Choose $R>0$ so that $K= \bket{(x',x_n)\in \R^n: |x'|\le R, 0\le x_n\le R}$ contains the support of $u_0$.
Since $u_0$ is uniformly continuous with compact support inside $K$,
\EQ{
\left\|\int_{\R^n_+}\Ga(x-y,t) u_{0,i}(y)\, dy - u_{0,i}(x)\right\|_{L^q_x(\R^n_+)}
+ \left\|\int_{\R^n_+}\Ga(x^*-y,t) u_{0,i}(y)\, dy \right\|_{L^q_x(\R^n_+)} \to 0
}
as $ t\to 0_+$ for all $i$.
In view of \eqref{lem7-1-pf3}, to show \eqref{lem-7-2_2}, it suffices to show
\EQ{\label{0924a}
\lim_{t\to 0_+}\sup_{x\in \R^n_+} \norm{v_i(\cdot,t)}_{L^q(\R^n_+)}=0,
}
where
\[
v_i(x,t)=\int_{\R^n_+} C_i(x,y,t) u_{0,n}(y)\, dy.
\]
Note that $|u_{0,n}(y)| \le Cy_n$, for some $C>0$, since $u_{0,n}|_\Si=0$ and $u_0 \in C^1_c(\overline {\R^n_+})$. Using estimate \eqref{C-estimate} for $C_i$, we have
\EQS{\label{0926a}
|v_i(x,t)| &\le \int_K \frac{e^{-\frac{ y_n^2}{30t}}}{\bke{y_n+\sqrt{t}}\bke{|x-y^*|+\sqrt{t}}^{n-1}}\, Cy_n \,dy
\\
&\lec \int_{K}  \frac 1{(|x-y|+\sqrt t)^{n-1}} e^{-\frac{ y_n^2}{30t}}  \, dy = \int_{\R^n} f(x-y{, t})g(y{, t})\,dy,
}
where
\[
f(x{, t})= \frac 1{(|x|+\sqrt t)^{n-1}},\quad g(x{, t})=e^{-\frac{ x_n^2}{30t}} \mathbbm{1}_K(x).
\]
By Young's convolution inequality,
\[ \norm{v_i(\cdot,t)}_{L^q(\R^n_+)}\lec \|{(f*g)(\cdot,t)}\|_{L^q(\R^n)} \le \|f{(\cdot,t)}\|_{L^p(\R^n)}\|g{(\cdot,t)}\|_{L^r(\R^n)}\]
where
\[
\frac1p+\frac1r = \frac1q + 1, \qquad 1\le p,q,r\le \I.
\]
We first compute $L^p$-norm of $f$.
If $p>\frac{n}{n-1}$,
\[\|f{(\cdot,t)}\|_{L^p(\R^n)}=\bke{\int_{\R^n} \frac1{(|z|+\sqrt{t})^{(n-1)p}}\,dz}^{1/p} =C \sqrt{t}^{\frac{n}p-(n-1)}.\]
Next, we compute $L^r$-norm of $g$. We need $0\le\frac1p- \frac1q< 1$ so that $1\le r<\infty$.
\[
\int_{\R^n} |g|^r \le \int_0^{R} \int_{B_R'} e^{-\frac{z_n^2}{30t}}\,dz'dz_n = CR^{n-1}\sqrt{t} \int_0^{\frac{R}{\sqrt{t}}} e^{-\frac{u^2}{30}}\,du  \lesssim \sqrt{t}.\]
Hence
$\|g(\cdot,t)\|_{L^r}\lesssim \sqrt{t}^{\frac1r}$, and
\[\|{(f*g)(\cdot,t)}\|_{L^q}\lesssim \sqrt{t}^{\frac{n}p-(n-1)+\frac1r} = \sqrt{t}^{\frac1q+1+(n-1)\bke{\frac1p-1}}.\]
To have vanishing limit when $t\to0_+$, we require $\frac1q+1+(n-1)\bke{\frac1p-1}>0$.

When $q \in (\frac{n}{n-1},\infty]$,
we can choose $p \in (\frac n{n-1},\min(q,\frac{n-1}{n-2}))$ so that all conditions on $p$,
\[
p>\frac{n}{n-1}, \quad 0\le\frac1p- \frac1q< 1, \quad \frac1q+1+(n-1)\bke{\frac1p-1}>0
\]
are satisfied. This shows \eqref{0924a} for all $q \in (\frac{n}{n-1},\infty]$.

For the small $q$ case, let
\[
u^*_i(x,t) = \int_{\R^n_+} G^*_{ij}(x,y,t) u_{0, j}(y)\,dy,
\]
where $G^*_{ij}$ is given in the \eqref{E1.6}, and is the sum of the last terms of \eqref{E1.6}. It suffices to show
\[
\lim_{t \rightarrow 0} \norm{u^*_i(x,t) }_{L^q_x (\R^n_+)}=0.
\]
By estimate \eqref{Solonnikov.est},  $|G^*_{ij}(x,y,t)|\lesssim e^{-\frac{Cy_n^2}{t}}\bke{|x^*-y|^2+t}^{-\frac{n}{2}}$.
For $1<q<\infty$, using the Minkowski's inequality,
\EQN{
\norm{u^*_i(x,t)}_{L^q_x(\R^n_+)}&\lesssim \int_{\R^n_+}\bke{\int_{\R^n_+}\abs{ G^*_{ij}(x,y,t)}^q dx}^{\frac{1}{q}} \abs{u_{0}(y)}dy
\\
&\lesssim  \int_{\R^n_+}\bke{\int_{\R^n_+}\frac{dx}{\bke{|x^*-y|^2+t}^{\frac{nq}{2}} }}^{\frac{1}{q}} e^{-\frac{Cy_n^2}{t}}\abs{u_{0}(y)}dy
\\
&\lesssim \int_0^R \int_{|y'|<R}\frac{1}{(y_n +\sqrt{t})^{\frac{n(q-1)}{q}}}\,e^{-\frac{Cy_n^2}{t}}\,dy'\,dy_n
\\
&\lesssim t^{\frac{1}{2}\bke{1-\frac{n(q-1)}{q}}}\int_0^{R/\sqrt t}\frac{1}{(z_n +1)^{\frac{n(q-1)}{q}}}\,e^{-Cz_n^2} \,dz_n,
}
where $y_n=\sqrt t z_n$.
Therefore, if $1<q< \frac{n}{n-1}$, then the right hand side goes to zero as $ t\to 0_+$.

The case $q=\frac{n}{n-1}$ can be obtained using the previous cases and the H\"older inequality.

This finishes the proof
 of Lemma \ref{th6.2}.
\end{proof}

\begin{remark}
In the proof of Lemma \ref{th6.2}, we have used $\widehat G_{ij}$ for large $q$ and
$\breve G_{ij}$ for small $q$. We do not use $\widehat G_{ij}$ for small $q$ because the estimate \eqref{0926a} for $v_i$ does not have enough decay in $x$.
 We can not use $\breve G_{ij}$ for $q=\infty$ because, although the pointwise estimate of $u_i^*(x,t)$ using \eqref{Solonnikov.est}
 converges to 0 as $t\to0$ for each $x\in \R^n_+$, it is not uniform in $x$. In contrast, it is uniform for $v_i$ thanks to $|u_{0,n}(y)| \le C y_n$.

\end{remark}

\begin{lem}\thlabel{th:Gij-initial-Lq}
Let $u_0$ be a vector field in $\R^n_+$, $u_0\in L^q(\R^n_+)$, $1<q<\infty$,  and let $u_i(x,t)=\tsum_{j=1}^n \int_{\R^n_+} G_{ij}(x,y,t)u_{0,j}(y)\,dy$.
Then $u(x,t)\to (\bP u_0)(x)$ in $L^q(\R^n_+)$.
\end{lem}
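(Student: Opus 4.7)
The plan is to prove the statement in three steps: establish a uniform-in-$t$ bound for the operator $T_t u_0(x):=\sum_j \int_{\R^n_+} G_{ij}(x,y,t)u_{0,j}(y)\,dy$ on $L^q(\R^n_+)$, reduce to solenoidal initial data by showing that $T_t$ annihilates the gradient part of $u_0$, and finally apply Lemma \ref{th6.2} on a dense subclass of $L^q_\si$.

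\textbf{Step 1 (uniform $L^q$-bound).} First I would establish $\|T_t u_0\|_{L^q(\R^n_+)}\lec\|u_0\|_{L^q(\R^n_+)}$ with a constant independent of $t\in(0,1]$. This follows from the identification $T_t=e^{-t\mathbf{A}}\bP$ on $L^q$: the Helmholtz projection $\bP$ is bounded on $L^q(\R^n_+)$ for $1<q<\infty$, and the Stokes semigroup $e^{-t\mathbf{A}}$ is uniformly bounded on $L^q_\si(\R^n_+)$ for small $t$. The naive Schur estimate directly from \thref{thm3} does not suffice because the polynomial pointwise bounds on $G_{ij}$ are not integrable in $y$ over $\R^n_+$ without the log factor from Remark \ref{rem9.2}; the classical semigroup boundedness bypasses this.

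\textbf{Step 2 (vanishing on gradients).} Next I would show $T_t u_0=T_t\bP u_0$ for every $u_0\in L^q$. Writing $u_0-\bP u_0=\nabla p$ with $\nabla p\in L^q$, it suffices to prove $T_t(\nabla p)=0$. The two ingredients are $\sum_j \pd_{y_j}G_{ij}(x,y,t)=0$ and $G_{in}(x,y,t)|_{y_n=0}=0$, both of which follow from the corresponding properties of $G$ in the $x$-variable (cf.~\eqref{Green-def-distribution}) combined with the symmetry $G_{ij}(x,y,t)=G_{ji}(y,x,t)$ of \thref{prop1}. For $p\in C^\infty_c(\overline{\R^n_+})$ the identity $T_t(\nabla p)=0$ then follows by integration by parts in $y$: the divergence-free condition kills the interior term, the $y_n=0$ boundary contribution vanishes because $G_{in}|_{y_n=0}=0$, and there is no boundary at infinity. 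Extension to general $\nabla p\in L^q$ is achieved by approximating $p$ with smooth compactly supported functions and using the uniform bound from Step 1.

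\textbf{Step 3 (density and passage to the limit).} Since $C^1_{c,\si}(\overline{\R^n_+})$ is dense in $L^q_\si(\R^n_+)$ for $1<q<\infty$, I would pick $v_k\in C^1_{c,\si}(\overline{\R^n_+})$ with $v_k\to\bP u_0$ in $L^q$. By Lemma \ref{th6.2} applied to $v_k$, we have $T_t v_k\to v_k$ in $L^q$ as $t\to 0_+$. Then by Step 2 and the triangle inequality,
\[
\|T_t u_0-\bP u_0\|_{L^q}\le\|T_t(\bP u_0-v_k)\|_{L^q}+\|T_t v_k-v_k\|_{L^q}+\|v_k-\bP u_0\|_{L^q},
\]
and Step 1 bounds the first term by $C\|\bP u_0-v_k\|_{L^q}$. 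Letting $t\to 0_+$ first and then $k\to\infty$ yields the claim.

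The main obstacle is Step 2 together with the uniform bound of Step 1: although the integration-by-parts computation is formally immediate, rigorously justifying it for gradients $\nabla p\in L^q$ without pointwise decay on $p$ requires a careful approximation argument, and the uniform $L^q$-bound for $T_t$ cannot be obtained cleanly from the bare pointwise estimates of \thref{thm3} alone but must invoke the classical boundedness of $\bP$ and $e^{-t\mathbf{A}}$ on $L^q$.
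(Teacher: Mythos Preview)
Your three-step plan matches the paper's argument in structure: approximate $\bP u_0$ by some $a=\bP a \in C^\infty_{c,\si}(\overline{\R^n_+})$, apply Lemma \ref{th6.2} to $a$, and control the remainder $T_t(u_0-a)$ via a uniform $L^q$-bound together with gradient annihilation.

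The one substantive difference is in Step~1. You obtain the uniform bound by invoking the identification $T_t = e^{-t\mathbf{A}}\bP$ and classical semigroup estimates; the paper instead cites its own estimate \eqref{E10.5} from Lemma \ref{th9.1}, which bounds the $\breve G_{ij}$-integral directly from Solonnikov's pointwise bound \eqref{Solonnikov.est}, and then implicitly combines this with Theorem \ref{th6.1} (equality of the $G_{ij}$- and $\breve G_{ij}$-integrals on solenoidal data) and the gradient-annihilation of Remark \ref{Gij.kernel}. Your route is valid, but be aware that rigorously justifying $T_t = e^{-t\mathbf{A}}\bP$ on all of $L^q$ is not free: one typically first shows it on $C^1_{c,\si}$ via uniqueness for the Stokes system, then extends by density, which in turn requires a uniform bound on $T_t$ on solenoidal data---i.e.\ precisely \eqref{E10.5} or an equivalent. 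So in practice your Step~1 ends up resting on the same ingredient the paper uses; the paper's formulation is just more self-contained within its own framework, while yours is conceptually cleaner once the identification is granted. Your explicit treatment of Step~2 (including the approximation of general $L^q$-gradients and the use of $G_{in}|_{y_n=0}=0$ via symmetry) is a useful clarification of something the paper leaves implicit.
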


This lemma does not assume $u = \bP u$, and implies \eqref{Gij-initial0}.

\begin{proof}
Since the Helmholtz projection $\bP$ is bounded in $L^q(\R^n_+)$, we also have $\bP u_0 \in L^q(\R^n_+)$. For any $\e>0$, choose $a =\bP a \in C^\infty_c(\overline{\R^n_+};\R^n)$ with $\norm{a-\bP u_0}_{L^q} \le \e$. Such $a$ may be obtained by first localizing $\bP u_0$ using a Bogovskii map, and then
mollifying the extension defined in \eqref{div0-extension} of the localized vector field.
Let $v_i(x,t)=\sum_{j=1}^n\int_{\R^n_+}G_{ij}(x,y,t)a_j(y)\,dy$. By Lemma \ref{th6.2}, there is $t_\e>0$ such that
\[
\norm{v(\cdot,t) -a}_{L^q(\R^n)} \le  \e, \quad \forall t \in (0,t_\e).
\]
By $L^q$ estimate \eqref{E10.5} in Lemma \ref{th9.1}, $\norm{u(t) - v(t)}_{L^q} \le C \norm{\bP u_0-a}_{L^q}\le C\e$.
Hence
\[
\norm{u(t)-\bP u_0}_{L^q} \le \norm{u(t)-v(t)}_{L^q} + \norm{v(t)-a}_{L^q} + \norm{a-\bP u_0}_{L^q} \le C\e
\]
for $t \in (0,t_\e)$.
This shows $L^q$-convergence of $u(t)$ to $\bP u_0$.
\end{proof}

\begin{proof}[Proof of Theorem \ref{Convergence-to-initial-data}]
Part (a) is by Lemma \ref{th:Gij-initial}.
Part (b) is by Lemma \ref{th:Gij-initial-Lq}.
Part (c) is by Lemma \ref{th6.2}.
\end{proof}

\section{The symmetry of the Green tensor}
\label{sec7}

In this section we prove \thref{prop1}, i.e., the symmetry of the Green tensor of the Stokes
system in the half-space,
\EQS{\label{Green.symmetry}
G_{ij}(x,y,t)=G_{ji}(y,x,t),\quad
\forall
x,y\in\R^n_+,\
\forall t\in\R\setminus\{0\}.
}
In the Green tensor formula in Lemma \ref{Green-formula}, this symmetry property is valid for the first three terms but unclear for the last two terms $ - \ep_i \ep_j \Ga_{ij}(x-y^*,t) - 4\widehat H_{ij}(x,y,t)$.
To prove it rigorously, we will use its regularity away from the singularity, bounds on spatial decay, and estimates near the singularity from the previous sections. For example, without the pointwise bound in \thref{prop2}, the bound
\eqref{sym_lim_y} is unclear, and it will take extra effort to show their zero limits as $\ep\to 0$.

Denote $G^y_{ij}(z,\tau)=G_{ij}(z, y, \tau)$ and $g^y_j(z,\tau)=g_j(z,y,\tau)=\widehat w_j^y(z,\tau) - F_j^y(z)\de(\tau)$ by \thref{gj-decomp}. Equation \eqref{Green-def-distribution} reads: For fixed $j=1,2,\cdots, n$ and $y\in \R^n_+$,
\EQS{\label{eq7.2}
\partial_{\tau} G^y_{ij}-\Delta_z
G^y_{ij}+\partial_{z_i}g^y_j=\delta_{ij}\delta_y(z)\delta(\tau),
\quad \sum_{i=1}^n \partial_{z_i}G^y_{ij}=0,\quad
(z,\tau)\in\R^n_+\times\R,
}
and $G^y_{ij}(z',0,\tau)=0$.
Denote $U:=\R^{n}_+\times \R$ and

\[
Q^{y,t}_{\epsilon}=B^y_\epsilon\times (t-\epsilon, \, t+ \epsilon).
\]
The inward normal $\nu_z$ on $\pd Q^{y,t}_{\epsilon}$ is defined on its lateral boundary as
\[
\nu_i (z,\tau)= -\frac{z_i-y_i}{|z-y|}.
\]

\begin{lem}\label{th7-1}
For $j=1,\ldots,n$, $y\in\R^n_+$, $t>0$, and all $f\in C^\infty (\R^n_+\times[0,t];\R^n)$, we have
\begin{align}
\label{fj_weak_form}
\notag
&f_j(y,0)=\lim_{\epsilon\to0_+}\sum_{k=1}^n\left[\int_{|z-y|=\ep}F^y_j(z)f_k(z,0)\nu_k\,dS_z\right.-\int_0^\ep\!\int_{|z-y|=\ep}G^y_{kj}(z,\tau)\na_zf_k(z,\tau)\cdot\nu_z\,dS_zd\tau\\
&+\int_0^\ep\!\int_{|z-y|=\ep}(\na_zG^y_{kj}(z,\tau)\cdot\nu_z)f_k(z,\tau)\,dS_zd\tau
\left.- \int_0^\ep\!\int_{|z-y|=\ep} \widehat w^y_j (z,\tau)f_k(z,\tau)\nu_k\,dS_zd\tau\right].
\end{align}
\end{lem}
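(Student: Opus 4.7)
The plan is to apply Green's identity to the Stokes pair $(G^y_{kj},\widehat w^y_j)$ (recall $\widehat w^y_j=g^y_j$ for $\tau>0$ by \thref{gj-decomp}) tested against $f_k(z,\tau)$ on the cylinder $B^y_\ep\times(\delta,\ep)$ with $0<\delta<\ep<y_n$, and then pass $\delta\to 0^+$ followed by $\ep\to 0^+$. Since $(G^y_{kj},\widehat w^y_j)$ is smooth on this cylinder (its only singularity is at $(y,0)$) and satisfies $(\pd_\tau-\De_z)G^y_{kj}+\pd_{z_k}\widehat w^y_j=0$, integrating $f_k$ against this equation and integrating by parts once in $\tau$, twice in $z$ for the Laplacian, and once in $z$ for the pressure divergence produces
\EQN{
\int_{B^y_\ep}\!\bigl[f_k G^y_{kj}\bigr]_{\tau=\delta}^{\tau=\ep}dz &= \int_\delta^\ep\!\int_{B^y_\ep}\!\bigl[(\pd_\tau f_k+\De f_k)G^y_{kj}+\pd_{z_k}f_k\,\widehat w^y_j\bigr]dz\,d\tau\\
&\quad + \int_\delta^\ep\!\int_{\pd B^y_\ep}\!\bigl[\na f_k\cdot\nu\,G^y_{kj}-f_k\,\na G^y_{kj}\cdot\nu+f_k\widehat w^y_j\nu_k\bigr]dS\,d\tau,
}
whose three lateral boundary integrals match, up to sign, the last three terms of \eqref{fj_weak_form}.

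Next I pass $\delta\to 0^+$. The initial trace $G_{kj}(z,y,0_+)=\de_{kj}\de(z-y)+\pd_{z_k}F^y_j(z)$ from \thref{th:Gij-initial} is distributional in $z$, so I adapt its proof to integration in the first variable by splitting $G^y_{kj}=\tilde G_{kj}+W_{kj}$ as in \eqref{eq_def_Gij}: Lemma \ref{Oseen-initial} applies to $S_{kj}(z-y,\delta)$ in either argument by evenness, while the $y^*$-translates and $W_{kj}$ are smooth on $\overline{B^y_\ep}$ for $\ep<y_n$. This yields
\EQN{
\lim_{\delta\to 0^+}\int_{B^y_\ep}\!f_k(z,\delta)\,G^y_{kj}(z,\delta)\,dz = f_j(y,0)-\int_{B^y_\ep}\!F^y_j\,\pd_{z_k}f_k(z,0)\,dz-\int_{\pd B^y_\ep}\!F^y_j\,f_k(z,0)\,\nu_k\,dS.
}
The surface integral here contributes the first term of \eqref{fj_weak_form}; its derivation requires IBP on the shell $B^y_\ep\setminus B^y_\rho$ followed by $\rho\to 0$, during which the Dirac contribution of $\pd_{z_k}\pd_{z_j}E(z-y)$ (present only for $k=j$, of mass $-1/n$) cancels exactly against the $\rho\to 0$ limit of the surface integral of $F^y_j$ on $\pd B^y_\rho$ once summed over $k$.

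Finally I pass $\ep\to 0^+$. Combining the two displays, $f_j(y,0)$ equals the four $\pd B^y_\ep$ integrals of \eqref{fj_weak_form} plus a remainder
\EQN{
\mathcal R_\ep &:= \int_{B^y_\ep}\!F^y_j\,\pd_{z_k}f_k(z,0)\,dz + \int_{B^y_\ep}\!f_k(z,\ep)G^y_{kj}(z,\ep)\,dz\\
&\quad - \int_0^\ep\!\int_{B^y_\ep}\!\bigl[(\pd_\tau f_k+\De f_k)G^y_{kj}+\pd_{z_k}f_k\,\widehat w^y_j\bigr]dz\,d\tau,
}
which tends to zero by the pointwise bounds of \thref{prop2}: the two $G^y_{kj}$-bulk terms are $O(\ep^{n/2})$ and $O(\ep|\log\ep|)$ from $|G^y_{kj}|\lesssim(|z-y|^2+\tau)^{-n/2}$, the $F^y_j$-bulk term is $O(\ep)$ from $|F^y_j(z)|\lesssim|z-y|^{1-n}$, and the $\widehat w^y_j$-bulk term is a higher power of $\ep$ from $|\widehat w^y_j|\lesssim\tau^{-1/2}$ on $B^y_\ep$. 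The principal obstacle lies in the $\delta\to 0$ step, where the distributional nature of the Green tensor's initial value in the first argument forces the careful shell-IBP Dirac bookkeeping described above in order to reach the clean identity.
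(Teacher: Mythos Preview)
Your approach is correct but takes a genuinely different route from the paper's. The paper works on the \emph{exterior} of the singularity: it starts from the distributional identity \eqref{eq7.2} tested against $f\in C^\infty_c(\R^n_+\times\R)$, writes $f_j(y,0)$ as an integral over all of $U=\R^n_+\times\R$, then excises the cylinder $Q^{y,0}_\ep$ and integrates by parts on $U\setminus Q^{y,0}_\ep$. The point of the exterior viewpoint is that the $\tau\to 0_+$ step becomes elementary: for $|z-y|>\ep$ one only needs the \emph{pointwise} identity $G^y_{kj}(z,0_+)=\pd_{z_k}F^y_j(z)$, no distributional bookkeeping. The price is that compact support of $f$ is needed to kill far-field boundary terms, and the general case $f\in C^\infty(\R^n_+\times[0,t])$ is recovered only at the end by multiplying with a cutoff $\zeta$ equal to $1$ near $(y,0)$.

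Your interior approach, integrating directly on $B^y_\ep\times(\delta,\ep)$, inverts this trade-off: no support condition on $f$ is ever needed, but the $\delta\to 0_+$ step now requires the full distributional initial value in the \emph{first} variable, which is not what \thref{th:Gij-initial} gives as stated. Your outline of this step is correct: the evenness of $S_{kj}$ lets you invoke the computation behind Lemma~\ref{Oseen-initial}, and the $y^*$-translates together with $W_{kj}$ are smooth on $\overline{B^y_\ep}$ uniformly in $\delta$ (their limit being $\ep_j\pd_k\pd_jE(z-y^*)$). The shell-IBP cancellation you describe is also right: the $\Ga_{kj}$ contribution to $\lim_\delta\int_{B^y_\ep}f_k\Ga_{kj}(\cdot-y,\delta)$ equals $\mathrm{p.v.}\int_{B^y_\ep}f_k\pd_k\pd_jE(\cdot-y)-\tfrac{\de_{kj}}{n}f_k(y,0)$, and the $-\tfrac1n f_j(y,0)$ so produced is exactly compensated by the inner-sphere term $+\tfrac1n f_j(y,0)$ arising when one rewrites the principal-value integral via IBP on the annulus. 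Your remainder estimates $\mathcal R_\ep\to 0$ are fine (the bulk $G$ time-integral is actually $O(\ep^2|\log\ep|)$ rather than $O(\ep|\log\ep|)$, but this is immaterial).

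In short: both proofs are valid. The paper's exterior argument keeps the initial-value step trivial at the cost of an extra cutoff reduction; your interior argument is more self-contained for general $f$ but front-loads the work into the distributional $\delta\to 0_+$ analysis, which you have handled correctly.
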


\begin{proof}
We first assume $f\in C^\infty_c(\R^n_+\times \R;\R^n)$.
By the defining property \eqref{eq7.2} of Green tensor, we have
\EQN{
f_j(y,0)=&\sum_{k=1}^n\int_{U}\left[G^y_{kj}(z,\tau)(-\pd_{\tau} f_k(z,\tau)-\De_zf_k(z,\tau))-\widehat w^y_j  (z,\tau)\pd_{z_k}f_k(z,\tau)\right]dz\,d\tau\\
&+\int_{\R^n_+} {F_j^y} (z)\div f(z,0)\,dz.
}
Separating the domain of the first integral, we have
\EQN{
f_j(y,0)=&\lim_{\epsilon\to0_+}\sum_{k=1}^n\int_{U\setminus Q^{y,0}_\ep}\left[G^y_{kj}(z,\tau)(-\pd_\tau f_k(z,\tau)-\De_zf_k(z,\tau)) - \widehat w_j^y(z,\tau)\pd_{z_k} f_k(z,\tau)\right]dz\,d\tau\\
&+\int_{\R^n_+}{F_j^y}\div f(z,0)\,dz\\
=&\lim_{\epsilon\to0_+}\sum_{k=1}^n\left(\int_0^\infty\int_{|z-y|>\ep}+\int_{\ep}^\infty\int_{|z-y|<\ep}\right)[\cdots]\,dz\,d\tau+\int_{\R^n_+}{F_j^y} (z)\div f(z,0)\,dz.
}
Here we have used the fact that $G^y_{kj}(z,\tau)=w^y_j(z,\tau)=0$ for $\tau<0$.

Integrating by parts and using $f\in C^\infty_c(\R^n_+\times \R)$, we get
\begin{align*}
f_j(y,0)=&\lim_{\epsilon\to0_+}\sum_{k=1}^n\left[\int_{|z-y|>\ep}G^y_{kj}(z, 0_+)f_k(z,0)\,dz+\int_0^\infty\int_{|z-y|>\ep}\pd_\tau G^y_{kj}(z,\tau)f_k(z,\tau)\,dzd\tau\right.\\
&+\int_0^\infty\int_{|z-y|=\ep}\bket{-G^y_{kj}(z,\tau)\na_zf_k(z,\tau) + f_k(z,\tau) \na_z G^y_{kj}(z,\tau)} \cdot\nu_z\,dS_zd\tau\\
&-\int_0^\infty\int_{|z-y|>\ep}\De_zG^y_{kj}(z,\tau)f_k(z,\tau)\,dzd\tau-\int_0^\infty\int_{|z-y|=\ep} \widehat w^y_j  (z,\tau)f_k(z,\tau)\nu_k\,dS_zd\tau\\
&+\int_0^\infty\int_{|z-y|>\ep}\pd_{z_k} \widehat w^y_j  (z,\tau)f_k(z,\tau)\,dzd\tau\\
&+\int_{|z-y|<\ep} G^y_{kj}(z,\ep)f_k(z,\ep)\,dz +\int_\ep^\infty\int_{|z-y|<\ep}\pd_\tau G^y_{kj}(z,\tau)f_k(z,\tau)\,dzd\tau\\
&+\int_\ep^\infty\int_{|z-y|=\ep} \bket{-G^y_{kj}(z,\tau)\na_zf_k(z,\tau) + f_k(z,\tau) \na_z G^y_{kj}(z,\tau)} \cdot(-\nu_z)\,dS_zd\tau\\
&-\int_\ep^\infty\int_{|z-y|<\ep}\De_zG^y_{kj}(z,\tau)f_k(z,\tau)\,dzd\tau
-\int_\ep^\infty\int_{|z-y|=\ep} \widehat w^y_j  (z,\tau)f_k(z,\tau)(-\nu_k)\,dzd\tau\\
&+\int_\ep^\infty\int_{|z-y|<\ep}\pd_{z_k} \widehat w^y_j  (z,\tau)f_k(z,\tau)\,dzd\tau
+ \bke{\int_{|z-y|<\ep}+ \int_{|z-y|>\ep}} {F_j^y}(z)\pd_{z_k}  f_k(z,0)\,dz\bigg].
\end{align*}
Note that $\pd_\tau G^y_{kj}-\De_zG^y_{kj}+\pd_{z_k}\widehat w^y_j =\pd_\tau G^y_{kj}-\De_zG^y_{kj}+\pd_{z_k}g^y_j=0$ for $\tau>0$ and that $G^y_{kj}(z,0_+)=\pd_kF^y_j(z)$ if $y\not=z$. Therefore, after combining and integrating by parts the sum of the first term and the last term,
\EQN{
f_j(y,0)=&\lim_{\epsilon\to0_+}\sum_{k=1}^n\left[\int_{|z-y|=\ep}F^y_j(z)f_k(z,0)\nu_k\,dS_z\right.-\int_0^\ep\int_{|z-y|=\ep}G^y_{kj}(z,\tau)\na_zf_k(z,\tau)\cdot\nu_z\,dS_zd\tau\\
&+\int_0^\ep\int_{|z-y|=\ep}(\na_zG^y_{kj}(z,\tau)\cdot\nu_z)f_k(z,\tau)\,dS_zd\tau
-\int_0^\ep\int_{|z-y|=\ep} \widehat w^y_j  (z,\tau)f_k(z,\tau)\nu_k\,dS_zd\tau\\
&+\int_{|z-y|<\ep} G^y_{kj}(z,\ep)f_k(z,\ep)\,dz
+\left.\int_{|z-y|<\ep} {F_j^y}(z)\pd_{z_k}f_k(z,0)\,dz
\right].
}
The last two terms vanish as $\ep\to0_+$ since
\EQS{\label{sym_lim_y}
\left|\int_{|z-y|<\ep}G^y_{kj}(z,\ep)f_k(z,\ep)\,dz\right|\lesssim\int_0^\ep\frac{\|f\|_\infty}{(r+\sqrt{\ep})^{n}}\,r^{n-1}\,dr\lesssim\ep^{\frac{n+1}2}\to0\ \text{ as }\ep\to0_+
}
by \eqref{Green_est} and \thref{lem6-1},
and
\[\left|\int_{|z-y|<\ep} {F_j^y}(z)\pd_{z_k}f_k(z,0)\,dz\right|\lesssim\int_0^\ep\frac{ \|\nabla f\|_\infty}{r^{n-1}}\,r^{n-1}\,dr\lesssim\ep\to0\ \text{ as }\ep\to0_+.\]
Hence \eqref{fj_weak_form} is valid for all $f\in C^\infty_c(\R^n_+\times\R)$.

If $f\in C^\infty_c(\R^n_+\times[0,t])$, we can extend it to
$\tilde f\in C^\infty_c(\R^n_+\times\R)$. Hence \eqref{fj_weak_form} is valid for all such $f$. %
Finally, if $f\in C^\infty(\R^n_+\times[0,t])$ for some $t>0$,
let $\tilde f = f \zeta$ where $\zeta(z,\tau)$ is a smooth cut-off function which equals 1 in $Q^{y,0}_{2\ep}$.
Then \eqref{fj_weak_form} is valid for $\tilde f\in C^\infty_c(\R^n_+\times[0,\infty))$ and hence also for $f$.
This completes the proof of the lemma.
\end{proof}

We now prove the symmetry.
\begin{proof}[Proof of  \thref{prop1}]
 Fix $\Phi \in C^\infty_c(\R)$,  $\Phi(s)=1$ for $s \le 1$, and $\Phi(s)=0$ for $s\ge 2$.
For fixed $x\not= y\in\R^n_+$, $t>0$, and $i,j=1,\ldots,n$, by choosing $f_k(z,\tau)=G^x_{ki}(z,t-\tau)\eta^{x,t}(z,\tau)$ in \eqref{fj_weak_form} of Lemma \ref{th7-1},
where $\eta^{x,t}$ is a smooth cut-off function defined by
\[\eta^{x,t}(z,\tau)=1-\Phi\left(\frac{|x-z|}\ep\right)\Phi\left(\frac{|t-\tau|}\ep\right),\]
and using that $\eta^{x,t}(z,\tau)=1$ on $\{(z,\tau):0\le\tau\le\ep,\,|z-y|\le\ep\}$ for $\ep<|x-y|/3$,
we obtain
\EQS{\label{Gx_weak_form}
G^x_{ji}(y,t)=&\lim_{\epsilon\to0_+}\sum_{k=1}^n\left[\int_{|z-y|=\ep}F^y_j(z)G^x_{ki}(z,t)\nu_k\,dS_z\right.\\
&-\int_0^\ep\int_{|z-y|=\ep}G^y_{kj}(z,\tau)\na_zG^x_{ki}(z,t-\tau)\cdot\nu_z\,dS_zd\tau\\
&+\int_0^\ep\int_{|z-y|=\ep}(\na_zG^y_{kj}(z,\tau)\cdot\nu_z)G^x_{ki}(z,t-\tau)\,dS_zd\tau\\
&-\left.\int_0^\ep\int_{|z-y|=\ep}\widehat w^y_j (z,\tau)G^x_{ki}(z,t-\tau)\nu_k\,dS_zd\tau\right].
}
Switching $y$ and $j$ in the above identity with $x$ and $i$, respectively,
and changing the variables in $\tau$, we get
\EQS{\label{Gy_weak_form}
G^y_{ij}(x,t)=&\lim_{\epsilon\to0_+}\sum_{k=1}^n\left[\int_{|z-x|=\ep}F^x_i(z)G^y_{kj}(z,t)\nu_k\,dS_z\right.\\
&-\int_{t-\ep}^t\int_{|z-x|=\ep}G^x_{ki}(z,t-\tau)\na_zG^y_{kj}(z,\tau)\cdot\nu_z\,dS_zd\tau\\
&+\int_{t-\ep}^t\int_{|z-x|=\ep}(\na_zG^x_{ki}(z,t-\tau)\cdot\nu_z)G^y_{kj}(z,\tau)\,dS_zd\tau\\
&\left.-\int_{t-\ep}^t\int_{|z-x|=\ep} {\widehat w^x_i}(z,t-\tau)G^y_{kj}(z,\tau)\nu_k\,dS_zd\tau\right].
}

Denote
\[
U_\ep^{L,\de}:=\bket{(\R^n_+\cap \bket{|z|<L, L z_n>1 }) \times [\de,t-\de]} \setminus
(Q^{x,t}_{\epsilon}\cup Q^{y,0}_{\epsilon})
\]
for $0<\de<\ep<\min(t,|x-y|)/2$ and $L > 2(|x|+|y|+1)$.
Since $G^x_{ki}(z, t-\tau)$ and $G^y_{kj}(z, \tau)$ are smooth in $U_\ep^{L,\de}$,
$\bkt{(\partial_{\tau}-\Delta_z)G^y_{kj}+\partial_{z_k}g^y_j}(z,\tau)$ and $\bkt{(-\partial_{\tau}-\Delta_z)G^x_{ki}+\partial_{z_k}g^x_i}(z,t-\tau)$ vanish in $U_\ep^{L,\de}$,
and $g_j(x,y,t)= \widehat w_j(x,y,t)$ for $t>0$,
\EQN{
0
=&~\sum_{k=1}^n \int_{U_\ep^{L,\de}}G^x_{ki}(z,
t-\tau)\bkt{(\partial_{\tau}-\Delta_z)G^y_{kj}+\partial_{z_k} \widehat w^y_j  }(z,\tau)\,dz\,d\tau\\
&~-\sum_{k=1}^n \int_{U_\ep^{L,\de}}G^y_{kj}(z,\tau)\bkt{(-\partial_{\tau}-\Delta_z)G^x_{ki}+\partial_{z_k} {\widehat w^x_i} }(z,t-\tau)\,dz\,d\tau\\
=&~\sum_{k=1}^n\left(\int_\de^\ep\int_{|z-y|>\ep}+\int_\ep^{t-\ep}\int_{|z|<L}+\int_{t-\ep}^{t-\de}\int_{|z-x|>\ep}\right)[\cdots]\,dz\,d\tau.
}

By integration by parts,  $G^y_{kj}(z',0,t)=0$, $G^y_{kj}(z,0_+)=\pd_kF_j^y(z)$ if $y\not=z$,
$\sum_{k=1}^n \partial_{z_k}G^x_{kj}=0$, and taking limits $L\to \infty$ and $\de\to0_+$, ($\e>0$ fixed), we get
\EQS{\label{sym_1}
\sum_{k=1}^n& \left[-\int_{|z-y|<\ep}G^x_{ki}(z,t-\ep)G^y_{kj}(z,\ep)\,dz-\int_{|z-y|=\ep}G^x_{ki}(z,t)F^y_j(z)\nu_k\,dS_z\right.\\
&+\int_{|z-x|<\ep}G^x_{ki}(z,\ep)G_{kj}^y(z,t-\ep)\,dz+\int_{|z-x|=\ep}G_{kj}^y(z,t)F^x_i(z)\nu_k\,dS_z\\
&-\int_0^\ep\int_{|z-y|=\ep}\bkt{G^x_{ki}(z,t-\tau)\na_zG^y_{kj}(z,\tau)-G^y_{kj}(z,\tau)\na_zG^x_{ki}(z,t-\tau)}\cdot\nu_z\,dS_zd\tau\\
&-\int_{t-\ep}^t\int_{|z-x|=\ep}\bkt{G^x_{ki}(z,t-\tau)\na_zG^y_{kj}(z,\tau)-G^y_{kj}(z,\tau)\na_zG^x_{ki}(z,t-\tau)}\cdot\nu_z\,dS_zd\tau\\
&+\int_0^\ep\int_{|z-y|=\ep}\bkt{G^x_{ki}(z,t-\tau)\widehat w^y_j(z,\tau)-G^y_{kj}(z,\tau)\widehat w^x_i(z,t-\tau)}\nu_k\,dS_zd\tau\\
&\left.+\int_{t-\ep}^t\int_{|z-x|=\ep}\bkt{G^x_{ki}(z,t-\tau)\widehat w^y_j(z,\tau)-G^y_{kj}(z,\tau)\widehat w^x_i(z,t-\tau)}\nu_k\,dS_zd\tau\right]=0.
}
Note that the above integrals are over finite regions. We can take limits $\de\to0_+$ because in these regions we do not evaluate $G_{kj}^y(z,\tau)$ and $\widehat w_j^y(z,\tau)$ at their singularity $(y,0)$, nor $G_{ki}^x(z,t-\tau)$ and $\widehat w_i^x(z,t-\tau)$ at their singularity $(x,t)$.
To justify the limits $L \to \infty$, we first need to show that the far-field integrals
\EQN{
J_1&= \int_{\R^n_+\cap\{|z|=L\}}\bkt{G^x_{ki}(z,t)F^y_j(z)-G_{kj}^y(z,t)F^x_i(z)}\nu_k\,dS_z\\
J_2&=\int_0^t\int_{\R^n_+\cap\{|z|=L\}}\bkt{G^x_{ki}(z,t-\tau)\na_zG^y_{kj}(z,\tau)-G^y_{kj}(z,\tau)\na_zG^x_{ki}(z,t-\tau)}\cdot\nu_z\,dS_zd\tau\\
J_3&=\int_0^t\int_{\R^n_+\cap\{|z|=L\}} \bkt{G^x_{ki}(z,t-\tau)\widehat w^y_j(z,\tau)-G^y_{kj}(z,\tau)\widehat w^x_i(z,t-\tau)}
\nu_k\,dS_zd\tau
}
vanish as $L\to\infty$.
By \eqref{Green_est},
\[
|J_1| \lec \int _{\R^n_+\cap\{|z|=L\}} L^{-n}\,L^{1-n} \,dS_z = CL^{-n}\to 0.
\]
For $J_2$ with $L>2(|x|+|y|+\sqrt t)$, the worst estimate of $\na _zG_{kj}^y(z,\tau)$ by \eqref{Green_est} is $L^{-n} (z_n+\sqrt \tau)^{-1} \log \frac L{\sqrt \tau}$. Thus
\[
|J_2| \lec\int_0^t \int _{\R^n_+\cap\{|z|=L\}}L^{-n}\,L^{-n} \tau^{-1/2} (\log L + |\log \tau| \mathbbm 1_{\tau<1} )\,dS_z d\tau\lesssim \sqrt{t}\,L^{-(n+1)}\log L \to 0.
\]

For the integral $J_3$, by \eqref{pressure_est} with $r=\min(x_n,y_n)>0$,
\[
|J_3| \lec
	\int_0^t \int _{\R^n_+\cap\{|z|=L\}} L^{-n}  \tau^{-1/2} \bkt{\frac1{L^{n}} \log\frac{L}{z_n}  + \frac1{L^{n-1}r}}
\,dS_z d\tau.
\]
Using
\[
  \int _{|z|=L,\, z_n<1} |\log z_n| \,dS_z
  =
  \int_0^1\!\int_{|z'|={\sqrt{L^2-z_n^2}}} |\log z_n| \, dS_{z'} dz_n
  \lesssim
  \int_0^1L^{n-2} \, |\log z_n|\,dz_n
  \lec L^{n-2},
\]
we get
\[
|J_3| \lec \sqrt t \bke{L^{-n-1}\log L + L^{-n-2} + L^{-n} r^{-1} }\to 0,\quad\text{as } L \to \infty.
\]

We also need to show the boundary integrals similar to $J_1$, $J_2$  and $J_3$ at $z_n=1/L$ (instead of $|z|=L$) vanish as $L \to \infty$. This is clear for $J_1$ and $ J_2 $ as $G_{ki}^x(z',0,t)=0$ and the factors $F_j^y$ and $\nb_z G^y_{kj}$ are bounded near $z_n=0$.
For $J_3$, estimate \eqref{pressure_est} of the factor $\widehat w_j^y$ has a log singularity $\log z_n$, and we use the boundary vanishing estimate \eqref{eq_thm3_al_yn} of $G_{ki}^x$, %
\EQN{
|J_3|\lec &\int_0^{t} \int_{z_n = 1/L} \frac{z_n \log\bke{e+\frac{|x^*-z|}{\sqrt{t-s}} }}{\sqrt{t-\tau}(|z'-x'|+|z_n-x_n|+\sqrt{t-\tau})^n}\\
&\cdot\tau^{-\frac12}\,\frac1{(|z'-y'|+z_n+y_n+\sqrt{\tau})^n}\,\log\bke{1+\frac{|z'-y'|+y_n+\sqrt{\tau}}{z_n}} dS_zd\tau,
}
which vanishes as $L\to \infty$. Note that the proof of the base case (no derivatives) of \eqref{eq_thm3_al_yn}, to be given in \S\ref{sec8}, does not rely on the symmetry.%

The above show \eqref{sym_1}.

Now take $\ep\to0$. Using \eqref{Gx_weak_form} and \eqref{Gy_weak_form}, the identity \eqref{sym_1} becomes
\EQS{\label{sym_2}
\lim_{\ep\to0_+}\sum_{k=1}^n&\left[-\int_{|z-y|<\ep}G^x_{ki}(z,t-\ep)G^y_{kj}(z,\ep)\,dz+\int_{|z-x|<\ep}G^x_{ki}(z,\ep)G^y_{kj}(z,t-\ep)\,dz\right.\\
&-\int_0^\ep\int_{|z-y|=\ep}G^y_{kj}(z,\tau){\widehat w^x_i} (z,t-\tau)\nu_k\,dS_zd\tau\\
&\left.+\int_{t-\ep}^t\int_{|z-x|=\ep}G^x_{ki}(z,t-\tau)\widehat w^y_j (z,\tau)\nu_k\,dS_zd\tau\right]
-G^x_{ji}(y,t) +G^y_{ij}(x,t) =0.
}
The first two terms tend to zero as $\ep\to0_+$ by the same reason as for \eqref{sym_lim_y}. Moreover, since $w^x_i(z,t-\tau)$ is uniformly bounded (independent of $\ep$) for $(z,\tau)\in\{(z,\tau):|z-y|=\ep,\,0<\tau<\ep\}$ by \eqref{pressure_est},
we obtain from \eqref{Green_est} that
\EQS{\label{sym_lim_2}
&\left|\int_0^\ep\!\int_{|z-y|=\ep}G^y_{kj}(z,\tau){\widehat w^x_i}(z,t-\tau)\nu_k\,dS_zd\tau\right|\lesssim \int_0^\ep\!\int_{|z-y|=\ep}\frac1{(|z-y|+\sqrt{\tau})^{n}}\,dS_zd\tau\\
&\lesssim\int_0^\ep\frac1{(\ep+\sqrt{\tau})^{n}}\,\ep^{n-1}\,d\tau
\lesssim\ep + \de_{n2} \ep\log \frac 1\ep \to0\ \ \ \text{ as }\ep\to0_+.
}
Similarly, $\int_{t-\ep}^t\int_{|z-x|=\ep}G^x_{ki}(z,t-\tau)w^y_j(z,\tau)\nu_k\,dS_zd\tau$ goes to zero as $\e\to0_+$.
By \eqref{sym_lim_y} and \eqref{sym_lim_2}, the equation \eqref{sym_2} turns into
\EQN{
-G^x_{ji}(y,t)+G^y_{ij}(x,t)=0.
}
This completes the proof of \thref{prop1}, i.e., the symmetry \eqref{Green.symmetry} of the Green tensor.
\end{proof}

\begin{remark}\label{rem7.1}
We can actually show an alternative estimate of $\widehat w_j^y(z,\tau)$ which has no singularity as $z_n \to 0_+$ by estimating \eqref{0715a} instead of \eqref{w-formula}, cf.~Remark \ref{rem3.5}(i). Using it, we don't need the vanishing estimate \eqref{eq_thm3_al_yn}.
We do not present it in this way since its proof is more involved, in particular in the case $n=2$.
\end{remark}

\section{The main estimates of the Green tensor}
\label{sec8}
In this section we prove the main estimates in \thref{thm3} and \thref{thm4}.

\begin{proof}[Proof of \thref{thm3}]
From \eqref{Green_est} we have that
\EQS{\label{main_eq1}
&|\pd_{x',y'}^l\pd_{x_n}^k\pd_{y_n}^q\pd_t^mG_{ij}(x,y,t)|\lesssim\frac1{(|x-y|^2+t)^{\frac{l+k+q+n}2+m}}\\
&~~~~+\frac{\LN_{ijkq}^{mn}}{t^{m}(|x^*-y|^2+t)^{\frac{ l+k-k_i+n }2} (x_n^2+t)^{\frac{k_i}2} (y_n^2+t)^{\frac{q}2}},
}
where $k_i = (k - \de_{in})_+$,
and
\[
\LN_{ijkq}^{mn} := 1+ \de_{n2}\mu_{ik}^m\bkt{\log(\nu_{ijkq}^m|x'-y'|+x_n+y_n+\sqrt{t}) - \log(\sqrt{t})},
\]
\[
\mu_{ik}^m=1-(\de_{k0}+\de_{k1}\de_{in})\de_{m0},\quad
\nu_{ijkq}^m =  \de_{q0} \de_{jn} \de_{k(1+\de_{in})} \de_{m0}+\de_{m>0}.
\]
On the other hand, by the symmetry of the Green tensor (\thref{prop1}) and \eqref{Green_est},
\EQS{\label{main_eq2}
&|\pd_{x',y'}^l\pd_{x_n}^k\pd_{y_n}^q\pd_t^mG_{ij}(x,y,t)|=|\bke{\pd_{x',y'}^l\pd_{Y_n}^k\pd_{X_n}^q\pd_t^mG_{ji}}(y,x,t)|\\
&\lesssim\frac1{(|x-y|^2+t)^{\frac{l+k+q+n}2+m}}
+\frac{\LN_{jiqk}^{mn}}{t^{m}(|x^*-y|^2+t)^{\frac{l+q-q_j+n}2 } (y_n^2+t)^{\frac{q_j}2} (x_n^2+t)^{\frac{k}2}},
}
where $q_j = (q-\de_{jn})_+$, $\pd_{X_n}$ denotes the partial derivative in the $n$-th variable, and $\pd_{Y_n}$ denotes the partial derivative in the $2n$-th variable.
The combination of \eqref{main_eq1} and \eqref{main_eq2} gives
\EQN{
|\pd_{x',y'}^l \pd_{x_n}^k \pd_{y_n}^q \pd_t^m G_{ij}(x,y,t)|&\lesssim\frac1{(|x-y|^2+t)^{\frac{l+k+q+n}2+m}}
\\&+\frac{\LN_{ijkq}^{mn}+\LN_{jiqk}^{mn}}
{t^{m}(|x^*-y|^2+t)^{\frac{l+k-k_i+q-q_j+n}2}(x_n^2+t)^{\frac{k_i}2}(y_n^2+t)^{\frac{q_j}2} }.
}
This shows \eqref{eq_Green_estimate} and completes the proof of \thref{thm3}.
\end{proof}

We next show the boundary vanishing of derivatives of $G_{ij}$ at $x_n=0$ or $y_n=0$.
\begin{proof}[Proof of \thref{thm4}]
Denote
\[
\LN= {\textstyle\sum_{k=0}^1}( \LN_{ijkq}^{mn} + \LN_{jiqk}^{mn})(x,y,t) .
\]
By $\pd_{x',y'}^l\pd_{y_n}^q\pd_t^mG_{ij}|_{x_n=0}=0$ and \eqref{eq_Green_estimate} with $k=1$, %
we have
\EQS{\label{eq_thm3_pf}
& \left|\pd_{x',y'}^l\pd_{y_n}^q \pd_t^mG_{ij}(x,y,t)\right|\le \int_0^{x_n}\left|\pd_{x',y'}^l\pd_{x_n}\pd_{y_n}^q\pd_t^mG_{ij}(x',z_n,y,t)\right|\,dz_n  \\
& \lesssim \int_0^{x_n}\left[\frac1{(|x'-y'|^2+|z_n-y_n|^2+t)^{\frac{l+q+n+1}2+m}}\right.\\
&\quad \quad \left.+\frac{\LN}{t^{m}(|x'-y'|^2+(z_n+y_n)^2+t)^{\frac{l+q-q_j+n}2 } (z_n^2+t)^{\frac12 } (y_n^2 + t)^{\frac{q_j}2} }\right]\,dz_n
=: I_1 + I_2.
}
Above we have used that $\LN_{ijkq}^{mn}(x',z_n,y,t)$ is nondecreasing in $z_n$.

\medskip

We first estimate $I_1$.

Case 1.
If $3x_n<y_n$, then $|z_n-y_n|>\frac12 (x_n + y_n)$ and $z_n+y_n>\frac14(x_n+y_n)$ for $0<z_n<x_n$. Thus,  \eqref{eq_thm3_pf} gives
\EQN{
I_1
&\lesssim \frac{x_n}{(|x-y^*|^2+t)^{\frac{l+q+n+1}2+m}}.
}

Case 2.
If $y_n<3x_n < \frac12 \bke{|x'-y'|+y_n+\sqrt{t}}$, then $x_n+y_n<\frac43(|x'-y'|+\sqrt t)$, which implies $|x-y^*|+\sqrt t\lec |x'-y'|+\sqrt t$.
We drop $|z_n-y_n|$ in the integrand of \eqref{eq_thm3_pf} to get
\EQN{
I_1
\lesssim\frac{x_n}{(|x'-y'|^2+t)^{\frac{l+q+n+1}2+m}}
\lesssim\frac{x_n}{(|x-y^*|^2+t)^{\frac{l+q+n+1}2+m}}.
}

Case 3.
If $3 x_n > y_n > \frac12 \bke{|x'-y'|+y_n+\sqrt{t}}$ or $3 x_n > \frac12 \bke{|x'-y'|+y_n+\sqrt{t}} > y_n$, then $x_n\approx |x-y^*|+\sqrt{t}$. By \eqref{eq_Green_estimate} with $k=0$,
\EQN{
I_1
\lesssim\frac1{(|x-y|^2+t)^{\frac{l+q+n}2+m}}
\lesssim\frac{x_n}{(|x-y|^2+t)^{\frac{l+q+n}2+m}(|x-y^*|^2+t)^{\frac12}} .
}
Thus, we have
\[
I_1
\lesssim\frac1{(|x-y|^2+t)^{\frac{l+q+n}2+m}}
\lesssim\frac{x_n}{(|x-y|^2+t)^{\frac{l+q+n}2+m}(|x-y^*|^2+t)^{\frac12}}.
\]

\medskip

Next, we estimate $I_2$.

If $x_n< y_n+ \frac12 \sqrt{t}$, then $|x-y^*|^2+t\approx|x'-y'|^2+y_n^2+t$.
We drop $z_n$ in the integrand of \eqref{eq_thm3_pf} to get
\EQN{
I_2
&\lesssim \frac{x_n\,\LN }{t^{m+\frac12}(|x'-y'|^2+y_n^2+t)^{ \frac{l+q-q_j+n}2 } (y_n^2+t)^{ \frac{q_j}2}}
\lesssim \frac{x_n\,\LN }{t^{m+\frac12}(|x-y^*|^2+t)^{ \frac{l+q-q_j+n}2 } (y_n^2+t)^{\frac{q_j}2}}.
}

If $x_n>y_n+ \frac12\sqrt{t}$, then $x_n\gtrsim\sqrt{t}$. By \eqref{eq_Green_estimate} with $k=0$,
\EQN{
I_2
&\lesssim \frac{\LN}
{t^{m}(|x-y^*|^2+t)^{ \frac{l+q-q_j+n}2 } (y_n^2+t)^{\frac{q_j}2}}
\lesssim \frac{x_n\,\LN}
{t^{m+\frac12}(|x-y^*|^2+t)^{ \frac{l+q-q_j+n}2 }(y_n^2+t)^{\frac{q_j}2}}.
}

Combining the above cases, we derive
\EQN{
\left|\pd_{x',y'}^l\pd_{y_n}^q\pd_t^mG_{ij}(x,y,t)\right|
&\lesssim\frac{x_n}{(|x-y|^2+t)^{\frac{l+q+n}2+m}(|x-y^*|^2+t)^{\frac12}}
\\
&\quad +\frac{x_n\,\LN}
{t^{m+\frac12}(|x-y^*|^2+t)^{ \frac{l+q-q_j+n}2 }(y_n^2+t)^{\frac{q_j}2}},
}
which is \eqref{eq_thm3_al_yn} for $\al=1$. Since \eqref{eq_thm3_al_yn} also holds for $\al=0$ by \eqref{eq_Green_estimate},  it holds for all $0\le \al\le 1$. Finally, \eqref{eq_thm3_al_xn} follows from the symmetry. This completes the proof of \thref{thm4}.
\end{proof}

\section{Mild solutions of Navier-Stokes equations}
In this section we apply our linear estimates to the construction of mild solutions of Navier-Stokes equations \eqref{NS}.

%\end{document}
\subsection{Mild solutions in $L^q$}

%\end{document}

In this subsection we prove Lemma \ref{th9.1}. It is standard to prove Theorem \ref{thm5} using
 estimates in  Lemma \ref{th9.1}
 and a  fixed point argument. We skip the proof of Theorem \ref{thm5}.

\begin{lem}\thlabel{th9.1}
Let $n \ge 2$, $1\le p\leq q\le \infty$ and $1<q$.

(a) If $u_0 \in L^p_\si(\R^n_+)$ and $\breve u_i(x,t)=\sum_{j=1}^n\int _{\R^n_+} \breve G_{ij}(x,y,t) u_{0,j}(y) dy$, then
\EQ{\label{E10.5}
\norm{\breve u(\cdot,t) }_{L^q(\R^n_+)} \le C  t^{-\frac{n}{2}(\frac{1}{p}-\frac{1}{q})}\norm{u_0}_{L^p(\R^n_+)}, \quad \text{if } u_0 = \bP u_0,
}
\EQ{\label{E10.5a}
L^q\text{-}
\lim_{t \to 0_+} t^{\frac{n}{2}(\frac{1}{p}-\frac{1}{q})} \breve u(\cdot,t) =
\left\{
\begin{aligned}
0 , \quad \text{if } 1\le p<q\le \I,
\\
u_0, \quad \text{if } 1<p=q<\I.
\end{aligned}\right.
}
\eqref{E10.5a}$_2$ is also valid for $p=q=\I$ if $u_0 $ in the $L^\infty$-closure of $C^1_{c,\si}(\overline{\R^n_+})$.

(b) Let $F \in L^p(\R^n_+)$, $a,b\in \NN_0$, and $1 \le a+b$. Assume $b\ge1$ and $n\ge3$ if $p=q=\infty$. Then
\EQ{\label{E10.6}
\norm{\int _{\R^n_+} \pd_x^a \pd_{y}^b G_{ij}(x,y,t) F(y) dy}_{L^q(\R^n_+)} \le C t^{-\frac{a+b}{2}-\frac{n}{2}(\frac{1}{p}-\frac{1}{q})} \norm{F}_{L^p(\R^n_+)}. %
}
\end{lem}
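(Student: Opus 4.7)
The plan is to establish Lemma~\ref{th9.1} by representing the relevant operators as convolutions in $\R^n$ via the reflection $y \mapsto y^*$, and then applying Young's inequality. The main inputs are Solonnikov's pointwise bound \eqref{Solonnikov.est} for part (a) and the refined pointwise estimate \eqref{eq_Green_estimate} of Theorem~\ref{thm3} for part (b).

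For part (a), I would use Solonnikov's decomposition $\breve G_{ij}(x,y,t) = \delta_{ij}\Gamma(x-y,t) + G^*_{ij}(x,y,t)$ with $|G^*_{ij}| \lesssim e^{-cy_n^2/t}(|x^*-y|^2+t)^{-n/2}$. The heat-kernel piece $\delta_{ij}\Gamma(x-y,t)$ gives the classical $L^p$-$L^q$ smoothing by Young's inequality after extending $u_0$ by zero to $\R^n$. For $G^*_{ij}$, the substitution $z = y^*$ converts $(|x^*-y|^2+t)^{-n/2}$ into the Newtonian-type convolution kernel $k_t(w) = (|w|^2+t)^{-n/2}$ in $w = x - z$, acting on $\tilde h(z) = e^{-cz_n^2/t}|u_0(z^*)|\mathbbm{1}_{z_n<0}$ with $\|\tilde h\|_{L^p}\le\|u_0\|_{L^p}$. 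Young's inequality with $1/r = 1 + 1/q - 1/p$ yields the required rate $t^{n/(2r)-n/2}$ whenever $r > 1$, i.e., $p < q$. For the diagonal case $p = q$ (where $r = 1$ and $k_t \notin L^1(\R^n)$), I would exploit the exponential $e^{-cy_n^2/t}$ to obtain $\sup_x\int|G^*_{ij}(x,y,t)|\,dy \lesssim 1$, combine with the pointwise bound $\|G^*_{ij}\|_\infty \lesssim t^{-n/2}$, and interpolate via Riesz--Thorin to recover the diagonal $L^p$-$L^p$ estimate for $1 < p < \infty$. Convergence \eqref{E10.5a} for $p = q$ is Theorem~\ref{Convergence-to-initial-data}; the $p < q$ case reduces to $p = q$ by approximating $u_0$ in $L^p$ by $a \in C^\infty_{c,\sigma}(\overline{\R^n_+})$, since then $t^{(n/2)(1/p-1/q)}\|\breve u(t)\|_{L^q} \le t^{(n/2)(1/p-1/q)}\|\breve a(t)\|_{L^q} + C\|u_0 - a\|_{L^p}$, and the first summand vanishes as $t\to 0_+$ using $\|\breve a(t)\|_{L^q} \lesssim \|a\|_{L^q}$.

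For part (b), I would apply \eqref{eq_Green_estimate} to dominate $|\pd_x^a\pd_y^b G_{ij}(x,y,t)|$ by the sum of the whole-space kernel $(|x-y|^2+t)^{-(a+b+n)/2}$ and an anisotropic kernel $\LN(|x-y^*|^2+t)^{-(n+\sigma)/2}((x_n+y_n)^2+t)^{-(a+b-\sigma)/2}$, with $\sigma\in\{0,1,2\}$ and $\LN$ the logarithmic factor arising only for $n=2$. The first kernel gives a standard convolution estimate after extending $F$ by zero, and the hypothesis $a+b\ge 1$ ensures $L^r$-integrability for every $1\le r\le\infty$ (including the otherwise problematic endpoints $r=1$ and $r=\infty$), so Young's inequality covers the full range of $(p,q)$. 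For the second kernel, the reflection $z = y^*$ with $\tilde F(z) = F(z^*)\mathbbm{1}_{z_n<0}$ turns it into the pure convolution kernel $(|w|^2+t)^{-(n+\sigma)/2}(w_n^2+t)^{-(a+b-\sigma)/2}$ in $w = x - z$ (using $x_n+y_n = |x_n - z_n|$ when $z_n<0$); direct computation of its $L^r(\R^n)$-norm by separating tangential and normal integrations produces the scaling $t^{n/(2r)-(a+b+n)/2}$, matching the claimed rate.

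The principal obstacle I anticipate is the endpoint $p = q$ in part (a): the Newtonian kernel $(|w|^2+t)^{-n/2}$ does not belong to $L^1(\R^n)$, so plain Young's inequality fails, and one must invoke Schur-type bounds weighted by the exponential $e^{-cy_n^2/t}$ together with Riesz--Thorin interpolation to recover the diagonal estimate. A minor technical issue is the logarithmic factor $\LN$ in the $n=2$ case of part (b); after reflection it becomes $1 + \log(1 + |x-z|/\sqrt{t})$, which grows sub-polynomially and therefore neither destroys $L^r$-integrability nor alters the $t$-scaling, so \eqref{E10.6} holds without further modification.
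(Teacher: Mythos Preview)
Your approach to part (b) is correct and essentially parallel to the paper's, which also reduces to Young's inequality with an $L^1\cap L^\infty$ convolution kernel; the only cosmetic difference is that the paper dominates both terms of \eqref{eq_Green_estimate} by a single kernel $H_t(x-y)=t^{-(n+m)/2}H_1((x-y)/\sqrt t)$ via the elementary inequalities $|x^*-y|\ge|x-y|$ and $x_n+y_n\ge|x_n-y_n|$, avoiding the reflection altogether. Your handling of \eqref{E10.5} for $p<q$ and of the limits \eqref{E10.5a} is also fine.

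There is, however, a genuine gap in your treatment of the diagonal case $p=q$ in part (a). You propose to combine the Schur bound $\sup_x\int|G^*_{ij}(x,y,t)|\,dy\lesssim 1$ with the pointwise bound $\|G^*_{ij}\|_{L^\infty}\lesssim t^{-n/2}$ and interpolate. But these two bounds yield only $L^\infty\to L^\infty$ and $L^1\to L^\infty$, and Riesz--Thorin between them never reaches $L^p\to L^p$. To get the diagonal estimate by interpolation you would need the dual Schur bound $\sup_y\int|G^*_{ij}(x,y,t)|\,dx\lesssim 1$, and this \emph{fails}: integrating $e^{-cy_n^2/t}(|x-y^*|^2+t)^{-n/2}$ over $x'\in\Si$ produces $(x_n+y_n+\sqrt t)^{-1}$, and the subsequent $dx_n$-integral diverges logarithmically. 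The exponential sits in the wrong variable to save you here.

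The paper circumvents this by \emph{not} treating the $G^*$ part as a full $\R^n$-convolution. Instead it writes $J_t(x)$ as an integral in $y_n$ of tangential convolutions on $\Si$, applies Young's inequality only in the tangential variables (the tangential kernel has $L^1(\Si)$-norm $\sim(x_n+y_n+\sqrt t)^{-1}$, finite for every fixed $x_n,y_n$), then uses Minkowski in $x_n$ followed by H\"older in $y_n$. The crucial step is the $L^q_{x_n}(0,\infty)$-norm of $(x_n+y_n+\sqrt t)^{-1}$, which is finite precisely when $q>1$; this is where the hypothesis $q>1$ enters, and the remaining $e^{-cy_n^2/t}$ factor then supplies the correct $t$-power after the H\"older step in $y_n$. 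You will need this tangential--normal splitting (or an equivalent device) to close the diagonal case.
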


\begin{proof}
We consider \eqref{E10.5} and decompose $\breve u_i(x,t)$ defined in \eqref{breve-u-def} as
\EQN{
\breve u_i(x,t) &=\int _{\R^n_+} \Ga(x-y,t) u_{0,i}(y) dy + \int _{\R^n_+} G_{ij}^*(x,y,t) u_{0,j}(y) dy = :
u^{heat}_i(x,t) + u_i^*(x,t).
}

The basic property of heat kernel yields
$$
\|u^{heat}\|_{L^{q}(\R_{+}^{n})}\leq C  t^{\frac{n}{2}(\frac{1}{q}-\frac{1}{p})} \norm{u_0}_{L^p(\R_{+}^{n})}.
$$

By \eqref{Solonnikov.est}, $u^*(x,t)$ is bounded by
\EQN{
J_t(x)&=  \int _{\R^n_+}\frac{e^{-\frac{cy_n^2}t}} {(|x'-y'| + x_n+y_n+\sqrt t)^n} |u_0(y)| \,dy
\\
&= \int_0^\I
\frac{1} {(|x'| + x_n+y_n+\sqrt t)^n} *_{\Si} |u_0(x',y_n)| e^{-\frac{cy_n^2}t}\,dy_n,
}
where $*_\Si$ indicates convolution over $\Si$. By Minkowski and Young inequalities,
\EQN{
\norm{J_t(\cdot,x_n)}_{L^q(\Si)} &\lec \int_0^\I \norm{
\frac{1} {(|x'| + x_n+y_n+\sqrt t)^n} *_{\Si} |u_0(x',y_n)|}_{L^q(\Si)}  e^{-\frac{cy_n^2}t}\,dy_n
\\
&\lec \int_0^\I \norm{
\frac{1} {(|x'| + x_n+y_n+\sqrt t)^n}}_{L^r(\Si)} \cdot  \norm{ u_0(\cdot,y_n)}_{L^p(\Si)}  e^{-\frac{cy_n^2}t}\,dy_n,\quad \frac1q + 1 = \frac1r + \frac1p,
\\
&\lec \int_0^\I
\frac{1} { (x_n+y_n+\sqrt t)^{1-(n-1)\bke{\frac1q - \frac1p}}} \cdot  \norm{ u_0(\cdot,y_n)}_{L^p(\Si)}  e^{-\frac{cy_n^2}t}\,dy_n.
}
By Minkowski inequality again, (here we need $q>1$)
\begin{align}
\norm{J_t}_{L^q(\R^n_+)} &=\norm{ \norm{J_t(\cdot,x_n)}_{L^q(\Si)}}_{L^q(0,\I)} \notag
\\
&\lec \int_0^\I \norm{
\frac{1} { (x_n+y_n+\sqrt t)^{1-(n-1)\bke{\frac1q - \frac1p}}} }_{ L^q(x_n\in(0,\I))} \cdot  \norm{ u_0(\cdot,y_n)}_{L^p(\Si)}  e^{-\frac{cy_n^2}t}\,dy_n \notag
\\
&\lec \int_0^\I
\frac{1} {(y_n+\sqrt t)^{1-\frac1q - (n-1)\bke{\frac1q - \frac1p}} } \cdot  \norm{ u_0(\cdot,y_n)}_{L^p(\Si)}  e^{-\frac{cy_n^2}t}\,dy_n.\label{0912a}
\end{align}
By H\"older inequality,
\EQN{
\norm{J_t}_{L^q(\R^n_+)} &\lec \norm{ u_0}_{L^p(\R^n_+)} \bke{ \int_0^\I
\bke{\frac{1} {(y_n+\sqrt t)^{1-\frac1q - (n-1)\bke{\frac1q - \frac1p}} }\, e^{-\frac{cy_n^2}t}} ^{\frac p{p-1}}\,dy_n}^{\frac {p-1}p}\lec
t^{\frac{n}2\bke{\frac1q - \frac1p}} \norm{ u_0}_{L^p(\R^n_+)}
}
by the change of variables $y_n = \sqrt t z$. This proves \eqref{E10.5}.

\medskip

For \eqref{E10.5a}, denote $\si = \frac n2(\frac 1p-\frac 1q)$.
 If $1\le p < q \le \infty$, then $\si>0$.
For any $\e>0$, we can choose $b \in L^p_\si \cap L^q_\si$ with $\norm{u_0 - b}_{L^p} \le \e$. Let $v_i(x,t)=\sum_{j=1}^n\int _{\R^n_+} \breve G_{ij}(x,y,t) b_{j}(y) dy$.
Then by \eqref{E10.5},
\EQN{
t^\si \norm{\breve u(\cdot,t)}_{L^q}&\le t^\si \norm{v(\cdot,t)}_{L^q} + t^\si \norm{\breve u(\cdot,t)-v(\cdot,t)}_{L^q}
\\
&\lec t^\si \norm{b}_{L^q} +  \norm{u_0-b}_{L^p}
}
which is less than $C\e$ for $t$ sufficiently small. This shows \eqref{E10.5a}$_1$.

If $1<p=q<\infty$, For any $\e>0$, there is $M>0$ such that $\big\|\norm{ u_0(\cdot,y_n)}_{L^q(\Si)} \one_M\big\|_{L^q(0,\infty)} \le \e$, where
$\one_{M}(y_n) = 1$ if $\norm{ u_0(\cdot,y_n)}_{L^q(\Si)}\ge M$, and $\one_{M}(y_n) = 0$ otherwise. Then
\[
\norm{ u_0(\cdot,y_n)}_{L^q(\Si)} \le M + \norm{ u_0(\cdot,y_n)}_{L^q(\Si)}\one_M.
\]
Applying H\"older inequality to \eqref{0912a},
\EQN{
\norm{J_t}_{L^q(\R^n_+)} &\lec \big\|\norm{ u_0(\cdot,y_n)}_{L^q(\Si)} \one_M\big\|_{L^q(0,\infty)}
+  \int_0^\I
\frac{1} {(y_n+\sqrt t)^{1-1/q} } \cdot  M  e^{-\frac{cy_n^2}t}\,dy_n\\
&\lec \e + M t^{1/2q}
}
which is bounded by $C\e$ for $t$ sufficiently small. Since $u^{heat}(\cdot,t) \to u_0$ in $L^q$ as $ t\to 0_+$, this shows $\breve u^{L}(\cdot,t) \to u_0$ in $L^q$ as $ t\to 0_+$. This shows \eqref{E10.5a}$_2$.

If $p=q=\I$ and $u_0 $ in the $L^\infty$-closure of $C^1_{c,\si}(\overline{\R^n_+})$,
for any $\e>0$, we can choose $b \in C^1_{c,\si}(\overline{\R^n_+})$ with $\norm{u_0 - b}_{L^\infty} \le \e$. Let $v_i(x,t)=\sum_{j=1}^n\int _{\R^n_+} \breve G_{ij}(x,y,t) b_{j}(y) dy$. By Lemma \ref{th6.2}, $\lim_{t \to 0_+}\norm{v_i(\cdot,t) - b}_{L^\infty(\R^n_+)}=0$.
Then by \eqref{E10.5},
\[
 \norm{\breve u(\cdot,t)-u_0}_{L^\infty}\le  \norm{\breve u(\cdot,t)-v(\cdot,t)}_{L^\infty} + \norm{v(\cdot,t) -b}_{L^\infty} + \norm{b-u_0}_{L^\infty}
\lec  \e + o(1),
\]
which is less than $C\e$ for $t$ sufficiently small. This shows the remark after \eqref{E10.5a}$_2$.

\medskip

For \eqref{E10.6}, denote
 $$
 w(x,t) = \int _{\R^n_+} \pd_{x}^a\pd_{y}^b  G_{ij}(x,y,t) F(y) dy ,\quad m=a+b.
 $$
By Theorem \ref{thm3},
\[
|\pd_{x}^a\pd_{y}^b G_{ij}(x,y,t)|\lesssim \frac1{(|x-y|^2+t)^{\frac{n+m}2}}
+\frac{1+ \de_{n2} \log (1+ \frac {|x^*-y|}{\sqrt t})}
{(|x^*-y|^2+t)^{\frac{n}2}  (x_n^2+t)^{\frac{a}2} (y_n^2+t)^{\frac{b}2} }.
\]
Using 
\[
\frac {\log(e+r)}{(e+r)^n} \le \frac {\log(e+s)}{(e+s)^n} , \quad \forall 0\le s\le r,
\]
we have
\[
|\pd_{x}^a\pd_{y}^b G_{ij}(x,y,t)|
\lesssim \frac1{(|x-y|^2+t)^{\frac{n+m}2}}
+ \frac{\de_{n2}\log (e+ \frac {|x-y|}{\sqrt t})} {(|x-y|^2+t)^{\frac{n}2}  (x_n^2+t)^{\frac{a}2} (y_n^2+t)^{\frac{b}2} }.
\]
Extend $F(y)$ to $y \in \R^n$ by zero for $y_n<0$.
We have
\EQS{\label{1015a}
|w(x,t)|&\lec \int_{\R^n} H_t^0(x-y) |F(y)|dy + \int_{\R^n} H_t(x-y^*) |F(y)|\, \frac1{(y_n^2+t)^{\frac{b}2}}\, dy\, \frac1{(x_n^2+t)^{\frac{a}2}}\\
&:= w_1(x,t) + w_2(x,t),
}
where
\EQ{\label{1015b-0}
H_{t}^0(x) = t^{-\frac {n+m}2} H_1^0\bke{\frac x {\sqrt t}},\quad
H_1^0(x) = \frac1 {(|x|^2+1)^{\frac{n+m}2}} \in L^1\cap L^\infty(\R^n),
}
\EQ{\label{1015b}
H_{t}(x) = t^{-\frac {m}2} H_1\bke{\frac x {\sqrt t}},\quad
H_1(x) = \frac{\de_{n2}\log (e+ |x|)} {(|x|^2+1)^{\frac{n}2} } .
}
By Young's convolution inequality with $\frac1q=\frac1r+\frac1p-1$,
\EQN{
\|w_1(\cdot,t)\|_{L^q} \lec  \norm{H_t^0}_{L^r(\R^n)}  \norm{F}_{L^p}
=
t^{-\frac{m}{2}+\frac{n}{2}(\frac{1}{q}-\frac{1}{p})} \norm{H_1^0}_{L^r(\R^n)}  \norm{F}_{L^p}.
}

It remains to estimate $w_2(\cdot,t)$.

If $p<q$, we drop the factors $(y_n^2+t)^{-\frac{b}2}$ and $(x_n^2+t)^{-\frac{a}2}$ in \eqref{1015a}, $H_t(x-y^*)$ by $H_t(x-y)$, and applying Young's convolution inequality with $\frac1q = \frac1r + \frac1p - 1$ to get
\[
\norm{ t^{-\frac{m}2} \int_{\R^n} H_t(x-y) |F(y)|\, dy}_{L^q} \lec t^{-\frac{m}2} \norm{H_t}_{L^r(\R^n)} \norm{F}_{L^p}
\lec t^{-\frac{m}2 + \frac{n}2\bke{\frac1q - \frac1p}} \norm{H_1}_{L^r(\R^n)} \norm{F}_{L^p}.
\]
Note that $H_1\in L^r$ since $r>1$ when $p<q$.
Thus, we get for $p<q$ that
\[
\norm{w_2(\cdot,t)}_{L^q} \lec t^{-\frac{m}2 + \frac{n}2\bke{\frac1q - \frac1p}} \norm{F}_{L^p}.
\]

If $p=q=\infty$, by the hypotheses $b\ge1$ and $n\ge3$ so there is no log term in \eqref{1015b}.
In this case,
\EQN{
w_2(x,t)  
&\lec \int_{\R^n_+} H_t(x-y^*) |F(y)|\, \frac1{(y_n^2 + t)^{\frac{b}2}}\, dy\, \frac1{(x_n^2 + t)^{\frac{a}2}}\\
&\lec \norm{F}_{L^\infty}\, \frac1{(x_n + t)^{\frac{a}2}} \int_{\R^n_+} \frac1{(|x-y^*|^2 + t)^{\frac{n}2} (y_n^2 + t)^{\frac{b}2}}\, dy\\
&\lec \norm{F}_{L^\infty}\, \frac1{(x_n + t)^{\frac{a}2}} \int_0^\infty \frac1{(x_n^2 + y_n^2 + t)^{\frac12} (y_n^2 + t)^{\frac{b}2}}\, dy_n
%&\le \norm{F}_{L^\infty}\, \frac1{(x_n + t)^{\frac{a}2}} \int_0^\infty \frac1{( y_n^2 + t)^{\frac{b+1}2} }\, dy_n
\lec t^{-\frac{m}2} \norm{F}_{L^\infty}.
}
This proves \eqref{E10.6}.
\end{proof}

\begin{remark}
Let $1\le p<q\le\I$ and $u_0 \in L^p_\si(\R^n_+)$. We claim that
\EQ{\label{eq1020b}
 u^L_i (x,t)= \int_{\R^n_+} G_{ij}(x,y,t) u_{0,j}(y)\, dy,\quad
 \widehat  u^L_i (x,t)=  \int_{\R^n_+} \widehat G_{ij}(x,y,t) u_{0,j}(y)\, dy,
}
are also defined in $L^q(\R^n_+)$ for fixed $t>0$ and \eqref{E10.5} holds for $u^L$ and $\widehat u^L$:
\EQ{\label{eq1020a}
\norm{u^L(\cdot,t) }_{L^q(\R^n_+)} + \norm{\widehat u^L(\cdot,t) }_{L^q(\R^n_+)}
 \le C  t^{-\frac{n}{2}(\frac{1}{p}-\frac{1}{q})}\norm{u_0}_{L^p(\R^n_+)}.
}
Our claim does not include the case $p=q$ as in \eqref{E10.5}.  For $u^L$, this is because $|G_{ij}(x,y,t)|\lec (|x-y|+\sqrt{t})^{-n}$ and, by Young's convolution inequality with $1+\frac1{q} = \frac1r + \frac1p$,
\EQN{
\norm{u^L(\cdot,t) }_{L^{q}} \lec \norm{ (|x|+\sqrt{t})^{-n} * u_0}_{L^q}
\lec \Big(\int_{\R^n}(|x|+\sqrt{t})^{-nr}dx\Big)^{\frac{1}{r}}\norm{u_0 }_{L^p}\lec t^{-\frac{n}{2}(\frac{1}{p}-\frac{1}{q})}\norm{u_0 }_{L^p}
}
where we used $q>p$ so that $r>1$. For $\widehat u^L$, by \eqref{0827a}, we can decompose
\EQN{
\widehat u_i^L(x,t) &=\int _{\R^n_+} \Ga(x-y,t) u_{0,i}(y) dy + \int _{\R^n_+} \widehat G_{ij}^*(x,y,t) u_{0,j}(y) dy = :
u^{heat}_i(x,t) + \widehat u_i^*(x,t),
}
where $\widehat G_{ij}^*(x,y,t) = -\de_{ij} \Ga(x-y^*,t) - 4 \de_{jn}C_i(x,y,t)$.
The first term $u^{heat}$ satisfies \eqref{eq1020a} by the basic property of heat kernel. The second term
 $\widehat u^*(x,t)$ is bounded by
\[
|\widehat u^*(x,t)|\lec \int _{\R^n_+}\frac{e^{-\frac{cy_n^2}t}} {(|x'-y'| + x_n+y_n+\sqrt t)^{n-1} (y_n+\sqrt t)} |u_0(y)| \,dy
\]
using \eqref{C-estimate}. Similar to the proof of \eqref{E10.5}, we can first apply
Minkowski and Young inequalities in $x'$ (using $q>p$ so that $r>1$), and then Minkowski and H\"older inequalities in $x_n$ to bound
$\norm{\widehat u^*(\cdot,t)}_{L^q(\R^n_+)}$ by $t^{\frac{n}2\bke{\frac1q - \frac1p}} \norm{ u_0}_{L^p(\R^n_+)} $.  The above shows \eqref{eq1020a} for $1\le p<q\le\I$ and $u_0 \in L^p_\si(\R^n_+)$.
\end{remark}

\begin{remark}
\label{rem9.2}
The following extends Theorem \ref{th6.1}.
Assume $u_0\in L^p_{\si}(\R^n_+)$, $1\le p<\infty$, and
$u^L$, $\breve u^L$ and $\widehat u^L$ are defined as in \eqref{eq1020b}.
There exist $u^k_0\in C^1_{c,\si}(\overline{\R^n_+})$ such that $u^k_0\to u_0$ in $L^p(\R^n_+)$ as $k\to\I$.
Let
\[
 u^k_i (x,t)= \int_{\R^n_+} G_{ij}(x,y,t) u_{0,j}^k(y)\, dy,\quad
 \breve u^k_i (x,t)=  \int_{\R^n_+} \breve G_{ij}(x,y,t) u_{0,j}^k(y)\, dy,
\]
\[
\widehat u^k_i (x,t)= \int_{\R^n_+} \widehat G_{ij}(x,y,t) u_{0,j}^k(y)\, dy.
\]
They are equal by Theorem \ref{th6.1} since $u_{0}^k\in C^1_{c,\si}(\overline{\R^n_+})$. On the other hand, by
\eqref{E10.5} and \eqref{eq1020a},
\[
\norm{u^L(t) -  u^k(t)}_{ L^q}
+\norm{\breve u^L(t) -  \breve u^k(t)}_{ L^q}
+\norm{\widehat  u^L(t) - \widehat  u^k(t)}_{ L^q}
\le C   t^{-\frac{n}{2}(\frac{1}{p}-\frac{1}{q})} \norm{u_{0}-u_{0}^k}_{L^p}
\]
which vanishes as $k\to\infty$. This shows $u^L(t) = \breve u^L(t) =\widehat  u^L(t) $ in $L^q$ for any $q\in(p, \infty]$ and fixed $t$.

For $u_0\in L^\infty_{\si}(\R^n_+)$ and $u_0$ in the $L^\infty$-closure of $C^1_{c,\si}(\overline{\R^n_+})$, we can also show $u^L(t) = \breve u^L(t)$ (but we do not know about  $\widehat  u^L(t) $).
We use the boundary vanishing \eqref{eq_thm3_al_yn} to get
\EQN{
|u^L(x,t) - u^k(x,t)|
&=\abs{\int_{\R^n_+} G_{ij}(x,y,t) \bke{u_{0,j}(y) - u_{0,j}^k(y)} dy} \le C_1 \norm{u_0 - u_0^k}_{L^\infty}
}
where
\begin{align}\notag
C_1&= \int_{\R^n_+} \frac{x_n}{(|x-y|+\sqrt t)^n (x_n+y_n+\sqrt t)}\, dy\\ \label{C1.def}
& \lec\int_0^\infty \frac{x_n}{(|x_n-y_n|+\sqrt t) (x_n+y_n+\sqrt t)}\, dy_n\\ \notag
&  \lec\int_0^{2(x_n+\sqrt t)}\frac{x_n}{(|x_n-y_n|+\sqrt t) (x_n+\sqrt t)}\, dy_n  + \int _{2(x_n+\sqrt t)}^ \infty \frac{x_n}{y_n^2}\, dy_n \lec  \ln (e+\frac {x_n}{\sqrt t}).
\end{align}
It converges to 0
as $k\to\infty$, and the convergence is uniform in $x_n\le M \sqrt t$ for any fixed $t,M>0$. As $u^k (t)\to \breve u^L(t)$ in $L^\infty$ by \eqref{E10.5},
this shows $u^L(x,t)=\breve u^L(x,t)$.
\hfill $\square$
\end{remark}

\subsection{Mild solutions with pointwise decay}

In this subsection we prove Theorems \ref{thm6} and \ref{thm1.8}. We first consider Theorem \ref{thm6}. Recall Theorem \ref{thm6} is a direct consequence of \cite[Theorem 1]{CJ-NA2017} using the estimates in \cite[Theorem 1]{CJ-JDE2017} for $0<a<n$. For $a=n$, the hypothesis of \cite[Theorem 1]{CJ-NA2017} is not satisfied: $(1+|x|+\sqrt t)^n e^{-tA} u_0 \sim \log(2+t) \not \in L^\infty(\R^n_+\times (0,\infty))$ (see \cite[Theorem 1]{CJ-JDE2017} and \eqref{1001b}). Nonetheless, the proof of local existence still works if $\norm{(1+|x|+\sqrt t)^n e^{-tA} u_0}_{L^\infty(\R^n_+\times(0,T))}\le C(T)$, which is true for $u_0\in Y_n$. Theorem \ref{thm6} can be proved using the estimates in Lemma \ref{1001a} below and the same iteration argument in \cite{CJ-NA2017}. We omit its proof and focus on Lemma \ref{1001a}.

\begin{lem}\label{1001a}
Let $n\ge 2$ and $0 \le a \le n$. %
For $u_0\in Y_a$ with $\div u_0 =0$ and $u_{0,n}|_\Si=0$,
\EQ{\label{1001b}
\norm{\sum_{j=1}^n \int _{\R^n_+} \breve G_{ij}(x,y,t) u_{0,j}(y) dy }_{Y_a} \le C (1 + \delta_{an} \log_+t)  \norm{u_0}_{Y_a}.
}
For $F\in Y_{2a}$,
\EQ{\label{1001c}
\norm{\int _{\R^n_+} \pd_{y_p} G_{ij}(x,y,t) F_{pj}(y)\,dy }_{Y_a} \le C t^{-1/2} \norm{F}_{Y_{2a}}.
}
\end{lem}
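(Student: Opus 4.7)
The plan is to prove the two inequalities separately, using the formulas and pointwise estimates established earlier in the paper.

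For the linear estimate \eqref{1001b}, first apply Theorem \ref{th6.1} in the $\widehat G_{ij}$ form (since $u_0$ is solenoidal) to rewrite
\[
\sum_{j=1}^n\int_{\R^n_+}\breve G_{ij}(x,y,t)u_{0,j}(y)\,dy = \int_{\R^n_+}[\Ga(x-y,t)-\Ga(x-y^*,t)]u_{0,i}(y)\,dy - 4\int_{\R^n_+}C_i(x,y,t)u_{0,n}(y)\,dy.
\]
The heat kernel contribution reduces to the standard bound
\[
\int_{\R^n}\Ga(x-y,t)\bka{y}^{-a}\,dy \lec (1+\de_{an}\log_+ t)\bka{x}^{-a},\quad 0\le a\le n,
\]
proved by splitting $\R^n$ according to whether $|y-x|< \tfrac12\bka{x}$; the log factor for $a=n$ arises only when $\sqrt t \gtrsim \bka{x}$ and the ball $|y|\lec \sqrt t$ contributes $\log(\sqrt t/\bka{x})$. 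The mirror term $\Ga(x-y^*,t)$ is handled identically since $|x-y^*|\ge |x-y|$ for $y\in\R^n_+$. For the $C_i$ contribution, use $|u_{0,n}(y)|\le \|u_0\|_{Y_a}\bka{y}^{-a}$ together with the scaled form of the pointwise bound \eqref{C-estimate},
\[
|C_i(x,y,t)|\lec \frac{e^{-cy_n^2/t}}{(|x-y^*|+\sqrt t)^{n-1}(y_n+\sqrt t)};
\]
integrating $y_n$ first gives $\int_0^\I \frac{e^{-cy_n^2/t}}{y_n+\sqrt t}\,dy_n \lec 1$ uniformly in $t$, which reduces the problem to a tangential integral of the same type as the heat kernel case, yielding the same bound.

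For the bilinear estimate \eqref{1001c}, invoke Theorem \ref{thm3} with $(l,k,q,m)=(\de_{p<n},0,\de_{pn},0)$ to get the pointwise bound
\[
|\pd_{y_p}G_{ij}(x,y,t)|\lec \frac1{(|x-y|+\sqrt t)^{n+1}} + \frac{1+\de_{n2}\log\bigl(e+\tfrac{|x-y^*|+\sqrt t}{\sqrt t}\bigr)}{(|x-y^*|+\sqrt t)^{n+\si}((x_n+y_n)+\sqrt t)^{1-\si}},
\]
where $\si\in\{0,1\}$ is the symmetry index. Using $|F(y)|\le \|F\|_{Y_{2a}}\bka{y}^{-2a}$, we then need
\[
J(x,t) := \int_{\R^n_+}\frac{\bka{y}^{-2a}\,dy}{(|x-y|+\sqrt t)^{n+1}} + \int_{\R^n_+}\frac{(1+\de_{n2}\log(\cdots))\bka{y}^{-2a}\,dy}{(|x-y^*|+\sqrt t)^{n+\si}(x_n+y_n+\sqrt t)^{1-\si}} \lec \frac{\bka{x}^{-a}}{\sqrt t}.
\]
Split $\R^n_+$ into the near zone $\{|y-x|<\tfrac12\bka{x}\}$, the far zone $\{|y|>2\bka{x}\}$, and an intermediate annulus. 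In the near zone $\bka{y}\sim\bka{x}$, so $\bka{y}^{-2a}\lec \bka{x}^{-2a}$, and the remaining integrals give $t^{-1/2}$ times a constant by Young-type estimates; the extra $\bka{x}^{-a}$ factor then comes from $\bka{x}^{-2a}\le \bka{x}^{-a}$ when $\bka{x}\ge 1$. In the far and intermediate zones, apply \thref{lemma2.2} to the product of negative powers, exploiting the decay $\bka{y}^{-2a}$ to compensate for the loss in $|x-y|$.

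The main obstacle will be the borderline case $a=n$ in \eqref{1001b}, where the logarithmic factor $\log_+ t$ is unavoidable and must be precisely tracked through the split of the heat and $C_i$ integrals; and, for \eqref{1001c}, the delicate balance in the intermediate zone $|y|\sim\bka{x}$ where neither the $\bka{y}^{-2a}$ decay nor the kernel decay alone suffices and both must be used together to produce the factor $t^{-1/2}\bka{x}^{-a}$ uniformly for $0\le a\le n$. A secondary technical nuisance is the $n=2$ logarithmic factor in the LN term of Theorem \ref{thm3}, which can be absorbed using $\log(e+r)r^{-n-1}$ monotonicity after a change of variable $y\mapsto y/\sqrt t$.
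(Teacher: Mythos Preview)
Your approach to \eqref{1001b} via the $\widehat G_{ij}$ form has a genuine gap. The estimate \eqref{C-estimate} for $C_i$ gives only $(|x-y^*|+\sqrt t)^{-(n-1)}$ tangential decay, one power short of the $(|x-y^*|+\sqrt t)^{-n}$ decay of $G^*_{ij}$ in Solonnikov's estimate \eqref{Solonnikov.est}. After the $y'$-integration (\thref{lemma2.2} with $d=m=n-1$) this missing power leaves an unremovable factor $\log\bigl((|x|+\sqrt t+1)/(x_n+\sqrt t)\bigr)$: for instance at $x=(x',0)$ with $|x'|=L$ large and $t=1$, the $C_i$ contribution is of order $L^{-a}\log L$, not $L^{-a}$. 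This is precisely the deficiency flagged in Remark~\ref{D_estimate-rmk}. The paper instead keeps the $\breve G_{ij}$ decomposition $u^{heat}+u^*$ and bounds $u^*$ directly from \eqref{Solonnikov.est}; the full power $n$ there is what makes the argument close.

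For \eqref{1001c} the paper uses \thref{prop2} with $k=0$ rather than \thref{thm3}. Since all derivatives fall on $y$ one has $\mu^0_{i0}=0$ in \eqref{LNijkqmn.def}, so there is no log factor even when $n=2$, and the second kernel is simply $(|x^*-y|+\sqrt t)^{-n}(y_n+\sqrt t)^{-1}$. Your route via \thref{thm3} symmetrizes and reintroduces $\LN^{mn}_{jiqk}$, which for $q=1$, $j<n$, $n=2$ carries a genuine factor $\log\bigl((x_n+y_n+\sqrt t)/\sqrt t\bigr)$; absorbing it into $(x_n+y_n+\sqrt t)^{-1}$ as you suggest costs an extra $t^{-1/2}$ and then the remaining $y_n$-integral produces a spurious $\log(1/t)$ for small $t$, spoiling the clean $t^{-1/2}$ bound. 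The remark immediately following the paper's proof makes exactly this point.
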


The estimate \eqref{1001b} is proved in \cite[Theorem 1]{CJ-JDE2017} with space-time decay (see also \cite[Theorem 4.2]{CM2004}), whereas \eqref{1001c} is not known in \cite{CM2004} and \cite{CJ-JDE2017} since the pointwise estimates of the Green tensor $G_{ij}$ was not available. Instead, they used \eqref{eq:1015} for the bilinear form in the Duhamel's formula when constructing mild solutions.

Note that $Y_0=L^\infty$ and $a\le n$ in \eqref{1001b} since the decay cannot be faster than the Green tensor. The case $a=0$ is a special case of \eqref{E10.5}. It is similar to \cite[Theorem 1.1]{MR1981302} which further assumes continuity.
We do not assume any boundary condition on $F_{pj}$. Also note
\[
\norm{|u|^2}_{Y_{2a}} = \sup_{x \in \R^n_+} |u(x)|^2\bka{x}^{2a} = \norm{u}_{Y_{a}}^2.
\]

\begin{proof}
If $a=0$, the lemma follows from \eqref{E10.5} and \eqref{E10.6} with $p=q=\infty$.
 Thus, we consider $a>0$.
For \eqref{1001b},
write
\EQN{
\sum_{j=1}^n\int _{\R^n_+} \breve G_{ij}(x,y,t) u_{0,j}(y) dy &=\int _{\R^n_+} \Ga(x-y,t) u_{0,i}(y) dy + \int _{\R^n_+} G_{ij}^*(x,y,t) u_{0,j}(y) dy\\
& = :
u^{heat}_i(x,t) + u_i^*(x,t).
}
It is known that for $0\le a \le n$
\EQ{\label{Knightly}
\norm{u^{heat}}_{Y_a} \lec (1+\de_{an}\log_+ t) \norm{u_0}_{Y_a}.
}
See e.g.~\cite[Lemma 1]{MR191213} for $n=3$ case. Its statement corresponds to $1\le a \le n$ but its proof also works for $0\le a <1$.

For $u^*$ with $|u_0(y)| \lec \bka{y}^{-a}$, by  \eqref{Solonnikov.est},  (for both $n \ge 3$ and $n=2$)
\[
|u^*(x,t)| \lec
J(x)=  \int _{\R^n_+}\frac{e^{-\frac{cy_n^2}t}} {(|x^*-y|^2+t)^{\frac{n}2} \bka{y}^a} \,dy.
\]

Suppose $0< a<n-1$. By Lemma \ref{lemma2.2},
\EQN{
J \lec& \int_0^\infty e^{-\frac{cy_n^2}t} \int_{\Si} \frac1{(|x'-y'|+x_n+y_n+\sqrt{t})^n(|y'|+y_n+1)^a}\, dy'dy_n\\
\lec& \int_0^\infty \bkt{\frac1{(|x|+y_n+\sqrt{t}+1)^{a+1}} + \frac{e^{-\frac{cy_n^2}t}}{(|x|+y_n+\sqrt{t}+1)^a(x_n+y_n+\sqrt{t})}} dy_n\\
\lec& \frac1{(|x|+\sqrt{t}+1)^{a}} + \frac1{(|x|+\sqrt{t}+1)^{a}} \int_0^\infty \frac{e^{-u^2}}{\bke{\frac{x_n}{\sqrt{t}}}+1}\, du\\
\lec& \frac1{(|x|+\sqrt{t}+1)^{a}}.
}
This proves
\[\norm{u^*}_{Y_a} \lec \norm{u_0}_{Y_a},\ \ \ 0< a<n-1.\]

If $a=n-1$, we have an additional term from Lemma \ref{lemma2.2},
\EQN{
&\int_0^\infty e^{-\frac{cy_n^2}t}\, \frac1{(|x|+y_n+\sqrt{t}+1)^{n}}\, \log\bke{1 + \frac{|x|+\sqrt{t}}{y_n+1}} dy_n\\
&\lec \frac1{(|x|+\sqrt{t}+1)^{n}} \int_0^\infty e^{-\frac{cy_n^2}t} \bke{\frac{|x|+\sqrt{t}}{y_n+1}}^\e\, dy_n,
}
where
\EQN{
\int_0^\infty& e^{-\frac{cy_n^2}t} \bke{\frac{|x|+\sqrt{t}}{y_n+1}}^\e\, dy_n\\
& \le
\int_0^{|x|+\sqrt{t}} \bke{\frac{|x|+\sqrt{t}}{y_n+1}}^\e dy_n + \int_{|x|+\sqrt{t}}^\infty e^{-\frac{cy_n^2}t} \bke{\frac{|x|+\sqrt{t}}{y_n+1}}^\e dy_n\\
& \le (|x|+\sqrt{t})^\e (|x|+\sqrt{t}+1)^{1-\e} + \frac{(|x|+\sqrt{t})^\e}{(|x|+\sqrt{t}+1)^{\e}} \int_0^\infty e^{-u^2} \sqrt{t}\, du\\
& \le |x|+\sqrt{t}+1.
}
So the additional term is bounded by $(|x|+\sqrt{t}+1)^{1-n} = (|x|+\sqrt{t}+1)^{-a}$.

If $n-1<a<n$, we have an additional term from Lemma \ref{lemma2.2},
\EQN{
\int_0^\infty& \frac{e^{-\frac{cy_n^2}t}}{(|x|+y_n+\sqrt{t}+1)^{n}(y_n+1)^{a-n+1}}\, dy_n \\
& \lec \frac1{(|x|+\sqrt{t}+1)^{n}} \bkt{\int_0^{|x|+\sqrt{t}+1} \frac1{(y_n+1)^{a-n+1}}\, dy_n + \int_{|x|+\sqrt{t}+1}^\infty \frac{e^{-\frac{cy_n^2}t}}{(|x|+\sqrt{t}+1)^{a-n+1}}\, dy_n}\\
& \lec \frac1{(|x|+\sqrt{t}+1)^{n}} \bkt{\frac1{(|x|+\sqrt{t}+1)^{a-n}} + \frac1{(|x|+\sqrt{t}+1)^{a-n+1}} \int_0^\infty e^{-u^2}\sqrt{t}\,du}\\
& \lec \frac1{(|x|+\sqrt{t}+1)^{a}}.
}
If $a=n$, we have the same additional term from Lemma \ref{lemma2.2},
\EQN{
\int_0^\infty &\frac{e^{-\frac{cy_n^2}t}}{(|x|+y_n+\sqrt{t}+1)^{n}(y_n+1)}\, dy_n
 \lec \frac{1}{(|x|+\sqrt{t}+1)^{n}}
 \int_0^\infty \frac{e^{-\frac{cy_n^2}t}}{y_n+1}\, dy_n
\\
&\lec \frac{1}{(|x|+\sqrt{t}+1)^{n}}\bke{ \int_0^{\sqrt t} \frac1{y_n+1}\, dy_n + \int_{\sqrt t}^\infty \frac{e^{-\frac{cy_n^2}t}}{y_n}\, dy_n}
\\
&\lec \frac{1}{(|x|+\sqrt{t}+1)^{n}}\bke{\log(1+\sqrt t) + 1}.
}

We have proved
\[\norm{u^*}_{Y_a} \lec \norm{u_0}_{Y_a},\ \ \ 0< a<n;\quad
\norm{u^*(t)}_{Y_n} \lec \log(2+t) \norm{u_0}_{Y_n},
\]
and hence \eqref{1001b}.

We next consider \eqref{1001c}. For $k=0$ and $l+q=1$, by Proposition \ref{prop2} with $k=k_i=0$ and $q=1$ we have
\EQ{
|\pd_{y'}^l \pd_{y_n}^q G_{ij}(x,y,t)|\lec\frac1{(|x-y|^2+t)^{\frac{n+1}2}}
+\frac{1}
{(|x^*-y|^2+t)^{\frac{n}2}(y_n^2+t)^{\frac{1}2} }.
}
It suffices to show
\[
I_1+I_2\lec t^{-1/2}  \frac 1{\bka{x}^{a}}
\]
where
\EQN{
I_1&= \int _{\R^n_+} \frac1{(|x-y|+\sqrt t)^{n+1} } \frac 1{\bka{y}^{2a}}\,dy,
\\
I_2 &= \int _{\R^n_+} \frac1{(|x^*-y|+\sqrt t)^n (y_n+\sqrt t)} \frac 1{\bka{y}^{2a}}\,dy .
}

For $I_1$,
by Lemma \ref{lemma2.2},
we have
\EQN{
I_1 &\le \int _{\R^n} \frac1{(|x-y|+\sqrt t)^{n+1} } \frac 1{(|y|+1)^{2a}}\,dy
\\
&\lec \frac 1{(|x|+\sqrt t + 1)^{2a} \sqrt t } + \frac 1{(|x|+\sqrt t + 1)^{n+1}} \bke{ \mathbbm{1}_{2a=n}\log(|x|+\sqrt t + 1)  +  \one_{2a>n} }
}
Thus, if $0<a \le n$,
\EQ{\label{1006a}
I_1 \lec \frac 1{(|x|+\sqrt t + 1)^{a} \sqrt t } .
}

For $I_2$,
let $A=x_n+y_n+\sqrt t$.
We have
\[
I_2 \lec \int_0^\infty \bke{ \int_\Si  \frac1{(|x'-y'|+ A)^n(|y'|+ y_n+1)^{2a}}\,dy'} \,\frac{dy_n}{y_n+\sqrt t}.
\]
Let $R=|x'|+A+(y_n+1) \sim |x|+y_n + 1+\sqrt t$.
By Lemma \ref{lemma2.2},
\EQN{
I_2 &\lec \int_0^\infty \bke{ R^{-2a} A^{-1} + R^{-n} \bke{ \mathbbm 1_{2a=n-1}  \log \frac R{y_n+1}
+ \frac{\mathbbm 1_{2a>n-1}} {  (y_n+1)^{2a+1-n}} }
} \frac{dy_n}{y_n+\sqrt t}
\\
&=I_3+I_4+I_5.
}
We have
\[
I_3 \lec \int_0^\infty \frac {dy_n}{( |x|+1+\sqrt t)^{2a}(y_n+\sqrt t)^2}
\lec \frac {1}{(|x|+1+\sqrt t)^{2a}\sqrt t}.
\]

If $2a=n-1$, for any $0<\ep <a$, we have $n-1-\ep>a$ and
\EQN{
I_4 &\lec \int_0^\infty \frac {\log (y_n + |x|+1+\sqrt t)  }{(y_n + |x|+1+\sqrt t)^{n}\sqrt t} dy_n
\\
&\lec \frac {1 }{( |x|+1+\sqrt t)^{n-1-\ep}\sqrt t}
\lec \frac {1 }{( |x|+1+\sqrt t)^{a}\sqrt t} .
}

If $\frac {n-1}2 < a \le n$,
\EQN{
I_5 &\lec  \frac1{\sqrt t} \bke{\int_0^{|x| + 1 + \sqrt t}
+  \int_{|x| + 1 + \sqrt t}^\infty} \frac {dy_n}{(y_n + |x|+1+\sqrt t)^{n} (y_n+1)^{2a+1-n}}
\\
&\lec \frac1{\sqrt t}\int_0^{|x| +1+ \sqrt t} \frac {dy_n}{( |x|+1+\sqrt t)^{n}  (y_n+1)^{2a+1-n}}
+  \frac1{\sqrt t}\int_{|x| + 1+\sqrt t}^\infty  \frac {dy_n}{y_n^{2a+1}}
\\
& \lec \frac1{\sqrt t} \bke{ \frac {1 }
{( |x|+1+\sqrt t)^{2a}}
+ \frac {\mathbbm{1}_{2a=n} \log ( |x|+1+\sqrt t)}
{( |x|+1+\sqrt t)^{2a}}
+ \frac {\mathbbm{1}_{2a>n} }
{( |x|+1+\sqrt t)^{n}}}.
}

Thus, if $0<a \le n$,
\[
I_2 \lec I_3+I_4+I_5\lec \frac 1{(|x|+\sqrt t + 1)^{a} \sqrt t } .
\]
This and the $I_1$ estimate \eqref{1006a} show \eqref{1001c}.
\end{proof}

\emph{Remark.} In the proof of  \eqref{1001c}, we use Proposition \ref{prop2} instead of Theorem \ref{thm3} to avoid  $\LN$ since $\mu_{jq}$ may be $1$ when $q=1$.

\medskip

We next consider Theorem \ref{thm1.8}. It can be proved using the same iteration argument in \cite{CJ-NA2017} and the estimates in the following.

\begin{lem}\label{lem9.3}
Let $n\ge 2$ and $0 \le a \le 1$. %
For $u_0\in Z_a$ with $\div u_0 =0$ and $u_{0,n}|_\Si=0$,
\begin{equation}\label{weight-xn-10}
\norm{\sum_{j=1}^n \int _{\R^n_+} \breve G_{ij}(x,y,t) u_{0,j}(y) dy }_{Z_a} \le C (1 + \delta_{a1} \log_+t)  \norm{u_0}_{Z_a}.
\end{equation}
For $F\in Z_{2a}$,
\begin{equation}\label{weight-xn-20}
\norm{\int _{\R^n_+} \pd_{y_p} G_{ij}(x,y,t) F_{pj}(y)\,dy }_{Z_a} \le C t^{-1/2} \norm{F}_{Z_{2a}}.
\end{equation}
\end{lem}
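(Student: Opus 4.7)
The proof mirrors that of Lemma \ref{1001a}, with the crucial simplification that the weight $\bka{x_n}^a$ depends only on the normal direction. For every convolution-type integral appearing below, I first perform the $y'$-integration of the kernel (which is independent of the weight) using the elementary identities
\[
\int_\Si(|x'-y'|+R)^{-n}\,dy'\sim R^{-1},\qquad \int_\Si(|x'-y'|+R)^{-(n+1)}\,dy'\sim R^{-2},
\]
reducing everything to a weighted one-dimensional integral in $y_n$, which is then handled by case analysis on $x_n$ versus $\sqrt t$. The restriction $a\le 1$ is sharp because $\int_0^\infty\bka{y_n}^{-a}\,dy_n=\infty$ for $a\le 1$, and the logarithm at $a=1$ in \eqref{weight-xn-10} arises precisely from $\int_0^{\sqrt t}(1+y_n)^{-1}\,dy_n\sim\log(1+\sqrt t)$.

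For \eqref{weight-xn-10}, I decompose $\sum_j\int\breve G_{ij}u_{0,j}\,dy=u_i^{heat}+u_i^*$ using \eqref{E1.6}, as in the proof of Lemma \ref{1001a}. For the heat part, Fubini in $y'$ gives
\[
|u^{heat}(x,t)|\le\|u_0\|_{Z_a}\int_0^\infty\frac{e^{-(x_n-y_n)^2/(4t)}}{\sqrt{4\pi t}}\,\bka{y_n}^{-a}\,dy_n,
\]
and a case split on $x_n\gtrless 2\sqrt t$ yields $\bka{x_n}^a|u^{heat}|\lec(1+\delta_{a1}\log_+t)\|u_0\|_{Z_a}$; the log arises only when $a=1$ and $x_n\lesssim\sqrt t$, combining $\bka{x_n}\le C\bka{\sqrt t}$ with the integral $\int_0^{3\sqrt t}(1+y_n)^{-1}\,dy_n\sim\log(1+\sqrt t)$. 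For $u^*$, estimate \eqref{Solonnikov.est} together with the $y'$-integration reduces the bound to
\[
|u^*(x,t)|\lec\|u_0\|_{Z_a}\int_0^\infty\frac{e^{-cy_n^2/t}}{(x_n+y_n+\sqrt t)\bka{y_n}^a}\,dy_n,
\]
and the same case split (using $\bka{x_n}^a\le C\bka{\sqrt t}^a\sim t^{a/2}$ when $x_n\le\sqrt t$, and $\bka{x_n}^a/(x_n+\sqrt t)\lec t^{(a-1)/2}$ in general) yields the matching bound.

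For \eqref{weight-xn-20}, I apply \thref{prop2} with $k=m=0$ and $l+q=1$. Then $\sigma_{ij0}=0$ and, crucially, $\mu_{i0}^0=0$ in \eqref{LNijkqmn.def}, so $\LN_{ijkq}^{mn}=1$ uniformly even for $n=2$; no logarithmic loss enters. Inserting $|F(y)|\le\|F\|_{Z_{2a}}\bka{y_n}^{-2a}$ and integrating $y'$ reduces the desired bound to $\bka{x_n}^a(I_1+I_2)\lec t^{-1/2}$, where
\[
I_1=\int_0^\infty\frac{dy_n}{(|x_n-y_n|+\sqrt t)^2\bka{y_n}^{2a}},\qquad I_2=\int_0^\infty\frac{dy_n}{(x_n+y_n+\sqrt t)(y_n+\sqrt t)\bka{y_n}^{2a}}.
\]
For $I_1$, the 1D kernel $(|x_n-y_n|+\sqrt t)^{-2}$ has total mass $\sim t^{-1/2}$, and splitting at $y_n=x_n/2$ together with $\bka{x_n}^a\lec\bka{y_n}^a+\bka{x_n-y_n}^a$ in each region yields $\bka{x_n}^a I_1\lec t^{-1/2}$. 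For $I_2$, I write $\bka{x_n}^a I_2\le I_2+x_n^a I_2$; the first uses the monotonicity $I_2\le I_2|_{x_n=0}\lec t^{-1/2}$ (from $(x_n+y_n+\sqrt t)^{-1}(y_n+\sqrt t)^{-1}\le(y_n+\sqrt t)^{-2}$ and splitting at $y_n=\sqrt t$), and the second uses the pointwise bound $x_n^a/(x_n+y_n+\sqrt t)\le(y_n+\sqrt t)^{a-1}$ valid for $0\le a\le 1$, reducing $x_n^a I_2$ to $\int(y_n+\sqrt t)^{a-2}\bka{y_n}^{-2a}\,dy_n\lec t^{-1/2}$ after another case split on $\sqrt t\gtrless 1$.

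The main obstacle is the borderline case $a=1$ in \eqref{weight-xn-10}, where the borderline non-integrability of $\bka{y_n}^{-1}$ produces the logarithmic growth $\log_+t$ in the regime $x_n\lesssim\sqrt t\gg 1$, and one must track this factor precisely in both the $u^{heat}$ and $u^*$ contributions to ensure they are matched (and not multiplied by further powers of $t$). A secondary subtlety is that, for \eqref{weight-xn-20}, the pointwise inequality $x_n^a/(x_n+y_n+\sqrt t)\le(y_n+\sqrt t)^{a-1}$ fails for $a>1$, which is precisely where the restriction $a\le 1$ enters the argument for the bilinear term.
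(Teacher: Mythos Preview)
Your proposal is correct and follows essentially the same route as the paper: the same decomposition $\breve u = u^{heat}+u^*$ via \eqref{E1.6}, the same reduction to one-dimensional $y_n$-integrals after integrating out $y'$, and for \eqref{weight-xn-20} the same use of \thref{prop2} with $k=m=0$, $l+q=1$ (so $\mu_{i0}^0=0$ and no logarithm), leading to the same integrals $I_1,I_2$. Your key pointwise inequality $x_n^a/(x_n+y_n+\sqrt t)\le (y_n+\sqrt t)^{a-1}$ for $0\le a\le 1$ is exactly the device the paper uses (there written as $x_n^a(y_n+1)^{1-a}\le x_n+y_n+\sqrt t$ for $x_n\ge 1$), and the splitting $\bka{x_n}^a\lec 1+x_n^a$ is just a repackaging of the paper's dichotomy $x_n\le 1$ versus $x_n\ge 1$.

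One caution: your stated bound ``$\bka{x_n}^a/(x_n+\sqrt t)\lec t^{(a-1)/2}$ in general'' is false as a pointwise inequality (take $x_n=\sqrt t$ with $t\le 1$ and $0<a<1$). What you actually need for $u^*$ is to keep the Gaussian factor $e^{-cy_n^2/t}$ when estimating $\int_0^\infty e^{-cy_n^2/t}(x_n+y_n+\sqrt t)^{-1}\bka{y_n}^{-a}\,dy_n$; pulling out $1/(x_n+\sqrt t)$ and then bounding the remaining integral by $C(1+\de_{a1}\log_+t)\,(1+\sqrt t)^{1-a}$ gives $\bka{x_n}^a|u^*|\lec (1+\de_{a1}\log_+t)$ directly. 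The paper's displayed intermediate bound for $J$ likewise drops the Gaussian too early and literally yields $1+\de_{a1}\log_+(x_n+\sqrt t)$, which is not uniform in $x_n$; the fix is the same in both cases.
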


Our estimates for both inequalities fail for $a >1$. See Remark \ref{rem9.4} after the proof.

\begin{proof}
If $a=0$, the lemma follows from \eqref{E10.5} and \eqref{E10.6} with $p=q=\infty$.
Thus, we only consider $0<a\le 1$. We may suppose that $\norm{u_0}_{Z_a}=1$ without loss of generality.
For \eqref{weight-xn-10},
write
\EQN{
\sum_{j=1}^n\int _{\R^n_+} \breve G_{ij}(x,y,t) u_{0,j}(y) dy &=\int _{\R^n_+} \Ga(x-y,t) u_{0,i}(y) dy + \int _{\R^n_+} G_{ij}^*(x,y,t) u_{0,j}(y) dy\\
& = :
u^{heat}_i(x,t) + u_i^*(x,t).
}

Denote by $\Gamma_k$ the $k$-dimensional heat kernel.
When $|u_0(y)| \le \bka{y_n}^{-a}$, we have
\EQN{
|u^{heat}_i(x,t) | \lec &\int_0^\infty \frac{\Gamma_{1}(x_n-y_n,t)}{(y_n+1)^a} \int_{\Si} \Gamma_{n-1}(x'-y',t)\, dy'dy_n\\
\lec & \int_0^\infty \frac{\Gamma_{1}(x_n-y_n,t)}{(y_n+1)^a}  dy_n\\
\lec &  (1+\de_{a=1}\log_+ t)(x_n+1)^{-a}.
}
We have used the one dimensional version of \eqref{Knightly} for the last inequality and $0<a\le 1$.

For $u^*$ with $|u_0(y)| \le \bka{y_n}^{-a}$, by  \eqref{Solonnikov.est},  (for both $n \ge 3$ and $n=2$) we get
\[
|u^*(x,t)| \lec
J(x,t)=  \int _{\R^n_+}\frac{e^{-\frac{cy_n^2}t}} {(|x^*-y|^2+t)^{\frac{n}2} \bka{y_n}^a} \,dy.
\]
%
%
%Suppose $0< a\le n$.
For $0 < a<\infty$,
%Continuing to estimate $J$,
 we have
\EQN{
J &\lec \int_0^\infty \frac{e^{-\frac{cy_n^2}t}}{(y_n+1)^a} \int_{\Si} \frac1{(|x'-y'|+x_n+y_n+\sqrt{t})^n}\, dy'dy_n\\
&\lec \int_0^\infty { \frac{e^{-\frac{cy_n^2}t}}{(y_n+1)^a(x_n+y_n+\sqrt{t})}}\, dy_n \\
&\lec \frac1{x_n+\sqrt{t}} \int_0^{x_n+\sqrt{t}} \frac1{(y_n+1)^a}\, dy_n + \frac1{(x_n+\sqrt{t}+1)^a} \int_{x_n+\sqrt{t}}^\infty \frac{e^{-c\frac{y_n^2}t}}{y_n+\sqrt{t}}\, dy_n.
}
Using Lemma \ref{lem6-1} to bound the first integral, we have
\EQN{
J &\lec \frac1{x_n+\sqrt{t}}\, \frac{(x_n+\sqrt{t})\bke{1 + \de_{a1} \log_+(x_n+\sqrt{t}) } }{(1+x_n+\sqrt{t})^{\min(a,1)}} + \frac1{(x_n+\sqrt{t}+1)^a} \int_0^\infty \frac{e^{-u^2}}{u+1}\, du\\
&\lec \frac{1 + \de_{a1} \log_+(x_n+\sqrt{t})}{(x_n+\sqrt{t}+1)^{\min(a,1)}}.
}

When $a=1$,
we want to improve the above numerator $1 + \de_{a1} \log_+(x_n+\sqrt{t})$ to a function of $t$ independent of $x_n$. It suffices to consider the case $x_n > 10+\sqrt t$. In this case,
\EQN{
J&\lec \frac1{x_n} \int_{0}^\infty \frac{e^{-c\frac{y_n^2}t}}{y_n+1}\, dy_n
\lec  \frac1{x_n} \int_{0}^{\sqrt t} \frac{1}{y_n+1}\, dy_n+ \frac1{x_n} \int_{\sqrt t}^\infty  \frac{e^{-c\frac{y_n^2}t}}{y_n}\, dy_n
\\
&\lec  \frac1{x_n} \log(\sqrt t +1)+ \frac1{x_n}.
}
We conclude when $a=1$, either $x_n > 10+\sqrt t$ or not,
\[
J \lec  \frac {\log(2+\sqrt{t})}{x_n+\sqrt{t}+1}.
\]
Combining the above estimates of $u^{heat}$ and $J$, the estimate \eqref{weight-xn-10} is deduced.

\medskip

Next, we will show \eqref{weight-xn-20}.
 For $k=0$ and $l+q=1$, by Proposition \ref{prop2} with $k=k_i=0$ and $q=1$, we have
\EQ{
|\pd_{y'}^l \pd_{y_n}^q G_{ij}(x,y,t)|\lec\frac1{(|x-y|^2+t)^{\frac{n+1}2}}
+\frac{1}
{(|x^*-y|^2+t)^{\frac{n}2}(y_n^2+t)^{\frac{1}2} }.
}
It suffices to show, for $a>0$,
\[
I_1+I_2\lec t^{-1/2}  \frac 1{\bka{x_n}^{a}}
\]
where
\EQN{
I_1&= \int _{\R^n_+} \frac1{(|x-y|+\sqrt t)^{n+1} } \frac 1{\bka{y_n}^{2a}}\,dy,
\\
I_2 &= \int _{\R^n_+} \frac1{(|x^*-y|+\sqrt t)^n (y_n+\sqrt t)} \frac 1{\bka{y_n}^{2a}}\,dy .
}
Indeed, via  Lemma \ref{lemma2.2}, we have
\EQN{
I_1%&= \int _{\R^n_+} \frac1{(|x-y|+\sqrt t)^{n+1} } \frac 1{\bka{y_n}^{2a}}\,dy,\\
&\lec \int_0^\infty \frac{1}{(y_n+1)^{2a}} \int_{\Si} \frac1{(|x-y|+\sqrt{t})^{n+1}}\, dy'dy_n\\
&\lec \int_0^\infty \frac{1}{(y_n+1)^{2a}(|x_n-y_n|+\sqrt{t})^{2}}\, dy_n\\
&\lec R^{-1-2a} + \de_{2a=1} R^{-2} \log R
+ \mathbbm 1_{2a>1} R^{-2}
+  R^{-2a} t^{-1/2}\\
&\lec  t^{-1/2} \frac 1{\bka{x_n}^{a}},
}
where $R=x_n+\sqrt{t}+1$. We have used $a\le 1$ to bound $\one_{2a>1} R^{-2}\lec  t^{-1/2} \frac 1{\bka{x_n}^{a}}$.
On the other hand,
\EQN{
I_2%&= \int _{\R^n_+} \frac1{(|x^*-y|+\sqrt t)^n (y_n+\sqrt t)} \frac 1{\bka{y_n}^{2a}}\,dy \\
&\lec \int_0^\infty \frac{1}{(y_n+1)^{2a}(y_n+\sqrt t)} \int_{\Si} \frac1{(|x^*-y|+\sqrt{t})^{n}}\, dy'dy_n\\
&\lec \int_0^\infty \frac{1}{(y_n+1)^{2a}(y_n+\sqrt t)(x_n+y_n+\sqrt{t})}\, dy_n.
}
If $x_n \le 1$, we have
\[
I_2\lec \int_0^1\frac1{(y_n+\sqrt t)^2}\, dy_n+  \int_1^\infty \frac{1}{y_n^{2a+1}\sqrt t}\, dy_n
\lec \frac1{\sqrt t} .
\]

If $x_n \ge 1$, using $0<a\le 1$ we have
\EQN{
I_2
&\lec
\int_0^\infty \frac{1}{(y_n+1)^{2a}(\sqrt t)\, x_n^a (y_n+1)^{1-a}}\, dy_n
\\
&= \frac1{x_n^a \sqrt t}\int_0^\infty \frac{1}{(y_n+1)^{1+a}}\, dy_n =  \frac c{x_n^a \sqrt t}.
}

Combining the above estimates of $I_1$ and $I_2$, we obtain \eqref{weight-xn-20}.
\end{proof}

\begin{remark}\label{rem9.4}
The restriction $a\le 1$ is used for both estimates of $u^{heat}$ and $J$ for \eqref{weight-xn-10} and
for both $I_1$ and $I_2$ for \eqref{weight-xn-20} in the above proof. In fact, $J$ has the lower bound for $t=1$ and all $a>0$,
\[
J(x,1) \gec \int_{0<y_n<1} \int_\Si \frac{dy'\,dy_n} {(|y'|+x_n+1)^n} \gec \frac 1{1+x_n}.
\]
\end{remark}

\subsection{Mild solutions in $L^q_\uloc$}
In this subsection we prove Lemma \ref{th9.6}.
The estimates in Lemma \ref{th9.6} are used by Maekawa, Miura and Prange to construct local in time mild solutions of \eqref{NS} in $L^q_\uloc(\R^n_+)$ in
\cite[Prop 7.1]{MMP1} for $n<q \le \infty$ and
\cite[Prop 7.2]{MMP1} for $q=n$.
Their same proofs give Theorem \ref{thm7}.

\begin{lem}\label{th9.6}
Let $n \ge 2$.
Let $1 \le p \le q \le \infty$.
For $u_0\in L^p_{\uloc,\si}$, %
\EQ{\label{S93eq1}
\norm{\sum_{j=1}^n \int _{\R^n_+}  \breve G_{ij}(x,y,t) u_{0,j}(y) dy }_{L^q_\uloc} \le C
 \bke{1+t^{-\frac{n}{2}(\frac{1}{p}-\frac{1}{q})}+  \one_{p=q=1} \ln_+ \frac {1}{ t}}
 \norm{u_0}_{L^p_\uloc}.
}
Let $F\in L^p_\uloc$, $a,b\in \NN_0$ and $1\le a+b$.
Assume $b\ge1$ and $n\ge3$ if $p=q=\infty$. Then
\EQ{\label{S93eq2}
\norm{\int _{\R^n_+} \pd_{x}^a\pd_{y}^b  G_{ij}(x,y,t) F(y) dy }_{L^q_\uloc} \le C t^{-\frac{a+b}{2}} \big(1+t^{-\frac{n}{2}(\frac{1}{p}-\frac{1}{q})}\big) \norm{F}_{L^p_\uloc}.
}
\end{lem}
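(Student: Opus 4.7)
The plan is to rerun the proof of Lemma \ref{th9.1} with classical Young's inequality replaced by its uniformly local analogue. The key auxiliary tool is the following: for $1\le p\le q\le\infty$ with $1+\tfrac1q=\tfrac1p+\tfrac1r$, and $\Phi\ge0$ with $\Phi\in L^r(\R^n)$ decaying at infinity,
\EQ{\label{1012young}
\|\Phi * f\|_{L^q_\uloc(\R^n)} \le C\,\|\Phi\|_{L^r(\R^n)}\,\|f\|_{L^p_\uloc(\R^n)}.
}
This is proved by splitting $f=f\one_{B_R(x_0)}+f\one_{B_R(x_0)^c}$ for some $R\ge 2$: classical Young on $B_R(x_0)$ controls the near part on $B_1(x_0)$ by $\|\Phi\|_{L^r}R^{n/p}\|f\|_{L^p_\uloc}$, while the far part is handled by partitioning into unit cubes and using the $L^r$-decay of $\Phi$. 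The same inequality holds with $\R^n$ replaced by $\Si=\R^{n-1}$.

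For the derivative estimate \eqref{S93eq2}, the proof of Lemma \ref{th9.1} already yields the pointwise bound $|\pd_x^a\pd_y^b G_{ij}(x,y,t)|\le H_t(x-y)$ with $H_t$ and $H_1$ as in \eqref{1015b}; since $m=a+b\ge 1$, the product structure of $H_1$ gives $H_1\in L^r(\R^n)$ for every $r\in[1,\infty]$, with $\|H_t\|_{L^r}=t^{-\frac m2-\frac n2(\frac1p-\frac1q)}\|H_1\|_{L^r}$ when $\tfrac1r=1-\tfrac1p+\tfrac1q$. Inserting $\Phi=H_t$ into \eqref{1012young} gives \eqref{S93eq2}.

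For \eqref{S93eq1}, decompose $\breve G_{ij}(x,y,t)=\de_{ij}\Ga(x-y,t)+G^*_{ij}(x,y,t)$. The heat part obeys \eqref{1012young} with $\Phi=\Ga(\cdot,t)$ and $\|\Ga(\cdot,t)\|_{L^r}\sim t^{-\frac n2(1-\frac1r)}$, reproducing the $t^{-\frac n2(\frac1p-\frac1q)}$ rate. For $G^*_{ij}$, Solonnikov's estimate \eqref{Solonnikov.est} supplies a product kernel $|G^*_{ij}(x,y,t)|\lec e^{-cy_n^2/t}/(|x'-y'|+x_n+y_n+\sqrt t)^n$ decoupling the tangential and normal directions. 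Mirroring the proof of \eqref{E10.5}, we apply Minkowski in $y_n$ to pull the $L^q_\uloc(\R^n_+)$-norm inside, use the tangential version of \eqref{1012young} on $\Si$ with parameter $x_n+y_n+\sqrt t$, and then combine Minkowski and H\"older in $x_n$ against the Gaussian factor $e^{-cy_n^2/t}$; this yields the $t^{-\frac n2(\frac1p-\frac1q)}$ bound except in the borderline case $p=q=1$, where the resulting local integral $J(y,x_0)=\int_{B_1(x_0)\cap\R^n_+}(|x'-y'|+x_n+y_n+\sqrt t)^{-n}\,dx$ behaves like $1+\log(1/(y_n+\sqrt t))$ in the boundary layer $0<y_n<1$, contributing the additional factor $\log(1/\sqrt t)\sim\tfrac12\ln_+(1/t)$.

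The main obstacle is to verify \eqref{1012young} and its tangential counterpart with sharp constants in the uniformly local setting, and to precisely track the logarithmic correction at $p=q=1$ in \eqref{S93eq1} by isolating the boundary-layer contribution of $G^*_{ij}$; all remaining steps are direct rewrites of the $L^p$ arguments of Lemma \ref{th9.1}, read through \eqref{1012young} rather than classical Young.
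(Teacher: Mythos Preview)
Your overall strategy matches the paper's: reduce \eqref{S93eq2} to a uniformly-local Young inequality applied to the convolution kernel $H_t$ of \eqref{1015b}, and handle \eqref{S93eq1} by the decomposition $\breve G_{ij}=\de_{ij}\Ga+G^*_{ij}$ with a cube-by-cube analysis of the $G^*$ piece via \eqref{Solonnikov.est}. The paper does exactly this, citing Maekawa--Terasawa \cite[(3.18), Theorem 3.1]{MaTe} for the uloc convolution bounds rather than reproving them.

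There is, however, a genuine gap in your key tool. The inequality \eqref{1012young} as you state it, with a constant depending only on $\|\Phi\|_{L^r}$, is \emph{false} whenever $r>1$ (i.e.\ $p<q$). Take $\Phi(x)=(1+|x|)^{-\alpha}$ with $n/r<\alpha<n$: then $\Phi\in L^r(\R^n)$ but $\Phi\notin L^1(\R^n)$, and for $f\equiv 1\in L^p_\uloc$ one gets $\Phi*f\equiv+\infty$. Your sketched proof also breaks at the far part: partitioning $\{|y-x_0|>R\}$ into unit cubes $Q_k$ and summing $\|\Phi\|_{L^r(Q_k)}$ gives an $\ell^1$-sum of cube norms, which is controlled by $\|\Phi\|_{L^1}$, not by $\|\Phi\|_{L^r}$. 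The correct uloc Young inequality (this is \cite[Theorem~3.1]{MaTe}, used verbatim in the paper) reads
\[
\|\Phi*f\|_{L^q_\uloc}\le C\bigl(\|\Phi\|_{L^r}+\|\Phi\|_{L^1}\bigr)\|f\|_{L^p_\uloc}.
\]
This matters for the conclusion: applying your version to $\Phi=H_t$ yields only $t^{-\frac{a+b}{2}-\frac n2(\frac1p-\frac1q)}\|F\|_{L^p_\uloc}$, which misses the ``$1+$'' in \eqref{S93eq2} (and similarly for the heat part of \eqref{S93eq1}). It is precisely the $\|H_t\|_{L^1}=t^{-\frac{a+b}{2}}\|H_1\|_{L^1}$ term that supplies the large-$t$ bound. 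Once you replace \eqref{1012young} by the two-norm version, the rest of your outline goes through and coincides with the paper's argument; your identification of the $p=q=1$ logarithm as arising from the boundary layer $0<y_n\lesssim 1$ in the $G^*$ contribution is correct and matches the paper's explicit computation of $\|I_0\|_{L^q_{x_n}}$.
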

These estimates correspond to \cite[Proposition 5.3]{MMP1} and \cite[Theorem 3]{MMP1}.
Their proof is based on resolvent estimates in \cite[Theorem 1]{MMP1}, which does not allow $q=1$. Thus our estimates for $p=q=1$ are new. Also note that we do not restrict $a,b\le 1$ as in \cite[Theorem 3]{MMP1}.

\begin{proof}
First consider \eqref{S93eq1}. The endpoint case $p=q=\infty$ follows from \eqref{E10.5}. Let $p<\infty$. The formula \eqref{E1.5} gives
\EQN{
\sum_{j=1}^n\int _{\R^n_+} \breve G_{ij}(x,y,t) u_{0,j}(y) dy
&=\int _{\R^n} \Ga(x-y,t) \one_{y_n>0}u_{0,i}(y) dy + \int _{\R^n_+} G_{ij}^*(x,y,t) u_{0,j}(y) dy\\
& = :
u^{heat}_i(x,t) + u_i^{*}(x,t).
}
Since $u^{heat}$ is a convolution with the heat kernel in $\R^n$, it satisfies the estimate in \eqref{S93eq1} by Maekawa-Terasawa \cite[(3.18)]{MaTe}.
It suffices now to show that $u^*(x,t)$ also satisfies the same estimate.
By \eqref{Solonnikov.est}, $u^*(x,t)$ is bounded by
\EQN{
J_t(x)&=  \int _{\R^n_+}\frac{e^{-\frac{cy_n^2}t}} {(|x'-y'|+x_n+y_n+\sqrt t)^n } |u_0(y)| \,dy
\\
&= \int_0^\infty \frac{1} {(|x'|+x_n+y_n+\sqrt t)^n }*_\Si  |u_0(x',y_n)|  e^{-\frac{cy_n^2}t} \,dy_n,
}
where $*_\Si$ indicates convolution over $\Si$. Denote
\[
Q= [-\tfrac12,\tfrac12]^{n-1} \subset \Si, \quad Q_k = k + Q, \quad  k \in \ZZ^{n-1}.
\]
Our goal is to bound
\[
\norm{J_t}_{L^q(Q_{j'} \times (j_n,j_n+1))}
\]
by the right side of  \eqref{S93eq1}, uniformly for all $j' \in \ZZ^{n-1}$ and $j_n \in \NN_0$. By translation, we may assume $j'=0$.
Decompose
\[
J_t(x) = \sum_{k,l\in \ZZ^{n-1}}  \int_0^\infty
\frac{\one_{Q_k}(x')} {(|x'|+x_n+y_n+\sqrt t)^n }*_{\Si_{x'}} \bke{\one_{Q_l}(x') |u_0(x',y_n)|}
e^{-\frac{cy_n^2}t} \,dy_n.
\]

By Minkowski and Young inequalities with $1+\frac{1}{q}=\frac{1}{p}+\frac{1}{r}$,
\EQN{
&\norm{J_t(\cdot,x_n)}_{L^q(Q)}\\
&\lec \sum_{k,l\in \ZZ^{n-1},\, k-l \in 3Q}  \int_0^\I \norm{
\frac{\one_{Q_k}(x')} {(|x'| + x_n+y_n+\sqrt t)^n} *_{\Si_{x'}}\bke{ \one_{Q_l}(x') |u_0(x',y_n)|}
}_{L^q_{x'}(Q)}  e^{-\frac{cy_n^2}t}\,dy_n
\\
&\lec  \sum_{k,l\in \ZZ^{n-1},\, k-l \in 3Q} \int_0^\I I_k \cdot
\norm{ u_0(\cdot,y_n)}_{L^p(Q_l)}  e^{-\frac{cy_n^2}t}\,dy_n,
}
where
\EQN{
I_k &= \norm{\frac{1} {(|x'| + x_n+y_n+\sqrt t)^n}}_{L^r_{x'}(Q_k)}.
}
We have $I_k \approx (1+|k|+ x_n+y_n+\sqrt t)^{-n}$ when $k \not =0$, and
$I_0 \lec (1+ x_n+y_n+\sqrt t)^{-\frac{n-1}r} ( x_n+y_n+\sqrt t)^{-n+\frac{n-1}r}$ by Lemma \ref{lem6-1} .

By Minkowski inequality again with $I=(j_n,j_n+1)$,
\[
\norm{J_t}_{L^q(Q\times I)} =\norm{ \norm{J_t(\cdot,x_n)}_{L^q(Q)}}_{L^q_{x_n}(I)}
\lec  \sum_{k\in \ZZ^{n-1}} \int_0^\I
\norm{I_k}_{L^q_{x_n}(I)}
\norm{ u_0(\cdot,y_n)}_{L^p(k+4Q)}  e^{-\frac{cy_n^2}t}\,dy_n .
\]
We have $\norm{I_k}_{L^q_{x_n}(I)} \lec \frac{1} {(1+|k| +y_n+\sqrt t)^n}$ except when $k=0$ and $y_n+\sqrt t<1$. For $k=0$,
\EQN{
\norm{I_0}_{L^q_{x_n}(I)} &\lec \norm{\frac 1{ (x_n+y_n+\sqrt t)^{n-\frac{n-1}r}}}_{L^q_{x_n}(0,1)}
\lec\frac 1{ (y_n+\sqrt t)^{1+\frac{n-1}p - \frac nq}} + \one_{p=q=1} \ln_+ \frac {1}{y_n+\sqrt t},
}
using $n-\frac{n-1}r = 1 + (n-1)(\frac{1}{p}-\frac{1}{q})\ge 1$.
 Thus
\EQN{
\norm{J_t}_{L^q(Q\times I)}
&\lec \sum_{k\in \ZZ^{n-1}} \sum_{j=0}^\infty
\int_j^{j+1} \frac{1} {(1+|k| +y_n+\sqrt t)^n}
\norm{ u_0(\cdot,y_n)}_{L^p(k+4Q)}  e^{-\frac{cy_n^2}t}\,dy_n+M,
}
where
\EQN{
M= \int_0^1 \norm{I_0}_{L^q_{x_n}(I)}  \norm{ u_0(\cdot,y_n)}_{L^p(4Q)}  e^{-\frac{cy_n^2}t}\,dy_n.
}

By H\"older inequality with $p'=\frac p{p-1}$,
\EQN{
\norm{J_t}_{L^q(Q\times I)}
&\lec  \sum_{k\in \ZZ^{n-1}} \sum_{j=0}^\infty
\norm{ u_0}_{L^p((k+4Q)\times (j,j+1))}\cdot
\norm{
\frac{1} {(1+|k| +y_n+\sqrt t)^n}
  e^{-\frac{cy_n^2}t} }_ {L^{p'}_{y_n} (j,j+1)}+M
\\
&\lec  \sum_{k\in \ZZ^{n-1}} \sum_{j=0}^\infty
\norm{ u_0}_{L^p_\uloc}
\frac{1} {(1+|k| +j+\sqrt t)^n}
  e^{-\frac{cj^2}t}+M
  \\
&\lec
\norm{ u_0}_{L^p_\uloc} \int_{\R^n_+} \frac {  e^{-\frac{cy_n^2}t}}
{(1+|y| +\sqrt t)^n} dy+M \lec \norm{ u_0}_{L^p_\uloc}+M.
}
Also by H\"older inequality, when $(p,q)\not=(1,1)$,
\[
M
\lec
\norm{ u_0}_{L^p_\uloc}\cdot
\norm{\frac 1{ (y_n+\sqrt t)^{1+\frac{n-1}p - \frac nq}}
  e^{-\frac{cy_n^2}t} }_ {L^{p'}_{y_n}(0,1)}
\lec  t^{-\frac{n}{2}(\frac{1}{p}-\frac{1}{q})}
\norm{ u_0}_{L^p_\uloc},
\]
while when $p=q=1$,
\[
M
\lec
\norm{ u_0}_{L^1_\uloc}\cdot
\norm{
\bke{1+ \ln_+ \frac {1}{y_n+\sqrt t}} e^{-\frac{cy_n^2}t} }_{L^\infty(0,1) }
\lec \bke{1+ \ln_+ \frac {1}{ t}} \norm{ u_0}_{L^1_\uloc}.
\]

We have shown
\EQ{\label{1007b}
\norm{J_t}_{L^q_\uloc} \lec \bke{t^{-\frac{n}{2}(\frac{1}{p}-\frac{1}{q})}+  \one_{p=q=1} \ln_+ \frac {1}{ t}} \norm{ u_0}_{L^p_\uloc}.
}

This proves \eqref{S93eq1}.

\medskip

For \eqref{S93eq2},
 denote
 $$
 w(x,t) = \int _{\R^n_+} \pd_{x}^a\pd_{y}^b  G_{ij}(x,y,t) F(y) dy ,\quad m=a+b.
 $$
By \eqref{1015a}, $|w(x,t)|\lec w_1(x,t) + w_2(x,t)$, where
$w_1(x,t) = \int_{\R^n} H_t^0(x-y) |F(y)|dy$ with $H_t^0(x)$ given by \eqref{1015b-0}, and 
$w_2(x,t) = (x_n^2 + t)^{a/2}\int_{\R^n} H_t(x-y^*) |F(y)| (y_n^2+t)^{b/2}\,dy$ with $H_t^0(x)$ given by \eqref{1015b}.

For $w_1(x,t)$, by Maekawa-Terasawa \cite[Theorem 3.1]{MaTe} with $\frac1q=\frac1r+\frac1p-1$,
\EQN{
\|w_1(\cdot,t)\|_{L^q_{\uloc}}
&\lec
t^{-\frac{m}{2}}\bke{ t^{\frac{n}{2}(\frac{1}{q}-\frac{1}{p})} \norm{H_1^0}_{L^r(\R^n)} +\norm{H_1^0}_{L^1(\R^n)} } \norm{F}_{L^p_\uloc}.
}

It remains to estimate $w_2(x,t)$. When $p=q=\infty$, noting that $L^\infty_{\uloc} = L^\infty$, \eqref{S93eq2} follows from \eqref{E10.6}.
For $p<q$, we drop the factors $(y_n^2+t)^{-\frac{b}2}$ and $(x_n^2+t)^{-\frac{a}2}$ in \eqref{1015a}, $H_t(x-y^*)$ by $H_t(x-y)$, and applying Maekawa-Terasawa \cite[Theorem 3.1]{MaTe} with $\frac1q=\frac1r+\frac1p-1$,
\EQN{
\|w_2(\cdot,t)\|_{L^q_{\uloc}}
&\lec
t^{-\frac{m}{2}}\bke{ t^{\frac{n}{2}(\frac{1}{q}-\frac{1}{p})} \norm{H_1}_{L^r(\R^n)} +\norm{H_1}_{L^1(\R^n)} } \norm{F}_{L^p_\uloc}.
}
Note that $H_1\in L^r$ since $r>1$ when $p<q$.
Thus, we get for $p<q$ that
\[
\norm{w_2(\cdot,t)}_{L^q_{\uloc}} \lec t^{-\frac{m}{2}}\bke{ t^{\frac{n}{2}(\frac{1}{q}-\frac{1}{p})}  + 1 } \norm{F}_{L^p_\uloc}.
\]
This shows \eqref{S93eq2}.
\end{proof}

\section*{Acknowledgments}
The research of KK was partially supported by NRF-2019R1A2C1084685.
The research of BL was partially supported by NSFC-11971148.
The research of both CL and TT was partially supported by the NSERC grant RGPIN-2018-04137.
The research of CL is supported in part by the Simons Foundation Math + X Investigator Award \#376319 (Michael I. Weinstein).

\addcontentsline{toc}{section}{\protect\numberline{}{References}}
\small

\def\cprime{$'$}

\end{document}